\documentclass[12pt,a4paper]{article}
\usepackage[utf8]{inputenc}
\usepackage[english]{babel}
\usepackage{xcolor}

\usepackage{amssymb,mathtools,amsmath,amsthm,stmaryrd}
\usepackage[all,cmtip,2cell]{xy} 

\usepackage[draft=false, hidelinks, linkcolor = blue]{hyperref}
\usepackage{cleveref}
\usepackage{tensor}
\usepackage{url}
\usepackage{bookmark}

\usepackage{graphicx}
\usepackage{lmodern}
\usepackage{enumitem}
\usepackage{array}

\usepackage[left=2.5cm,right=2.5cm,top=2.5cm,bottom=2.5cm]{geometry}
%

\def\defthm#1#2#3#4{
  \newtheorem{#1}[theorem]{#3}
  \newtheorem*{#1*}{#3}
  \newtheorem{#2}[theorem]{#4}
  \newtheorem*{#2*}{#4}
  \crefname{#1}{#3}{#4}
  \crefname{#2}{#4}{#4}  
}


\newtheoremstyle{mythm}%
{10pt}
{}
{\itshape}
{}
{\bf}
{.}
{.5em}
{}%


\newtheoremstyle{mydef}%
{10pt}
{3pt}
{}
{}
{\bf}
{.}
{.5em}
{}%


\newtheoremstyle{myrmk}%
{10pt}
{3pt}
{}
{}
{\bf}
{.}
{.5em}
{}%

\theoremstyle{mythm}
\newtheorem{theorem}{Theorem}[section]
\newtheorem*{theorem*}{Theorem}

\defthm{corollary}{corollaries}{Corollary}{Corollaries}
\defthm{lemma}{lemmata}{Lemma}{Lemmata}
\defthm{proposition}{propositions}{Proposition}{Propositions}
\defthm{axiom}{axioms}{Axiom}{Axioms}
\defthm{propdef}{props/defs}{Proposition/Definition}{Prositions/Definitions}
\defthm{defcor}{defscors}{Corollary/Definition}{Corollaries/Definitions}
\defthm{conjecture}{conjectures}{Conjecture}{Conjectures}

\theoremstyle{mydef}
\defthm{definition}{definitions}{Definition}{Definitions}

\theoremstyle{myrmk}
\defthm{acknowledgment}{acknowledgments}{Acknowledgment}{Acknowledgments}
\defthm{remark}{remarks}{Remark}{Remarks}
\defthm{motivation}{motivations}{Motivation}{Motivations}
\defthm{example}{examples}{Example}{Examples}
\defthm{question}{questions}{Question}{Questions}
\defthm{observation}{observations}{Observation}{Observations}
\defthm{claim}{claims}{Claim}{Claims}
\defthm{notation}{notations}{Notation}{Notations}

\newtheorem*{replemmax}{\reptitle}
 {\end{replemmax}}

\newtheorem*{repthmx}{\reptitle}
\newenvironment{repthm}[1]{%
 \def\reptitle{Theorem \ref*{#1}}%
 \begin{repthmx}}%
 {\end{repthmx}}

\newtheorem*{repcorx}{\reptitle}
 {\end{repcorx}}


\crefname{section}{Section}{Sections}
\crefname{theorem}{Theorem}{Theorems}


\makeatletter
\renewenvironment{proof}[1][\proofname] {\par\pushQED{\qed}\normalfont\topsep6\p@\@plus6\p@\relax\trivlist\item[\hskip\labelsep\bf#1\@addpunct{.}]\ignorespaces}{\popQED\endtrivlist\@endpefalse}
\makeatother


\newcommand{\blank}{\mbox{\hspace{3pt}\underline{\ \ }\hspace{2pt}}}
\newcommand{\sprime}{^{\prime}}

\newcommand{\pbs}{\scalebox{1.5}{\rlap{$\cdot$}$\lrcorner$}}

\newcommand{\adj}{\rotatebox[origin=c]{180}{$\vdash$}\hspace{0.1pc}}

\newcommand{\Address}{{
  \bigskip
  \footnotesize

\textsc{Max Planck Institute for Mathematics, Vivatsgasse 7, 53111 Bonn, Germany}\par\nopagebreak
  \textit{E-mail address}: \texttt{stenzel@mpim-bonn.mpg.de}

}}


\title{Notions of $(\infty,1)$-sites and related formal structures}
\author{Raffael Stenzel}
\renewcommand\footnotemark{}

\begin{document}
\maketitle

\abstract{
We study various characterizations of higher sites over a given $\infty$-category $\mathcal{C}$ which are conceptually in line with their 
classical ordinary categorical counterparts, and extract some new results about $\infty$-topos theory from them. First, in terms of formal
$(\infty,2)$-category theory, we define a notion of higher Lawvere-Tierney operators on $\infty$-toposes which 
internalizes a parametrized version of the left exact modalities in \cite{abjfsheavesI} and the left exact modalities in \cite{rss_hottmod}. 
Second, in the spirit of Lawvere's hyperdoctrines, we describe the $\infty$-toposes embedded in the $\infty$-category $\hat{\mathcal{C}}$ of 
presheaves over $\mathcal{C}$ as the sheaves of ideals of what we call the logical structure sheaf on $\mathcal{C}$. This naturally induces a 
notion of ``geometric kernels'' on $\mathcal{C}$ which play the part of higher Grothendieck topologies from the given perspective.
Lastly, we study the $\infty$-category of cartesian $(\infty,1)$-sites. We generalize the notion of canonical Grothendieck topologies 
from \cite[Section 6.2.4]{luriehtt} appropriately to all geometric kernels and show an according ``Comparison Lemma'' in the best case 
scenario.
However, we show that a corresponding topological version of the lemma in the context of Grothendieck 
topologies fails.
}
\tableofcontents

\section{Introduction}

The notion of a site in (ordinary) category theory comes in various guises, some motivated by topological practice and others by notions 
naturally occurring in the context of categorical logic. The aim of this paper is, first, to recover these guises in an appropriately 
adjusted manner for the notion of $(\infty,1)$-site as developed in \cite{abjfsheavesI}, \cite{luriehtt}, and \cite{rss_hottmod}, and 
second, to extract some additional new theory from the structures studied along the way.

Any Grothendieck site $(\mathcal{C},J)$, thought of as a categorified 
topological space, comes canonically equipped with a notion of associated sheaf theory, that is, the associated ``petit'' topos of
$J$-sheaves on $\mathcal{C}$. It turns out that every topos (over a fixed base $\mathcal{C}$) is a petit topos with respect to some 
Grothendieck site (with base $\mathcal{C}$), and so the topological practice of sheaf theory is a generic aspect of the categorical logic 
of toposes.
A faithful development of the corresponding $\infty$-categorical landscape has been found out to be somewhat of a false friend in recent 
years, given that the translation of intuition as well as of results from ordinary topos theory subtly fails in eventually crucial places. 
One of those crucial differences is that the notion of a Grothendieck topology on an $\infty$-category $\mathcal{C}$ as defined in
\cite[Section 6.2.2]{luriehtt} is insufficient to describe all $\infty$-toposes generated by $\mathcal{C}$. A theory of notions to classify 
the poset of $\infty$-subtoposes of a given $\infty$-topos $\mathcal{B}$ in terms of topologically motivated or logically formal notions
associated to $\mathcal{B}$ is still work in progress. Towards this end, the authors of \cite{abjfsheavesI} introduced classes of 
factorizations systems (the left exact modalities of small generation) in $\mathcal{B}$ which stand in 1-1 correspondence to the
$\infty$-subtoposes of (and hence the ``higher sheaf theories'' obtained as quotients from) $\mathcal{B}$. These translate to left exact 
modalities in Homotopy Type Theory imposed on top of the internal language of
$\mathcal{B}$ via their semantics as ``reflective subfibrations''  of $\mathcal{B}$ as lined out in the appendix
\cite[Section A.2]{rss_hottmod}.

In this paper, we take up on this literature and introduce various equivalent formal $(\infty,2)$-categorical (specifically
$\infty$-cosmological) structures fibered over, indexed over, and internal to
$\mathcal{B}$, respectively, which mirror the ordinary topos theoretic landscape appropriately. In the special case when $\mathcal{B}$ is the
$\infty$-category of presheaves on a small $\infty$-category $\mathcal{C}$, we furthermore introduce an equivalent 
notion indexed over $\mathcal{C}$ itself which we call sheaves of $\mathcal{O}_{\mathcal{C}}$-ideals, and show that these ideals are fully 
determined by their ``geometric kernels'' which play the role of a corresponding notion of higher Grothendieck topology. The main results of 
this paper may hence be summarized by the following theorem.

\begin{theorem}\label{thmmain}
Let $\mathcal{B}$ be an $\infty$-topos. Then the following (partially ordered) classes are isomorphic to one another.
\begin{enumerate}
\item Equivalence classes of accessible left exact reflective localizations of $\mathcal{B}$.
\item Higher closure operators on $\mathcal{B}$.
\item Higher Lawvere-Tierney operators on $\mathcal{B}$.
\end{enumerate}
Whenever $\mathcal{B}$ is the $\infty$-topos of presheaves over a small $\infty$-category $\mathcal{C}$, all of these (partially ordered)
classes furthermore are isomorphic to any of the following classes as well.
\begin{enumerate}
\item[4.] Accessible sheaves of $\mathcal{O}_{\mathcal{C}}$-ideals.
\item[5.] Accessible geometric kernels on $\mathcal{C}$.
\end{enumerate}
\end{theorem}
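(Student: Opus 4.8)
The plan is to assemble the stated isomorphism of partially ordered classes from a chain of pairwise isomorphisms, each the content of one of the later sections; here I only describe the architecture and flag the step that carries the real weight. Throughout, every comparison will be manifestly monotone with a monotone inverse (each side is ordered by inclusion of the relevant local class, resp.\ of the relevant covering datum), so the poset structure will be automatically preserved once the underlying bijections are in place.

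First, for the equivalence (1) $\Leftrightarrow$ (2): starting from the Anel--Biedermann--Joyal--Finster correspondence between $\infty$-subtoposes of $\mathcal{B}$ (equivalently, accessible left exact reflective localizations up to equivalence) and left exact modalities of small generation, I would repackage such a modality as a reflective subfibration of the codomain fibration $\mathcal{B}^{\Delta^1} \to \mathcal{B}$ that is stable under base change and closed under the pertinent finite-limit structure — equivalently, as the indexed $\infty$-category $X \mapsto (\text{lex reflective localization of } \mathcal{B}_{/X})$ — and this $\infty$-cosmological gadget is what I call a higher closure operator. Checking that the modality axioms translate exactly into the fibered reflectivity-plus-stability axioms, in an appropriately coherent way, is the routine part of that section.

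Second, the equivalence (2) $\Leftrightarrow$ (3) is the internalization step, and this is where I expect the main obstacle to lie. A base-change-stable fibered closure operator is determined by its action on universal families, so since $\mathcal{B}$ carries object classifiers $\mathcal{U}_\kappa$ for $\kappa$-small maps, such a structure should be encoded by a suitably coherent endomorphism-type datum $j$ on $\mathcal{U}$, obtained by colimiting over the cardinals $\kappa$, together with axioms making $j$ a higher analogue of a Lawvere--Tierney topology (idempotency up to coherent homotopy, compatibility with the ``true'' point, left exactness); conversely $j$ reconstructs the fibered operator by pullback along classifying maps. The hard part will be, on the one hand, the size bookkeeping — the object classifier only sees bounded maps, so accessibility of the localization must be recast as $j$ being a filtered colimit over $\kappa$ of operators on the $\mathcal{U}_\kappa$ — and, on the other hand, producing all the higher coherences for $j$ rather than a mere map of objects, which is precisely where the formal $(\infty,2)$-categorical / $\infty$-cosmological framework is indispensable.

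Third, in the presheaf case $\mathcal{B} = \hat{\mathcal{C}}$, for the equivalence with (4) I would restrict the indexing along the Yoneda embedding $\mathcal{C} \hookrightarrow \hat{\mathcal{C}}$: since $\hat{\mathcal{C}}$ is freely generated under colimits by the representables and all the structures at hand are colimit-compatible, a higher closure (equivalently Lawvere--Tierney) operator on $\hat{\mathcal{C}}$ is the same as its restriction to representables, under which the object classifier restricts to the logical structure sheaf $\mathcal{O}_{\mathcal{C}}$; the restricted datum is exactly an accessible sheaf of $\mathcal{O}_{\mathcal{C}}$-ideals. Finally, for (4) $\Leftrightarrow$ (5) I would argue — just as a Grothendieck topology is recovered from its sieves — that an ideal sheaf carries redundant data and is reconstructed from its geometric kernel, the sub-datum recording which maps are sent to equivalences, so that ideal $\mapsto$ kernel is an order isomorphism onto the accessible geometric kernels. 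Composing the four pairwise isomorphisms, all monotone with monotone inverses, yields the theorem.
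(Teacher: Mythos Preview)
Your architecture matches the paper's: (1)$\Leftrightarrow$(2) via the ABFJ correspondence with left exact modalities of small generation (Proposition~\ref{thmprelim}) and their repackaging as cartesian reflective subfibrations of $t_{\mathcal{B}}$ (Corollary~\ref{corlexfibmods}), then Beck monadicity to pass to idempotent monads (Theorem~\ref{thmlocmodopacc}); (2)$\Leftrightarrow$(3) via internalization along the externalization functor and the $\kappa$-indexed object classifiers, with exactly the size bookkeeping you anticipate handled by ``polymorphic families'' (Section~\ref{secsubhlttops}, Theorem~\ref{thmcharlvtops}); (4) via restriction and right Kan extension along Yoneda using descent (Theorem~\ref{thmlocaltoglobal}); and (5) by showing the ideal sheaf is determined by what it nullifies (Theorem~\ref{thmgeomkernel}).

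One small terminological mismatch worth noting: the paper defines a higher closure operator to be the \emph{idempotent monad} (left exact, accessible, and compositional) on $t_{\mathcal{B}}$ in $\mathbf{Cart}(\mathcal{B})$, not the reflective subfibration itself; the passage between the two is the Beck Monadicity step (Corollary~\ref{corbeckmonadicity}, Proposition~\ref{propcompunitmnds}), and the ``compositional'' axiom on the monad --- which encodes that the local objects are closed under dependent sums --- is what recovers closure under composition of the right class, a point your sketch leaves implicit.
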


We will study pairwise equivalent non-accessible versions of 1., 4.\ and 5.\ as well.
One of the benefits of these notions is that the classes $2.$, $3.$ and $4.$ in Theorem~\ref{thmmain} are formal constructions in associated 
$\infty$-cosmoses which translate 
to one another along (almost) cosmological functors basically by construction. They hence can be generalized to other contexts for this 
reason (see e.g.\ Remark~\ref{remelemlvtops} for a note on accordingly defined higher Lawvere-Tierney operators on elementary
$\infty$-toposes). More 
specifically, they are formal structures defined on the target fibration $t\colon\mathcal{B}^{\Delta^1}\twoheadrightarrow\mathcal{B}$ as an 
object in the $\infty$-cosmos $\mathbf{Cart}(\mathcal{B})$ of cartesian fibrations over $\mathcal{B}$ -- or the images/preimages under 
certain cosmological functors thereof, respectively -- which can be replaced for instance by the cartesian fibration
$\mathrm{Sub}_{\mathcal{B}}\in\mathbf{Cart}(\mathcal{B})$ of subobjects in $\mathcal{B}$. That way one recovers the classical equivalences 
between topological localizations, closure operators, Lawvere-Tierney topologies and, in the case of $\mathcal{B}=\hat{\mathcal{C}}$, 
Grothendieck topologies on $\mathcal{C}$ (although there are some technical differences in the proofs, see e.g.\ Remark~\ref{remtoplideals}). 
Therefore, we will start in Section~\ref{secfirstorder} with a concise motivation from ordinary topos theory, and state in 
Proposition~\ref{proplocalglobalfirstorder} how the more familiar notions which represent subtoposes here translate to one another in the 
specific sense which we will generalize to $\infty$-topos theory in the rest of the paper.

Section~\ref{secfibglobal} builds the foundation for those generalizations. In Section~\ref{subsecprelim} we cover the relevant material from 
the literature regarding the framework of left exact modalities as worked out in \cite{abjfsheavesI}, and in Section~\ref{secsubfsrl} we 
develop the theory of reflective subfibrations as considered in \cite[Section A.2]{rss_hottmod} in some more depth and generality. 
Section~\ref{secfibglobal2} is technically entirely independent from Sections~\ref{sectopdef} and \ref{secsitemorphisms}, and hence need not to be read if the reader wishes to proceed directly to Section~\ref{sectopdef}.

In Section~\ref{secfibglobal2}, we prove the equivalence of the classes $1.$, $2.$, and $3.$ in Theorem~\ref{thmmain} and define all the 
notions involved. 
Higher closure operators will be defined as idempotent, left exact, accessible and compositional monads in the
$\infty$-cosmos of cartesian fibrations over $\mathcal{B}$ in Section~\ref{sechigherclosureops}, and will be shown to stand in 1-1 
correspondence to left exact modalities of small generation in Theorem~\ref{thmlocmodopacc}. Higher Lawvere-Tierney operators will be defined 
as ``polymorphic families'' of such idempotent monads in the $\infty$-cosmos of internal $\infty$-categories in $\mathcal{B}$ in
 Section~\ref{secsubhlttops}, and will be proven to be 
equivalent to their external counterparts in Theorem~\ref{thmcharlvtops}. Here, a parametrization of the monads along unbounded sequences of 
regular cardinals is necessary, because the target fibration $t$ of $\mathcal{B}$ is only representable one cardinal segment at a time. 
For such parametrizations to exist, the assumption of accessibility of the monads is crucial. Indeed, an according equivalence of the given 
internal and external structures in a non-presentable setting -- as the one of elementary $\infty$-toposes -- might simply not hold.

In Section~\ref{sectopdef} we consider the special case of presheaf $\infty$-toposes over a fixed $\infty$-category $\mathcal{C}$ and 
prove the equivalence of the classes $1.$, $4.$, and $5.$ in Theorem~\ref{thmmain}. In Section~\ref{secsublogstrsheaf}, we adapt the 
perspective that a topology ought to be an indexed 
logical equipment in the sense of Lawvere, and argue that every $(\infty,1)$-category $\mathcal{C}$ comes equipped with a ``logical structure 
sheaf'' $\mathcal{O}_{\mathcal{C}}$ such that a topology on $\mathcal{C}$ is exactly what qualifies -- in terms borrowed from algebraic 
geometry --  as a sheaf of $\mathcal{O}_{\mathcal{C}}$-ideals. Here, the structure sheaf is valued not in frames, but in $\infty$-toposes (or 
$\infty$-logoi) themselves.
Whenever the $\infty$-category $\mathcal{C}$ itself exhibits a sufficiently impredicative internal language, this structure sheaf is 
representable, and so topologies correspond to internal ideals of the corresponding internalized structure. This is precisely the equivalence 
between 3.\ and 4.\ in Proposition~\ref{proplocalglobalfirstorder} in the ordinary case, and underlies the equivalence of (higher) closure 
operators and (higher) Lawvere-Tierney operators whenever $\mathcal{C}$ is an ($\infty$-)topos. 

In Section~\ref{subsectopdef}, we will use this notion of sheaves of $\mathcal{O}_{\mathcal{C}}$-ideals $\mathcal{E}$ to define the notion of 
a geometric kernel on $\mathcal{C}$, which is a $\mathcal{C}$-indexed collection of objects in $\mathcal{O}_{\mathcal{C}}$ which are to be 
nullified pointwise in a sheaf of $\mathcal{O}_{\mathcal{C}}$-ideals $\mathcal{E}$. We show that all sheaves of
$\mathcal{O}_{\mathcal{C}}$-ideals are uniquely determined by their associated geometric kernels, and classify all geometric kernels in some 
simple examples in Section~\ref{subsecexples} (which all turn out to be generated by Grothendieck topologies, raising the question how 
complicated an $\infty$-category has to be for it to exhibit non-topological geometric kernels).
%
%
%

In Section~\ref{secsitemorphisms} we define an $\infty$-category of cartesian $(\infty,1)$-sites, show how it relates to the
$\infty$-category of $\infty$-toposes, and prove an appropriate version of the Comparison Lemma.

\begin{repthm}{thmcomplemma}[Comparison Lemma]
Let $(\mathcal{D},K)$ be a cartesian $(\infty,1)$-site, $\mathcal{C}$ be a small left exact $\infty$-category and
$F\colon\mathcal{C}\rightarrow\mathcal{D}$ be a cartesian functor.
\begin{enumerate}
\item Whenever the localization $\mathrm{Sh}_K(\mathcal{D})$ is locally presentable, the ordered class of accessible geometric kernels $J$ on
$\mathcal{C}$ such that the functor $F\colon(\mathcal{C},J)\rightarrow(\mathcal{D},K)$ is cover-preserving has a maximal element $J_K$.
\item Whenever $F$ is fully faithful and $K$-dense, there is a geometric kernel $J_K$ on $\mathcal{C}$ such that
$F\colon(\mathcal{C},J_K)\rightarrow(\mathcal{D},K)$ is an equivalence of cartesian $(\infty,1)$-sites.
\item Whenever $F$ is fully faithful and $K$-dense, and $\mathrm{Sh}_K(\mathcal{D})$ is locally presentable as well, the $(\infty,1)$-sites
$(\mathcal{C},J_K)$ from Parts 1 and 2 coincide.
\end{enumerate}
\end{repthm}

As a corollary (Corollary~\ref{corgeoesssmall}) we obtain that 
the canonical cartesian site on any $\infty$-topos exists and is essentially small. This gives an alternative proof of the fact that 
every $\infty$-topos has a left exact and subcanonical presentation (\cite[Proposition 6.1.5.3]{luriehtt}) as well as that the large 
sheafification of a small presheaf on an $\infty$-topos is small again (\cite[Lemma 6.3.5.28]{luriehtt}).
We however show that the Comparison Lemma stated for Grothendieck sites with the usual notion of density -- which is the one considered in 
ordinary topos theory in \cite[Theorem 2.2.3]{elephant} -- fails, 
because the according notion of density is too weak in the given context. This implies that the ``canonical Grothendieck topologies'' from 
\cite[Proposition 6.2.4.6]{luriehtt} generally cannot yield equivalences of sheaf theories.

\begin{notation*}
As is often custom, in all of the following the prefix ``$(\infty,1)$-'' will be abbreviated to ``$\infty$-''. The term
``$\infty$-category'' will mean ``quasi-category'' unless explicitly stated otherwise. The
$\infty$-category of spaces will be denoted by $\mathcal{S}$, the $\infty$-category of functors between two
$\infty$-categories $\mathcal{B}$, $\mathcal{C}$  will be denoted by $\mathrm{Fun}(\mathcal{B},\mathcal{C})$. The $\infty$-category of small 
$\infty$-categories will be denoted by $\mathrm{Cat}_{\infty}$. The (simplicially enriched) category of simplicial sets will be denote by
$\mathbf{S}$.
\end{notation*}

\begin{acknowledgments*}
Part of the paper was written with support of the Grant Agency of the Czech Republic under the grant 19-00902S. Part of the paper was written 
as a guest at the Max Planck Institute for Mathematics in Bonn, Germany.
The author thanks John Bourke and Nima Rasekh for multiple discussions regarding the results of this paper.
\end{acknowledgments*}

\section{The first order logical structure sheaf}\label{secfirstorder}
 
In this section we restrict our attention to Grothendieck sites on ordinary categories.
Let $\mathrm{Frm}$ be the category of frames and frame homomorphisms. Accordingly, its opposite is the category
$\mathrm{Loc}$ of locales. An extensive study of both is given in \cite{johnstone_ss}.
Let $\Omega$ be the subobject classifier in $\mathrm{Set}$ considered as a frame and thus thought of as the
predicate classifier in classical set theory. Let $\mathcal{C}$ be an ordinary category. Given an object
$C\in\mathcal{C}$, we can define the frame $\mathrm{Sv}(\mathcal{C}_{/C})$ of isomorphism classes of sieves on the slice
$\mathcal{C}_{/C}$; that is, up to natural isomorphism, the frame of isomorphism classes of monomorphisms of presheaves 
over the representable $yC\in\mathrm{Fun}(\mathcal{C}^{op},\mathrm{Set})$. The composition
\[\Omega_{\mathcal{C}}\colon\mathcal{C}^{op}\xrightarrow{\mathcal{C}_{/(\cdot)}}\mathrm{Cat}^{op}\xrightarrow{\mathrm{Fun}((\cdot)^{op},\Omega)}\mathrm{Frm}\]
takes an object $C$ to $\mathrm{Fun}((\mathcal{C}_{/C})^{op},\Omega)\simeq\mathrm{Sv}(\mathcal{C}_{/C})$ and a map $f\colon C\rightarrow D$ 
to the pullback functor $(\Sigma_f)^{\ast}\colon\mathrm{Sv}(\mathcal{C}_{/D})\rightarrow\mathrm{Sv}(\mathcal{C}_{/C})$ which will be simply 
denoted by $f^{\ast}$.
We will denote this functor by $\Omega_{\mathcal{C}}$ and call it the \emph{first order logical structure sheaf} on
$\mathcal{C}$. In dual terms, the structure sheaf is an assignment of the form
\[\Omega_{\mathcal{C}}\colon\mathcal{C}\rightarrow\mathrm{Loc}\]
which takes an object $C$ to the (localic) space of predicates over $\mathcal{C}_{/C}$.

\begin{remark}
Given an ordinary category $\mathcal{C}$ with finite products, recall that a first order hyperdoctrine on $\mathcal{C}$ 
is a functor
\[P\colon\mathcal{C}^{op}\rightarrow\mathrm{Frm}\]
which comes equipped with the following logical structure.
\begin{enumerate}
\item For each $f\colon C\rightarrow D$ in $\mathcal{C}$, the frame homomorphism $P(f)\colon P(D)\rightarrow P(C)$ is part of an adjoint triple $(\exists_f,P(f),\forall_f)$.
\item $P$ satisfies the Beck-Chevalley condition for squares associated to the projections
$\pi_1\colon(\cdot)\times C\Rightarrow\mathrm{id}_{\mathcal{C}}$ for $C\in\mathcal{C}$.
\end{enumerate}
Given an object $C\in\mathcal{C}$, the frame $P(C)$ can be considered as a structure of synthetic predicates over $C$. The 
left adjoints serve the function of a $P$-existential quantifier, the right adjoints that of a $P$-universal quantifier. 
$P$ is said to be a first order hyperdoctrine with equality if it satisfies the Beck-Chevalley condition for pullback 
squares in $\mathcal{C}$ of the form
\[\xymatrix{
A\times B\ar[r]^{A\times f}\ar[d]_{\Delta_A\times B} & A\times C\ar[d]^{\Delta_A\times C} \\
A\times A\times B\ar[r]_{A\times A\times f} & A\times A \times C.
}\]
The first order structure sheaf $\Omega_{\mathcal{C}}\colon\mathcal{C}^{op}\rightarrow\mathrm{Frm}$ is a standard 
example for a hyperdoctrine (with equality) for every category $\mathcal{C}$ with finite products
(via \cite{lawverecomp}).
\end{remark}

\begin{definition}\label{defidealsheaf1istorder}
Let $\mathcal{C}$ be a small category. A sheaf $\mathcal{E}$ of first order $\Omega_{\mathcal{C}}$-ideals is a regular
subfunctor
\[\mathcal{E}\subseteq\Omega_{\mathcal{C}}\colon\mathcal{C}\rightarrow\mathrm{Loc}\]
with $\mathcal{C}$-indexed reflector. 
\end{definition}

Here, to be a regular subfunctor $\mathcal{E}\subseteq\Omega_{\mathcal{C}}$ means that for every object $C\in\mathcal{C}$ the 
locale $\mathcal{E}(C)$ is a sublocale (i.e.\ a regular subobject) of $\Omega_{\mathcal{C}}(C)$, and the transition 
maps $\mathcal{E}(f)_{\ast}\colon\mathcal{E}(C)\rightarrow\mathcal{E}(D)$ are the restrictions
$\Omega_{\mathcal{C}}(f)_{\ast}\mid_{\mathcal{E}(C)}$ of the transition maps
$\Omega_{\mathcal{C}}(f)_{\ast}\colon\Omega_{\mathcal{C}}(C)\rightarrow\Omega_{\mathcal{C}}(D)$ (of 
locales). For the reflector to be $\mathcal{C}$-indexed means that the associated regular epic frame morphisms
$j_C\colon\Omega_{\mathcal{C}}(C)\rightarrow \mathcal{E}(C)$ for $C\in\mathcal{C}$ are compatible with the $\mathcal{C}$-action
as well. This additional commutativity of the left adjoints $j_C$ with the cartesian $\mathcal{C}$-action can be thought 
of as a Beck-Chevalley condition on $\mathcal{E}$ with respect to $\Omega_{\mathcal{C}}$. 

A sheaf of first order $\Omega_{\mathcal{C}}$-ideals can be literally thought of as a sheaf of 
exponential $\Omega_{\mathcal{C}}$-ideals via \cite[Exercise II.2]{johnstone_ss}.
By construction, a sheaf $\mathcal{E}$ of first order $\Omega_{\mathcal{C}}$-ideals considered as a contravariant 
functor of the form
\[\mathcal{E}\colon \mathcal{C}^{op}\rightarrow\mathrm{Frm}\]
gives rise to a coherent family of nuclei $j_C$ on the frames $\Omega_{\mathcal{C}}(C)\cong\mathrm{Sub}_{\hat{\mathcal{C}}}(yC)$ 
indexed over $\mathcal{C}$.  Left Kan extension of these nuclei along the Yoneda embedding of $\mathcal{C}$ yields the notion of a 
closure operator on $\hat{\mathcal{C}}$, as explained in more detail in the following Example.

\begin{example}\label{exmplehypdocint}
Whenever $\mathcal{B}$ is a geometric category, it naturally comes equipped with a hyperdoctrine 
\begin{align*}
\Omega_{\mathcal{B}}^{\mathrm{int}}\colon\mathcal{B}^{op} & \rightarrow\mathrm{Frm} \\
B & \mapsto \mathrm{Sub}_{\mathcal{B}}(B)
\end{align*}
of \emph{internal predicates}. By definition, for any small category $\mathcal{C}$ we have $\Omega_{\hat{\mathcal{C}}}^{\mathrm{int}}(yC)\cong\Omega_{\mathcal{C}}(C)$ for all $C\in\mathcal{C}$, using that presheaf categories are geometric. In fact more 
is true: as every topos is effective (and hence satisfies monic descent in the sense of Rezk, \cite[Proposition 2.5]{rezkhtytps}), we have
\[\Omega_{\hat{\mathcal{C}}}^{\mathrm{int}}\cong\mathrm{Lan}_y(\Omega_{\mathcal{C}}).\]
Furthermore, every sheaf $\mathcal{E}$ of first order $\Omega_{\mathcal{C}}$-ideals extends to a sheaf $\mathrm{Lan}_y(\mathcal{E})$ 
of first order $\Omega_{\hat{\mathcal{C}}}^\mathrm{int}$-ideals, by which we again mean a regular subfunctor of
$(\Omega_{\hat{\mathcal{C}}}^\mathrm{int})^{op}$. Such are exactly the \emph{closure operators} on $\hat{\mathcal{C}}$.
\end{example}

For comparison with the $\infty$-categorical expressions in the following sections, in the fibrational framework a closure operator on a geometric category $\mathcal{B}$ is exactly (the fibered monad corresponding to) an elementary subfibration
\[\xymatrix{
\mathcal{E}\ar@{^(->}[r]\ar@{->>}@/_/[dr] & \sum\limits_{B\in\mathcal{B}}\mathrm{Sub}_{\mathcal{B}}(B)\ar@{->>}[d] \\
 & \mathcal{B}
}\]
with a cartesian fibered reflector
$\bar{(\cdot)}\colon\sum_{X\in\mathcal{C}}\mathrm{Sub}_{\mathcal{B}}(B)\rightarrow\mathcal{B}$ which preserves meets 
fiberwise. In the case of presheaf categories $\mathcal{B}=\hat{\mathcal{C}}$, its pullback along the Yoneda embedding is by construction 
exactly its associated sheaf of $\Omega_\mathcal{C}$-ideals represented as an elementary fibration. For technical reasons, it is 
this fibrational expression which we will consider in the $\infty$-categorical setting in the coming sections.

Given that $\mathcal{B}$ is not only geometric, but an elementary topos, the hyperdoctrine $\Omega_{\mathcal{B}}^{\mathrm{int}}$ from 
Example~\ref{exmplehypdocint} is representable by the internal posetal category formed by its subject classifier
$\Omega\in\mathcal{B}$. That is, there is an \emph{externalization} 2-functor
\[\mathrm{Ext}\colon\mathrm{ICat}(\mathcal{B})\rightarrow\mathrm{Fun}(\mathcal{B}^{op},\mathrm{Cat})\]
from the 2-category of internal categories in $\mathcal{B}$ to the 2-category of indexed categories over $\mathcal{B}$ such that
$\mathrm{Ext}(\Omega)\cong\Omega_{\mathcal{B}}^{\mathrm{int}}$. This 2-functor
preserves and reflects various formal categorical structures in the respective 2-categories (\cite[Section 7.3]{jacobsttbook}). In 
particular, it transfers left exact idempotent monads back and forth between $\Omega$ and its externalization. This is captured by the 
equivalence of 4.\ and 5.\ in the following formulation of a classic correspondence.

\begin{proposition}\label{proplocalglobalfirstorder}
Let $\mathcal{C}$ be a small category. Then the following classes stand in bijection to one another.
\begin{enumerate}
\item Grothendieck topologies on $\mathcal{C}$.
\item Sheaves of first order $\Omega_{\mathcal{C}}$-ideals.
\item Closure operators on $\hat{\mathcal{C}}$ (i.e.\ sheaves of first order $\Omega_{\hat{\mathcal{C}}}^{\mathrm{int}}$-ideals).
\end{enumerate}
Whenever $\mathcal{C}$ is an (elementary) topos itself, the following classes stand in bijection to one another as well.
\begin{enumerate}
\item[4.] Sheaves of first order $\Omega_{\mathcal{C}}^{\mathrm{int}}$-ideals.
\item[5.] Lawvere-Tierney topologies on the sub-object classifier $\Omega$ in $\mathcal{C}$.
\item[6.] Equivalence classes of reflective left exact localizations of $\mathcal{C}$.
\end{enumerate}
\end{proposition}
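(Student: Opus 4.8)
I will prove \Cref{proplocalglobalfirstorder} by reducing all five bespoke notions occurring in the statement to the classical triad ``Grothendieck topology / universal closure operator / Lawvere--Tierney topology'', and then invoking the classical sheafification theorem for the passage to left exact localizations. The conceptual work is not in these classical equivalences but in recognizing the author's indexed and fibered formulations as literal restatements of the standard ones; for items $2$ and $3$ this recognition is essentially carried out already in \Cref{exmplehypdocint}, so the plan is mostly one of bookkeeping.

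For the bijection between items $1$ and $2$, I would first note that $\Omega_{\mathcal{C}}(C)\cong\mathrm{Sub}_{\hat{\mathcal{C}}}(yC)$ is the frame $\mathrm{Sv}(C)$ of sieves on $C$, and then unwind \Cref{defidealsheaf1istorder}: a sheaf of first order $\Omega_{\mathcal{C}}$-ideals $\mathcal{E}$ is exactly a coherent family of nuclei $j_C$ on the frames $\mathrm{Sv}(C)$, the condition that $\mathcal{E}(C)$ be a sublocale of $\Omega_{\mathcal{C}}(C)$ encoding that $j_C$ is inflationary, idempotent and meet-preserving on sieves, and the $\mathcal{C}$-indexedness of the reflector encoding the Beck--Chevalley condition $j_C\circ f^{\ast}=f^{\ast}\circ j_D$, i.e.\ stability of the closure under base change. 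The standard dictionary between sieve-closures and covering families (recovering the covering sieves of $C$ as $\{S : j_C(S)=\text{maximal sieve}\}$, see e.g.\ \cite[C2.1]{elephant}) then identifies such a family with a Grothendieck topology on $\mathcal{C}$, and these assignments are mutually inverse. The bijection between items $2$ and $3$ is \Cref{exmplehypdocint}: since presheaf toposes satisfy monic descent, $\Omega^{\mathrm{int}}_{\hat{\mathcal{C}}}\cong\mathrm{Lan}_y\Omega_{\mathcal{C}}$, and left Kan extension along the dense fully faithful Yoneda embedding restricts to a bijection between regular subfunctors of $\Omega_{\mathcal{C}}$ with $\mathcal{C}$-indexed reflector and regular subfunctors of $\Omega^{\mathrm{int}}_{\hat{\mathcal{C}}}$ with $\hat{\mathcal{C}}$-indexed reflector (i.e.\ closure operators on $\hat{\mathcal{C}}$), the inverse being restriction along $y$; concretely one reconstructs a closure on $\mathrm{Sub}_{\hat{\mathcal{C}}}(X)$ for arbitrary $X$ by descent along a presentation $X\cong\operatorname{colim}_i yC_i$.

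For items $4$, $5$, $6$, when $\mathcal{C}=\hat{\mathcal{D}}$ is itself a presheaf topos item $4$ coincides with item $3$ by the parenthetical in the statement, so I may assume $\mathcal{C}$ is an elementary topos. Exactly as above, a sheaf of first order $\Omega^{\mathrm{int}}_{\mathcal{C}}$-ideals is precisely a universal closure operator on $\mathcal{C}$, that is, a Beck--Chevalley-compatible family of nuclei on the frames $\mathrm{Sub}_{\mathcal{C}}(B)$. Since $\mathcal{C}$ is an elementary topos, representability of $\Omega^{\mathrm{int}}_{\mathcal{C}}$ by the internal poset $\Omega$ via the externalization $2$-functor $\mathrm{Ext}\colon\mathrm{ICat}(\mathcal{C})\to\mathrm{Fun}(\mathcal{C}^{op},\mathrm{Cat})$, together with the fact that $\mathrm{Ext}$ transfers left exact idempotent monads back and forth between $\Omega$ and $\Omega^{\mathrm{int}}_{\mathcal{C}}$ (\cite[Section 7.3]{jacobsttbook}), identifies such closure operators with left exact idempotent monads on $\Omega$, i.e.\ with Lawvere--Tierney topologies $j\colon\Omega\to\Omega$; this yields $4\Leftrightarrow 5$ (alternatively one cites the classical correspondence between universal closure operators and Lawvere--Tierney topologies on any elementary topos, \cite[A4.4]{elephant}). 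Finally $5\Leftrightarrow 6$ is the classical sheafification theorem, valid for every elementary topos: $j$ determines the reflective subcategory $\mathrm{sh}_j(\mathcal{C})\hookrightarrow\mathcal{C}$ of $j$-sheaves with left exact reflector, and a left exact reflector $L$ conversely recovers $j$ as the operation sending a mono $A\hookrightarrow B$ to its $L$-closure, the two assignments being mutually inverse up to the evident equivalence relation on localizations (\cite[A4.4, A4.5]{elephant}).

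The one step that is not purely formal is the reconstruction in items $1$--$3$, i.e.\ the bijectivity of $\mathrm{Lan}_y$ on these structured subfunctors: one must check that the closure reconstructed from a sheaf of $\Omega_{\mathcal{C}}$-ideals is independent of the chosen presentation $X\cong\operatorname{colim}_i yC_i$ and remains fiberwise left exact, and this is exactly where monic descent in $\hat{\mathcal{C}}$ is used; fortunately that input has already been isolated in \Cref{exmplehypdocint}. The only other place demanding care is phrasing the recovery of $j$ from $L$ and the equivalence relation on localizations in item $6$ correctly, which is routine.
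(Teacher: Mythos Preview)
Your proposal is correct and takes essentially the same approach as the paper: both reduce the statement to the classical equivalences between Grothendieck topologies, universal closure operators, Lawvere--Tierney topologies, and left exact localizations (citing \cite{mlmsheaves} or \cite{elephant}), and then observe that the author's indexed/fibered formulations are reformulations of the standard ones. The only difference is granularity: the paper's proof is a two-sentence citation, while you spell out the bookkeeping (via \Cref{exmplehypdocint} and the externalization functor) that identifies sheaves of first order $\Omega_{\mathcal{C}}$-ideals with the classical data.
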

\begin{proof}
The equivalence of Lawvere-Tierney topologies, closure operators and reflective left exact localizations of
toposes (and furthermore of Grothendieck topologies in the case of presheaf toposes) is a fundamental classic result of topos theory, see 
e.g\ \cite{mlmsheaves}. The other equivalences stated here are just reformulations thereof.
\end{proof}

\begin{remark}\label{remlocisnull}
The Grothendieck topology $J$ on a small category $\mathcal{C}$ which corresponds to a given sheaf $\mathcal{E}$ of 
first-order $\Omega_{\mathcal{C}}$-ideals consists at an object $C$ exactly of the objects $m\in\Omega_{\mathcal{C}}(C)$ which 
are nullified by $j_C$. That means a predicate $m\in\Omega_{\mathcal{C}}(C)$ is contained in $J(C)$ if and only 
if $j_C(m)=\top$ in the ideal $\mathcal{E}(C)$. In that sense, the definition of a Grothendieck topology gives necessary and sufficient 
conditions on a collection of predicates $\{J(C)\subseteq\Omega_{\mathcal{C}}(C)\mid C\in\mathcal{C}\}$ for the existence and uniqueness of 
an associated sheaf $\mathcal{E}$ of first order $\Omega_{\mathcal{C}}$-ideals such that each $\mathcal{E}(C)$ nullifies exactly the objects 
contained in $J(C)$. In other words, every such $J$ generates a unique Lawvere-Tierney topology 
$j\colon\Omega\rightarrow\Omega$ such that the inclusion $J\subseteq\Omega$ fits into a pullback square of the form
\[\xymatrix{
J\ar@{^(->}[d]\ar@{}[dr]|(.3){\pbs}\ar[r] & 1\ar@{^(->}[d]^{\top} \\
\Omega\ar[r]_j & \Omega.
}\]
Thus, $j$ is fully determined by the class of predicates $J$ that it nullifies internally.
\end{remark}

The usual B\'{enabou}-Mitchell language internal to a topos $\mathrm{Sh}_J(\mathcal{C})$ associated to a site
$(\mathcal{C},J)$ gives meaning to the interpretation of monomorphisms as predicates in the topos
$\mathrm{Sh}_J(\mathcal{C})$ (\cite[Section VI.5]{mlmsheaves}). The Kripke-Joyal sheaf semantics associated to the site
$(\mathcal{C},J)$ is a canonical way to translate statements in the internal language of $\mathrm{Sh}_J(\mathcal{C})$ to 
the internal language of the category of sets. Its ``local character'' (\cite[Section VI.7]{mlmsheaves}) may be understood as a 
semantic local to global principle of the $\Omega_{\mathcal{C}}$-ideal $\mathcal{E}$ with respect to its associated 
topology $J$. In fact, more generally, the family of frames $\mathcal{E}_C$ for $C\in\mathcal{C}$ itself satisfies a 
categorical local to global principle in the following sense, which justifies the description of such $\mathcal{E}$ as 
\emph{sheaves} of $\Omega_{\mathcal{C}}$-ideals. 

\begin{proposition}\label{prop1stidealsheaf}
Let $(\mathcal{C},J)$ be a site and $\mathcal{E}$ be its associated sheaf of $\Omega_{\mathcal{C}}$-ideals. Then 
the diagram
\[\mathcal{E}\colon\mathcal{C}^{op}\rightarrow\mathrm{Frm}\]
is a $J$-stack of frames (and is in fact the largest -- i.e.\ the terminal -- such among all sheaves of $\Omega_{\mathcal{C}}$-ideals).
\end{proposition}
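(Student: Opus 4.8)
The plan is to unwind the two claims --- that $\mathcal{E}$ is a $J$-stack of frames, and that it is terminal among sheaves of $\Omega_{\mathcal{C}}$-ideals --- and reduce each to facts already recorded in the excerpt, principally Remark~\ref{remlocisnull} (which identifies $\mathcal{E}(C)$ as the localization of $\Omega_{\mathcal{C}}(C)\cong\mathrm{Sub}_{\hat{\mathcal{C}}}(yC)$ that nullifies exactly the covering sieves $J(C)$) and the classical equivalence of Proposition~\ref{proplocalglobalfirstorder}. First I would make the stack condition precise: a diagram $\mathcal{E}\colon\mathcal{C}^{op}\to\mathrm{Frm}$ is a $J$-stack if, regarded as a $\mathrm{Frm}$-valued (equivalently, via the externalization, a $\mathrm{Loc}$-valued) presheaf, it satisfies descent along every $J$-covering sieve; since $\mathrm{Frm}$ is a $1$-category the relevant descent is the ordinary sheaf (limit) condition. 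The key observation is that $\mathcal{E}$, being by construction the restriction along $y$ of the closure operator $\mathrm{Lan}_y(\mathcal{E})$ on $\hat{\mathcal{C}}$ (Example~\ref{exmplehypdocint}), computes at each $C$ the poset of $j$-closed (equivalently $J$-closed) subobjects of $yC$, and a $j$-closed subobject of $yC$ is the same datum as a subobject of $yC$ in the sheaf topos $\mathrm{Sh}_J(\mathcal{C})$ --- i.e.\ $\mathcal{E}(C)\cong\mathrm{Sub}_{\mathrm{Sh}_J(\mathcal{C})}(aC)$ where $a$ is sheafification of the representable. So I would proceed:

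First, identify $\mathcal{E}(C)\cong\mathrm{Sub}_{\mathrm{Sh}_J(\mathcal{C})}(aC)$ naturally in $C$, using Remark~\ref{remlocisnull} to see that $j_C$-nullification of $\Omega_{\mathcal{C}}(C)$ has $j$-closed subobjects of $yC$ as its frame of fixed points, and the standard description of subobjects in a subtopos in terms of closed subobjects in the ambient topos (\cite{mlmsheaves}, \cite[Exercise II.2]{johnstone_ss}, already cited). Second, feed a $J$-covering sieve $S\hookrightarrow yC$ into the sheaf condition: since $aS\to aC$ becomes an isomorphism in $\mathrm{Sh}_J(\mathcal{C})$ for every $J$-cover $S$, the colimit diagram exhibiting $aC$ as a colimit of the sheafified representables $a(\mathrm{dom})$ over $S$ is preserved, and applying the contravariant functor $\mathrm{Sub}_{\mathrm{Sh}_J(\mathcal{C})}(-)$ --- which sends colimits in a topos to limits of frames --- yields exactly the descent/equalizer diagram expressing $\mathcal{E}(C)$ as the limit of the $\mathcal{E}$-values over the cover. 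This is the heart of the stack claim. Third, for terminality: given any sheaf $\mathcal{E}'$ of $\Omega_{\mathcal{C}}$-ideals, Remark~\ref{remlocisnull} assigns to it a Grothendieck topology $J'$ (the predicates it nullifies), and the ordering of sheaves of ideals corresponds contravariantly to inclusion of the associated topologies; so among all $\mathcal{E}'$ with a fixed underlying family that nullifies at least the sieves in $J$ one has $\mathcal{E}'\twoheadleftarrow\Omega_{\mathcal{C}} \twoheadrightarrow \mathcal{E}$ with a canonical comparison $\mathcal{E}'\to\mathcal{E}$ (because $\mathcal{E}$ is the \emph{smallest} quotient localization nullifying $J$, hence receives a unique map from any coarser one), giving the terminal property. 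Here I should be slightly careful about variance: terminal in $\mathrm{Frm}$-valued presheaves corresponds to the $J$ being the \emph{largest} topology compatible with a given covering family, matching the parenthetical in the statement, and this is exactly the characterization of the closure operator associated to $J$ as the terminal one among those inverting $J$.

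I expect the main obstacle to be the descent computation in the second step --- specifically, verifying that $\mathrm{Sub}_{\mathrm{Sh}_J(\mathcal{C})}(-)$ applied to the canonical colimit presentation of $aC$ over a covering sieve genuinely produces the \emph{equalizer} shape required by the $\mathrm{Frm}$-valued sheaf condition, rather than merely a limit over the full slice. This amounts to knowing that covering sieves are \emph{sifted enough} (or, better, that it suffices to test descent on covering sieves, which generate the topology), together with the fact that monomorphisms satisfy descent in the topos $\mathrm{Sh}_J(\mathcal{C})$ --- precisely the ``monic descent in the sense of Rezk'' already invoked in Example~\ref{exmplehypdocint}. Once that is in hand, the naturality bookkeeping (compatibility of the $j_C$ with the cartesian $\mathcal{C}$-action, which is part of the definition of a sheaf of ideals) makes the limit cone natural in $C$, and the remaining verifications --- that the comparison maps are frame homomorphisms, that the universal property holds on the nose --- are routine.
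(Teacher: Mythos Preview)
Your approach to the stack claim is correct but takes a different route than the paper. You pass to the sheaf topos, identify $\mathcal{E}(C)\cong\mathrm{Sub}_{\mathrm{Sh}_J(\mathcal{C})}(aC)$, and invoke monic descent \emph{in} $\mathrm{Sh}_J(\mathcal{C})$ together with the fact that sheafification inverts each covering sieve $S\hookrightarrow yC$. The paper instead stays in the presheaf category throughout: using monic descent in $\hat{\mathcal{C}}$ it identifies the equalizer of the parallel pair directly with the frame $\mathcal{E}(S)$ of monomorphisms over the presheaf $S$ whose components are $j$-closed (``relative monic $J$-sheaves over $S$''), and then shows that the inclusion $s\colon S\hookrightarrow yC$ induces an isomorphism $s^{\ast}\colon\mathcal{E}(C)\to\mathcal{E}(S)$ via the adjoint pair $(\Sigma_s,s^{\ast})$ and the factorization system coming from the closure operator. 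Your route is conceptually cleaner (the cover is already an isomorphism in the topos you work in), whereas the paper's argument stays closer to the raw definitions and, more importantly, is the template that is reused verbatim for the $\infty$-categorical analogue in Proposition~\ref{propinftysemsheaf}, where working in $\hat{\mathcal{C}}$ and using descent there is what generalizes smoothly.

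The paper does not spell out the parenthetical terminality claim, so your sketch there is additional content; just be careful with the variance. If $\mathcal{E}'$ is a $J$-stack with associated topology $J'$, then $J\subseteq J'$, and the resulting comparison is a frame map $\mathcal{E}\to\mathcal{E}'$ (a further localization), i.e.\ a locale map $\mathcal{E}'\to\mathcal{E}$. Thus ``terminal'' in the statement is meant in the $\mathrm{Loc}$-valued sense of Definition~\ref{defidealsheaf1istorder} (largest regular subfunctor of $\Omega_{\mathcal{C}}$), which corresponds to being \emph{initial} among the associated frame quotients; your phrase ``receives a unique map from any coarser one'' is correct when read in $\mathrm{Loc}$ but is easy to misread in $\mathrm{Frm}$.
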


\begin{proof}
We will show that for every $J$-cover $S$, the sequence
\begin{align}\label{diagsemsheaf}
\xymatrix{
\mathcal{E}(C)\ar[r] & \prod_{f\in S}\mathcal{E}(\mathrm{dom}f)\ar@<.5ex>[r]\ar@<-.5ex>[r] & \prod_{f\in S, \mathrm{dom}f=\mathrm{cod}g}\mathcal{E}(\mathrm{dom}g)
}
\end{align}
is an equalizer diagram of frames. Indeed, the equalizer of the parallel pair in (\ref{diagsemsheaf}) is equivalent to 
the category of monic natural cartesian transformations over the functor
$(\mathcal{C}\downarrow S)\xrightarrow{F}\mathcal{C}\xrightarrow{y}\hat{\mathcal{C}}$ which are componentwise relative 
$J$-sheaves (i.e.\ the isomorphism classes of their components are contained in $\mathcal{E}(\cdot)$).
The functor $y\circ F$ exhibits $S$ as the canonical colimit of representables over $S$. Hence, via monic descent in the presheaf 
category $\mathrm{Fun}(\mathcal{C}^{op},\mathrm{Set})$ (\cite[Proposition 2.5]{rezkhtytps}), the equalizer of the 
parallel pair in (\ref{diagsemsheaf}) is canonically isomorphic to the frame $\mathcal{E}(S)$ of (isomorphism classes 
of) relative monic $J$-sheaves over the presheaf $S$. The inclusion $s\colon S\hookrightarrow yC$ then induces an 
adjoint isomorphism $s^{\ast}\colon\mathcal{E}(C)\rightarrow \mathcal{E}(S)$ (with left adjoint given by $\Sigma_s$ and 
the $\mathcal{E}(C)$-factorization system on monomorphisms) which is exactly the left map in (\ref{diagsemsheaf}).
\end{proof}

\begin{remark}
The local character of the sheaf semantics in \cite[VI.7]{mlmsheaves} follows from Proposition~\ref{prop1stidealsheaf} when applied to the 
elements $\mathcal{E}(C)$ which are mapped to the top element in $\mathcal{E}(\mathrm{dom}(f))$ for every $f$ in some covering sieve $S$.
\end{remark}

\begin{remark}
For later reference, we note that given a site $(\mathcal{C},J)$ and a $J$-cover $S\hookrightarrow yC$, the equalizer of 
the parallel pair in (\ref{diagsemsheaf}) is the general ``equalizer of products'' description of the limit of the 
precomposition
\[(\mathcal{C}\downarrow S)^{op}\xrightarrow{\pi}\mathcal{C}^{op}\xrightarrow{\mathcal{E}}\mathrm{Frm},\]
where $\pi$ is the canonical projection from the category of elements of $S$ to $\mathcal{C}$.
Since $\mathrm{Frm}$ is a complete category, this limit is a presentation of the weighted limit $\{S,\mathcal{E}\}$ when 
$S\colon\mathcal{C}^{op}\rightarrow\mathrm{Set}$ is considered as a weight on $\mathcal{C}^{op}$. Thus, a contravariant 
diagram $F\colon\mathcal{C}^{op}\rightarrow\mathrm{Frm}$ of frames is a stack if and only if for every $J$-cover
$S\hookrightarrow yC$, the induced map 
\[F(C)\cong\{yC,F\}\rightarrow\{S,F\}\]
of weighted limits is an isomorphism.
Analogously, a presheaf $F\in\hat{\mathcal{C}}$ is a $J$-sheaf if and only if 
$F(C)\rightarrow\{S,F\}$ is an isomorphism of sets for every $J$-cover $S\hookrightarrow yC$.
We will use this description of stacks for the analogue of Proposition~\ref{prop1stidealsheaf} in the case of sheaves of
$\mathcal{O}_{\mathcal{C}}$-ideals in Proposition~\ref{propinftysemsheaf}.
\end{remark}

\begin{example}
Proposition~\ref{prop1stidealsheaf} implies that for every small category $\mathcal{C}$ the logical structure sheaf
$\Omega_{\mathcal{C}}$, being an ideal over itself, is (trivially) a sheaf for the minimal topology on $\mathcal{C}$, and that there is no 
finer topology on $\mathcal{C}$ that $\Omega_{\mathcal{C}}$ is a sheaf for.

In contrast, whenever $\mathcal{C}$ is geometric, in Example~\ref{exmplehypdocint} we considered the hyperdoctrine 
$\Omega_{\mathcal{C}}^{\mathrm{int}}$ of internal predicates on $\mathcal{C}$. In this case we may equip $\mathcal{C}$ with its canonical 
(generally not minimal) Grothendieck topology ``$\mathrm{Geo}$'' of geometric covers. Then the hyperdoctrine
$\Omega_{\mathcal{C}}^{\mathrm{int}}$ is a Geo-stack of frames whenever $\mathcal{C}$ is an infinitary pretopos. 
That means, in this case it is a ``propositional'' derivator on $\mathcal{C}$ if we think of $\mathcal{C}$ as a 1-category of propositions. 
Analogously, in Section~\ref{sectopdef} we will see that whenever $\mathcal{C}$ is an $\infty$-logos (that is, the $\infty$-categorical 
version of an infinitary pretopos), then the $\infty$-categorical analogon $\mathcal{O}_{\mathcal{C}}^{\mathrm{int}}$ of internal predicates 
in $\mathcal{C}$ is a higher geometric sheaf as well. In that sense, it is an $\infty$-categorical derivator on $\mathcal{C}$, too, 
of we think of $\mathcal{C}$ as an $\infty$-category of homotopy types.
\end{example}

\section{Fibered structures over $\infty$-toposes I}\label{secfibglobal}

\subsection{Preliminary notions}\label{subsecprelim}

\begin{notation}
Given a reflective localization $\rho\colon\mathcal{B}\rightarrow\mathcal{E}$ of an $\infty$-category $\mathcal{B}$, we adhere to the usual 
conventions and will say that a map $f\in\mathcal{B}$ is an $\mathcal{E}$-local equivalence if $\rho{f}\in\mathcal{E}$ is an equivalence. We 
say that an object $B\in\mathcal{B}$ is $\mathcal{E}$-local if the associated unit $\eta_B$ is an equivalence.
\end{notation}

Recall that every reflective localization $\rho\colon\mathcal{B}\rightarrow\mathcal{E}$ represents the localization of $\mathcal{B}$ at the 
class of all $\mathcal{E}$-local equivalences. As such it is equivalent to the full subcategory spanned by the $\mathcal{E}$-local objects in 
$\mathcal{B}$, which in turn is not only reflective itself but replete in $\mathcal{B}$ as well. It is closed under all limits that exist in 
$\mathcal{B}$ and is (in particular) closed under retracts in $\mathcal{B}$. In the following, the term ``reflective subcategory'' will 
always refer to a subcategory presenting a reflective localization, and hence in particular imply fullness of the subcategory.
Note that ``localization'' in \cite{luriehtt} is synonymous with 
``reflective localization'' (\cite[Definition 5.2.7.2, Warning 5.2.7.3]{luriehtt}).

\begin{definition}[{\cite[Definition 6.1.0.4]{luriehtt}}]
An $\infty$-category $\mathcal{B}$ is an \emph{$\infty$-topos} if it is equivalent to an accessible left exact
localization of the $\infty$-category $\hat{\mathcal{C}}=\mathrm{Fun(\mathcal{C}^{op},\mathcal{S})}$ of presheaves over 
a small $\infty$-category $\mathcal{C}$.
\end{definition}

Given a small $\infty$-category $\mathcal{C}$, any accessible localization
$L\colon\hat{\mathcal{C}}\rightarrow\mathcal{B}$ is reflective and hence may be presented by the subcategory of
$\mathcal{B}$-local objects in $\hat{\mathcal{C}}$ (\cite[Section 5.5.4]{luriehtt}).

\begin{definition}[{\cite[Definition 4.2.1]{abjfsheavesI}}]
Let $\mathcal{B}$ be an $\infty$-category which admits finite limits and small colimits. A class $\mathcal{L}$ of maps 
in $\mathcal{B}$ is called a \emph{congruence} if the following conditions hold.
\begin{enumerate}
\item $\mathcal{L}$ is closed under equivalences and compositions.
\item The full $\infty$-subcategory $\mathcal{L}\subseteq\mathcal{B}^{\Delta^1}$ is closed under small colimits.
\item The full $\infty$-subcategory $\mathcal{L}\subseteq\mathcal{B}^{\Delta^1}$ is closed under finite limits.
\end{enumerate}
A congruence $\mathcal{L}$ is \emph{of small generation} if there is a set $S\subseteq\mathcal{B}^{\Delta^1}$ such that
$\mathcal{L}$ is the strong saturation of $S$ (\cite[Definition 5.5.4.5, Remark 5.5.4.7]{luriehtt}).
\end{definition}

Thus, the notion of small generation of a congruence $\mathcal{L}$ is just the notion of small generation of
$\mathcal{L}$ as a strongly saturated class. Indeed, every congruence is strongly saturated in the first place
(via \cite[Theorem 2.5]{binimkelly} and its dual).

\begin{definition}[{\cite[Definition 5.2.8.8]{luriehtt}}]
Given an $\infty$-category $\mathcal{B}$, a pair $(\mathcal{L},\mathcal{R})$ of classes of maps in
$\mathcal{B}$ is a \emph{factorisation system} whenever
\begin{enumerate}
\item $\mathcal{L}\perp\mathcal{R}$,
\item every map in $\mathcal{B}$ has an $(\mathcal{L},\mathcal{R})$-factorization,
\item each of the two classes $\mathcal{L}$ and $\mathcal{R}$ is closed under retracts.
\end{enumerate}
A factorization system $(\mathcal{L},\mathcal{R})$ is \emph{of small generation} if there is a set
$S\subseteq\mathcal{B}^{\Delta^1}$ such that $\mathcal{R}$ is the class
$S^{\perp}:=\{f\in\mathcal{B}^{\Delta^1}| S\perp f\}$.
\end{definition}

\begin{definition}[{\cite[Definitions 3.2.1, 4.1.1]{abjfsheavesI}}] 
Let $\mathcal{B}$ be an $\infty$-category with pullbacks. 
\begin{enumerate}
\item A factorization system $(\mathcal{L},\mathcal{R})$ on $\mathcal{B}$ is a \emph{modality} on $\mathcal{B}$ if the left class
$\mathcal{L}$ is pullback-stable.
\item Suppose $\mathcal{B}$ has all finite limits. A factorization system $(\mathcal{L},\mathcal{R})$ on
$\mathcal{B}$ is a \emph{left exact modality} if the full subcategory $\mathcal{L}\subseteq\mathcal{B}^{\Delta^1}$ is closed under 
finite limits.
\end{enumerate}
\end{definition}

Whenever the $\infty$-category $\mathcal{B}$ is presentable, a factorization system $(\mathcal{L},\mathcal{R})$ is of 
small generation if and only if there is a set $S\subset\mathcal{B}^{\Delta^1}$ such that $\mathcal{L}$ is the 
\emph{saturation} of $S$ (\cite[Definition 5.5.5.1]{luriehtt}). This follows from the small object argument given in
\cite[Section 2]{as_soa} in standard fashion.

\begin{proposition}\label{thmprelim}
Let $\mathcal{B}$ be an $\infty$-topos. Then the following structures stand in bijective correspondence to one another.
\begin{enumerate}
\item An equivalence class of accessible left exact localizations of $\mathcal{B}$.
\item A congruence of small generation in $\mathcal{B}$.
\item A left exact modality of small generation on $\mathcal{B}$.
\end{enumerate}
\end{proposition}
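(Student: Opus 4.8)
The plan is to exhibit a cycle of assignments $1 \to 2 \to 3 \to 1$ and check that the composites are identities on the respective classes. All three data are really ways of packaging a single reflective localization, so most of the work lies in naming the correspondences precisely and verifying that the small-generation hypotheses match up. I would first set up the passage $1 \to 2$: given an accessible left exact localization $\rho \colon \mathcal{B} \to \mathcal{E}$, let $\mathcal{L}_\rho$ be the class of $\mathcal{E}$-local equivalences. Accessibility of $\rho$ means $\mathcal{E}$ is an accessible localization, hence (\cite[Section 5.5.4]{luriehtt}) $\mathcal{L}_\rho$ is the strong saturation of a small set, so it is of small generation; it is closed under equivalences and compositions by the usual 2-out-of-3 bookkeeping; closure of $\mathcal{L}_\rho \subseteq \mathcal{B}^{\Delta^1}$ under small colimits is automatic for a strongly saturated class; and closure under finite limits is precisely the translation of left exactness of $\rho$ (a localization is left exact iff its class of local equivalences is closed under finite limits in the arrow category — this is the content I would cite or recall from \cite{abjfsheavesI}, combined with \cite[Theorem 2.5]{binimkelly} as already invoked in the excerpt). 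Thus $\mathcal{L}_\rho$ is a congruence of small generation.

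Next, the passage $2 \to 3$: given a congruence $\mathcal{L}$ of small generation, I would take the factorization system it generates. Since $\mathcal{L}$ is strongly saturated and of small generation, the small object argument of \cite[Section 2]{as_soa} (as noted in the remark just before the Proposition) produces a factorization system $(\mathcal{L}, \mathcal{R})$ with $\mathcal{R} = \mathcal{L}^\perp$; this is of small generation in the factorization-system sense because $\mathcal{L}$ is the saturation of a small set. Pullback-stability of $\mathcal{L}$ follows from its closure under finite limits in $\mathcal{B}^{\Delta^1}$ (a pullback of an arrow is a finite limit in the arrow category), and closure of $\mathcal{L} \subseteq \mathcal{B}^{\Delta^1}$ under finite limits is part of the congruence axioms, so $(\mathcal{L}, \mathcal{R})$ is a left exact modality of small generation. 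Then $3 \to 1$: from a left exact modality $(\mathcal{L}, \mathcal{R})$ of small generation, form the reflective localization onto the $\mathcal{R}$-local objects, i.e.\ the objects $X$ for which $X \to 1$ lies in $\mathcal{R}$; accessibility comes from small generation, and left exactness of the resulting localization is the standard consequence of left exactness of the modality (the fiberwise structure of $\mathcal{L}$ over $\mathcal{B}$ gives a reflective subfibration that is left exact, exactly as in \cite[Section A.2]{rss_hottmod}).

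Finally I would close the loop by checking that each round trip is the identity. Going $1 \to 2 \to 3 \to 1$: the localization recovered from $(\mathcal{L}_\rho, \mathcal{R})$ has as local objects exactly the $\mathcal{E}$-local objects of $\mathcal{B}$, since an object is $\mathcal{R}$-local iff its terminal map is right orthogonal to every $\mathcal{E}$-local equivalence iff it is $\mathcal{E}$-local; so up to the equivalence-class identification this is $\rho$ again. Going $2 \to 3 \to 2$: the left class of the factorization system generated by $\mathcal{L}$ is $\mathcal{L}$ itself, because $\mathcal{L}$ was already strongly saturated, so nothing is added. And $3 \to 1 \to 2 \to 3$ is forced by the previous two together with the fact that a left exact modality of small generation is determined by its left class. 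The ordering in all three cases is inclusion of the associated classes of local equivalences (equivalently, reverse inclusion of local subcategories), and each of the three assignments visibly preserves it, so the correspondences are order-isomorphisms.

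The main obstacle I anticipate is the careful matching of the two distinct meanings of "small generation" — strong saturation of a small set of arrows (for congruences) versus $\mathcal{R} = S^\perp$ for a small set $S$ (for factorization systems) — together with the fact that in a presentable $\infty$-category these coincide with "saturation of a small set" in the sense of \cite[Definition 5.5.5.1]{luriehtt}; this equivalence, flagged in the excerpt as following from \cite{as_soa}, is what makes the small object argument applicable and is the technical heart of $2 \leftrightarrow 3$. The left-exactness bookkeeping ($\mathcal{L}$ closed under finite limits in $\mathcal{B}^{\Delta^1}$ $\Leftrightarrow$ the localization is left exact $\Leftrightarrow$ the modality is left exact) is routine but needs to be stated cleanly in all three guises; I would handle it once and reuse it.
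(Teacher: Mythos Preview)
Your proposal is correct and matches the paper's approach in substance: both reduce everything to the class $\mathcal{L}$ of local equivalences and identify the key technical input as the matching of ``strong saturation of a small set'' (congruence sense) with ``saturation of a small set'' (factorization-system sense) via \cite{as_soa}. The paper organizes the argument as two separate bijections $1\leftrightarrow 2$ and $2\leftrightarrow 3$ (the latter explicitly routed through $1$ in order to invoke \cite[Proposition 3.4.7, Theorem 3.4.2]{as_soa}), whereas you run a single cycle $1\to 2\to 3\to 1$; the content is the same. One small caution: your sentence ``because $\mathcal{L}$ was already strongly saturated, so nothing is added'' is not the right reason---the left class produced by the small object argument from a generating set $S$ is the \emph{saturation} of $S$, which a priori is smaller than the strong saturation $\mathcal{L}$---but you correctly identify this as the main obstacle in your final paragraph and point to \cite{as_soa} for its resolution, exactly as the paper does.
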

\begin{proof}
The proof is a straight-forward combination of results in \cite{abjfsheavesI}, \cite{as_soa} and \cite{luriehtt}. First, 
every congruence $\mathcal{L}$ on $\mathcal{B}$ with generating set $G$ yields an accessible localization
$\mathcal{B}\rightarrow\mathcal{B}[G^{-1}]$ by localizing at the set $G$ (\cite[Lemma 5.5.4.14]{luriehtt}). The class of 
arrows which is inverted by this localization is exactly the strong saturation $\mathcal{L}$ of $G$. The localization is 
left exact, because $\mathcal{L}$ is pullback-stable by assumption (via \cite[Proposition 5.6]{rezkhtytps}).

Vice versa, given an accessible left exact localization $\mathcal{B}\rightarrow\mathcal{E}$, 
the class $\mathcal{L}$ of maps inverted by this localization is strongly saturated and closed under finite limits, 
hence a congruence. It is of small generation by accessibility of the localization (again by
\cite[Lemma 5.5.4.14]{luriehtt}). Since a localization is (up to equivalence) uniquely determined by the class of arrows 
it inverts, these two assignments are inverse to each other.

Furthermore, the left class $\mathcal{L}$ of a left exact modality $(\mathcal{L},\mathcal{R})$ on $\mathcal{B}$ is 
always a congruence (via \cite[Theorem 4.2.3]{abjfsheavesI}). Small generation of the factorization system
$(\mathcal{L},\mathcal{R})$ directly implies small generation of $\mathcal{L}$ as a congruence since $\mathcal{L}$ is 
strongly saturated.
Vice versa, given a congruence $\mathcal{L}$ of small generation (as a strongly saturated class) in $\mathcal{B}$, and 
given the equivalence of $1.$ and $2.$, an application of \cite[Proposition 3.4.7]{as_soa} yields a set $G$ of maps in
$\mathcal{B}$ whose saturation is $\mathcal{L}$ by \cite[Theorem 3.4.2]{as_soa}.
\end{proof}

In the next section we express left exact modalities of small generation in an $\infty$-topos $\mathcal{B}$ as certain fibered 
subtoposes of the target fibration $t\colon\mathcal{B}^{\Delta^1}\twoheadrightarrow\mathcal{B}$. Such in 
turn are characterized by their associated fibered idempotent monads on $\mathcal{B}^{\Delta^1}$. 
These may be thought of as higher modal operators on $\mathcal{B}$ (or higher closure operators for that matter). 
Stratifying these operators along an unbounded sequence of regular cardinals and using descent of $\mathcal{B}$, 
we show that these can be internalized to yield a functorial (or ``polymorphic'') sequence of accordingly higher
Lawvere-Tierney operators on $\mathcal{B}$. Much of this can be done in more generality for quasi-categories $\mathcal{B}$ with 
finite limits and other intermediate structure, and hence will be developed accordingly as some of those aspects will turn out to be very 
useful.

To formalize all the involved structures and further transfer them between the various $\infty$-categorical frameworks on, over and in
an $\infty$-topos $\mathcal{B}$, we will make use of Riehl and Verity's work on $\infty$-cosmoses (\cite{riehlverityelements}).
All $\infty$-cosmoses we will consider arise as the $\infty$-cosmos $\mathbb{M}_f$ of fibrant objects associated to a
quasi-categorically enriched model category $\mathbb{M}$ (see \cite[Proposition E.1.1]{riehlverityelements} in the case in which furthermore 
all fibrant objects are cofibrant).

\begin{example}
Given a quasi-category $\mathcal{B}$, the $\infty$-cosmos $(\mathbf{Cat}_{\infty})_{/\mathcal{B}}$ of isofibrations over 
$\mathcal{B}$ is given by the quasi-categorically enriched category of fibrant objects associated to the slice-category
$\mathbf{S}_{/\mathcal{B}}$ equipped with the Joyal model structure over $\mathcal{B}$; its underlying quasi-category is the
over-category $(\mathrm{Cat}_{\infty})_{/\mathcal{B}}$. 

Recall that a fibered adjunction between isofibrations $p\colon\mathcal{E}\twoheadrightarrow\mathcal{B}$ and
$q\colon\mathcal{F}\twoheadrightarrow\mathcal{B}$ over an $\infty$-category $\mathcal{B}$ is an adjunction in the $\infty$-cosmos 
$(\mathbf{Cat}_{\infty})_{/\mathcal{B}}$ (\cite[Definition 3.6.5]{riehlverityelements}). That is, essentially, a pair of functors
$F\colon \mathcal{E}\rightarrow\mathcal{F}$ and $G\colon\mathcal{F}\rightarrow\mathcal{E}$ over $\mathcal{B}$ together with an 
equivalence
\[\mathcal{E}\downarrow G\simeq F\downarrow \mathcal{F}\]
between comma-objects over $\mathcal{E}\times\mathcal{F}$ (\cite[Proposition 4.1.1]{riehlverityelements}).
Such are -- up to a contractible space of higher data -- the formal \emph{homotopy-coherent adjunctions} between $p$ and $q$ in
$(\mathbf{Cat}_{\infty})_{/\mathcal{B}}$ in the sense of \cite{riehlverityadjmon}.
\end{example}

In line with standard notation, we say that a homotopy-coherent adjunction in an $\infty$-cosmos is a \emph{reflective localization} 
whenever its right adjoint is fully faithful (the latter is defined in \cite[Corollary 3.5.6]{riehlverityelements}). Given an 
adjunction, this is the case if and only if the associated counit is a (pointwise) equivalence
(\cite[Proposition 9.4.6]{riehlverityelements}).

\begin{example}
Whenever the isofibrations $p\colon\mathcal{E}\twoheadrightarrow\mathcal{B}$ and $q\colon\mathcal{F}\twoheadrightarrow\mathcal{B}$ 
over a quasi-category $\mathcal{B}$ are cartesian fibrations, we may as well consider the formal homotopy-coherent adjunctions between 
$p$ and $q$ in the $\infty$-cosmos
$\mathbf{Cart}(\mathcal{B})$ of cartesian fibrations over $\mathcal{B}$. Similarly, this $\infty$-cosmos is given by the simplicially 
enriched category of fibrant objects associated to the category $\mathbf{S}^+_{/\mathcal{B}^{\sharp}}$ of marked simplicial sets over 
$\mathcal{B}$ equipped with the cartesian model structure which we denote by ``Cart'' (\cite[Section 3]{luriehtt}). Here, it is 
worth to point out that we consider the model category $(\mathbf{S}^+_{/\mathcal{B}^{\sharp}},\mathrm{Cart})$ as a
quasi-categorically enriched model category via its flat simplicial enrichment (\cite[Section 3.1.3]{luriehtt}) opposed to its sharp 
simplicial enrichment; the latter of which gives it the structure of a simplicial (i.e.\ a $\infty$-groupoidally enriched) model 
category. We will denote these two instances of the cartesian model structure over $\mathcal{B}$ by
$(\mathbf{S}^+_{/\mathcal{B}^{\sharp}},\mathrm{Cart})^{\flat}$ and
$(\mathbf{S}^+_{/\mathcal{B}^{\sharp}},\mathrm{Cart})^{\sharp}$ respectively, so that
$\mathbf{Cart}(\mathcal{B}):=(\mathbf{S}^+_{/\mathcal{B}^{\sharp}},\mathrm{Cart})^{\flat}_f$.
Its underlying quasi-category -- that is equivalently, the underlying quasi-category of
$(\mathbf{S}^+_{/\mathcal{B}^{\sharp}},\mathrm{Cart})^{\sharp}$ -- will be denoted by $\mathrm{Cart}(\mathcal{B})$.

A given fibered homotopy-coherent adjunction $(F,G,\epsilon)$ (with essentially uniquely determined higher adjunction 
data via \cite[Example 4.2.3]{riehlverityadjmon}) between such cartesian fibrations $p$ and $q$ is an adjunction \emph{in}
$\mathbf{Cart}(\mathcal{B)}$ whenever the functors $F\colon p\rightarrow q$ and $G\colon q\rightarrow p$ are 
cartesian functors as well. That is because the forgetful functor
$U\colon\mathbf{Cart}(\mathcal{B})\hookrightarrow(\mathbf{Cat}_{\infty})_{/\mathcal{B}}$ of $\infty$-cosmoi (which arises 
from a right Quillen functor on associated quasi-categorically enriched model structures, see \cite[Proposition 3.1.5.3]{luriehtt}) is 
fully faithful on all $n$-cells for $n\geq 2$ (and hence in particular ``$n$-smothering'' for $n\geq 2$). In some more detail, the 
forgetful functor $U$ yields the forgetful map
\[\mathrm{Map}^{\flat}_{\mathcal{B}}(\mathcal{E}^{\natural},\mathcal{F}^{\natural})\rightarrow\mathrm{Fun}_{\mathcal{B}}(\mathcal{E},\mathcal{F})\]
on associated mapping quasi-categories. It is a bijection for $n\geq 2$ by definition, and a bijection for $n=1$ because the edges 
marked in the product $\mathcal{E}^{\natural}\times(\Delta^{1})^{\flat}$ are tuples of edges which \emph{both} are marked. Thus, a 
functor $\mathcal{E}^{\natural}\times(\Delta^{1})^{\flat}\rightarrow\mathcal{F}^{\natural}$ of marked simplicial sets over
$\mathcal{B}$ is just a functor $\mathcal{E}\times\Delta^{1}\rightarrow\mathcal{F}$ of simplicial sets over $\mathcal{B}$ such that 
the restrictions to $\mathcal{E}\times\{i\}$ for $i=0,1$ are both cartesian functors. That means in particular that squares of the form
\[\xymatrix{
\overline{\{f,g\}}\ar@{^(->}[d]\ar[r]^(.4){(f,g)} & \mathbf{Cart}(\mathcal{B})\ar[d]^U \\
\overline{\{\epsilon\}}\ar[r]^(.4){(f,g,\epsilon)} & (\mathbf{Cat}_{\infty})_{/\mathcal{B}}
}\]
have (unique) fillers. Here, $\overline{\{f,g\}}$ and $\overline{\{\epsilon\}}$ are the simplicial subcomputads of the 
free adjunction $\mathrm{Adj}$ generated by the pair $f,g$ of 1-cells and the 2-cell $\epsilon$, respectively 
(\cite[Sections 3 and 4]{riehlverityadjmon}). A triple $(f,g,\epsilon)$ defined on $\overline{\{\epsilon\}}$ has a homotopically 
unique extension to a fully fledged adjunction defined on $\mathrm{Adj}$ via \cite[Example 4.2.3]{riehlverityadjmon}. 
\end{example}

In this context, let us state a simple closure property of cartesian fibrations for future reference.

\begin{lemma}\label{subfibcart}
Let $\mathcal{B}$ be a quasi-category, $p\colon\mathcal{E}\twoheadrightarrow\mathcal{B}$ be a cartesian fibration and
$\mathcal{F}\subseteq\mathcal{E}$ be the inclusion of a full subcategory such that for all $b\in\mathcal{F}$ and all cartesian 
arrows $f\colon a\rightarrow b$ in $\mathcal{F}$, the domain $a$ is again contained in $\mathcal{F}$. Then the restriction
$q\colon\mathcal{F}\twoheadrightarrow\mathcal{B}$ is again a cartesian fibration, and the inclusion $\iota\colon q\rightarrow p$ is a 
fibration in $\mathbf{Cart}(\mathcal{B})$.
\end{lemma}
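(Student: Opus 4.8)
The plan is to verify the two assertions in turn, starting with the claim that $q\colon\mathcal{F}\twoheadrightarrow\mathcal{B}$ is a cartesian fibration. First I would recall that since $\mathcal{F}\subseteq\mathcal{E}$ is a full subcategory, an arrow of $\mathcal{F}$ is cartesian over $\mathcal{B}$ with respect to $q$ if and only if it is cartesian over $\mathcal{B}$ with respect to $p$; this is because cartesianness is detected by a lifting/mapping-space condition that only involves objects of $\mathcal{F}$ lying over the relevant base objects, and fullness means no such mapping spaces change upon passing to the subcategory. Granting this, given $b\in\mathcal{F}$ and an edge $\bar{f}\colon \bar{a}\rightarrow p(b)$ in $\mathcal{B}$, the cartesian fibration $p$ supplies a $p$-cartesian lift $f\colon a\rightarrow b$ in $\mathcal{E}$; by hypothesis $a\in\mathcal{F}$, and $f$ is then a $q$-cartesian lift of $\bar{f}$ in $\mathcal{F}$. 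Hence $q$ is a cartesian fibration. (One should also note, as part of the same bookkeeping, that $q$ is an inner fibration — indeed an isofibration — since $\mathcal{F}$ being full means every equivalence in $\mathcal{E}$ between objects of $\mathcal{F}$ lies in $\mathcal{F}$, and $p$-cartesian lifts of equivalences are equivalences.)

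For the second assertion, I would unwind what it means for $\iota\colon q\rightarrow p$ to be a fibration in the $\infty$-cosmos $\mathbf{Cart}(\mathcal{B})=(\mathbf{S}^+_{/\mathcal{B}^{\sharp}},\mathrm{Cart})^{\flat}_f$: since the underlying model category is $\mathbf{S}^+_{/\mathcal{B}^{\sharp}}$ with the cartesian model structure, and the $\infty$-cosmos fibrations between fibrant objects are precisely the fibrations of this model structure, it suffices to exhibit $\mathcal{F}^{\natural}\hookrightarrow\mathcal{E}^{\natural}$ as a fibration of marked simplicial sets over $\mathcal{B}^{\sharp}$ in the cartesian model structure. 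By the characterization of cartesian fibrations in \cite[Proposition 3.1.1.6, Section 3.1.4]{luriehtt}, a map $X\rightarrow Y$ of $\mathcal{P}$-fibrant marked simplicial sets over $\mathcal{B}$ is a cartesian fibration in that model structure iff the underlying map of simplicial sets is an inner fibration, the marked edges are exactly the cartesian ones, and $X\rightarrow Y\times_{\mathcal{B}}(\text{target})$ satisfies the relevant right-lifting property against the markings; concretely it comes down to checking right lifting against the pushout-product of $(\Lambda^n_i\hookrightarrow\Delta^n)$ with the boundary inclusions in the marked sense, plus the special map witnessing that marked edges are cartesian. All of these lifts can be solved in $\mathcal{E}^{\natural}$ and then observed to land in $\mathcal{F}^{\natural}$ precisely because of the closure hypothesis: the domain of a cartesian lift of an edge with codomain in $\mathcal{F}$ is again in $\mathcal{F}$, and fullness handles the higher-dimensional cells.

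The cleanest route, and the one I would actually write up, is to avoid re-deriving the fibrations of the cartesian model structure from scratch and instead invoke a recognition principle: the inclusion $\iota$ is a fibration in $\mathbf{Cart}(\mathcal{B})$ as soon as it is an isofibration in $(\mathbf{Cat}_\infty)_{/\mathcal{B}}$ (equivalently, $\mathcal{F}\hookrightarrow\mathcal{E}$ is an isofibration and a map over $\mathcal{B}$) \emph{and} both $q$ and $p$ are cartesian fibrations and $\iota$ is a cartesian functor, because the forgetful $\infty$-cosmos functor $U\colon\mathbf{Cart}(\mathcal{B})\hookrightarrow(\mathbf{Cat}_\infty)_{/\mathcal{B}}$ creates such fibrations — a point already used implicitly in the preceding examples via \cite[Proposition 3.1.5.3]{luriehtt}. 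So the remaining obligations are: $\mathcal{F}\hookrightarrow\mathcal{E}$ is an isofibration (immediate from fullness plus the fact that a full replete-on-equivalences subcategory inclusion into an $\infty$-category is an isofibration — and fullness gives repleteness-on-equivalences here because an equivalence with codomain in $\mathcal{F}$ is in particular a cartesian arrow with codomain in $\mathcal{F}$, hence its domain is in $\mathcal{F}$), and $\iota$ is a cartesian functor (it preserves cartesian arrows, since by the first paragraph cartesian arrows of $\mathcal{F}$ are exactly the cartesian arrows of $\mathcal{E}$ between objects of $\mathcal{F}$).

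I expect the main obstacle to be purely expository rather than mathematical: pinning down precisely which recognition statement for fibrations in $\mathbf{Cart}(\mathcal{B})$ one is entitled to cite, and making sure the hypothesis "$a\in\mathcal{F}$ for every $\mathcal{F}$-cartesian (equivalently $\mathcal{E}$-cartesian) arrow $f\colon a\rightarrow b$ with $b\in\mathcal{F}$" is deployed exactly where needed — namely to see that cartesian lifts computed in $\mathcal{E}$ never leave $\mathcal{F}$. Everything else (inner-fibration checks, fullness giving repleteness on equivalences, the match between cartesian arrows of $\mathcal{F}$ and of $\mathcal{E}$) is routine diagram-chasing of the sort the paper elsewhere treats as standard, so I would state those steps and move on.
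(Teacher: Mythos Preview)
Your argument for the first assertion is correct and in fact more explicit than the paper's, which simply cites \cite[Lemma 4.5]{barwicketalparahct}. Your observation that $q$-cartesian and $p$-cartesian arrows coincide on $\mathcal{F}$ by fullness, together with the closure hypothesis supplying lifts, is exactly the content of that lemma.

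For the second assertion, your first sketch --- checking the right lifting property against marked anodyne maps directly --- is precisely what the paper does: it cites \cite[Theorem 2.17, Definition 4.32]{nguyencontracov} for the fact that fibrations between fibrant objects in $(\mathbf{S}^+_{/\mathcal{B}^{\sharp}},\mathrm{Cart})$ are characterised by the RLP against right marked anodyne morphisms, and then observes (implicitly) that a full subcategory inclusion closed under domains of cartesian edges has this RLP. Your outline of how that check goes (inner horns handled by fullness since all vertices are present; the marked right horn $\Lambda^1_1\to\Delta^1$ handled by the closure hypothesis; higher cells by fullness) is the right shape.

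The one point to flag is your ``cleanest route''. You claim that the forgetful functor $U\colon\mathbf{Cart}(\mathcal{B})\to(\mathbf{Cat}_\infty)_{/\mathcal{B}}$ \emph{creates} fibrations, so that a cartesian functor which is an isofibration over $\mathcal{B}$ is automatically a fibration in $\mathbf{Cart}(\mathcal{B})$, and you cite \cite[Proposition 3.1.5.3]{luriehtt}. That proposition only shows $U$ arises from a right Quillen functor and hence \emph{preserves} fibrations; it does not establish reflection or creation. Whether every cartesian isofibration between cartesian fibrations is a fibration in the cartesian model structure is a separate (and less obvious) claim --- one would still have to verify the RLP against the marked right horns, which is not subsumed by the ordinary isofibration condition. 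So this shortcut is not justified by the reference given, and you should stick with the direct RLP argument, which is both correct and what the paper does.
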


\begin{proof}
The first part is one half of \cite[Lemma 4.5]{barwicketalparahct}. The fact that $\iota \colon p\rightarrow q$ preserves cartesian 
arrows readily follows. It is a fibration in $\mathbf{Cart}(\mathcal{B})$ itself, because it has the right lifting property with respect 
to the right marked anodyne morphisms over $\mathcal{B}^{\sharp}$. The latter characterize the fibrations between fibrant objects in
$(\mathbf{S}^+/\mathcal{B}^{\sharp},\mathrm{Cart})$ by \cite[Theorem 2.17, Definition 4.32]{nguyencontracov}). 
\end{proof}

\begin{notation}
Given a class $S$ of maps in an $\infty$-category $\mathcal{B}$ and an object $B\in\mathcal{B}$, we adopt the notation from 
\cite{abjfsheavesI} and denote the full subcategory of the slice $\mathcal{B}_{/B}$ generated by the maps with codomain $B$ which are 
contained in $S$ by $S(B)$. The class of edges $f\in\mathrm{Fun}(\Delta^1,\mathcal{C}_{/B})$ such that the map of domains $\Sigma_B(f)$ is contained in $S$ will be denoted by $S_{/B}.$

Whenever $\mathcal{B}$ has pullbacks and $S$ is pullback-stable, the full cartesian subfibration of the target fibration
$t\colon\mathcal{B}^{\Delta^1}\twoheadrightarrow\mathcal{B}$ generated by the objects in $S$ will be denoted by
\[\sum_{B\in\mathcal{B}}S(B)\twoheadrightarrow\mathcal{B}.\]
A reflective subcategory $\mathcal{E}$ of $\mathcal{B}$ is a reflective localization $\mathcal{B}\rightarrow\mathcal{E}$ whose right adjoint
is an inclusion of quasi-categories. A fibered (cartesian) reflective subcategory $p\colon\mathcal{E}\twoheadrightarrow\mathcal{B}$ of a 
(cartesian) fibration $q\colon\mathcal{F}\twoheadrightarrow \mathcal{B}$ is a fibered (cartesian) reflective localization such that the 
underlying functor $\mathcal{E}\hookrightarrow\mathcal{F}$ of quasi-categories is an inclusion of quasi-categories.
\end{notation}

\subsection{Factorization systems and fibered reflective localizations}\label{secsubfsrl}

The following characterization of factorization systems in an $\infty$-category given in Proposition~\ref{propfibmods} is an
$\infty$-categorical generalization of \cite[Theorem 5.10]{binimkelly} (which in the given reference is stated without the fibrational 
aspects and with an additional assumption on identities which we show to be redundant in the next lemma). Versions of the statement appear 
to be fairly folklore in various categorical contexts, but since a reference for $\infty$-categories also appears to missing to date in this 
generality, the proof will be given in full.

\begin{lemma}\label{lemmaidreduct}
Let $\mathcal{B}$ be an $\infty$-category and $\mathcal{E}\hookrightarrow\mathcal{B}^{\Delta^1}$ be a fibered reflective and replete 
subcategory of the target fibration over $\mathcal{B}$. Then $\mathcal{E}\subseteq\mathcal{B}^{\Delta^1}$ contains all 
identities in $\mathcal{B}$.
\end{lemma}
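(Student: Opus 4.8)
The plan is to exploit that $\mathcal{E}$ is reflective in the target fibration, and that reflective subcategories are closed under retracts. First I would recall the setup: $\mathcal{E} \hookrightarrow \mathcal{B}^{\Delta^1}$ is full, replete, and reflective over $\mathcal{B}$, so in particular $\mathcal{E}$ (as a full subcategory of $\mathcal{B}^{\Delta^1}$) is closed under all limits that exist in $\mathcal{B}^{\Delta^1}$ and under retracts. Given an object $B \in \mathcal{B}$, I want to show $\mathrm{id}_B$, viewed as an object of $\mathcal{B}^{\Delta^1}$ lying over $B$, is in $\mathcal{E}$.

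The key observation is that for \emph{any} object $f \colon X \to B$ of $\mathcal{B}^{\Delta^1}$ lying over $B$, the identity $\mathrm{id}_B$ is a retract of $f$ in $\mathcal{B}^{\Delta^1}$: the retraction data is given by the commuting squares formed by $(\mathrm{id}_B, f)$ from $\mathrm{id}_B$ to $f$ (the square with top edge $\mathrm{id}_B \colon B \to X$... no—rather, take the section/retraction pair) — concretely, $\mathrm{id}_B \to f \to \mathrm{id}_B$ where the first map is the square with vertical sides $\mathrm{id}_B$ and $f$ and horizontal edges some section, and the second uses $f$ itself as the top horizontal edge and $\mathrm{id}_B$ as the bottom; composing gives $\mathrm{id}_B$. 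The cleanest formulation: in $\mathcal{B}^{\Delta^1}$ the object $\mathrm{id}_B$ is a retract of $f$ precisely because in the arrow category, for the morphism $(s, \mathrm{id}_B)\colon \mathrm{id}_B \Rightarrow f$ (where $s$ is any map with $fs = \mathrm{id}$... which need not exist) — so instead I would go the other direction and use that $\mathrm{id}_B$ is a retract of $f$ via the pair of squares $\mathrm{id}_B \xrightarrow{(\mathrm{id}_B,\ \mathrm{id}_B)} f$? That does not typecheck either.

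The correct and robust route: since $\mathcal{E}$ is nonempty over $B$ (reflectivity gives, for instance, the reflection of $\mathrm{id}_B$, which lies over $B$ because the reflector is fibered), pick \emph{any} $e \in \mathcal{E}$ over $B$, say $e \colon E \to B$. Now $\mathrm{id}_B$ is a retract of $e$ in $\mathcal{B}^{\Delta^1}$: the inclusion $\mathrm{id}_B \Rightarrow e$ is the commutative square with top edge $e \colon E \to B$ and vertical legs $e$ and $\mathrm{id}_B$ — wait, that has domain $e$, not $\mathrm{id}_B$. I think the honest statement is the reverse: \emph{$e$ is a retract of nothing canonical}, but $\mathrm{id}_B$ \emph{does} sit as a retract of $e$ via: map $\mathrm{id}_B \Rightarrow e$ by the square $\bigl(\text{some } s\colon B\to E,\ \mathrm{id}_B\bigr)$ and $e \Rightarrow \mathrm{id}_B$ by $(e, \mathrm{id}_B)$; the composite is $(es, \mathrm{id}_B)$, which equals $\mathrm{id}_{\mathrm{id}_B}$ iff $es \simeq \mathrm{id}_B$, i.e.\ iff $e$ has a section. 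So this works only when the chosen $\mathcal{E}$-object over $B$ has a section. Hence the real plan: take $e$ to be the reflection $L(\mathrm{id}_B)$ of $\mathrm{id}_B$; the unit $\eta \colon \mathrm{id}_B \to L(\mathrm{id}_B)$ is a morphism in $\mathcal{B}^{\Delta^1}$ over $B$, so its component on codomains is $\mathrm{id}_B$, and on domains it is some $u \colon B \to E$ with $e u \simeq \mathrm{id}_B$ — that is exactly a section of $e$! So $\mathrm{id}_B$ is a retract of $L(\mathrm{id}_B) \in \mathcal{E}$, and since $\mathcal{E}$ is closed under retracts in $\mathcal{B}^{\Delta^1}$ and replete, $\mathrm{id}_B \in \mathcal{E}$.

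I expect the main obstacle to be making the retract diagram precise at the $\infty$-categorical level: one must produce an idempotent (or at least a retract diagram $\mathrm{id}_B \to L(\mathrm{id}_B) \to \mathrm{id}_B$ with composite the identity) coherently in $\mathcal{B}^{\Delta^1}$, rather than just up to homotopy on underlying objects. The clean way is to observe that the fibered reflector lands over $\mathcal{B}$, so the unit $\eta_{\mathrm{id}_B}$ lives in the fiber $(\mathcal{B}^{\Delta^1})_B = \mathcal{B}_{/B}$; in this slice, $\eta_{\mathrm{id}_B} \colon \mathrm{id}_B \to e$ is a map out of the terminal object $\mathrm{id}_B$ of $\mathcal{B}_{/B}$, hence $\mathrm{id}_B$ is automatically a retract of $e$ in $\mathcal{B}_{/B}$ (the retraction being the unique map $e \to \mathrm{id}_B$), and a retract in the fiber is a retract in $\mathcal{B}^{\Delta^1}$. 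Then closure under retracts and repleteness of the reflective subcategory $\mathcal{E}$ finishes the argument. I would phrase the write-up around this "$\mathrm{id}_B$ is terminal in the fiber $\mathcal{B}_{/B}$" observation, which sidesteps any delicate coherence bookkeeping.
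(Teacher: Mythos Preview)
Your argument is correct and matches the paper's own proof: the paper likewise exhibits $1_B$ as a retract of its reflection $\iota\rho(1_B)$ via the unit square (the retraction being the map $\iota\rho(1_B)\colon\bar{B}\to B$ itself, i.e.\ the unique map to the terminal object of the fiber $\mathcal{B}_{/B}$) and then invokes closure of the replete reflective subcategory $\mathcal{E}$ under retracts. The paper also records, as a first and more abstract variant, exactly the observation you land on at the end---pulling back along $\{B\}\colon\Delta^0\to\mathcal{B}$ yields a reflective subcategory $\mathcal{E}(B)\subseteq\mathcal{B}_{/B}$, and reflective subcategories contain the terminal object.
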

\begin{proof}
One way to see this is that for every $B\in\mathcal{B}$ the inclusion $\{B\}\colon\Delta^0\rightarrow\mathcal{B}$ induces a cosmological 
functor $\{B\}^{\ast}\colon(\mathbf{Cat}_{\infty})_{/\mathcal{B}}\rightarrow\mathbf{Cat}_{\infty}$ via
\cite[Proposition 1.3.4.(v)]{riehlverityelements}. As cosmological 
functors preserve reflective localizations by \cite[Proposition 10.1.4]{riehlverityelements}, we obtain a reflection of the subcategory
$\mathcal{E}(B)\subseteq\mathcal{B}_{/B}$. But reflective subcategories are closed under limits, and the identity $1_B\in \mathcal{B}_{/B}$ 
is terminal. Thus, $1_B\in\mathcal{E}(B)$ for all objects $B\in\mathcal{B}$.

In fact, more directly, the fibered reflection $\rho\colon\mathcal{B}^{\Delta^1}\rightarrow\mathcal{E}$ applied to an identity $1_B$ exhibits 
the object $1_B$ as a retract of $\iota\rho(1_B)$ via the square
\[\xymatrix{
B\ar@{=}[d]_{1_B}\ar[r]_{\eta_{1_B}} & \bar{B}\ar[d]^{\iota\rho(1_B)}\ar@{-->}@/_1pc/[l]_{\iota\rho(1_B)} \\
B\ar@{=}[r] & B.
}\]
As $\mathcal{E}$ is closed under retracts, it contains all identities.
\end{proof}

\begin{proposition}\label{propfibmods}
For any $\infty$-category $\mathcal{B}$, there is a bijection between factorisation systems on $\mathcal{B}$ and fibered reflective 
and replete subcategories
\begin{align}\label{equpropfibmods}
\begin{gathered}
\xymatrix{
\mathcal{E}\ar@{->>}@/_/[dr]_p\ar@{^(->}[r]_{\iota} & \mathcal{B}^{\Delta^1}\ar@{->>}[d]^{t_{\mathcal{B}}}\ar@/_1pc/[l]_{\rho}^{\rotatebox[origin=c]{90}{$\vdash$}} \\
 & \mathcal{B}
}
\end{gathered}
\end{align}
of $t_{\mathcal{B}}$ such that $\mathcal{E}$
is (objectwise) closed under composition.
This bijection yields an isomorphism of partial orders when considering the class of factorisation systems on $\mathcal{B}$ 
with inclusions of their right classes, and fibered reflective subcategories with functors over $t_{\mathcal{B}}$.	
\end{proposition}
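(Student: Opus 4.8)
The plan is to construct the bijection in both directions explicitly and then check naturality/functoriality. In one direction: given a factorization system $(\mathcal{L},\mathcal{R})$ on $\mathcal{B}$, let $\mathcal{E}\subseteq\mathcal{B}^{\Delta^1}$ be the full subcategory spanned by the maps in $\mathcal{R}$. Since $(\mathcal{L},\mathcal{R})$-factorizations exist and are essentially unique (by orthogonality $\mathcal{L}\perp\mathcal{R}$), the assignment sending $f\colon A\to B$ to the $\mathcal{R}$-part $\bar f\colon \bar A\to B$ of its factorization defines a functor $\rho\colon\mathcal{B}^{\Delta^1}\to\mathcal{E}$ over $t_{\mathcal{B}}$ (one has to produce this functor coherently; the cleanest route is to exhibit $\mathcal{E}\hookrightarrow\mathcal{B}^{\Delta^1}$ as a reflective localization by verifying the comma-object equivalence $\mathcal{E}\downarrow\mathcal{B}^{\Delta^1}\simeq \mathcal{B}^{\Delta^1}\downarrow\iota$ of \cite[Prop.~4.1.1]{riehlverityelements}, i.e.\ that for $f\in\mathcal{B}^{\Delta^1}$ and $r\in\mathcal{R}$ the space of maps $f\to r$ in $\mathcal{B}^{\Delta^1}$ is equivalent to the space of maps $\bar f\to r$, which is exactly the orthogonality/lifting data repackaged). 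Repleteness of $\mathcal{E}$ follows since $\mathcal{R}$ is closed under retracts and equivalences, and closure under composition is immediate from the two-out-of-three-style closure of right classes under composition.

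**Conversely,** given a fibered reflective replete subcategory $(p,\iota,\rho)$ of $t_{\mathcal{B}}$ with $\mathcal{E}$ closed under composition, define $\mathcal{R}$ to be the class of maps in $\mathcal{B}$ equivalent to an object of $\mathcal{E}$, and $\mathcal{L}$ to be the class of maps $f$ such that the unit $\eta_f\colon f\to\iota\rho f$ in $\mathcal{B}^{\Delta^1}$ has an invertible domain component (equivalently, $f$ is an $\mathcal{E}$-local equivalence in $t_{\mathcal{B}}$-fibered sense whose codomain component is already an identity — more precisely: the unit $\eta_f$ lives over $\mathrm{id}$ on codomains since $\rho$ is over $\mathcal{B}$, so $\eta_f$ is the map $A\to\bar A$ appearing as the left factor). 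The $(\mathcal{L},\mathcal{R})$-factorization of $f$ is then $A\xrightarrow{\eta_f}\bar A\xrightarrow{\bar f}B$. I would verify the three axioms of a factorization system: factorizations exist (just given); orthogonality $\mathcal{L}\perp\mathcal{R}$ follows from the universal property of the reflection $\rho$ (maps out of $f$ into $r\in\mathcal{R}$ factor uniquely through $\eta_f$); and closure under retracts of $\mathcal{R}$ is repleteness-plus-retract-closure of $\mathcal{E}$, while closure of $\mathcal{L}$ under retracts follows formally once orthogonality is established, via the standard argument that $\mathcal{L}={}^{\perp}\mathcal{R}$.

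**The two constructions are mutually inverse:** starting from $(\mathcal{L},\mathcal{R})$, passing to $\mathcal{E}=\mathcal{R}\subseteq\mathcal{B}^{\Delta^1}$ and back recovers $\mathcal{R}$ on the nose and $\mathcal{L}$ because in a factorization system $\mathcal{L}={}^{\perp}\mathcal{R}$; starting from $(p,\iota,\rho)$, the associated factorization system has right class $\mathcal{R}=\mathcal{E}$ and its reflection reconstructs $\rho$ up to the essentially-unique factorization, using that a reflective localization is determined by its local objects. One subtle point deserving care is that I must use Lemma~\ref{lemmaidreduct} to know $\mathcal{E}$ contains all identities, so that the class $\mathcal{L}$ defined from $\rho$ contains all equivalences and the pair $(\mathcal{L},\mathcal{R})$ genuinely has $\mathcal{L}\cap\mathcal{R}=$ equivalences; this is exactly the "redundant identity assumption" alluded to before the lemma.

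**For the order-isomorphism claim,** observe that under the correspondence, an inclusion $\mathcal{R}_1\subseteq\mathcal{R}_2$ of right classes corresponds precisely to a full inclusion $\mathcal{E}_1\hookrightarrow\mathcal{E}_2$ over $t_{\mathcal{B}}$ (a functor in $(\mathbf{Cat}_\infty)_{/\mathcal{B}}$ commuting with the reflectors), since both encode the same containment of subcategories of $\mathcal{B}^{\Delta^1}$; conversely any functor over $t_{\mathcal{B}}$ between the reflective subcategories restricts to an inclusion on underlying objects because both are replete full subcategories of $\mathcal{B}^{\Delta^1}$. **The main obstacle** I anticipate is the coherence issue in the forward direction: upgrading "each map has an essentially unique $(\mathcal{L},\mathcal{R})$-factorization" to an honest reflective localization $\rho\colon\mathcal{B}^{\Delta^1}\to\mathcal{E}$ in the $\infty$-cosmos $(\mathbf{Cat}_\infty)_{/\mathcal{B}}$, i.e.\ producing the comma-equivalence rather than just a pointwise statement. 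The right tool is Riehl–Verity's characterization of reflective localizations via fully faithful right adjoints \cite[Cor.~3.5.6, Prop.~9.4.6]{riehlverityelements}: it suffices to produce a fibered adjunction $\rho\dashv\iota$ over $\mathcal{B}$ with invertible counit, and the adjunction itself can be extracted from the orthogonality data by the usual argument that $\mathcal{L}\perp\mathcal{R}$ makes $\iota$ admit a left adjoint computed by factorization — this is where the $\infty$-categorical generalization of \cite[Theorem 5.10]{binimkelly} does its real work.
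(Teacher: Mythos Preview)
Your overall architecture matches the paper's: from $(\mathcal{L},\mathcal{R})$ form the full subfibration on $\mathcal{R}$ and exhibit the factorization as a fibered reflection; conversely, from $(\mathcal{E},\iota,\rho)$ take $\mathcal{R}$ to be the objects of $\mathcal{E}$ and $\mathcal{L}$ the maps with $\iota\rho(f)$ an equivalence. The forward direction as you sketch it is essentially the paper's argument (the paper makes the coherence step you flag as the ``main obstacle'' precise via \cite[Proposition 5.2.8.17]{luriehtt} and \cite[Proposition 4.1.6]{riehlverityelements}, constructing the unit as an initial object of $\mathcal{B}^{\Delta^1}\downarrow\iota$).

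There is, however, a genuine gap in your reverse direction. First, your phrasing ``the unit $\eta_f$ has an invertible domain component'' is not what you want: the domain component of $\eta_f$ is the map $A\to\bar A$, and its invertibility characterizes $f\in\mathcal{R}$, not $f\in\mathcal{L}$. Your parenthetical is closer to correct---the right definition (and the paper's) is that $f\in\mathcal{L}$ iff $\iota\rho(f)\colon\bar A\to B$ is an equivalence in $\mathcal{B}$. More seriously, you assert ``factorizations exist (just given)'' without checking that the left factor $l(f)\colon A\to\bar A$ actually lies in $\mathcal{L}$, i.e.\ that $\iota\rho(l(f))$ is an equivalence. This is not automatic from the reflection alone, and it is precisely the place where the hypothesis that $\mathcal{E}$ is closed under composition enters. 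The paper's argument builds a $3$-simplex with front face $\eta(f)$ and top face $\eta(l(f))$; the vertical edge is the composite $\iota\rho(f)\circ\iota\rho(l(f))$, which lies in $\mathcal{R}$ \emph{only} by the composition hypothesis. One then compares the two resulting $\mathcal{E}$-local equivalences out of $f$ to conclude that $\iota\rho(l(f))$ is an equivalence. Without this step you have neither used the composition hypothesis nor verified the factorization axiom, and your subsequent claim that $\mathcal{L}={}^{\perp}\mathcal{R}$ (used for retract closure of $\mathcal{L}$) becomes circular, since that equality requires the factorization property. The paper instead proves retract closure of $\mathcal{L}$ directly from the fact that $\iota$, $\rho$, and equivalences are all retract-stable.
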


\begin{proof}
Given a factorization system $(\mathcal{L},\mathcal{R})$ on $\mathcal{B}$, we may consider the full subfibration
\[\xymatrix{
\sum\limits_{B\in\mathcal{B}}\mathcal{R}(B)\ar@{^(->}[rr]^{\iota}\ar@{->>}@/_/[dr] & & \mathcal{B}^{\Delta^1}\ar@{->>}@/^/[dl]^t \\
 & \mathcal{B} & 
}\]	
generated by the right class $\mathcal{R}$. This subfibration is clearly replete and
is (objectwise) closed under composition. In the following we show that the inclusion $\iota$ has a fibered left adjoint
\[\rho\colon\mathcal{B}^{\Delta^1}\rightarrow\sum\limits_{B\in\mathcal{B}}\mathcal{R}(B)\]
which takes a map $f$ to the right map $\rho(f)$ according to its (essentially unique) associated
$(\mathcal{L},\mathcal{R})$-factorization. Therefore, let
\[\mathrm{Fun}^{LR}(\Delta^2,\mathcal{B})\subseteq\mathrm{Fun}(\Delta^2,\mathcal{B})\]
be the full $\infty$-subcategory spanned by those triangles $\tau\colon\Delta^2\rightarrow\mathcal{B}$ such that
$d_2(\tau)\in\mathcal{L}$ and $d_0(\tau)\in\mathcal{R}$. Since $\iota$ itself is a fully faithful embedding, we obtain a fully 
faithful embedding of the form
\[\xymatrix{
\mathrm{Fun}^{LR}(\Delta^2,\mathcal{B})\ar@/^1pc/@{^(->}[drr]\ar@/_1pc/@{->>}[ddr]_{(d_1,d_0)}\ar@{^(-->}[dr] &  & \\
 & \mathcal{B}^{\Delta^1}\downarrow\iota\ar@{->>}[d]\ar@{^(->}[r]\ar@{}[dr]|(.3){\pbs}& \mathcal{B}^{\Delta^2}\ar@{->>}[d]^{(d_1,d_0)} \\
 & \mathcal{B}^{\Delta^1}\times_{\mathcal{B}}\sum\limits_{B\in\mathcal{B}}\mathcal{R}(B)\ar@{^(->}[r]_{(1,\iota)} &
 \mathcal{B}^{\Delta^1}\times_{\mathcal{B}}\mathcal{B}^{\Delta^1}.
}\]
depicted by the dotted arrow. Here note that the fibration
$(d_1,d_0)\colon\mathcal{B}^{\Delta^2}\twoheadrightarrow\mathcal{B}^{\Delta^1}\times_{\mathcal{B}}\mathcal{B}^{\Delta^1}$ is the two-sided hom-fibration associated to the 
cotensor $(t\colon\mathcal{B}^{\Delta^1}\twoheadrightarrow\mathcal{B})^{\Delta^1}$ in the $\infty$-cosmos
$(\mathbf{Cat}_{\infty})_{/\mathcal{B}}$. The isofibration
\[d_1\colon\mathrm{Fun}^{LR}(\Delta^2,\mathcal{B})\twoheadrightarrow\mathcal{B}^{\Delta^1}\]
is a trivial fibration (over $\mathcal{B}$) by \cite[Proposition 5.2.8.17]{luriehtt}. Let
$\sigma\colon\mathcal{B}^{\Delta^1}\rightarrow\mathrm{Fun}^{LR}(\Delta^2,\mathcal{B})$ be a section in
$(\mathbf{Cat}_{\infty})_{/\mathcal{B}}$ to this fibration; we claim that the composition
\[\eta\colon\mathcal{B}^{\Delta^1}\rightarrow\mathrm{Fun}^{LR}(\Delta^2,\mathcal{B})\hookrightarrow\mathcal{B}^{\Delta^1}\downarrow\iota\]
is an initial element of $\mathcal{B}^{\Delta^1}\downarrow\iota$ over $\mathcal{B}^{\Delta^1}$ in
$(\mathbf{Cat}_{\infty})_{/\mathcal{B}}$. It then follows that the inclusion
$\iota\colon\sum_{B\in\mathcal{B}}\mathcal{R}(B)\rightarrow\mathcal{B}^{\Delta^1}$ has a left adjoint in
$(\mathbf{Cat}_{\infty})_{/\mathcal{B}}$ by \cite[Proposition 4.1.6]{riehlverityelements}.

Therefore, we consider the following two maps. First, consider the post-composition
\begin{align*}
(\mathcal{B}^{\Delta^1}\downarrow\iota) & \xrightarrow{(\sigma d_1,1)}\mathrm{Fun}^{LR}(\Delta^2,\mathcal{B})\times_{\mathcal{B}^{\Delta^1}}(\mathcal{B}^{\Delta^1}\downarrow\iota)\\ 
 & \hookrightarrow(\mathcal{B}^{\Delta^1}\downarrow\iota)\times_{\mathcal{B}^{\Delta^1}}(\mathcal{B}^{\Delta^1}\downarrow\iota)\\
 & \hookrightarrow\mathcal{B}^{\Delta^2}\times_{\mathcal{B}^{\Delta^1}}\mathcal{B}^{\Delta^2}\\
 &\cong \mathcal{B}^{\Delta^1\times\Delta^1}. 
\end{align*}
Let
\[\mathrm{Fun}^{\mathcal{L}}_{\mathcal{R}}(\Delta^1\times\Delta^1,\mathcal{B})\subseteq\mathcal{B}^{\Delta^1\times\Delta^1}\]
be the full subcategory of squares such that the top map is contained in $\mathcal{L}$ and the bottom map is contained in
$\mathcal{R}$. The inclusion
$\mathrm{Fun}^{LR}(\Delta^1,\mathcal{B})\times_{\mathcal{B}^{\Delta^1}}(\mathcal{B}^{\Delta^1}\downarrow\iota)\subseteq\mathcal{B}^{\Delta^1\times\Delta^1}$ factors through $\mathrm{Fun}^{\mathcal{L}}_{\mathcal{R}}(\Delta^1\times\Delta^1,\mathcal{B})$ by 
construction. Second, let $\mathcal{B}^{\Delta^3}\twoheadrightarrow\mathcal{B}^{\Delta^1\times\Delta^1}$ be the isofibration of 
squares in $\mathcal{B}$ together with a lift; indeed note that $\Delta^3$ is the free shape of a lifting square, and the inclusion
$\Delta^1\times\Delta^1\hookrightarrow\Delta^3$ picks out the according square (without the lift). We thus may consider the following pullbacks.
\begin{align}\label{diagpropfibmods1}
\begin{gathered}
\xymatrix{
\bullet\ar@{^(->}[r]\ar@{->>}[d]\ar@{}[dr]|(.3){\pbs} & (\mathcal{B}^{\Delta^1}\downarrow\iota)^{\Delta^1}\ar@{^(->}[r]\ar@{->>}[d]\ar@{}[dr]|(.3){\pbs} & \mathcal{B}^{\Delta^3}\ar@{->>}[d] \\
\mathrm{Fun}^{LR}(\Delta^2,\mathcal{B})\times_{\mathcal{B}^{\Delta^1}}(\mathcal{B}^{\Delta^1}\downarrow\iota)\ar@{^(->}[r] & (\mathcal{B}^{\Delta^1}\downarrow\iota)\times_{\mathcal{B}^{\Delta^1}}(\mathcal{B}^{\Delta^1}\downarrow\iota)\ar@{^(->}[r] & \mathcal{B}^{\Delta^1\times\Delta^1}
}
\end{gathered}
\end{align}
Here, the fibration
$(\mathcal{B}^{\Delta^1}\downarrow\iota)^{\Delta^1}\twoheadrightarrow(\mathcal{B}^{\Delta^1}\downarrow\iota)\times_{\mathcal{B}^{\Delta^1}}(\mathcal{B}^{\Delta^1}\downarrow\iota)$ is the two-sided hom-fibration of $\mathcal{B}^{\Delta^1}\downarrow\iota$ in
the slice $((\mathrm{Cat}_{\infty})_{/\mathcal{B}})_{/\mathcal{B}^{\Delta^1}}\cong((\mathrm{Cat}_{\infty})_{/\mathcal{B}^{\Delta^1}}$.
The fact that it fits into the pullback square above follows from a straight-forward combinatorial computation of the simplicial 
weights and the respective cotensors. 

The composition of inclusions on the bottom of Diagram~(\ref{diagpropfibmods1}) may also be factored through the subcategory
$\iota_{(\mathcal{L},\mathcal{R})}\colon\mathrm{Fun}^{\mathcal{L}}_{\mathcal{R}}(\Delta^1\times\Delta^1,\mathcal{B})\hookrightarrow\mathcal{B}^{\Delta^1\times\Delta^1}$ instead. Hence, every vertex of the bottom left corner in (\ref{diagpropfibmods1}) is an 
$(\mathcal{L},\mathcal{R})$-lifting problem. Indeed, the pullback
\begin{align}\label{equpropfibmods1}
\iota_{(\mathcal{L},\mathcal{R})}^{\ast}\mathcal{B}^{\Delta^3}\twoheadrightarrow\mathrm{Fun}^{\mathcal{L}}_{\mathcal{R}}(\Delta^1\times\Delta^1,\mathcal{B})
\end{align}
is a trivial fibration: 
first, it is a locally cartesian fibration (\cite[Definition 2.4.2.6]{luriehtt}) as (in a nutshell) for every morphism 
$f\colon\Delta^1\rightarrow\mathrm{Fun}^{\mathcal{L}}_{\mathcal{R}}(\Delta^1\times\Delta^1,\mathcal{B})$ of
$(\mathcal{L},\mathcal{R})$-squares, a lift of $f$ to a morphism $\bar{f}$ of associated lifting squares corresponds to lifts to the square 
between the $\mathcal{L}$-map $s(s(f))$ and the $\mathcal{R}$-map $t(t(f))$ induced by composition. Hence, such lifts $\bar{f}$ exist and are 
unique up to equivalence. In particular, the fibers of the Kan fibration 
\[((\iota_{(\mathcal{L},\mathcal{R})}\circ\{f\})^{\ast}\mathcal{B}^{\Delta^3})_{/\bar{f}}\twoheadrightarrow ((\iota_{(\mathcal{L},\mathcal{R})}\circ\{f\})^{\ast}\mathcal{B}^{\Delta^3})_{/t(\bar{f})}\]
are homotopy equivalent to a slice of the contractible space of lifts to the given square between the $\mathcal{L}$-map $s(s(f))$ and the $\mathcal{R}$-map $t(t(f))$, and hence are contractible themselves. Second, the fibers of the fibration (\ref{equpropfibmods1}) are the spaces of lifts to a given $(\mathcal{L},\mathcal{R})$-square and thus 
are contractible by definition of a factorization system (\cite[Definition 5.2.8.1]{luriehtt}). 
Thus, the map (\ref{equpropfibmods1}) is a locally cartesian fibration with contractible fibers; such are trivial fibrations by 
\cite[Corollary 2.4.4.4]{luriehtt}.

It follows that the left hand side isofibration in Diagram (\ref{diagpropfibmods1}) is a pullback of a trivial fibration and hence a trivial fibration itself.
It hence admits a section which induces a lift of the form
\[\xymatrix{
& \bullet\ar@{^(->}[r]\ar@{->>}[d]\ar@{}[dr]|(.3){\pbs} & (\mathcal{B}^{\Delta^1}\downarrow\iota)^{\Delta^1}\ar@{->>}[d]\\
\mathcal{B}^{\Delta^1}\downarrow\iota\ar[r]_(.25){(\sigma d_1,1)}\ar@{-->}@/^/[ur] & \mathrm{Fun}^{LR}(\Delta^2,\mathcal{B})\times_{\mathcal{B}^{\Delta^1}}(\mathcal{B}^{\Delta^1}\downarrow\iota)\ar@{^(->}[r] & (\mathcal{B}^{\Delta^1}\downarrow\iota)\times_{\mathcal{B}^{\Delta^1}}(\mathcal{B}^{\Delta^1}\downarrow\iota)
}\]
and by composition a lift
\[\xymatrix{
 & (\mathcal{B}^{\Delta^1}\downarrow\iota)^{\Delta^1}\ar@{->>}[d]\\
\mathcal{B}^{\Delta^1}\downarrow\iota\ar[r]_(.3){(\eta d_1,1)}\ar@{-->}@/^/[ur]^{\epsilon} & (\mathcal{B}^{\Delta^1}\downarrow\iota)\times_{\mathcal{B}^{\Delta^1}}(\mathcal{B}^{\Delta^1}\downarrow\iota)
}\]
which is a 2-cell in the $\infty$-cosmos $(\mathbf{Cat}_{\infty})_{/\mathcal{B}^{\Delta^1}}$. Its restriction along
$\eta\colon\mathcal{B}^{\Delta^1}\hookrightarrow(\mathcal{B}\downarrow\iota)$ is pointwise a lift to a square of the form
\[\xymatrix{
\bullet\ar[r]^{\in\mathcal{L}}\ar[d]_{\in\mathcal{L}} & \bullet\ar[d]^{\in\mathcal{R}} \\
\bullet\ar[r]_{\in\mathcal{R}} & \bullet
}\]
and hence a (pointwise) equivalence. It thus is a natural equivalence itself. It follows from
\cite[Lemma 2.2.2
]{riehlverityelements} that the section $\eta$ is initial as was to show.

Vice versa, suppose we are given a fibered reflective localization as in (\ref{equpropfibmods}). Without loss of generality,
$\mathcal{E}$ is a full subcategory of $\mathcal{B}^{\Delta^1}$. Let
$\mathcal{R}\subseteq\mathcal{B}^{\Delta^1}$ be the class of objects in $\mathcal{E}\subseteq\mathcal{B}^{\Delta^1}$, and let
$\mathcal{L}\subseteq\mathcal{B}^{\Delta^1}$ be the class of objects 
$f\in\mathcal{B}^{\Delta^1}$ such that $\iota\rho(f)$ is an equivalence in $\mathcal{B}$. In other words, as $\mathcal{E}$ 
contains the identities in $\mathcal{B}$ by Lemma~\ref{lemmaidreduct}, and hence contains all equivalences, a map $f\colon A\rightarrow B$ is 
contained in $\mathcal{L}(B)$ if and only if $\rho(f)$ is a terminal object in the fiber $\mathcal{E}(B)$.
We show that $(\mathcal{L},\mathcal{R})$ is a factorization system on $\mathcal{B}$.

First, let us show orthogonality. Using terminology from \cite{luriehtt} to denote subsimplices of the standard simplex $\Delta^n$, 
consider the following combinatorial instantiation of the precomposition of 2-cells with 3-cells and squares in $\mathcal{B}$, 
respectively.
\begin{align}\label{diagpropfibmods2}
\begin{gathered}
\xymatrix{
\mathcal{B}^{\Delta^2}\times_{\mathcal{B}^{\Delta^1}}\mathcal{B}^{\Delta^3}\ar@{->>}[d]\ar[r]_(.45){\cong} & \mathcal{B}^{\Delta^{\{0,1,2\}}\cup_{\Delta^1}d_0[\Delta^3]}\ar@{->>}[d] & \mathcal{B}^{\Delta^4}\ar@{->>}[d]\ar@{->>}[l]^(.35){\sim}_(.4){}\ar@{->>}[r]^{d_1} & \mathcal{B}^{\Delta^3}\ar@{->>}[d] \\
\mathcal{B}^{\Delta^2}\times_{\mathcal{B}^{\Delta^1}}\mathcal{B}^{\Delta^1\times\Delta^1}\ar[r]_(.45){\cong} & \mathcal{B}^{\Delta^{\{0,1,2\}}\cup_{\Delta^1}(\Delta^{\{1,2\}}\times\Delta^{\{3,4\}})} & \mathcal{B}^{d_2[\Delta^3]\cup_{\Delta^2}d_3[\Delta^3]}\ar@{->>}[l]^(.4){\sim}\ar@{->>}[r]_(.55){(d_1,d_1)} &  \underset{\cong\mathcal{B}^{\Delta^1\times\Delta^1}}{\mathcal{B}^{\Delta^2\cup_{\Delta^1}\Delta^2}}
}
\end{gathered}
\end{align}
Here, the horizontal trivial fibrations are trivial because they are induced by inner anodyne cofibrations of simplicial sets.

Let $\eta\colon\mathcal{B}^{\Delta^1}\rightarrow\mathcal{B}^{\Delta^2}$ be the fibered unit of the localization, where we 
consider $\mathcal{B}^{\Delta^2}\subseteq\mathcal{B}^{\Delta^1\times\Delta^1}$ via
$(1,s_1d_1)\colon\Delta^1\times\Delta^1\xrightarrow{\cong}\Delta^2\cup_{\Delta^1}\Delta^2\rightarrow\Delta^2$. Now suppose
$f\colon A\rightarrow B$ is contained in $\mathcal{L}$ and $g\colon C\rightarrow D$ is contained in $\mathcal{R}$.
Consider the hom-space
\begin{align}\label{diagpropfibmods3}
\begin{gathered}
\xymatrix{
\mathcal{B}^{\Delta^1}(\iota\rho f,g)\ar@{^(->}[r]\ar@{->>}[d]\ar@{}[dr]|(.3){\pbs} & \mathcal{B}^{\Delta^1\times\Delta^1}\ar@{->>}[d]  \\
\Delta^0\ar@{^(->}[r]_(.4){(\{\iota\rho f\},\{g\})} & \mathcal{B}^{\Delta^1}\times\mathcal{B}^{\Delta^1}
}
\end{gathered}
\end{align}
and the pullback
\begin{align}\label{diagpropfibmods4}
\begin{gathered}
\xymatrix{
\mathcal{B}^{\Delta^3}(\iota\rho f,g)\ar@{^(->}[r]\ar@{->>}[d]\ar@{}[dr]|(.3){\pbs} & \mathcal{B}^{\Delta^3}\ar@{->>}[d]^{(d_{\{0,1\}},d_{\{2,3\}})}  \\
\Delta^0\ar@{^(->}[r]_(.4){(\{\iota\rho f\},\{g\})} & \mathcal{B}^{\Delta^1}\times\mathcal{B}^{\Delta^1}.
}
\end{gathered}
\end{align} 
Define $\mathcal{B}^{\Delta^1}(f,g)$ and $\mathcal{B}^{\Delta^3}(f,g)$ accordingly. Then restriction of the two rows in 
(\ref{diagpropfibmods2}) along the inclusions
\[(\{\eta(f)\},(\ref{diagpropfibmods3}))\colon\mathcal{B}^{\Delta^1}(\iota\rho f,g)\hookrightarrow\mathcal{B}^{\Delta^2}\times_{\mathcal{B}^{\Delta^1}}\mathcal{B}^{\Delta^1\times\Delta^1}\]
and
\[(\{\eta(f)\},(\ref{diagpropfibmods4}))\colon \mathcal{B}^{\Delta^3}(\iota\rho f,g)\hookrightarrow\mathcal{B}^{\Delta^2}\times_{\mathcal{B}^{\Delta^1}}\mathcal{B}^{\Delta^3}\]
yields associated corestrictions of the form
\begin{align}\label{diagpropfibmods5}
\begin{gathered}
\xymatrix{
\mathcal{B}^{\Delta^3}(\iota\rho f,g)\ar@{->>}[d] & \mathcal{B}^{\Delta^4}(f,g)\ar@{->>}[d]\ar@{->>}[l]^(.45){\sim}\ar@{->>}[r]^{d_1} & \mathcal{B}^{\Delta^3}(f,g)\ar@{->>}[d]\\
\mathcal{B}^{\Delta^1}(\iota\rho f,g) & \mathcal{B}^{\Delta^3\cup_{\Delta^2}\Delta^3}(f,g)\ar@{->>}[l]^(.5){\sim}\ar@{->>}[r]_(.55){(d_1,d_1)} & \mathcal{B}^{\Delta^1}(f,g)
}
\end{gathered}
\end{align}
where the quasi-categories in the middle are again defined by the obvious boundary conditions. We use this diagram to argue that $f$ 
is left orthogonal to $g$. Indeed, the top horizontal sequence in (\ref{diagpropfibmods5}) fits into the diagram
\[\xymatrix{
\mathcal{B}^{\Delta^3}(\iota\rho f,g)\ar@{->>}@/_/[dr]_{d_{\{1,2\}}}^{\sim} & \mathcal{B}^{\Delta^4}(f,g)\ar@{->>}[d]|{d_{\{2,3\}}}\ar@{->>}[l]^(.45){\sim}\ar@{->>}[r]^{d_1} & \mathcal{B}^{\Delta^3}(f,g)\ar@{->>}@/^/[dl]^{d_{\{1,2\}}}_{\sim}\\
& \mathcal{B}(B,C), & 
}\]
where the two vertical fibrations $d_{\{1,2\}}$ are trivial in virtue of being pullbacks of the trivial fibration
$\mathcal{B}^{\Delta^3}\twoheadrightarrow\mathcal{B}^{S^3}$ for $S^3\subset\Delta^3$ the 3-spine. It follows that the top fibration $d_1$ is trivial as well.

The bottom horizontal sequence of (\ref{diagpropfibmods5}) consists of trivial fibrations as well, because it is (precisely up to choice 
of a section of the trivial fibration on the bottom of (\ref{diagpropfibmods2})) ) a pullback of the 
(unfibered) adjunction datum
\[\xymatrix{
\rho\downarrow\mathcal{E}\ar[rr]_{\sim}^{\ulcorner(\cdot)\circ\eta\urcorner}\ar@/_/@{->>}[dr] & & \mathcal{B}^{\Delta^1}\downarrow\iota\ar@/^/@{->>}[dl]\ar@{^(->}[r]\ar@{}[dr]|(.3){\pbs} & \mathcal{B}^{\Delta^1\times\Delta^1}\ar@{->>}[d]\\
 & \mathcal{B}^{\Delta^1}\times\mathcal{E}\ar[rr]_{(1,\iota)} & & \mathcal{B}^{\Delta^1}\times\mathcal{B}^{\Delta^1}
}\]
that is given by the precomposition operation from (\ref{diagpropfibmods2}) with the unit 2-cell $\eta$ by
\cite[Proposition 4.1.1, Observation 4.1.2]{riehlverityelements}.

The left vertical fibration in (\ref{diagpropfibmods5}) is a trivial fibration because $\iota\rho(f)$ is an equivalence in
$\mathcal{B}$, and equivalences are left orthogonal to all maps in $\mathcal{B}$ (\cite[Example 5.2.8.9]{luriehtt}). Here we use again that 
this fibration is locally cartesian (being a pullback of the map (\ref{equpropfibmods1}) for $(\mathcal{L},\mathcal{R})$ the pair of 
equivalences and all maps), and so contractibility of its fibers implies triviality of the fibration as noted before. 

Eventually, by 2-out-of-3 it follows that the right vertical fibration in (\ref{diagpropfibmods5}) is trivial, which means that $f$ is 
left orthogonal to $g$ (as its fibers are exactly the mapping spaces considered in \cite[Definition 5.2.8.1]{luriehtt}).

Second, to show that every map $f\colon A\rightarrow B$ in $\mathcal{B}$ admits an $(\mathcal{L},\mathcal{R})$-factorization, it 
suffices to show that the unit 2-cell
\[\xymatrix{
A\ar@/_/[dr]_{f}\ar[rr]^{l(f)}\ar@{}[drr]|{\eta(f)} & & \bar{A}\ar@/^/[dl]^{\iota\rho f} \\
 & B & 
}\]
yields such a factorization. Therefore, we only have to show that the map $l(f):=d_2(\eta(f))\colon A\rightarrow\bar{A}$ is contained 
in $\mathcal{L}$. Thus, consider the (essentially unique) 3-simplex $\tau$ given by
\[\xymatrix{
 & \bar{\bar{A}}\ar[ddd]|(.56)\hole^(.3){}\ar@/^/[ddr]^{\iota\rho(l(f))} & \\
A\ar@/_/[ddr]_f\ar@/_/[drr]^(.3){l(f)}\ar@/^/[ur]^{l(l(f))} & & \\
 & & \bar{A}\ar@/^/[dl]^{\iota\rho(f)} \\
 & B & \\
}\]
in $\mathcal{B}$ where the front face is $\eta(f)$, the top face is $\eta(l(f))$ (where $l(f)$ is considered as an object of
$\mathcal{B}_{/\bar{A}}$) and the vertical map $\bar{\bar{A}}\rightarrow B$ is the composition $\iota\rho(f)\circ\iota\rho(l(f))$.
Since objects in $\mathcal{E}$ are closed under composition by assumption, this composition is contained in $\mathcal{R}$. 
Thus, both the boundaries $d_1[\tau]$ and $d_2[\tau]$ are $\mathcal{E}$-local equivalences from $f\in\mathcal{B}^{\Delta^1}$ into a
$\mathcal{E}$-local object. It follows that the face $d_0[\tau]$ is an $\mathcal{E}$-local equivalence between $\mathcal{E}$-local 
objects, and hence an equivalence in $\mathcal{B}^{\Delta^1}$. Thus, the object $l(f)\in\mathcal{B}^{\Delta^1}$ is contained in
$\mathcal{L}$ by definition of $\mathcal{L}$.

Third, the class $\mathcal{R}$ is closed under retracts, since replete and reflective subcategories are closed under retracts. The 
class $\mathcal{L}$ is closed under retracts since both $\iota$ and $\rho$ preserve retracts and equivalences in $\mathcal{B}$ are 
closed under retracts.

The two assignments are mutually inverse since, on the one hand, factorization systems are fully determined by their right 
class, and on the other hand, two reflective localizations of $\mathcal{B}^{\Delta^1}$ are equivalent if their images in
$\mathcal{B}^{\Delta^1}$ coincide.
Lastly, the fact that this 1-1 correspondence induces an isomorphism of partial orders is straight-forward.

\end{proof}

\begin{lemma}\label{lemmafibmodscons}
Suppose $\mathcal{B}$ is an $\infty$-category with pullbacks and $\mathcal{E}$ is a reflective subcategory of $\mathcal{B}^{\Delta^1}$ as in 
Proposition~\ref{propfibmods}. Let $(\mathcal{L},\mathcal{R})$ be the associated factorization system on $\mathcal{B}$. Then for every 
object $B\in\mathcal{B}$, by pullback along $\{B\}\colon\Delta^0\rightarrow\mathcal{B}$ we obtain a reflective localization
$\rho_B\colon\mathcal{B}_{/B}\rightarrow\mathcal{E}(B)$ of quasi-categories (via
\cite[Proposition 1.3.4.(v), Proposition 10.1.4]{riehlverityelements}). 
\begin{enumerate}
\item The class $\mathcal{L}_{/B}$ -- which by construction consists of those arrows $f\colon A\rightarrow C$ over $B$ which are
$\mathcal{E}(C)$-connected (i.e.\ contractible in $\mathcal{E}(C)$) -- is always contained in the class of
$\mathcal{E}(B)$-local equivalences in $\mathcal{B}_{/B}$. 
\item Whenever the left class $\mathcal{L}\subseteq\mathcal{B}^{\Delta^1}$ is closed under pullbacks, the converse holds as well.
\end{enumerate}
\end{lemma}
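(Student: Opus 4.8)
The plan is to treat both claims as comparisons between the reflector $\rho_B$ and the factorization system $(\mathcal{L},\mathcal{R})$. Recall that $\rho_B$ is the restriction of the fibered reflector $\rho\colon\mathcal{B}^{\Delta^1}\rightarrow\mathcal{E}$ to the fibre over $B$, so it sends $a\colon A\rightarrow B$ to the $\mathcal{R}$-part $\overline{A}$ of its $(\mathcal{L},\mathcal{R})$-factorization $A\xrightarrow{l_a}\overline{A}\xrightarrow{r_a}B$, and a morphism $g\colon(A,a)\rightarrow(C,c)$ over $B$ to the comparison $\rho_B(g)\colon\overline{A}\rightarrow\overline{C}$, which by naturality of the reflection unit satisfies $\rho_B(g)\circ l_a\simeq l_c\circ g$ and $r_c\circ\rho_B(g)\simeq r_a$.

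For Part~1, let $f\colon A\rightarrow C$ lie in $\mathcal{L}$ and sit over $B$, so $a=cf$. Since $l_c\in\mathcal{L}$ and $f\in\mathcal{L}$, the composite $l_c f$ lies in $\mathcal{L}$, so $A\xrightarrow{l_c f}\overline{C}\xrightarrow{r_c}B$ is an $(\mathcal{L},\mathcal{R})$-factorization of $a$; by essential uniqueness of such factorizations and orthogonality $l_a\perp r_c$, the comparison $\rho_B(f)\colon\overline{A}\rightarrow\overline{C}$ is an equivalence, i.e.\ $f$ is an $\mathcal{E}(B)$-local equivalence in $\mathcal{B}_{/B}$. (More conceptually one can instead observe that $f$, viewed as the morphism $(A,f)\rightarrow(C,\mathrm{id}_C)$ of $\mathcal{B}_{/C}$, is inverted by $\rho_C$ because $(C,\mathrm{id}_C)$ is terminal and $f\in\mathcal{L}$, and that the postcomposition functor $c_!\colon\mathcal{B}_{/C}\rightarrow\mathcal{B}_{/B}$, which sends it to $f$ over $B$, carries $\rho_C$-local equivalences to $\rho_B$-local equivalences, since its right adjoint $c^{\ast}$ carries $\mathcal{R}$-maps into $B$ to $\mathcal{R}$-maps into $C$ and hence preserves local objects.)

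For Part~2, assume $\mathcal{L}\subseteq\mathcal{B}^{\Delta^1}$ is closed under pullbacks. The key point I would extract is that this forces left-cancellation for $\mathcal{L}$ as a class of maps: if $g'f$ and $g'$ lie in $\mathcal{L}$ then so does $f$, because $f$ is recovered as the pullback, formed in $\mathcal{B}^{\Delta^1}$, of the cospan $g'f\xrightarrow{(f,\mathrm{id})}g'\xleftarrow{(\mathrm{id},g')}\mathrm{id}_{\mathrm{dom}(g')}$, all three vertices of which lie in $\mathcal{L}$ (identities lie in $\mathcal{L}$, as it contains the equivalences). Now let $g\colon A\rightarrow C$ over $B$ be an $\mathcal{E}(B)$-local equivalence: from $\rho_B(g)\circ l_a\simeq l_c\circ g$, the fact that $\rho_B(g)$ is an equivalence, and $l_a\in\mathcal{L}$, we get $l_c g\in\mathcal{L}$; since also $l_c\in\mathcal{L}$, left-cancellation gives $g\in\mathcal{L}$, i.e.\ $g\in\mathcal{L}_{/B}$, which together with Part~1 yields the asserted equality. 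I expect the main technical point to be exactly this extraction of left-cancellation from closure of $\mathcal{L}$ under pullbacks in $\mathcal{B}^{\Delta^1}$, together with the careful identification of $\rho_B$ on morphisms and of the unit-naturality square; once these are in place the rest reduces to the standard calculus of orthogonal factorization systems (right-cancellation in $\mathcal{L}$, left-cancellation in $\mathcal{R}$, essential uniqueness of factorizations) already in play in Proposition~\ref{propfibmods}.
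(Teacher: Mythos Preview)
Your proof is correct, and for Part~2 it matches the paper essentially verbatim: the paper also uses the unit-naturality square together with left-cancellability of $\mathcal{L}$. The only difference is that the paper simply cites \cite[Theorem~2.5]{binimkelly} for the implication ``closed under pullbacks $\Rightarrow$ left-cancellable'', whereas you give the explicit pullback-in-$\mathcal{B}^{\Delta^1}$ argument; your version is self-contained and arguably clearer.

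For Part~1 your main argument genuinely differs from the paper's. The paper verifies the mapping-space condition for $\mathcal{E}(B)$-locality directly: for $g\colon D\to B$ in $\mathcal{E}(B)$, it identifies $f^{\ast}\colon\mathcal{B}_{/B}(C,D)\to\mathcal{B}_{/B}(A,D)$ with $f^{\ast}\colon\mathcal{B}_{/C}(C,c^{\ast}D)\to\mathcal{B}_{/C}(A,c^{\ast}D)$ and uses that $c^{\ast}D\in\mathcal{E}(C)$ (i.e.\ pullback-stability of $\mathcal{R}$). You instead use closure of $\mathcal{L}$ under composition and uniqueness of factorizations to show $\rho_B(f)$ is an equivalence. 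Both are standard moves in the calculus of factorization systems; your parenthetical alternative (postcomposition $c_!$ preserves local equivalences because its right adjoint $c^{\ast}$ preserves local objects) is in fact an abstract repackaging of exactly the paper's argument. Neither approach has any advantage here beyond taste.
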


\begin{proof}
For Part 1, given a map $f\colon A\rightarrow C$ in $\mathcal{L}_{/B}$ and a 
map $g\in D\rightarrow B$ contained in $\mathcal{E}(B)$, the action
\[f^{\ast}\colon\mathcal{B}_{/B}(C,D)\rightarrow\mathcal{B}_{/B}(A,D)\]
is equivalent to
\[f^{\ast}\colon\mathcal{B}_{/C}(C,g^{\ast}D)\rightarrow\mathcal{B}_{/C}(A,g^{\ast}D).\]
Since $g^{\ast}D$ is again contained in $\mathcal{E}(C)$, the latter map is an equivalence, and hence so is the former.

For Part 2, given a map $f\colon A\rightarrow C$ over $B$, it follows from Part 2 of the proof of Proposition~\ref{propfibmods} that 
the units $\eta_A$ and $\eta_C$ of the adjunction $ \rho_B\adj \iota_B$ are contained in $\mathcal{L}_{/B}$. Thus, whenever $f$ is a
$\mathcal{E}(B)$-local equivalence, the square
\[\xymatrix{
A\ar[r]^{\eta(A)}\ar[d]_f & \bar{A}\ar[d]^{\iota_B\rho_B(A)}_{\simeq}  \\
C\ar[r]_{\eta(C)} & \bar{C}
}\]
shows that $f$ itself is contained in $\mathcal{L}_{/B}$ whenever $\mathcal{L}_{/B}$ is left-exact (and hence left-cancellable, see 
\cite[Theorem 2.5]{binimkelly}).
\end{proof}

\begin{remark}
Whenever $\mathcal{B}$ is left-exact, pullback-stability of a left class $\mathcal{L}\subseteq\mathcal{B}^{\Delta^1}$ is equivalent to
left-exactness of $\mathcal{L}$. In this case, note that left-exactness in Lemma~\ref{lemmafibmodscons}.2 is a necessary condition on
$\mathcal{L}$ at least whenever $(\mathcal{L},\mathcal{R})$ is a modality. That is, because for every object $B\in\mathcal{B}$ the class of
$\mathcal{E}(B)$-local equivalences satisfies the 2-out-of-3 property. Given that the converse of Lemma~\ref{lemmafibmodscons}.1 holds, it 
follows that $\mathcal{L}$ is left-cancellable; but pullback-stability and left-cancellability of $\mathcal{L}$ imply left-exactness.
\end{remark} 

Whenever the base $\mathcal{B}$ has pullbacks, the target fibration $t\colon\mathcal{B}^{\Delta^1}\twoheadrightarrow\mathcal{B}$ is a cartesian fibration. In this case, any fibered reflective localization of the form
\begin{align}\label{equpropfibmodsisol}
\begin{gathered}
\xymatrix{
\mathcal{E}\ar@{->>}@/_/[dr]_p\ar@{^(->}[r]_{\iota} & \mathcal{B}^{\Delta^1}\ar@{->>}[d]^{t_{\mathcal{B}}}\ar@/_1pc/[l]_{\rho}^{\rotatebox[origin=c]{90}{$\vdash$}} \\
 & \mathcal{B}
}
\end{gathered}
\end{align}
is itself automatically a full cartesian subfibration. Indeed, it is straight-forward to show that the subcategory
$\mathcal{E}\subseteq\mathcal{B}^{\Delta^1}$ satisfies the conditions of Lemma~\ref{subfibcart}. In the specific context of 
Proposition~\ref{propfibmods}, this is just the fact that the right class of any factorization system is pullback-stable.
In Lemma~\ref{lemmarefllocbifib} we show a dual statement.

\begin{corollary}\label{corlexfibmods}
The bijection of Proposition~\ref{propfibmods} induces the following bijections as special cases.
\begin{enumerate}
\item  Suppose $\mathcal{B}$ is an $\infty$-category with pullbacks. Then there is a 1-1 correspondence between modalities on
$\mathcal{B}$ and fibered reflective subcategories as in Proposition~\ref{propfibmods} such that the fibered reflector 
is a cartesian functor.\footnote{This is also stated in \cite[Theorem A.7]{rss_hottmod} and \cite[Theorem 4.8]{vergura_loctop}.}
\item Suppose $\mathcal{B}$ has finite limits. Then there is a 1-1 correspondence between left exact modalities on $\mathcal{B}$ and 
fibered reflective subcategories as in Proposition~\ref{propfibmods} such that the fibered reflector is left exact. \footnote{This is 
in essence \cite[Lemma 4.1.2]{abjfsheavesI}.}
\item Suppose $\mathcal{B}$ is presentable with universal colimits. Then there is a 1-1 correspondence between left exact modalities 
on $\mathcal{B}$ of small generation, and fibered reflective subcategories as in Proposition~\ref{propfibmods} with left exact
reflector and fiberwise accessible right adjoint inclusion.
\end{enumerate}
\end{corollary}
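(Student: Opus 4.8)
The plan is to deduce all three bijections from the order isomorphism of Proposition~\ref{propfibmods} by restriction to the indicated subclasses; the work is then to match each extra condition on the factorization system $(\mathcal{L},\mathcal{R})$ with the corresponding condition on the associated fibered reflective subcategory $\rho\dashv\iota$ of $t_{\mathcal{B}}$. Recall first, as observed before the statement via Lemma~\ref{subfibcart}, that $\iota$ is automatically a cartesian functor --- the cartesian arrows of $t_{\mathcal{B}}$ being the pullback squares, and right classes of factorization systems always being pullback-stable --- so no condition constrains $\iota$ other than accessibility. For Part~1 I would argue directly: given $f$ with $(\mathcal{L},\mathcal{R})$-factorization $A\xrightarrow{l}\bar A\xrightarrow{\rho f}B$ and $u\colon B'\to B$, the map $u^{\ast}(\rho f)$ always lies in $\mathcal{R}$, so by essential uniqueness of factorizations $\rho(u^{\ast}f)\simeq u^{\ast}(\rho f)$ for all $u$ if and only if $u^{\ast}l\in\mathcal{L}$ for all $u$; that is, $\rho$ is cartesian precisely when $\mathcal{L}$ is pullback-stable, i.e.\ precisely when $(\mathcal{L},\mathcal{R})$ is a modality. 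This reproves the correspondences of \cite[Theorem A.7]{rss_hottmod} and \cite[Theorem 4.8]{vergura_loctop}.

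For Part~2, the crux is the equivalence ``$\rho$ is left exact $\iff$ each fiber reflector $\rho_B\colon\mathcal{B}_{/B}\to\mathcal{E}(B)$ of Lemma~\ref{lemmafibmodscons} is left exact''. One direction is by restriction, since finite limits over a fixed base object are computed in $\mathcal{B}^{\Delta^1}$; for the converse I would use that, $\mathcal{B}$ having finite limits, every finite limit in $\mathcal{B}^{\Delta^1}$ is built from base changes along maps in $\mathcal{B}$ followed by one fiberwise finite limit (for instance $(f_1)\times(f_2)$ is the fiberwise product, over $B_1\times B_2$, of the two evident base changes of $f_1$ and $f_2$), all of which a cartesian and fiberwise left exact $\rho$ preserves. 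Moreover a left exact $\rho$ is automatically cartesian: $\rho$ fixes all identities by Lemma~\ref{lemmaidreduct}, and $u^{\ast}f$ is the pullback in $\mathcal{B}^{\Delta^1}$ of the cospan $\mathrm{id}_{B'}\to\mathrm{id}_B\leftarrow f$, whence $\rho(u^{\ast}f)\simeq\rho(\mathrm{id}_{B'})\times_{\rho(\mathrm{id}_B)}\rho f\simeq\mathrm{id}_{B'}\times_{\mathrm{id}_B}\rho f\simeq u^{\ast}(\rho f)$. Combined with Part~1 and with \cite[Lemma 4.1.2]{abjfsheavesI}, which characterizes the left exact modalities among all modalities as exactly those whose fiber reflectors $\rho_B$ are left exact, this yields Part~2.

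For Part~3 I would note that under the standing hypotheses each slice $\mathcal{B}_{/B}$ is again presentable with universal colimits, that $\mathcal{B}=\mathcal{B}_{/1}$, and that the restriction of a left exact modality to $\mathcal{B}_{/B}$ is again a left exact modality whose reflective subcategory of modal objects is exactly $\mathcal{E}(B)\subseteq\mathcal{B}_{/B}$. The small object argument of \cite{as_soa} (compare the proof of Proposition~\ref{thmprelim}) identifies, over a presentable category with universal colimits, the left exact modalities of small generation with those whose reflective subcategory of modal objects is accessible; applied slicewise, this reduces Part~3 to showing that a left exact modality on $\mathcal{B}$ is of small generation iff its restriction to every slice $\mathcal{B}_{/B}$ is. One implication holds since $\mathcal{B}$ is itself the slice over $1$; for the other --- that small generation propagates to all slices --- I would pull a small generating set $S\subseteq\mathcal{B}^{\Delta^1}$ back along $\mathcal{B}_{/B}\to\mathcal{B}$ and check, using the closure properties of congruences together with \cite{as_soa}, that it generates the restricted congruence $\mathcal{L}_{/B}$. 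I expect this last step --- reconciling a single global generating datum with the family of all fiberwise ones while controlling accessibility simultaneously in every slice --- to be the main technical obstacle; Parts~1 and~2 are essentially formal once the pullback-of-identities observation and the fiberwise description of left-exactness are in hand.
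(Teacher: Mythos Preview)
Your arguments for Parts 1 and 2 are correct and essentially match the paper's: Part~1 is the same pullback-of-factorizations computation, and for Part~2 the paper simply cites \cite[Lemma 4.1.2]{abjfsheavesI}, so your extra unpacking (fiberwise versus global left exactness, and the observation that left exactness of $\rho$ implies cartesianness via $u^{\ast}f\simeq\mathrm{id}_{B'}\times_{\mathrm{id}_B}f$) is a welcome elaboration of what the paper leaves implicit.

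For Part~3 your route diverges from the paper's, and the backward direction has a gap at the stated generality. Your reduction collapses the hypothesis ``$\mathcal{E}(B)$ accessible for \emph{all} $B$'' down to the single datum ``$\mathcal{E}(1)$ accessible'', and then invokes the identification ``left exact modality of small generation $\Leftrightarrow$ modal objects accessible'' from \cite{as_soa}. But Proposition~\ref{thmprelim}, which is where that identification is established in the paper, is stated for $\infty$-toposes; extending it to merely presentable $\mathcal{B}$ with universal colimits is exactly what needs proving. Concretely: knowing that $\mathcal{L}$ is the \emph{strong} saturation of a set $T$ (which is what accessibility of $\mathcal{E}(1)$ together with Lemma~\ref{lemmafibmodscons}.2 gives) does not obviously imply $\mathcal{R}=T^{\perp}$, since ${}^{\perp}(T^{\perp})$ is only the \emph{saturation} of $T$ and need not be closed under 2-out-of-3. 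The paper avoids this by \emph{not} discarding the fiberwise data: it takes generators $H(B)$ of the accessible class of $\mathcal{E}(B)$-local equivalences for each $B$ in a set $G$ of generators of $\mathcal{B}$, sets $H=\bigcup_{B\in G}d_1[H(B)]$, and checks directly that $H^{\perp}=\mathcal{R}$ using the locality principle ``$g\in\mathcal{R}$ iff $b^{\ast}g\in\mathcal{R}$ for all $b\colon B\to\mathrm{cod}(g)$ with $B\in G$'' (which follows from the modality axiom plus universal colimits: factor $g$ and observe that the $\mathcal{L}$-part becomes an equivalence after every such pullback, hence is an equivalence). Incidentally, the step you flag as the main obstacle---propagating small generation to slices---is in fact easy: the set $\{B\times s:s\in S\}$ generates $(\mathcal{L}_{/B},\mathcal{R}_{/B})$ because the adjunction $\Sigma_B\dashv B\times(-)$ gives $(B\times s)\perp f$ in $\mathcal{B}_{/B}$ iff $s\perp\Sigma_B f$ in $\mathcal{B}$.
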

\begin{proof}
For Part 1, suppose $(\mathcal{L},\mathcal{R})$ is a modality on $\mathcal{B}$. We are to show that the associated fibered left 
adjoint $\rho\colon\mathcal{B}^{\Delta^1}\rightarrow\sum\limits_{B\in\mathcal{B}}\mathcal{R}(B)$ preserves cartesian squares in
$\mathcal{B}$. Thus, consider its unit 
\begin{align}\label{diagcorlexfibmods1}
\begin{gathered}
\xymatrix{
A\ar[rr]^a\ar[dd]_(.7)f^{\hspace{.3cm}\eta_f}\ar@/^/[dr]^(.6){l(f)}_(.3){\pbs}& & C\ar[dd]_(.7)g^(.4){\hspace{.3cm}\eta_g}\ar@/^/[dr]^{l(g)} & \\
 & \bar{A}\ar[rr]\ar[dl]^(.4){\rho(f)} & & \bar{C}\ar[dl]^{\rho(g)}\\ 
B\ar[rr]_b & & D & \\ 
}
\end{gathered}
\end{align}
applied to the cartesian square in the back. We want to show that the square in the front is cartesian as well; therefore, we note 
that $l(g)\in\mathcal{L}$ and $\rho(g)\in\mathcal{R}$, and so are their pullbacks along the base map $b\colon B\rightarrow D$ since  
both $\mathcal{L}$ and $\mathcal{R}$ are pullback-stable by assumption. Thus both 2-cells $\eta_f$ and $b^{\ast}\eta_g$ yield an
$(\mathcal{L},\mathcal{R})$-factorization of $f$. It follows that $\rho(f)\simeq b^{\ast}\rho(g)$ over $B$ and so the front square in 
(\ref{diagcorlexfibmods1}) is cartesian.

Vice versa, suppose the left adjoint $\rho\colon\mathcal{B}^{\Delta^1}\rightarrow\sum_{B\in\mathcal{B}}\mathcal{R}(B)$ 
preserves cartesian squares, and suppose we are given a cartesian square as in the back of (\ref{diagcorlexfibmods1}) such that the map
$g\colon C\rightarrow D$ is contained in $\mathcal{L}$. It follows that $\rho(g)\colon\bar{C}\rightarrow D$ is an equivalence in
$\mathcal{B}$, and hence so is $\rho(f)\colon\bar{A}\rightarrow B$  as the front square is cartesian as well by assumption. It follows that 
$f\simeq l(f)$ and so the pullback $f$ is contained in $\mathcal{L}$.

Part 2 follows immediately from Proposition~\ref{propfibmods} and \cite[Lemma 4.1.2]{abjfsheavesI}.
%

For Part 3, if the factorisation system $(\mathcal{L},\mathcal{R})$ is of small generation, let $H$ be a generating set of
$\mathcal{L}$-maps. Then an object $f\in\mathcal{B}_{/B}$ is an $\mathcal{R}$-map -- and as such contained in the associated 
fibered reflective localization of $\mathcal{B}^{\Delta^1}\twoheadrightarrow\mathcal{B}$ -- if and only if it is local with respect to 
the set $H_{/B}$. Since the slice $\mathcal{B}_{/B}$ is again presentable, and reflective localizations at sets of maps of such are accessible, it follows that the inclusion $\iota_B\colon\mathcal{R}(B)\hookrightarrow\mathcal{B}_{/B}$ is accessible.

Vice versa, if each inclusion $\mathcal{R}(B)\hookrightarrow\mathcal{B}_{/B}$ is accessible, for $B\in\mathcal{B}$ consider the 
preimage $\rho_B^{-1}[\mathcal{W}]\subseteq(\mathcal{B}_{/B})^{\Delta^1}$ of the class $\mathcal{W}$ of equivalences in
$\mathcal{R}(B)$. The class $\mathcal{W}$ is accessible as the degeneracy $(s_0)^{\ast}\colon\mathcal{R}(B)\rightarrow\mathcal{W}$ which maps 
an object to its identity is part of an equivalence, and the $\infty$-category $\mathcal{R}(B)$ is accessible by assumption. The inclusion $\mathcal{W}\subseteq\mathcal{R}(B)$ 
preserves (filtered colimits), and it follows that the preimage $\rho_B^{-1}[\mathcal{W}]\subseteq(\mathcal{B}_{/B})^{\Delta^1}$ is
accessible by \cite[Corollary A.2.6.5]{luriehtt} (which is stated in more elaborate fashion for $\infty$-categories in
\cite[Proposition 5.4.6.6]{luriehtt}). Thus, for $B\in\mathcal{B}$ let
$H(B)\subseteq(\mathcal{B}_{/B})^{\Delta^1}$ be a set of generators of $\rho_B^{-1}[\mathcal{W}]$. Let $G$ be a set of generators of
the presentable $\infty$-category $\mathcal{B}$. We 
claim that
\[H=\bigcup_{B\in G}d_1[H(B)]\]
generates $(\mathcal{L},\mathcal{R})$ as a factorization system. Thus, first, suppose 
$g\colon C\rightarrow D$ is a map in $\mathcal{B}$ such that $H\perp g$. Since $(\mathcal{L},\mathcal{R})$ is a 
modality and $\mathcal{B}$ is presentable with universal colimits, to show that $g\in\mathcal{R}$ it suffices to show that its pullbacks over 
generators in $G$ are contained in $\mathcal{R}$. Given a map $b\colon B\rightarrow D$
with $B\in G$, the assumption that $g$ is right orthogonal to $d_1[H(B)]$ implies that the pullback $b^{\ast}g\in\mathcal{B}_{/B}$ is 
$H(B)$-local. It follows that $b^{\ast}g\in\mathcal{R}(B)$.

Vice versa, we have to show that $H\subseteq\mathcal{L}$. But this follows from the fact that for every $B\in G$ and every local
$\mathcal{E}(B)$-equivalence $\alpha\in(\mathcal{B}_{/B})^{\Delta^1}$, we have $d_1(\alpha)\in\mathcal{L}$
by Lemma~\ref{lemmafibmodscons}.2 since $\mathcal{L}$ is assumed to be left-exact.
%
%
\end{proof}

\begin{remark}
In Corollary~\ref{corlexfibmods}.3, the proof of fiberwise accessibility of
$\iota\colon\mathcal{E}\hookrightarrow\mathcal{B}^{\Delta^1}$ from small generation of the factorization system
$(\mathcal{L},\mathcal{R})$ does neither require the assumption of universality of colimits in $\mathcal{B}$ nor of left-exactness of 
$\mathcal{L}$. In the other direction, it may be worth to point out the formal role of universality of 
colimits in $\mathcal{B}$: it states that the target fibration $\mathcal{B}^{\Delta^1}\twoheadrightarrow\mathcal{B}$ is a cocomplete 
object in $\mathbf{Cart}(\mathcal{B})$. That is, because for cocomplete $\mathcal{B}$, colimits in $\mathcal{B}$ are universal if and 
only if the Straightening $\mathcal{B}_{/\blank}\colon\mathcal{B}^{op}\rightarrow\mathrm{Cat}_{\infty}$ factors through the
$\infty$-category of cocomplete $\infty$-categories and cocontinuous functors (see Proposition~\ref{propfiblex} and 
Remark~\ref{rempropfiblexgen}). Similarly, fiberwise accessibility of the right adjoints in this context is formal accessibility of the right 
adjoints in the $\infty$-cosmos $\mathbf{Cart}(\mathcal{B})$. 

Furthermore, whenever a factorization system $(\mathcal{L},\mathcal{R})$ on a presentable $\infty$-category $\mathcal{B}$ is of small 
generation (with generating set $G\subset\mathcal{L}$), an object $f\in\mathcal{B}^{\Delta^1}$ is an $\mathcal{R}$-map -- and as such 
contained in the associated fibered reflective localization of $\mathcal{B}^{\Delta^1}\twoheadrightarrow\mathcal{B}$ -- if and only if it is 
local with respect to the class of maps
\[\xymatrix{
C\ar[r]^f\ar[d]_f & D\ar@{=}[d] \\
D\ar@{=}[r] & D
}\]
in $\mathcal{B}^{\Delta^1}$ for $f\in G$. Since the $\infty$-category $\mathcal{B}^{\Delta^1}$ is again presentable, it follows that 
the inclusion 
$\iota\colon\sum_{B\in\mathcal{B}}\mathcal{R}(B)\hookrightarrow\mathcal{B}^{\Delta^1}$ is accessible. This means that fiberwise 
accessibility of the fibered reflective localization implies ``total'' accessibility of the localization in this case.
\end{remark}

Proposition~\ref{propfibmods} and Corollary~\ref{corlexfibmods}.3 are the main results of this section relevant for the constructions in 
Sections~\ref{sechigherclosureops} and \ref{secsubhlttops}. The following is relevant for Section~\ref{sectopdef} and indulges in a few 
more observations about the interplay of the present cartesian and cocartesian structures.

First, we note that fibered reflective localizations $\mathcal{E}\twoheadrightarrow\mathcal{B}$ 
of the target isofibration $t_{\mathcal{B}}$ in $(\mathbf{Cat}_{\infty})_{/\mathcal{B}}$ over any $\infty$-category $\mathcal{B}$ also 
automatically inherit the cocartesian structure from $t_{\mathcal{B}}$ as follows.

\begin{lemma}\label{lemmarefllocbifib}
Let $\mathcal{B}$ be a quasi-category and $p\colon\mathcal{E}\twoheadrightarrow\mathcal{B}$ be a reflective localization of
$t_{\mathcal{B}}\colon\mathcal{B}^{\Delta^1}\twoheadrightarrow\mathcal{B}$ in $(\mathbf{Cat}_{\infty})_{/\mathcal{B}}$ with fibered 
reflector $\rho\colon\mathcal{B}^{\Delta^1}\rightarrow\mathcal{E}$. Then $\mathcal{E}\twoheadrightarrow\mathcal{B}$ is also a 
cocartesian fibration; its cocartesian arrows are exactly those equivalent to $\rho(\alpha)$ for some 
cocoartesian arrow $\alpha\in\mathcal{B}^{\Delta^1}$. In particular, the fibered reflector $\rho$ is a cocartesian functor.  
\end{lemma}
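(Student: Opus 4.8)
The plan is to reduce everything to the single assertion that the reflector $\rho$ carries $t_{\mathcal{B}}$-cocartesian arrows to $p$-cocartesian arrows; once this is known, cocartesianness of $p$ and the description of its cocartesian arrows follow formally. Recall first that for any quasi-category $\mathcal{B}$ the target fibration $t_{\mathcal{B}}\colon\mathcal{B}^{\Delta^1}\twoheadrightarrow\mathcal{B}$ is a cocartesian fibration (\cite{luriehtt}): a $t_{\mathcal{B}}$-cocartesian lift of a map $b\colon B\to B'$ with source $f\colon A\to B$ is given, essentially uniquely, by the square whose top edge is the identity of $A$ and whose bottom edge is $b$, so that its target is the composite $b\circ f$; in particular $t_{\mathcal{B}}$-cocartesian lifts exist and are essentially unique.

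The main step is the claim: if $\alpha\colon x\to y$ is $t_{\mathcal{B}}$-cocartesian, then $\rho(\alpha)\colon\rho x\to\rho y$ -- which lies over $t_{\mathcal{B}}(\alpha)$ since $p\rho=t_{\mathcal{B}}$ -- is $p$-cocartesian. I would verify this through the mapping-space characterization of cocartesian arrows (\cite{luriehtt}): $\rho(\alpha)$ is $p$-cocartesian if and only if for every $z\in\mathcal{E}$ the canonical map
\[\mathrm{Map}_{\mathcal{E}}(\rho y,z)\longrightarrow\mathrm{Map}_{\mathcal{E}}(\rho x,z)\times_{\mathrm{Map}_{\mathcal{B}}(t_{\mathcal{B}}x,\,pz)}\mathrm{Map}_{\mathcal{B}}(t_{\mathcal{B}}y,\,pz)\]
is an equivalence. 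The fibered adjunction $\rho\dashv\iota$ (\cite[Proposition 4.1.1]{riehlverityelements}) supplies, naturally in the first variable and compatibly with the projections to $\mathcal{B}$, equivalences $\mathrm{Map}_{\mathcal{E}}(\rho(\blank),z)\simeq\mathrm{Map}_{\mathcal{B}^{\Delta^1}}(\blank,\iota z)$ under which $\rho(\alpha)^{\ast}$ is carried to $\alpha^{\ast}$, $p$ to $t_{\mathcal{B}}$, and $pz$ to $t_{\mathcal{B}}\iota z$; the displayed map is thereby identified with the comparison map
\[\mathrm{Map}_{\mathcal{B}^{\Delta^1}}(y,\iota z)\longrightarrow\mathrm{Map}_{\mathcal{B}^{\Delta^1}}(x,\iota z)\times_{\mathrm{Map}_{\mathcal{B}}(t_{\mathcal{B}}x,\,t_{\mathcal{B}}\iota z)}\mathrm{Map}_{\mathcal{B}}(t_{\mathcal{B}}y,\,t_{\mathcal{B}}\iota z),\]
which is an equivalence because $\alpha$ is $t_{\mathcal{B}}$-cocartesian -- this being exactly the cocartesianness criterion for $\alpha$ evaluated against the object $\iota z\in\mathcal{B}^{\Delta^1}$. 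Hence the claim.

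Granting the claim, the rest is bookkeeping. The map $p$ is an isofibration, and given $e\in\mathcal{E}$ over $B$ and $b\colon B\to B'$, a $t_{\mathcal{B}}$-cocartesian lift $\chi\colon\iota e\to b\circ\iota e$ of $b$ yields the $p$-cocartesian arrow $\rho(\chi)$; its source is $\rho\iota e$, which is equivalent to $e$ because the counit of $\rho\dashv\iota$ is an equivalence ($\iota$ being fully faithful, \cite[Proposition 9.4.6]{riehlverityelements}), so precomposing with that equivalence gives a $p$-cocartesian lift of $b$ with source $e$. Thus $p$ is a cocartesian fibration. Essential uniqueness of cocartesian lifts then shows that every $p$-cocartesian arrow is equivalent to one of the form $\rho(\chi)$ with $\chi$ a $t_{\mathcal{B}}$-cocartesian arrow, while the reverse inclusion -- and thereby the statement that $\rho$ is a cocartesian functor -- is precisely the claim of the previous paragraph.

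The point demanding care -- and the reason the argument is not merely ``restrict the cocartesian structure of $t_{\mathcal{B}}$'' -- is that $\mathcal{E}\subseteq\mathcal{B}^{\Delta^1}$ is in general \emph{not} closed under $t_{\mathcal{B}}$-cocartesian arrows, since reflective subcategories are closed under limits but not colimits; so the cocartesian pushforward in $\mathcal{E}$ genuinely is ``push forward in $\mathcal{B}^{\Delta^1}$, then reflect'', and the content lies in verifying its lifting property. The mapping-space computation above is what makes this transparent, avoiding any direct analysis of how cocartesian pushforward interacts with $\mathcal{E}$-local equivalences. The only genuinely delicate bit of bookkeeping is that the adjunction equivalences of mapping spaces respect the projections to $\mathrm{Map}_{\mathcal{B}}$, so that the two displayed squares are identified as squares over $\mathcal{B}$; this is built into the notion of an adjunction in $(\mathbf{Cat}_{\infty})_{/\mathcal{B}}$, whose defining comma-object equivalence lives over the relevant product of domains.
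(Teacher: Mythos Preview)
Your proof is correct and follows essentially the same strategy as the paper's: both reduce to showing that $\rho$ preserves cocartesian arrows, and both do so by using the fibered adjunction $\rho\dashv\iota$ to transport the cocartesianness criterion for $\rho(\alpha)$ in $\mathcal{E}$ back to the cocartesianness criterion for $\alpha$ in $\mathcal{B}^{\Delta^1}$. The only difference is packaging: you use the pointwise mapping-space characterization of cocartesian arrows together with the hom-set form of the adjunction, whereas the paper works with the global comma-object characterization, identifying the fibration $\mathcal{E}_{\rho(\gamma)/}\to\mathcal{E}_{\rho(g)/}\times_{\mathcal{B}_{p\rho(g)/}}\mathcal{B}_{p\rho(\gamma)/}$ with $\gamma\downarrow\iota\to g\downarrow\iota\times_{\mathcal{B}_{t(g)/}}\mathcal{B}_{t(\gamma)/}$ via the unit precomposition $\eta^{\ast}\colon\rho\downarrow\mathcal{E}\to\mathcal{B}^{\Delta^1}\downarrow\iota$ and then arguing that the relevant square is homotopy-cartesian by a diagram chase. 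Your version is a pointwise reading of the same equivalence, and your final paragraph correctly identifies the one non-formal point (compatibility of the adjunction equivalences with the projections to $\mathcal{B}$), which is exactly what the paper's use of the \emph{fibered} unit accomplishes.
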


\begin{proof}
Since $\rho$ is essentially surjective, and cocartesian arrows with fixed domain over a given base map in $\mathcal{B}$ are unique up 
to equivalence, it suffices to show that $\rho$ preserves cocartesianness of arrows. Let $f\colon C\rightarrow D$ be an arrow in
$\mathcal{B}$, let $g\in\mathcal{B}_{/C}$ be an object over $C$, and $\gamma\colon g\rightarrow fg$ be the (essentially unique) 
cocartesian arrow in $\mathcal{B}^{\Delta^1}$, given by the canonical square in $\mathcal{B}$ of the form
\begin{align}\label{diaglemmarefllocbifib}
\begin{gathered}
\xymatrix{
A\ar@{=}[r]\ar[d]_{g}\ar@{}[dr]|{\gamma} & A\ar[d]^{f\circ g} \\
C\ar[r]_f & D.
}
\end{gathered}
\end{align}
We are to show that the canonical fibration
\[\mathcal{E}_{\rho(\gamma)/}\rightarrow\mathcal{E}_{\rho(g)/}\times_{\mathcal{B}_{p\rho(g)/}}\mathcal{B}_{p\rho(\gamma)/}\]
is a trivial fibration. Because $\iota$ is fully faithful and right adjoint to $\rho$ over $\mathcal{B}$, this fibration is equivalent 
to the gap map
\begin{align}\label{equlemmarefllocbifib}
\gamma\downarrow\iota\rightarrow g\downarrow\iota\times_{\mathcal{B}_{t(g)/}}\mathcal{B}_{t(\gamma)/}
\end{align}
via the (fibered) unit precomposition $\eta^{\ast}\colon\rho\downarrow\mathcal{E}\rightarrow\mathcal{B}^{\Delta^1}\downarrow\iota$ 
over $\mathcal{B}^{\Delta^1}\times\mathcal{E}$. This gap map fits into the 
concatenation of the two top squares in the following diagram.
\begin{align}\label{diaglemmarefllocbifib2}
\begin{gathered}
\xymatrix{
 & & \gamma\downarrow\iota\ar[dl]\ar@{->>}@/^2pc/[dddll]|(.26)\hole|(.57)\hole\ar@{^(->}[rr]\ar@{}[ddd]^(.1){\pbs}  & & (\mathcal{B}^{\Delta^1})_{\gamma/}\ar[dl]\ar[r]\ar@{->>}@/^1pc/[dddll]|(.3)\hole & \mathcal{B}_{t(\gamma)/}\ar[dl]\ar@{->>}@/^1pc/[dddll] \\
 & g\downarrow\iota\ar@{^(->}[dl]\ar@{->>}@/^/[ddl]|(.48)\hole\ar@{^(->}[rr]\ar@{}[ddr]|(.2){\pbs}  & & (\mathcal{B}^{\Delta^1})_{g/}\ar@{^(->}[dl]\ar@{->>}@/^/[ddl]\ar@{->>}[r] & \mathcal{B}_{t(g)/}\ar@{->>}@/^/[ddl] & \\
\mathcal{B}^{\Delta^1}\downarrow\iota\ar@{->>}[d]\ar@{^(->}[rr]\ar@{}[drr]|(.3){\pbs} & & \mathcal{B}^{\Delta^2}\ar@{->>}[d]^{d_0} & & & \\
\mathcal{E}\ar@{^(->}[rr]_{\iota} & & \mathcal{B}^{\Delta^1}\ar@{->>}[r]_t & \mathcal{B} & & 
}
\end{gathered}
\end{align}
The right hand square on top of (\ref{diaglemmarefllocbifib2}) is homotopy-cartesian by cocartesianness of $\gamma$. The left hand side 
square is homotopy-cartesian by right-cancellation of homotopy-cartesian squares. It follows that the composed square on top of 
(\ref{diaglemmarefllocbifib2}) is homotopy-cartesian. This means that the gap map (\ref{equlemmarefllocbifib}) is a trivial fibration 
since the bottom composition $g\downarrow\iota\rightarrow\mathcal{B}_{t(g)/}$ is still a categorical fibration (and so the
homotopy-pullback of the large top square is presented by the ordinary corresponding pullback).
\end{proof}

\begin{remark}\label{remcocartaction}
Suppose $\mathcal{B}$ has pullbacks and we are given a fibered reflective localization as in (\ref{equpropfibmods}). We have seen that
$\mathcal{E}\twoheadrightarrow\mathcal{B}$ automatically inherits the bifibrational structure from $t_{\mathcal{B}}$, so that the left 
adjoint is cocartesian and the right adjoint inclusion is cartesian. In indexed terms, we obtain a subfunctor
$\mathcal{E}\hookrightarrow\mathcal{B}_{/\blank}$ in $\mathrm{Fun}(\mathcal{B}^{op},\mathrm{Cat}_{\infty})$ on the one hand, and a 
natural transformation $\rho\colon\mathcal{B}_{/\blank}\rightarrow\mathcal{E}$ in $\mathrm{Fun}(\mathcal{B},\mathrm{Cat}_{\infty})$ on the 
other hand, such that each pair
$(\rho_B,\iota_B)\colon\mathcal{B}_{/B}\rightarrow\mathcal{E}(B)$ is a reflective localization of $\infty$-categories.
For arrows $f\colon A\rightarrow B$ in $\mathcal{B}$ we thus obtain homotopy-commutative squares of the form
\begin{align}\label{diagremcocartaction}
\begin{gathered}
\xymatrix{
\mathcal{E}(B)\ar[d]_{f^{\ast}}\ar@{^(->}[r]^{\iota_B} & \mathcal{B}_{/B}\ar[d]^{f^{\ast}}\\
\mathcal{E}(A)\ar@{^(->}[r]_{\iota_A} & \mathcal{B}_{/A}
}
\end{gathered}
& & 
\begin{gathered}
\xymatrix{
\mathcal{E}(B) & \mathcal{B}_{/B}\ar[l]_{\rho_B}\\
\mathcal{E}(A)\ar[u]^{\Sigma_f} & \mathcal{B}_{/A}\ar[l]^{\rho_A}\ar[u]_{\Sigma_f}.
}
\end{gathered}
\end{align}
The fact that each such $f^{\ast}\colon\mathcal{B}_{/B}\rightarrow\mathcal{B}_{/A}$ takes $\mathcal{E}(B)$-local objects to
$\mathcal{E}(A)$-local objects (given by homotopy-commutativity of the left square) is equivalent to the fact that the left adjoints
$\Sigma_f\colon\mathcal{B}_{/A}\rightarrow\mathcal{B}_{/B}$ take $\mathcal{E}(A)$-local equivalences to $\mathcal{E}(B)$-local 
equivalences (given by homotopy-commutativity of the right square). The cocartesian action of a map $f\colon A\rightarrow B$ in
$\mathcal{B}$ on an object $g\in\mathcal{E}(A)$ is given by 
the composition
\begin{align}\label{equcocartformula}
\Sigma_f(g):=\rho_B(\Sigma_f(\iota_A(g))).
\end{align}
Generally, a bifibration $\mathcal{E}\twoheadrightarrow\mathcal{B}$ is said to be Beck-Chevalley if its cartesian and cocartesian 
actions satisfy the according Beck-Chevalley condition over all cartesian squares in $\mathcal{B}$. Since
$t\colon\mathcal{B}^{\Delta^1}\twoheadrightarrow\mathcal{B}$ is always Beck-Chevalley itself, it follows from the formula in 
(\ref{equcocartformula}) that, whenever $\mathcal{E}$ is reflective and hence contains all identities in $\mathcal{B}$ as objects, the 
fibration $\mathcal{E}\twoheadrightarrow\mathcal{B}$ is Beck-Chevalley if and only if the left adjoint $\rho$ is cartesian (and hence the 
whole adjunction $\rho\adj\iota$ is an adjunction in $\mathbf{Cart}(\mathcal{B})$).
\end{remark}

\begin{remark}\label{remcocartcharfacsys}
Let $\mathcal{E}\hookrightarrow\mathcal{B}^{\Delta^1}$ be a reflective subcategory as in Proposition~\ref{propfibmods}.
\begin{enumerate}
\item In this special case, the cocartesian action of a map $f$ in $\mathcal{B}$ on
$\mathcal{E}\simeq\sum_{B\in\mathcal{B}}\mathcal{R}(B)$ described in (\ref{equcocartformula}) is exactly the sharp-construction 
$f_{\sharp}$ given in \cite[Proposition 3.1.22]{abjfsheavesI}. 
\item By Corollary~\ref{corlexfibmods}.1 and Remark~\ref{remcocartaction}, the fibration $\mathcal{E}\twoheadrightarrow\mathcal{B}$ 
is Beck-Chevalley if and only if the factorization system $(\mathcal{L},\mathcal{R})$ is a modality.
\end{enumerate}
\end{remark}

While cocartesianness of the left adjoint in Lemma~\ref{lemmarefllocbifib} is automatic, the fibered inclusion
$\iota\colon\mathcal{E}\hookrightarrow\mathcal{B}^{\Delta^1}$ is generally not cococartesian. Indeed, the inclusion $\iota$ preserves 
cocartesian arrows over maps $g$ in $\mathcal{B}$ \emph{with $g\in\mathcal{E}$} if and only if the objects of $\mathcal{E}$ are closed 
under composition. Equivalently, this is characterized as follows.

\begin{lemma}\label{lemmacharTglue}
Let $\mathcal{B}$ be a quasi-category and $p\colon\mathcal{E}\twoheadrightarrow\mathcal{B}$ be a reflective localization of
$t_{\mathcal{B}}\colon\mathcal{B}^{\Delta^1}\twoheadrightarrow\mathcal{B}$ in $(\mathbf{Cat}_{\infty})_{/\mathcal{B}}$ with fibered 
reflector $\rho\colon\mathcal{B}^{\Delta^1}\rightarrow\mathcal{E}$. Then $\mathcal{E}$ is closed under compositions if and only if
for every $g\colon C\rightarrow D$  contained in $\mathcal{E}$ the cocartesian action
$\Sigma_{g}\colon\mathcal{B}_{/C}\rightarrow\mathcal{B}_{/D}$ reflects $\mathcal{E}$-local equivalences. In turn, that is equivalent to the 
induced cocartesian action $\Sigma_g\colon\mathcal{E}(C)\rightarrow\mathcal{E}(D)$ being conservative.
\end{lemma}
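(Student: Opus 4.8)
The plan is to prove the two asserted equivalences in turn, treating the equivalence between ``$\Sigma_g\colon\mathcal{B}_{/C}\to\mathcal{B}_{/D}$ reflects $\mathcal{E}$-local equivalences'' and ``$\Sigma_g\colon\mathcal{E}(C)\to\mathcal{E}(D)$ is conservative'' as a formal consequence of the cocartesianness of $\rho$, and concentrating the real work on the equivalence with closure under composition. Two preliminary facts are used throughout. First, pulling the reflective localization $\rho\adj\iota$ back along $\{C\}\colon\Delta^0\to\mathcal{B}$ yields a reflective localization $\rho_C\colon\mathcal{B}_{/C}\to\mathcal{E}(C)$ with fully faithful right adjoint $\iota_C$, exactly as in the proof of Lemma~\ref{lemmaidreduct}; in particular $\mathcal{E}(C)$ is replete and closed under retracts in $\mathcal{B}_{/C}$, and any $\mathcal{E}$-local equivalence between $\mathcal{E}$-local objects is an equivalence. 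Second, since $\rho$ is a cocartesian functor over $\mathcal{B}$ by Lemma~\ref{lemmarefllocbifib}, the naturality square of the associated transformation of straightenings gives $\rho_D\circ\Sigma_g\simeq\Sigma_g\circ\rho_C$ for every $g\colon C\to D$, where $\Sigma_g\colon\mathcal{B}_{/C}\to\mathcal{B}_{/D}$ on the source is post-composition with $g$ and $\Sigma_g\colon\mathcal{E}(C)\to\mathcal{E}(D)$ on the target is the induced cocartesian action; using $\rho_C\iota_C\simeq\mathrm{id}$ this identifies the latter with $\rho_D\circ(g\circ{-})\circ\iota_C$.

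The formal equivalence is then immediate: a morphism $\phi$ of $\mathcal{B}_{/C}$ is an $\mathcal{E}$-local equivalence iff $\rho_C\phi$ is invertible, and $\rho_D\Sigma_g\phi\simeq\Sigma_g\rho_C\phi$, so conservativity of $\Sigma_g$ on $\mathcal{E}(C)$ makes $\Sigma_g\colon\mathcal{B}_{/C}\to\mathcal{B}_{/D}$ reflect $\mathcal{E}$-local equivalences; for the converse one applies the reflecting hypothesis to $\iota_C\psi$ for $\psi$ a morphism of $\mathcal{E}(C)$ and invokes $\rho_C\iota_C\simeq\mathrm{id}$ together with the remark on $\mathcal{E}$-local equivalences between local objects. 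Likewise the easy half of the remaining equivalence is quick: if $\mathcal{E}$ is closed under composition and $g\in\mathcal{E}$, then for $h\in\mathcal{E}(C)$ the object $g\circ\iota_C(h)$ is a composite of two objects of $\mathcal{E}$, hence $\mathcal{E}$-local, so its unit is invertible and $\Sigma_g\simeq(g\circ{-})\circ\iota_C$ up to natural equivalence; since post-composition with $g$ does not change the underlying map in $\mathcal{B}$ and the inclusions $\mathcal{E}(C)\hookrightarrow\mathcal{B}_{/C}$, $\mathcal{E}(D)\hookrightarrow\mathcal{B}_{/D}$ are full, $\Sigma_g$ is conservative.

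For the substantive direction, assume $\Sigma_g\colon\mathcal{E}(C)\to\mathcal{E}(D)$ is conservative for every $g\in\mathcal{E}$; fix $g\colon C\to D$ in $\mathcal{E}$ and $h\in\mathcal{E}(C)$, and we want $g\circ h\in\mathcal{E}(D)$. Consider the morphism $\phi_h\colon g\circ h\to g$ of $\mathcal{B}_{/D}$ encoded by $h\colon A\to C$. Because $g$ is $\mathcal{E}$-local, $\phi_h$ factors essentially uniquely through the unit $\eta_{g\circ h}\colon g\circ h\to\iota_D\rho_D(g\circ h)$; writing $q\colon\bar A\to C$ for the map of $\mathcal{B}_{/C}$ underlying this factorization (so that $g\circ q\simeq\iota_D\rho_D(g\circ h)$), one reads off a morphism $\beta\colon h\to q$ in $\mathcal{B}_{/C}$ whose image under $\Sigma_g$ is (equivalent to) $\eta_{g\circ h}$. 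Since the unit $\eta_{g\circ h}$ is an $\mathcal{E}$-local equivalence, $\Sigma_g(\rho_C\beta)\simeq\rho_D\Sigma_g(\beta)$ is invertible, so conservativity forces $\rho_C(\beta)$ to be invertible; that is, $\beta$ is an $\mathcal{E}$-local equivalence.

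The step I expect to be the main obstacle is extracting $g\circ h\in\mathcal{E}(D)$ from this, since an $\mathcal{E}$-local equivalence $\beta$ with $\mathcal{E}$-local domain $h$ need not be an equivalence. The resolution is that $\beta$ is nonetheless a split monomorphism: in the naturality square of the unit of $\rho_C\adj\iota_C$ evaluated at $\beta$, the component $\eta_h$ is invertible because $h$ is $\mathcal{E}$-local and $\iota_C\rho_C(\beta)$ is invertible because $\beta$ is an $\mathcal{E}$-local equivalence, whence $\eta_q\circ\beta$ is invertible and $h$ is a retract of $q$ in $\mathcal{B}_{/C}$. Applying the functor $\Sigma_g$ then exhibits $g\circ h$ as a retract of $g\circ q\simeq\iota_D\rho_D(g\circ h)$, which lies in $\mathcal{E}(D)$; as $\mathcal{E}(D)$ is a reflective, hence retract-closed, subcategory of $\mathcal{B}_{/D}$, we conclude $g\circ h\in\mathcal{E}(D)$, so $\mathcal{E}$ is closed under composition. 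Together with the earlier paragraphs this gives the full three-way equivalence, with everything outside the retract manoeuvre amounting to routine bookkeeping with units and the cocartesian straightening of $\rho$.
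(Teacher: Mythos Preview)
Your proof is correct. The formal equivalence between ``$\Sigma_g$ reflects $\mathcal{E}$-local equivalences'' and ``$\Sigma_g$ is conservative on $\mathcal{E}(C)$'' is handled essentially as in the paper (via the cocartesianness of $\rho$ established in Lemma~\ref{lemmarefllocbifib}), and your easy direction (closure under composition $\Rightarrow$ conservativity) is a mild repackaging of the paper's 3-cell argument.

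The substantive direction, however, is genuinely different from the paper's. The paper proves that the composite $de$ is $\mathcal{E}(C)$-local by verifying the mapping-space criterion directly: for an arbitrary $\mathcal{E}(C)$-local equivalence $f\colon A\to B$ over $C$, it shows that
\[
f^{\ast}\colon\mathcal{B}_{/C}(b,\Sigma_d(e))\rightarrow\mathcal{B}_{/C}(a,\Sigma_d(e))
\]
is an equivalence, via a square of homotopy-pullbacks of hom-spaces (diagram~(\ref{equlemmacharTglue})) in which the conservativity hypothesis is invoked once to see that $f$, viewed over $D$, is an $\mathcal{E}(D)$-local equivalence. Your route is instead to exhibit $g\circ h$ as a retract of its own reflection $\iota_D\rho_D(g\circ h)$: factoring $\phi_h\colon g\circ h\to g$ through the unit produces $q\in\mathcal{B}_{/C}$ with $g\circ q\simeq\iota_D\rho_D(g\circ h)$, conservativity forces the comparison $\beta\colon h\to q$ to be an $\mathcal{E}(C)$-local equivalence, and the naturality square of the unit then splits $\beta$. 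This is shorter and more conceptual, and avoids the hom-space bookkeeping; the paper's argument, by contrast, makes the precise point at which conservativity enters more visible and does not appeal to retract-closure of the local objects.
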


\begin{proof}
First, suppose $\mathcal{E}$ is closed under compositions. Let $f\colon A\rightarrow B$ be a map in $\mathcal{B}_{/C}$ for some object 
$C\in\mathcal{B}$ (via a 2-cell $\sigma$ in $\mathcal{B}$) and let $g\colon C\rightarrow D$ be a map contained in $\mathcal{E}$. Applying the 
unit $\eta$ to $f$ yields a 3-cell in $\mathcal{B}$ of the following shape. 
\begin{align}\label{diagcharTglue1}
\begin{gathered}
\xymatrix{
 & \bar{A}\ar[rr]^{d_1(\iota\rho(\sigma))}\ar@/_/[dd]|(.5)\hole^(.3){\iota\rho(a)}& & \bar{B}\ar@/^2pc/[ddll]^{\iota\rho(b)} \\
A\ar[rr]_(.6)f\ar[ur]^{\eta_a}\ar[dr]_a & & B\ar[dl]^b\ar[ur]_{\eta_b} & \\
 & C & & 
}
\end{gathered}
\end{align}
Postcomposition with $g\colon C\rightarrow D$ yields an according 3-cell as follows.
\begin{align}\label{diagcharTglue2}
\begin{gathered}
\xymatrix{
 & \bar{A}\ar[rr]^{d_1(\iota\rho(\sigma))}\ar@/_/[dd]|(.5)\hole^(.3){\Sigma_g(\iota\rho(a))}& & \bar{B}\ar@/^2pc/[ddll]^{\Sigma_g(\iota\rho(b))} \\
A\ar[rr]_(.6)f\ar[ur]^{\eta_a}\ar[dr]_{\Sigma_g(a)} & & B\ar[dl]^(.3){\Sigma_g(b)}\ar[ur]_{\eta_b} & \\
 & D & & 
}
\end{gathered}
\end{align}
Suppose the front 2-cell is a $\mathcal{E}(D)$-local equivalence. Since the $\Sigma_g$-action preserves $\mathcal{E}$-local 
equivalences (Remark~\ref{remcocartaction}), the 2-cells on the left and right hand side of the 3-cell (\ref{diagcharTglue2}) are
$\mathcal{E}(D)$-local equivalences. By 2-out-of-3 it follows that the back face is a $\mathcal{E}(D)$-local equivalence. Both its 
source and target are $\mathcal{E}(D)$-local objects, since $\mathcal{E}$-local objects are assumed to be closed under composition. It 
follows that the back face of (\ref{diagcharTglue2}) is an equivalence 1-cell in $\mathcal{B}^{\Delta^1}$. That means that the map 
$d_1(\iota\rho(\sigma)))$ is an equivalence in $\mathcal{B}$, which in turn means that the back face of the 3-cell in
(\ref{diagcharTglue1}) is an equivalence in $\mathcal{B}^{\Delta^1}$. In other words, the front face $\sigma$ is an
$\mathcal{E}(C)$-local equivalence. It follows that $\Sigma_g\colon\mathcal{B}_{/C}\rightarrow\mathcal{B}_{/D}$ reflects
$\mathcal{E}$-local equivalences, and so $\Sigma_g\colon\mathcal{E}(C)\rightarrow\mathcal{E}(D)$ is conservative (in virtue of 
homotopy-commutativity of the squares in (\ref{diagremcocartaction})).

Second, suppose the cocartesian action of $\mathcal{E}$-arrows on $\mathcal{E}$ is conservative, and let $e\colon E\rightarrow D$ be 
in $\mathcal{E}(D)$ and  $d\colon D\rightarrow C$ be in $\mathcal{E}(C)$. We want to show that the (essentially unique) composition
$de\colon E\rightarrow C$ is $\mathcal{E}(C)$-local. Therefore, let $f\colon A\rightarrow B$ be an $\mathcal{E}(C)$-local 
equivalence over $C$. Then any map $A\rightarrow E$ over $C$ induces essentially unique pairs of dotted arrows in the diagram
\[\xymatrix{
A\ar[rr]\ar[dr]_(.7)f\ar@/_/[ddr]_a & & E\ar[d]^e \\
 & B\ar[d]^b\ar@{-->}[r]\ar@{-->}[ur] & D.\ar@/^/[dl]^d \\
 & C &
}\] 
They are formally given by the equivalences in the square
\begin{align}\label{equlemmacharTglue} 
\begin{gathered}
\xymatrix{
\mathcal{B}_{/C}(b,\Sigma_d(e))\ar[rr]_(.4){\simeq}^(.4){(e_!,\pi_C(b,e))}\ar[dd]_{f^{\ast}}  & & \mathcal{B}_{/C}(b,d)\times_{\mathcal{B}(B,D)}\mathcal{B}(B,E)\ar[d]_{\simeq}^{(1,f^{\ast})} \\
 & & \mathcal{B}_{/C}(b,d)\times_{\mathcal{B}(A,D)}\mathcal{B}(A,E)\ar[d]_{\simeq}^{(f^{\ast},1)}  \\
\mathcal{B}_{/C}(a,\Sigma_d(e))\ar[rr]_(.4){\simeq}^(.4){(e_!,\pi_C(a,e))} & & \mathcal{B}_{/C}(a,d)\times_{\mathcal{B}(A,D)}\mathcal{B}(A,E)
}
\end{gathered}
\end{align}
where the objects on the right hand side denote homotopy-pullbacks of $\infty$-groupoids. Here, the two horizontal maps are 
equivalences by \cite[Proposition 2.4.4.3]{luriehtt} applied to the right fibration
$\pi_C\colon\mathcal{B}_{/C}\twoheadrightarrow\mathcal{B}$ and the cartesian morphism $e\colon E\rightarrow D$ in
$\mathcal{B}_{/C}$. The lower vertical map $(f^{\ast},1)$ is an equivalence, because $d\in\mathcal{B}_{/C}$ is contained in
$\mathcal{E}(C)$ by assumption, and $f\colon A\rightarrow B$ is $\mathcal{E}(C)$-local (and homotopy-pullbacks preserve equivalences). To see 
that the upper vertical map $(1,f^{\ast})$ is an equivalence, consider the triangle
\[\xymatrix{
\mathcal{B}_{/C}(b,d)\times_{\mathcal{B}(B,D)}\mathcal{B}(B,E)\ar@/_1pc/[dr]_{\pi_1}\ar[rr]^{(1,f^{\ast})} &  & \mathcal{B}_{/C}(b,d)\times_{\mathcal{B}(A,D)}\mathcal{B}(A,E)\ar@/^1pc/[dl]^{\pi_1} \\
 & \mathcal{B}_{/C}(b,d). & 
}\]
Its restrictions to the homotopy-fibers over points $g\in\mathcal{B}_{/C}(b,d)$ are the maps
\[f^{\ast}\colon\mathcal{B}_{/D}(g,e)\rightarrow\mathcal{B}_{/D}(gf,e).\]
These are equivalences, precisely because $e\in\mathcal{B}_{/D}$ is contained in $\mathcal{E}(D)$ and $f\colon A\rightarrow B$ 
considered as a map over $D$ is an $\mathcal{E}(D)$-local equivalence by the conservativity assumption. It follows that the map 
$(1,f^{\ast})$ is an equivalence itself since the $\infty$-category of $\infty$-groupoids is an $\infty$-topos generated by the point 
$\Delta^0$. It follows that the left vertical map $f^{\ast}$ in (\ref{equlemmacharTglue}) is an equivalence, which means that the 
composition $\Sigma_d(e)\colon E\rightarrow D\rightarrow C$ is $f$-local over $C$. Since $f$ was an arbitrary $\mathcal{E}(C)$-local 
equivalence, this means
it is contained in $\mathcal{E}(C)$ itself.
\end{proof}

\begin{corollary}\label{corlexmodetale}
Suppose $\mathcal{B}$ is left exact. Then we obtain a bijection between left exact modalities on $\mathcal{B}$ and  
reflective subcategories $\mathcal{E}\hookrightarrow\mathcal{B}^{\Delta^1}$ in $\mathbf{Cart}(\mathcal{B})$ with left exact left 
adjoint $\rho$ such that the cocartesian action $\Sigma_f\colon\mathcal{E}(A)\rightarrow\mathcal{E}(B)$ is conservative 
for all $f\colon A\rightarrow B$ in $\mathcal{B}$.
\end{corollary}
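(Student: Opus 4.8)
The plan is to read off this refinement of the earlier bijections from Proposition~\ref{propfibmods}, Corollary~\ref{corlexfibmods} and Lemma~\ref{lemmacharTglue}, together with one elementary extra observation. Starting from a left exact modality $(\mathcal{L},\mathcal{R})$ on $\mathcal{B}$, Proposition~\ref{propfibmods} yields the fibered reflective and replete subcategory $\iota\colon\mathcal{E}=\sum_{B\in\mathcal{B}}\mathcal{R}(B)\hookrightarrow\mathcal{B}^{\Delta^1}$, whose objects are closed under composition simply because the right class of any factorization system is; Corollary~\ref{corlexfibmods}.2 records that its fibered reflector $\rho$ is left exact; and, $\mathcal{L}$ being pullback-stable, Corollary~\ref{corlexfibmods}.1 shows $\rho$ is moreover a cartesian functor. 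Since $\iota$ is automatically a fibration in $\mathbf{Cart}(\mathcal{B})$ (pullback-stability of $\mathcal{R}$, via Lemma~\ref{subfibcart} and the discussion preceding Corollary~\ref{corlexfibmods}), the adjunction $\rho\adj\iota$ lives in $\mathbf{Cart}(\mathcal{B})$ and exhibits $\mathcal{E}$ as a reflective subcategory there with left exact left adjoint. In the converse direction, given such a reflective subcategory $\mathcal{E}\hookrightarrow\mathcal{B}^{\Delta^1}$ in $\mathbf{Cart}(\mathcal{B})$ with left exact $\rho$ in which $\Sigma_f$ is conservative for all $f$ in $\mathcal{B}$ --- in particular for all $f$ that are objects of $\mathcal{E}$ --- Lemma~\ref{lemmacharTglue} shows $\mathcal{E}$ is closed under composition, so Proposition~\ref{propfibmods} applies; cartesianness of $\rho$ produces a modality by Corollary~\ref{corlexfibmods}.1, which left-exactness of $\rho$ upgrades to a left exact modality by Corollary~\ref{corlexfibmods}.2. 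Both assignments refine the bijection of Proposition~\ref{propfibmods}, hence are mutually inverse.

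The only ingredient not contained verbatim in those statements is that, in the presence of a left exact modality, conservativity of $\Sigma_g$ for all objects $g$ of $\mathcal{E}$ --- exactly what Lemma~\ref{lemmacharTglue} provides --- is equivalent to conservativity of $\Sigma_f$ for \emph{all} $f$ in $\mathcal{B}$, as demanded by the statement. One implication is trivial; for the other I would factor an arbitrary $f\colon A\to B$ through its $(\mathcal{L},\mathcal{R})$-factorization $A\xrightarrow{l}\bar{A}\xrightarrow{p}B$ with $l\in\mathcal{L}$ and $p\in\mathcal{R}$. By functoriality of cocartesian transport in the bifibration $\mathcal{E}\twoheadrightarrow\mathcal{B}$ (which exists by Lemma~\ref{lemmarefllocbifib}) one then has $\Sigma_f\simeq\Sigma_p\circ\Sigma_l$ as functors $\mathcal{E}(A)\to\mathcal{E}(B)$; here $\Sigma_p$ is conservative by Lemma~\ref{lemmacharTglue} since $p$ is an object of $\mathcal{E}$, so it suffices to show that $\Sigma_l\colon\mathcal{E}(A)\to\mathcal{E}(\bar{A})$ is an equivalence whenever $l\in\mathcal{L}$. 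Its right adjoint is the restriction $l^{\ast}\colon\mathcal{E}(\bar{A})\to\mathcal{E}(A)$ of pullback along $l$ (well defined by Remark~\ref{remcocartaction}), and I would verify that the unit and counit of $\Sigma_l\adj l^{\ast}$ are equivalences directly from the defining factorizations: pullback-stability of $\mathcal{L}$ puts the relevant comparison maps into $\mathcal{L}$, and then essential uniqueness of $(\mathcal{L},\mathcal{R})$-factorizations together with $\mathcal{L}\cap\mathcal{R}$ being the class of equivalences forces these comparison maps --- which are then maps in $\mathcal{L}$ between $\mathcal{R}$-objects over a common base --- to be equivalences. The $2$-out-of-$3$ property of $\mathcal{L}$, available because $\mathcal{L}$ is a congruence, is what is used to recognize the comparison maps as lying in $\mathcal{L}$ in the first place.

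The hard part, such as it is, will be this last computation --- that $\mathcal{L}$-maps induce adjoint equivalences between the fibers of the reflective subfibration $\mathcal{E}\twoheadrightarrow\mathcal{B}$. Everything else is a matter of matching hypotheses to the established dictionary: ``reflective in $\mathbf{Cart}(\mathcal{B})$'' corresponds to ``cartesian reflector'', hence (Corollary~\ref{corlexfibmods}.1) to a modality; ``left exact reflector'' upgrades this (Corollary~\ref{corlexfibmods}.2) to a left exact modality; and closure of $\mathcal{E}$ under composition is automatic on the modality side while, on the fibrational side, it is precisely the conservativity of the cocartesian action recorded in Lemma~\ref{lemmacharTglue}.
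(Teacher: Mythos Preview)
Your argument is correct, but you take a more laborious route to the one nontrivial implication than the paper does. The paper dispatches conservativity of $\Sigma_f$ for \emph{all} $f$ in a single stroke via Lemma~\ref{lemmafibmodscons}.2: for a left exact modality the class of $\mathcal{E}(B)$-local equivalences is exactly $\mathcal{L}_{/B}$, and since the underlying arrow of $\Sigma_f(g)$ in $\mathcal{B}$ is literally $g$ itself, membership in $\mathcal{L}$ is unchanged under $\Sigma_f$, whence $\Sigma_f\colon\mathcal{B}_{/A}\to\mathcal{B}_{/B}$ reflects $\mathcal{E}$-local equivalences. No factorization of $f$ is needed.

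Your approach instead factors $f=p\circ l$ and proves the separate (and stronger) statement that $\Sigma_l\colon\mathcal{E}(A)\to\mathcal{E}(\bar{A})$ is an adjoint equivalence whenever $l\in\mathcal{L}$. This is true and your sketch is sound: the counit reduces to the observation that $l^\ast X\to X$ is the $\mathcal{L}$-part of the factorization of $l^\ast X\to\bar{A}$; the unit $Y\to l^\ast\bar{Y}$ lies in $\mathcal{L}$ by left-cancellation (using that $\mathcal{L}$ is a congruence), and a map in $\mathcal{L}$ between $\mathcal{R}$-objects over the same base is an equivalence by essential uniqueness of factorizations, exactly as you say. This is the content of \cite[Proposition~3.3.5]{abjfsheavesI}, which the paper invokes elsewhere but does not reprove here. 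So what you have done is essentially reprove that proposition in situ, whereas the paper sidesteps it entirely by appealing to the identification of local equivalences with $\mathcal{L}_{/B}$ already established in Lemma~\ref{lemmafibmodscons}.2. Both arguments are fine; the paper's is shorter, while yours yields a fact of independent interest.
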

\begin{proof}
Against the background of Lemma~\ref{lemmacharTglue} and Corollary~\ref{corlexfibmods}.2 we are only left to show that whenever a 
reflective localization $\mathcal{E}\hookrightarrow\mathcal{B}^{\Delta^1}$ in $\mathbf{Cart}(\mathcal{B})$ arises from a left exact 
modality on $\mathcal{B}$, then the cocartesian action of any arrow in $\mathcal{B}$ on $\mathcal{E}$ is conservative.
Thus assume $\mathcal{E}\simeq\sum_{B\in\mathcal{B}}\mathcal{R}(B)$ for an associated left exact modality $(\mathcal{L},\mathcal{R})$ 
on $\mathcal{B}$. Then for all $B\in\mathcal{B}$ the class of $\mathcal{E}(B)$-local equivalences is exactly 
the class $\mathcal{L}_{/B}$ by Lemma~\ref{lemmafibmodscons}.2. Clearly, given a map $f\colon A\rightarrow B$ in $\mathcal{B}$ and a map 
$g\colon C\rightarrow D$ over $A$, we have $\Sigma_f(g)\in\mathcal{L}_{/B}$ if and only if $g\in\mathcal{L}_{/A}$. Thus,
$\Sigma_f\colon\mathcal{B}_{/A}\rightarrow\mathcal{B}_{/B}$ reflects $\mathcal{E}$-local equivalences.
\end{proof}

\begin{remark}\label{remlawverecomp}
Let $\mathcal{E}\hookrightarrow\mathcal{B}^{\Delta^1}$ be a fibered reflective and replete subcategory with fibered left adjoint $\rho$. The 
full $\infty$-subcategory $\mathcal{E}\subseteq\mathcal{B}^{\Delta^1}$ contains all identities if and only if the fibration
$\mathcal{E}\twoheadrightarrow\mathcal{B}$ has a terminal object (as an object of the $\infty$-cosmos
$(\mathbf{Cat}_{\infty})_{/\mathcal{B}}$). We have seen in Lemma~\ref{lemmaidreduct} that this is always automatically satisfied.
Since $g\simeq\Sigma_g(1_C)$ for all arrows $g\colon C\rightarrow D$ in $\mathcal{B}$, the formula 
(\ref{equcocartformula}) exhibits the functor $\rho\colon\mathcal{B}^{\Delta^1}\rightarrow\mathcal{E}$ as the left adjoint 
in Lawvere's comprehension schema (\cite[Section 2]{lawverecomp}). We hence obtain a 1-1 correspondence between factorization systems
(modalities) on $\mathcal{B}$ and those \emph{full (cartesian) Lawvere $\infty$-categories} in the sense of
\cite[Example 4.18]{jacobscompcats} which have sums (as a comprehension $\infty$-category in the sense of \cite[Section 5]{jacobscompcats}).
\end{remark}


\section{Fibered structures over $\infty$-toposes II}\label{secfibglobal2}

Having characterized various kinds of factorization systems in an $\infty$-category $\mathcal{B}$ via associated kinds of fibered 
reflective localizations of its target fibration $t\colon\mathcal{B}^{\Delta^1}\twoheadrightarrow\mathcal{B}$, we may apply the Beck 
Monadicity Theorem to describe these fibered reflective localizations furthermore in terms of associated kinds of fibered idempotent 
monads on $t$. We thus will show in Section~\ref{sechigherclosureops} the following theorem and define all non-standard notions occurring 
therein.

\begin{repthm}{thmlocmodopacc}[Higher closure operators]
Let $\mathcal{B}$ be a presentable $\infty$-category with universal colimits. Then there is a bijection between 
left exact modalities of small generation on $\mathcal{B}$ and higher closure operators on
$\mathcal{B}$: that is, idempotent monads $T\colon t_{\mathcal{B}}\rightarrow t_{\mathcal{B}}$ in the $\infty$-cosmos
$\mathbf{Cart}(\mathcal{B})$ which are (formally) left exact, accessible and compositional.
\end{repthm}

In Section~\ref{secsubhlttops} we internalize the cartesian reflective localizations from Section~\ref{secsubfsrl} (or equivalently the 
higher closure operators from Section~\ref{sechigherclosureops}) whenever the base $\mathcal{B}$ is an $\infty$-topos, and show the following 
theorem as well as define all non-standard notions occurring therein.

\begin{repthm}{thmcharlvtops}[Higher Lawvere-Tierney operators]
Let $\mathcal{B}$ be an $\infty$-topos. Then there is a bijection between left exact modalities of small generation on
$\mathcal{B}$, and equivalence classes of Lawvere-Tierney operators on $\mathcal{N}(\pi_{\bullet})$: that is, up to equivalence,
unbounded sequences of left exact, accessible and compositional idempotent monads
$T_{\kappa}\colon\mathcal{N}(\pi_{\kappa})\rightarrow\mathcal{N}(\pi_{\kappa})$ in the $\infty$-cosmos $\mathbf{Cat}_{\infty}(\mathcal{B})$ 
of internal $\infty$-categories in $\mathcal{B}$, where $\mathcal{N}(\pi_{\kappa})$ denotes the nerve of the (essentially unique) relatively
$\kappa$-compact object classifier $\pi_{\kappa}$ in $\mathcal{B}$ for suitable regular cardinals $\kappa$.
\end{repthm}

\subsection{Higher closure operators}\label{sechigherclosureops}

\subsubsection*{Monads and the Beck Monadicity Theorem}

The aim of this subsection is to state the Beck Monadicity Theorem in a general $\infty$-cosmos as developed by Riehl and Verity in the 
special case of reflective localizations on the one hand and idempotent monads on the other hand (Corollary~\ref{corbeckmonadicity}). Thus, 
the reader familiar with the nature of the result may skip to Corollary~\ref{corbeckmonadicity} right away.

To summarize the most relevant notions from \cite{riehlverityadjmon} in this context, first we 
recall that every $\infty$-cosmos is in particular a simplicially enriched category. A homotopy-coherent adjunction in an
$\infty$-cosmos $\mathcal{V}$ is a simplicial functor $\underline{\mathrm{Adj}}\rightarrow\mathcal{V}$ from the simplicial category
$\underline{\mathrm{Adj}}$ called the \emph{generic adjunction} (\cite[Section 3]{riehlverityadjmon}). The simplicial category
$\underline{\mathrm{Adj}}$ has two objects $+$ and $-$, its hom-objects are given by the (nerve of the) category of augmented 
simplicial sets
\[\underline{\mathrm{Adj}}(+,+)\cong\underline{\mathrm{Adj}}(-,-)^{op}\cong\Delta_+,\]
and the wide subcategory
\[\underline{\mathrm{Adj}}(-,+)\cong\underline{\mathrm{Adj}}(+,-)^{op}\cong\Delta_{\infty}\]
of $\Delta$ whose operators preserve the top element. Both categories come equipped with a monoidal structure 
derived from the ordinal sum operation which induces a simplicially enriched composition operation on $\underline{\mathrm{Adj}}$ 
(\cite[Remark 3.3.8]{riehlverityadjmon}). Similarly, there is a simplicial category
$\underline{\mathrm{Mnd}}$
called the \emph{generic monad}; it has one object $+$, and the according hom-object is given by the category $\Delta_+$ again 
equipped with the join operation (\cite[Section 6]{riehlverityadjmon}). Consequently, a monad $T$ on 
an object $B$ in an $\infty$-cosmos $\mathcal{V}$ is a simplicial functor $T\colon\underline{\mathrm{Mnd}}\rightarrow\mathcal{V}$ 
which maps $+$ to $B$. Its action on hom-quasi-categories $T(+,+)\colon N(\Delta_+)\rightarrow\mathcal{V}(B,B)$, which in the 
following we denote simply by $T$ itself, respects the monoidal structures. Hence, $T_{-1}=\mathrm{id}_B$, $T_n=(T_0)^n$ for
$n\geq 0$, and the boundaries and degeneracies are mapped to according compositions of the multiplication $\mu$ and the unit $\eta$ 
associated to $T$. Indeed, its evaluation at the 1-skeleton of $N(\Delta_+)$ can be depicted by the following diagram in
$\mathcal{V}(B,B)$ (which classically depicts the associated monad resolution, see \cite[Definition 6.1.4]{riehlverityadjmon}).
\begin{align}\label{diagmonres}
\xymatrix{
\mathrm{id}_B\ar[rr]|{\eta} & & T_0\ar@<2ex>[rr]|(.4){\eta T}\ar@<-2ex>[rr]|(.4){T\eta} & & T_0\circ T_0\ar[ll]|(.6){\mu}\ar@<4ex>[rr]|(.4){\eta T T}\ar[rr]|(.4){T \eta T}\ar@<-4ex>[rr]|(.4){T T \eta} & & T_0\circ T_0\circ T_0\ar@<2ex>[ll]|(.6){\mu T}\ar@<-2ex>[ll]|(.6){T \mu} & \dots
}
\end{align}
For every homotopy-coherent adjunction 
\begin{align}\label{equlttop1}
\xymatrix{B\ar@/^/[r]^f_{\rotatebox[origin=c]{90}{$\vdash$}} & A\ar@/^/[l]^{u}}
\end{align}
in an $\infty$-cosmos $\mathcal{V}$ -- given by a simplicial functor $\underline{\mathrm{Adj}}\rightarrow\mathcal{V}$ that takes the 
according $0$-simplices in the sign-switching hom-objects to $u$ and $f$, respectively -- restriction along the canonical inclusion
$\underline{\mathrm{Mnd}}\rightarrow\underline{\mathrm{Adj}}$ gives rise to a monad $T$ on $B$ with underlying endofunctor
$T(+,+)_0=uf$. Its unit is exactly the unit of the adjunction, its multiplication is given by the composition $u\epsilon f$ where
$\epsilon$ is the counit of the adjunction.

In turn, every monad $T$ on an object $B$ in $\mathcal{V}$ gives rise to a homotopy-coherent adjunction by its associated
Eilenberg-Moore object of algebras
\begin{align}\label{equlttop2}
\xymatrix{B\ar@/^/[r]^{f^T}_{\rotatebox[origin=c]{90}{$\vdash$}} & B[T]\ar@/^/[l]^{u^T}}
\end{align}
constructed as a flexible weighted limit of $T$ in \cite[Section 6.1]{riehlverityadjmon}.
Such adjunctions derived from a monad $T$ are called \emph{monadic} (\cite[Definition 6.1.14]{riehlverityadjmon}). The monad 
associated to a monadic adjunction (\ref{equlttop2}) associated to a monad $T$ is equivalent to $T$ itself (this basically follows 
directly from \cite[Definition 6.1.14]{riehlverityadjmon}). We thus obtain an equivalence between monadic adjunctions over an object 
$B$ in an $\infty$-cosmos $\mathcal{V}$ and monads on $B$ in $\mathcal{V}$.

To recognize when a given adjunction is monadic, we may apply the popular Beck Monadicity Theorem.

\begin{theorem}[{Beck Monadicity Theorem, \cite{riehlverityadjmon}}]\label{thmbeckmonadicity}
Let $\mathcal{V}$ be an $\infty$-cosmos. For every homotopy-coherent adjunction of the form (\ref{equlttop1}) in $\mathcal{V}$ with 
associated monad $T$, the associated monadic homotopy-coherent adjunction (\ref{equlttop2}) in $\mathcal{V}$ comes together with a 
canonical comparison functor
\[\xymatrix{
A\ar@{-->}[rr]\ar@<.5ex>[dr]^u & & B[T]\ar@<.5ex>[dl]^{u^T} \\
 & B\ar@<.5ex>[ul]^f\ar@<.5ex>[ur]^{f^T} & 
}\]
that defines the underlying functor of a simplicial natural transformation between the two adjunctions (each considered as a
simplicial functor defined on $\underline{\mathrm{Adj}}$).

If $A$ admits colimits of $u$-split simplicial objects, then the comparison functor admits a left adjoint. If $u$ preserves colimits 
of $u$-split simplicial objects and reflects equivalences, then this adjunction defines an adjoint equivalence $A\simeq B[T]$.
\end{theorem}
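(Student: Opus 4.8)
The plan is to follow the formal theory of homotopy-coherent monads developed in \cite{riehlverityadjmon} and specialise it to the present setting; since the statement is purely $\infty$-cosmological, nothing about $\mathbf{Cart}(\mathcal{B})$ beyond the $\infty$-cosmos axioms enters, and the argument is the same for an arbitrary $\mathcal{V}$.

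First I would construct the comparison functor. A homotopy-coherent adjunction as in (\ref{equlttop1}) is a simplicial functor $H\colon\underline{\mathrm{Adj}}\to\mathcal{V}$ with $H(+)=B$ and $H(-)=A$; restricting $H$ along the inclusion $\underline{\mathrm{Mnd}}\hookrightarrow\underline{\mathrm{Adj}}$ (as the full simplicial subcategory on $+$) recovers the monad $T$ on $B$, with underlying endofunctor $uf$ and structure maps as in Diagram~(\ref{diagmonres}). The Eilenberg--Moore object $B[T]$ is the flexible weighted limit of $T$ for the canonical weight on $\underline{\mathrm{Mnd}}$ described in \cite[Section 6.1]{riehlverityadjmon}; its universal property says that a functor $X\to B[T]$ in $\mathcal{V}$ is the same as a functor $X\to B$ together with homotopy-coherent $T$-algebra data, equivalently a simplicial cone of the appropriate shape over $T$ with vertex $X$. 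The adjunction $H$ supplies exactly such a cone with vertex $A$, built from the counit $\epsilon$ (the homotopy-coherent form of the classical $u$-split resolution of $u$), so by the universal property it induces a canonical functor $A\to B[T]$ over $B$ commuting with the free and forgetful functors; that it promotes to a simplicial natural transformation of adjunctions follows because the cone was extracted functorially from $H$.

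Next I would produce the left adjoint to the comparison functor $K\colon A\to B[T]$ under the hypothesis that $A$ admits colimits of $u$-split simplicial objects. The guiding idea is the \emph{bar construction}: to $b\colon X\to B[T]$ with underlying object $u^{T}b\colon X\to B$ one associates the simplicial object $f\,T^{\bullet}\,u^{T}b$ in $A$, which is $u$-split since its image under $u$ is the standard split augmented simplicial object $T^{\bullet+1}u^{T}b\to u^{T}b$, and one wants $L(b):=\operatorname{colim}\big(f\,T^{\bullet}\,u^{T}b\big)$. Promoting this to an actual functor $L\colon B[T]\to A$ is the technical heart: I would package $\operatorname{id}_{B[T]}$ as the $u^{T}$-split colimit of the free-algebra resolution and obtain $L$ by applying the ``colimit of $u$-split simplicial objects'' operation (available in $A$ by hypothesis) to the composite of that resolution with $f$, deriving the triangle identities from the corresponding split-colimit identities of the two monadic adjunctions. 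This is where I expect the real obstacle to lie: unlike the $1$-categorical case one must carry along an entire tower of coherence data, and the cleanest route is to phrase everything as maps of weights on $\underline{\mathrm{Adj}}$ and $\underline{\mathrm{Mnd}}$ and invoke the cosmological limit/colimit calculus of \cite{riehlverityadjmon}.

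Finally, for the last clause I would verify that the unit and counit of $L\dashv K$ are pointwise equivalences under the stated hypotheses. The counit $\epsilon^{LK}_{a}\colon LKa\to a$ is the canonical comparison map out of the colimit of the (u$\,$-split) bar resolution of $a$; since that simplicial object becomes split after applying $u$, and $u$ preserves colimits of $u$-split simplicial objects by hypothesis, $u(\epsilon^{LK}_{a})$ is the canonical equivalence out of a split colimit, hence an equivalence, and as $u$ reflects equivalences, $\epsilon^{LK}_{a}$ is an equivalence. Dually the unit $\eta^{LK}\colon\operatorname{id}_{B[T]}\to KL$ is an equivalence because the forgetful functor $u^{T}\colon B[T]\to B$ is always conservative and always preserves (indeed creates) colimits of $u^{T}$-split simplicial objects -- a structural property of Eilenberg--Moore objects read off from the defining weight -- so the same split-colimit argument applies with $u^{T}$ in place of $u$. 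Hence $L$ is left adjoint to $K$ with invertible unit and counit, i.e.\ the comparison functor is an adjoint equivalence $A\simeq B[T]$.
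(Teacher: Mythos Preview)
Your sketch is correct and faithfully outlines the Riehl--Verity argument: the comparison functor from the universal property of $B[T]$ as a flexible weighted limit, the left adjoint via the bar resolution and colimits of $u$-split simplicial objects, and the equivalence via conservativity of $u$ and $u^T$ together with preservation of split colimits. The paper itself does not reprove this result but simply cites \cite[Section 7]{riehlverityadjmon}, noting via \cite[Remark 6.1.2]{riehlverityadjmon} and \cite[Proposition 6.2.8.(i)]{riehlverityelements} that the flexible weighted limits required exist in every $\infty$-cosmos; so your proposal is more detailed than the paper's treatment, but follows exactly the same (cited) approach.
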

\begin{proof}
This is \cite[Section 7]{riehlverityadjmon}. It applies to all $\infty$-cosmoi against the background of
\cite[Remark 6.1.2]{riehlverityadjmon} and \cite[Proposition 6.2.8.(i)]{riehlverityelements}.
\end{proof}

Recall that a homotopy-coherent adjunction in an $\infty$-cosmos is a reflective localization if its associated counit is an 
equivalence. Whenever the adjunction is monadic, this in turn is the case if and only if the associated monad is idempotent, as we 
spell out a quick proof of in the next lemma due to a lack of reference. Therefore, recall that a monad
$T\colon\underline{\mathrm{Mnd}}\rightarrow\mathcal{V}$ with  $T(+)=B$ is \emph{idempotent} if the multiplication
$\mu=T(d_1)\colon T_0\circ T_0\rightarrow T_0$ is an equivalence in the quasi-category $\mathcal{V}(B,B)$.  

\begin{remark}\label{remidpotlocality}
Idempotency of a monad is a locality condition in the following sense. Let $T\colon\underline{\mathrm{Mnd}}\rightarrow\mathcal{V}$ be 
a monad on an object $B$ in an $\infty$-cosmos $\mathcal{V}$. Then $T$ is idempotent if and only if the underlying cosimplicial object 
$T\colon\Delta\rightarrow\mathcal{V}(B,B)$ is homotopically constant, i.e.\ it maps every edge to an equivalence in the quasi-category 
$\mathcal{V}(B,B)$. This essentially follows from Diagram~(\ref{diagmonres}), the cosimplicial identities and the 2-out-of-3 property 
for equivalences in a quasi-category.
\end{remark}

\begin{lemma}
Let $\mathcal{V}$ be an $\infty$-cosmos. Then a monadic adjunction of the form (\ref{equlttop2}) in $\mathcal{V}$ is a reflective 
localization if and only if the monad $T$ is idempotent.
\end{lemma}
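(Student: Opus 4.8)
The plan is to prove both implications by tracking the counit and multiplication through the monad/Eilenberg–Moore correspondence.

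For the ``if'' direction, suppose $T$ is idempotent. Given the monadic adjunction $(f^T\dashv u^T)$ from (\ref{equlttop2}), I would invoke the general fact recalled before the statement that the monad associated to this adjunction is (equivalent to) $T$ itself, and that its multiplication is $u^T\epsilon^T f^T$, where $\epsilon^T$ is the counit. Since the adjunction is monadic, the right adjoint $u^T$ is conservative (this is immediate from the construction of the Eilenberg--Moore object as a flexible weighted limit, or can be cited from \cite[Section 6.1]{riehlverityadjmon}). By idempotency, $\mu = u^T\epsilon^T f^T$ is an equivalence in $\mathcal{V}(B,B)$; one then argues that $u^T\epsilon^T$ is already an equivalence in $\mathcal{V}(B[T],B)$. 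The cleanest way is to use Remark~\ref{remidpotlocality}: idempotency says the cosimplicial object $T\colon\Delta\to\mathcal{V}(B,B)$ is homotopically constant, so in particular $\eta T_0\colon T_0\to T_0\circ T_0$ is an equivalence with inverse $\mu$; transporting along the monadic adjunction, the whisckered counit $u^T\epsilon^T$ becomes a retract of an equivalence, hence itself an equivalence. Then since $u^T$ reflects equivalences, $\epsilon^T$ is an equivalence, i.e.\ the adjunction is a reflective localization.

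For the ``only if'' direction, suppose the monadic adjunction $(f^T\dashv u^T)$ is a reflective localization, i.e.\ its counit $\epsilon^T$ is an equivalence. Then the multiplication of the associated monad is $\mu = u^T\epsilon^T f^T$, a whiskering of an equivalence, hence an equivalence in $\mathcal{V}(B,B)$. But the monad associated to the monadic adjunction for $T$ is equivalent to $T$, so the multiplication of $T$ is an equivalence, which is precisely idempotency of $T$.

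The main technical obstacle is bookkeeping the whiskering argument purely within an arbitrary $\infty$-cosmos $\mathcal{V}$: all the manipulations (``$\mu$ equivalence $\Rightarrow$ $u^T\epsilon^T$ equivalence'', ``$u^T$ reflects equivalences'') have to be phrased using the simplicially enriched structure of $\mathcal{V}$ and the homotopy-coherent nature of the adjunction, rather than by naive 2-categorical pasting. I expect the smoothest route is to reduce everything to statements in the quasi-category $\mathcal{V}(B,B)$ via Remark~\ref{remidpotlocality} together with the identification of the monad of (\ref{equlttop2}) with $T$, and to use that $u^T$ is conservative (part of the monadicity package) so that checking an equivalence after applying $u^T$ suffices; the classical triangle identity $u^T\epsilon^T f^T\cdot f^T\eta = \mathrm{id}$ then does the rest, exhibiting $u^T\epsilon^T$ as split by $u^T\epsilon^T\cdot (\text{unit whiskering})$ in a way that forces it to be an equivalence once $\mu$ is.
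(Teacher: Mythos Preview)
Your ``only if'' direction is correct and matches the paper exactly: if the counit $\epsilon^T$ is an equivalence, then so is the whiskering $\mu = u^T\epsilon^T f^T$.

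For the ``if'' direction you attempt a different route from the paper, and there is a genuine gap. You correctly note that conservativity of $u^T$ reduces the problem to showing $u^T\epsilon^T$ is an equivalence, and that one triangle identity supplies a section $\eta u^T$. But you never explain why $\eta u^T$ (equivalently $u^T\epsilon^T$) is itself an equivalence. Knowing that $\mu = u^T\epsilon^T f^T$ is an equivalence only controls the counit on \emph{free} algebras; neither the phrase ``retract of an equivalence'' nor the triangle identity $u^T\epsilon^T f^T\cdot u^T f^T\eta = \mathrm{id}$ (which is just the monad unit law $\mu\cdot T\eta = 1$ and lives entirely over $B$, not $B[T]$) extends this to all of $B[T]$.

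The paper closes the gap by a different mechanism: from $\mu$ an equivalence and $u^T$ conservative one gets that $\epsilon^T f^T$ is an equivalence, so the counit is an equivalence on every free algebra. Then every algebra is the colimit of its bar resolution by free algebras (\cite[Theorem~6.3.17]{riehlverityadjmon}), and idempotency forces this simplicial object to be homotopically constant (Remark~\ref{remidpotlocality}), so every algebra is free and $\epsilon^T$ is an equivalence globally.

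Your direct 2-categorical approach \emph{can} be made to work, but it needs an explicit argument you have not given: using naturality of $\eta$ one has $\eta u^T \cdot u^T\epsilon^T \simeq T(u^T\epsilon^T)\cdot(\eta T)u^T$; the second factor is an equivalence by idempotency, and the first is an equivalence because it has the equivalence $(T\eta)u^T$ as a section. Thus $\eta u^T \cdot u^T\epsilon^T$ is an idempotent equivalence, hence homotopic to the identity, and combined with $u^T\epsilon^T\cdot\eta u^T = 1$ this makes $u^T\epsilon^T$ an equivalence. Spelling this out would give a valid alternative to the paper's bar-resolution argument.
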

\begin{proof}
The multiplication 1-cell $\mu\colon T_0\circ T_0\rightarrow T_0$ of the monad $T$ in $\mathcal{V}(B,B)$ is exactly 
the composition $u\epsilon f\colon fufu\rightarrow fu$ for $\epsilon$ the counit of its associated monadic adjunction $f\adj u$. 
It follows that whenever the counit $\epsilon$ is a natural equivalence, then so is the multiplication $\mu$. 
Vice versa, if $\mu = u\epsilon f$ is an equivalence, then so is $\epsilon f$, because the right adjoint $u$ is conservative whenever 
the adjunction is monadic (\cite[Corollary 6.2.3]{riehlverityadjmon}). 
That means the counit is an equivalence on every free algebra. But every algebra $A$ in $B[T]$ is the colimit of a simplicial object whose 
vertices $A_n$ are free algebras (\cite[Theorem 6.3.17]{riehlverityadjmon}) and which is essentially constant in virtue of idempotency of the 
monad $T$ (as noted in Remark~\ref{remidpotlocality}). It follows that every algebra $A\simeq A_0$ is free.
\end{proof}

Thus, the equivalence between monadic adjunctions over an object $B\in\mathcal{V}$ and monads on $B$ restricts to an equivalence 
between monadic reflective localizations of $B$ and idempotent monads on $B$. 

\begin{corollary}
In an $\infty$-cosmos, every homotopy-coherent reflective localization is monadic.
\end{corollary}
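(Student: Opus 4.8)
The plan is to invoke the Beck Monadicity Theorem (Theorem~\ref{thmbeckmonadicity}). So let a homotopy-coherent reflective localization in an $\infty$-cosmos $\mathcal{V}$ be given, say with left adjoint $f\colon B\to A$, right adjoint $u\colon A\to B$, unit $\eta$ and counit $\epsilon$; by definition of ``reflective localization'' the counit $\epsilon$ is an equivalence, equivalently $u$ is fully faithful. Let $T$ denote the associated monad on $B$. I would then simply check the three hypotheses of Theorem~\ref{thmbeckmonadicity}: that $u$ reflects equivalences, that $A$ admits colimits of $u$-split simplicial objects, and that $u$ preserves them. The Beck Monadicity Theorem then supplies an adjoint equivalence $A\simeq B[T]$ realized by the comparison functor, which is exactly the assertion that $f\dashv u$ is monadic.

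The first hypothesis is immediate, since a fully faithful functor in an $\infty$-cosmos is conservative. For the remaining two, I would argue that $u$ in fact \emph{creates} colimits of $u$-split simplicial objects. Given such an object $A_\bullet$, its image $uA_\bullet$ underlies a split augmented simplicial object in $B$; split augmented simplicial objects are absolute colimit cocones, so the augmentation $X$ is $\operatorname{colim}uA_\bullet$, this colimit is preserved by every functor, and $X$ occurs as a retract of the $0$-simplices $uA_0$. The crux is that $X$ then lies in the essential image of $u$: the unit component $\eta_{uA_0}$ is an equivalence (by the triangle identity $u\epsilon\cdot\eta u=\mathrm{id}_u$ together with $\epsilon$ being an equivalence), and since $\eta$ is natural, $\eta_X$ is a retract of $\eta_{uA_0}$ in the arrow $\infty$-category $B^{\Delta^1}$, hence an equivalence; thus $X\simeq uA_\infty$ for some $A_\infty\in A$. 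Full faithfulness of $u$ then gives, naturally in $W\in A$, the chain $\operatorname{Map}_A(A_\infty,W)\simeq\operatorname{Map}_B(uA_\infty,uW)\simeq\lim_{\Delta^{\mathrm{op}}}\operatorname{Map}_B(uA_\bullet,uW)\simeq\lim_{\Delta^{\mathrm{op}}}\operatorname{Map}_A(A_\bullet,W)$, so $A_\infty\simeq\operatorname{colim}A_\bullet$ in $A$ and $u$ carries this colimit to $X\simeq\operatorname{colim}uA_\bullet$.

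I expect the only mildly delicate point to be executing the split-simplicial-object and retract manipulations entirely inside the $\infty$-cosmos $\mathcal{V}$, i.e.\ keeping the relevant notions (``$u$-split simplicial object'', ``absolute colimit'') in the precise sense of \cite{riehlverityadjmon} rather than importing intuition from an ambient $\infty$-category. As a consistency check, note that the monad $T=uf$ is idempotent --- its multiplication is $u\epsilon f$, an equivalence --- so that the resulting monadic adjunction $f^{T}\dashv u^{T}$ is a reflective localization by the preceding Lemma, and the comparison functor produced above identifies $f\dashv u$ with $f^{T}\dashv u^{T}$; this recovers, from the side of reflective localizations, the equivalence between monadic reflective localizations of $B$ and idempotent monads on $B$ noted after that Lemma.
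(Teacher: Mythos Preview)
Your proposal is correct and follows the same overall strategy as the paper: invoke the Beck Monadicity Theorem and verify its three hypotheses. The only difference is in how the colimit conditions are checked. Rather than showing the augmentation $X\in B$ lies in the essential image of $u$ via a retract argument with the unit and then verifying the universal property through mapping spaces, the paper simply applies the reflector $f$ to the entire split augmented simplicial object in $B$; since the counit $\epsilon$ is an equivalence, this yields a split augmented extension of $A_\bullet$ in $A$ directly, whose colimit is therefore absolute and hence preserved by $u$. This shortcut stays entirely within the formal $\infty$-cosmos apparatus and sidesteps precisely the mapping-space manipulation you flagged as the delicate point. Your route is equally valid, but---as you anticipated---the chain involving $\mathrm{Map}_A(-,-)$ would need to be rephrased in terms of comma objects or representables to be literally correct in a general $\infty$-cosmos $\mathcal{V}$; the paper's argument avoids that translation.
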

\begin{proof}
Given a reflective localization of the form (\ref{equlttop1}) in an $\infty$-cosmos $\mathcal{V}$, by the Monadicity Theorem we only 
have to show that $A$ admits colimits of $u$-split simplicial objects, that $u$ preserves colimits of $u$-split 
simplicial objects, and that $u$ reflects equivalences. But $A$ admits all colimits that exist in $B$, and $B$ always admits colimits of 
split augmented simplicial objects (given by the augmentation itself, see \cite[Proposition 2.3.15]{riehlverityelements}). A $u$-split 
augmented simplicial object in $A$ in fact yields a split \emph{augmented} simplicial object in $A$ by application of the reflector to the 
corresponding split augmented simplicial object in $B$. But colimits of such are absolute by the same Proposition in Riehl and Verity's book, 
and so it follows that $u$ preserves colimits of $u$-split simplicial objects. 
It furthermore reflects equivalences, because $f$ preserves equivalences and the counit $\epsilon\colon fu\rightarrow \mathrm{id}_A$ is 
a natural equivalence itself.
\end{proof}

We summarize this general discussion of monadicity in $\infty$-cosmoses by the following corollary which is exactly what we want to 
use in our case of interest.

\begin{corollary}\label{corbeckmonadicity}
Given an $\infty$-cosmos $\mathcal{V}$ together with an object $B\in\mathcal{V}$, the Beck Monadicity Theorem induces a bijection between 
equivalence classes of reflective localizations of $B$ in $\mathcal{V}$ and equivalence classes of idempotent monads $T$ on $B$ in
$\mathcal{V}$.
\end{corollary}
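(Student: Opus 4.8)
The plan is to assemble the three facts already established in this subsection into a single bijection, with essentially no new analytic content. First I would recall the general correspondence noted above: sending a monad $T$ on $B$ to its Eilenberg--Moore adjunction $f^T\dashv u^T$ of (\ref{equlttop2}), and a monadic homotopy-coherent adjunction to the monad it generates by restriction along $\underline{\mathrm{Mnd}}\to\underline{\mathrm{Adj}}$, exhibits an equivalence between monadic homotopy-coherent adjunctions over $B$ in $\mathcal{V}$ and monads on $B$ in $\mathcal{V}$; here the monad of the monadic adjunction associated to $T$ is equivalent to $T$ (from \cite[Definition 6.1.14]{riehlverityadjmon}), and by the Beck Monadicity Theorem~\ref{thmbeckmonadicity} the comparison functor of an adjunction is an equivalence precisely when that adjunction is monadic.

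Next I would cut this correspondence down to the relevant subclasses on each side. By the Lemma above, a monadic adjunction of the form (\ref{equlttop2}) is a reflective localization if and only if its monad $T$ is idempotent; hence the equivalence restricts to a bijection, on equivalence classes, between \emph{monadic} reflective localizations of $B$ and idempotent monads on $B$. Finally, by the Corollary above, every homotopy-coherent reflective localization of $B$ in $\mathcal{V}$ is already monadic, so the qualifier ``monadic'' is vacuous and the class of monadic reflective localizations of $B$ is the whole class of reflective localizations of $B$. It then remains to check that the two assignments descend to, and are mutually inverse on, equivalence classes: starting from an idempotent monad $T$ the monad of its Eilenberg--Moore adjunction is equivalent to $T$ as just recalled, while starting from a reflective localization $f\dashv u$ with generated idempotent monad $T=uf$, the comparison functor $A\to B[T]$ of Theorem~\ref{thmbeckmonadicity} admits a left adjoint and is an adjoint equivalence because $u$ preserves colimits of $u$-split simplicial objects and is conservative --- exactly the verification performed in the proof of the preceding Corollary --- so the round trip returns the original localization up to equivalence in $\mathcal{V}$. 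Compatibility with equivalences in both directions is automatic from functoriality of the flexible weighted limit defining $B[T]$ and of restriction along $\underline{\mathrm{Mnd}}\to\underline{\mathrm{Adj}}$.

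The step I expect to be the main obstacle is purely a matter of bookkeeping rather than of substance: ensuring that ``reflective localization'' is read uniformly throughout as a homotopy-coherent adjunction with invertible counit (equivalently, fully faithful right adjoint), so that the three results being combined genuinely speak about the same class of data, and that passing to equivalence classes does not lose the higher coherence data carried by a homotopy-coherent adjunction --- which is in any case essentially unique, cf.\ \cite[Example 4.2.3]{riehlverityadjmon}. Beyond this, the corollary follows by concatenating Theorem~\ref{thmbeckmonadicity} with the two preceding results.
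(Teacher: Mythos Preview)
Your proposal is correct and takes essentially the same approach as the paper: the paper states the corollary with a bare \qed, treating it as an immediate summary of the preceding discussion (the general equivalence between monadic adjunctions and monads, the Lemma characterizing idempotent monads via reflective localizations, and the Corollary that every reflective localization is monadic), and you have simply spelled out that assembly in detail.
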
\qed

\subsubsection*{Application to the $\infty$-cosmos of fibrations over $\mathcal{B}$}

The Monadicity Theorem applied to the $\infty$-cosmoses $(\mathbf{Cat}_{\infty})_{/\mathcal{B}}$ and $\mathbf{Cart}(\mathcal{B})$ 
yields a description of all notions of reflective localizations featured in Proposition~\ref{propfibmods} and 
Corollary~\ref{corlexfibmods} in terms of suitable classes of monads.

\begin{definition}\label{defcompmonads}
Let $\mathcal{B}$ be a quasi-category and
\[\xymatrix{
\mathcal{F}\ar@{^(->}[r]\ar@{->>}@/_/[dr] & \mathcal{B}^{\Delta^1}\ar@{->>}[d]^{t_{\mathcal{B}}} \\
 & \mathcal{B}
}\]
be a full and replete subfibration whose objects are closed under composition.
Say a fibered pointed endofunctor $(T,\eta)$ on $\mathcal{F}$ is \emph{compositional} if for all concatenable arrows $f,g\in\mathcal{F}$ 
there is an equivalence
\[T(g\circ f)\simeq T(g)\circ T(\eta_g\circ f)\]
in $\mathcal{B}_{/t(g)}$ (for any choice of respective compositions). We say that a monad $T$ on $\mathcal{F}$ in
$(\mathbf{Cat}_{\infty})_{/\mathcal{B}}$ is compositional if its underlying pointed endofunctor is so.
\end{definition}

The reason to relativize Definition~\ref{defcompmonads} to full and replete subfibrations $\mathcal{F}$ of $t_{\mathcal{B}}$ will become 
evident in Section~\ref{secsubhlttops}. 

\begin{proposition}\label{propcompunitmnds}
Let $\mathcal{B}$ be a quasi-category. Let $\mathcal{F}\subseteq\mathcal{B}^{\Delta^1}$ be a full and replete subfibration which is closed 
under compositions and contains all identities. Then there is a bijection between
\begin{enumerate}
\item 
\begin{enumerate}
\item fibered reflective and replete subcategories $\mathcal{E}\hookrightarrow\mathcal{F}$ such that $\mathcal{E}$ is closed under 
compositions, and 
\item compositional 
fibered idempotent monads $T$ on $\mathcal{F}$.
\end{enumerate}
\item Suppose furthermore $\mathcal{B}$ has pullbacks and $\mathcal{F}\subseteq\mathcal{B}^{\Delta^1}$ is a full cartesian subfibration. Then 
there is a bijection between
\begin{enumerate}
\item cartesian reflective and replete subcategories $\mathcal{E}\hookrightarrow\mathcal{F}$ such that $\mathcal{E}$ is closed under 
compositions, and 
\item compositional 
cartesian idempotent monads $T$ on $\mathcal{F}$.
\end{enumerate}
\end{enumerate}
\end{proposition}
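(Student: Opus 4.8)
The plan is to reduce both parts to the abstract monadicity statement Corollary~\ref{corbeckmonadicity}, applied to the object $\mathcal{F}$ in the relevant $\infty$-cosmos, and then to identify the two supplementary conditions --- ``$\mathcal{E}$ is closed under compositions'' on the localization side and ``$T$ is compositional'' on the monad side --- as matching up under the resulting dictionary. Concretely, for Part~1 I would invoke Corollary~\ref{corbeckmonadicity} in $\mathcal{V}=(\mathbf{Cat}_{\infty})_{/\mathcal{B}}$ with ``$B$'':$=\mathcal{F}$; this is a legitimate (fibrant) object of $\mathcal{V}$ because $\mathcal{F}\subseteq\mathcal{B}^{\Delta^1}$ is full and replete, so that $\mathcal{F}\twoheadrightarrow\mathcal{B}$ is an isofibration. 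This yields a bijection between equivalence classes of reflective localizations of $\mathcal{F}$ in $\mathcal{V}$ and (equivalence classes of) idempotent monads on $\mathcal{F}$ in $\mathcal{V}$. A reflective localization in $\mathcal{V}$ is, up to the equivalence of its underlying adjunction, the same datum as a fibered reflective and replete subcategory $\iota\colon\mathcal{E}\hookrightarrow\mathcal{F}$ with reflector $\rho$: one passes from an abstract adjunction to the replete essential image of $\iota$, i.e.\ the fibered subcategory of $T$-local objects, which is rigid and hence interchangeable with the equivalence class of the localization. (This is the same abuse of language already implicit in Corollary~\ref{corbeckmonadicity} and Theorem~\ref{thmlocmodopacc}, suppressing the essentially unique monad structure on an idempotent monad; with this convention the bijection is stated on the nose as in the proposition.) What then remains is purely to show that, under this correspondence, $\mathcal{E}$ is closed under compositions if and only if $T=\iota\rho$ is compositional in the sense of Definition~\ref{defcompmonads}.

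\textbf{From localizations to monads.} Suppose $\mathcal{E}\hookrightarrow\mathcal{F}$ is reflective, replete, and closed under compositions, and let $f,g\in\mathcal{F}$ be concatenable (so $g\circ f\in\mathcal{F}$ since $\mathcal{F}$ is closed under compositions). The comparison map $g\circ f\to T(g)\circ T(\eta_g\circ f)$ is obtained by postcomposing the unit $\eta_{\eta_g\circ f}$ of $T$ along $T(g)$ in the target fibration; since postcomposition along a fixed arrow preserves $\mathcal{E}$-local equivalences (a consequence of cocartesianness of the reflector, cf.\ Lemma~\ref{lemmarefllocbifib} and Remark~\ref{remcocartaction}) and $\eta_{\eta_g\circ f}$ is such an equivalence, the comparison map is an $\mathcal{E}$-local equivalence; its target $T(g)\circ T(\eta_g\circ f)$ is a composite of two objects of $\mathcal{E}$, hence lies in $\mathcal{E}$ by hypothesis. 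By uniqueness (up to a contractible choice) of the reflection of $g\circ f$ into $\mathcal{E}$, it follows that $T(g\circ f)\simeq T(g)\circ T(\eta_g\circ f)$ in $\mathcal{B}_{/t(g)}$, which is compositionality. For $\mathcal{F}=\mathcal{B}^{\Delta^1}$ this is exactly the iterated-factorization identity, i.e.\ essentially \cite[Proposition 3.1.22]{abjfsheavesI} in light of Remark~\ref{remcocartcharfacsys}; the argument goes through verbatim for a general $\mathcal{F}$.

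\textbf{From monads to localizations.} Suppose now $T$ is compositional and let $d,e\in\mathcal{E}$ be concatenable (with $d\circ e\in\mathcal{F}$). Since $d,e$ are $T$-local their units $\eta_d,\eta_e$ are equivalences, so $T(d)\simeq d$ and $\eta_d\circ e$ is, as an object, equivalent to $e$, whence $T(\eta_d\circ e)\simeq T(e)\simeq e$. Compositionality gives $T(d\circ e)\simeq T(d)\circ T(\eta_d\circ e)\simeq d\circ e$. As $T$ is idempotent its essential image is precisely $\mathcal{E}$, and any object equivalent to one of the form $T(x)$ lies in $\mathcal{E}$ by repleteness; applying this to $d\circ e\simeq T(d\circ e)$ yields $d\circ e\in\mathcal{E}$. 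This completes Part~1. For Part~2 the identical argument is run in the $\infty$-cosmos $\mathbf{Cart}(\mathcal{B})$ (available since $\mathcal{B}$ has pullbacks), using two compatibilities with the forgetful cosmological functor $U\colon\mathbf{Cart}(\mathcal{B})\hookrightarrow(\mathbf{Cat}_{\infty})_{/\mathcal{B}}$: first, a reflective localization of $\mathcal{F}$ in $\mathbf{Cart}(\mathcal{B})$ is the same as a fibered reflective replete subcategory $\mathcal{E}\hookrightarrow\mathcal{F}$ which is in addition a full cartesian subfibration with cartesian reflector --- that such an $\mathcal{E}$ is automatically closed under domains of cartesian arrows of $\mathcal{F}$, hence a subfibration with $\iota$ a fibration in $\mathbf{Cart}(\mathcal{B})$, follows from Lemma~\ref{subfibcart} exactly as in the discussion around (\ref{equpropfibmodsisol}); second, the conditions ``$\mathcal{E}$ closed under compositions'' and ``$T$ compositional'' (Definition~\ref{defcompmonads}) only involve the underlying $(\mathbf{Cat}_{\infty})_{/\mathcal{B}}$-data, so they are detected by $U$ and the characterizations proved in Part~1 transfer. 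Hence the bijection of Part~1 restricts to the bijection of Part~2.

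\textbf{Main obstacle.} The only genuinely technical point is the localization-to-monad direction: one must pin down the whiskered object $\eta_g\circ f$ and its image under $T$, identify $T(g)\circ T(\eta_g\circ f)$ with the right-hand stage of the twofold reflective factorization of $g\circ f$, and verify coherently that the comparison map is an $\mathcal{E}$-local equivalence (taking care, for a general subfibration $\mathcal{F}$, that the composites involved remain inside $\mathcal{F}$). This is the $\infty$-categorical analogue of \cite[Proposition 3.1.22]{abjfsheavesI} and is carried out in the same combinatorial style as the proof of Proposition~\ref{propfibmods}, leaning on uniqueness up to contractible choice of the relevant factorizations and reflections; everything else is formal bookkeeping.
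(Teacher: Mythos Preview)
Your proposal is correct and follows essentially the same approach as the paper's proof: both invoke Corollary~\ref{corbeckmonadicity} in the appropriate $\infty$-cosmos to obtain the basic bijection, and then verify that ``$\mathcal{E}$ closed under composition'' matches ``$T$ compositional'' by the same two-direction argument, leaning on Lemma~\ref{lemmarefllocbifib} and Remark~\ref{remcocartaction} for the key fact that the cocartesian push-forward $\Sigma_{T_C(g)}$ preserves $\mathcal{E}$-local equivalences. The paper is slightly more explicit about the point you flag as the ``main obstacle'': it spells out that for general $\mathcal{F}$ one only has cocartesian lifts over base arrows that themselves lie in $\mathcal{F}$, and that this suffices because $T_C(g)\in\mathcal{E}\subseteq\mathcal{F}$.
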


\begin{proof}
In virtue of Corollary~\ref{corbeckmonadicity}, we only have to show that a fibered (cartesian) reflective subcategory
$\mathcal{E}\subseteq\mathcal{B}^{\Delta^1}$ 
is closed under composition if and only if its associated idempotent monad on $t_{\mathcal{B}}$ is 
compositional. This however is a fairly straight-forward computation:
if $T$ is compositional, let $f$ and $g$ be concatenable and contained in $\mathcal{E}$. Consider the diagram
\begin{align}\label{diagpropmodop}
\begin{gathered}
\xymatrix{
A\ar[d]_f\ar[r]^(.35){\eta_{\eta_g\circ f}} & T_{T_C(B)}(A)\ar[d]^{T(\eta_g\circ f)}  \\
B\ar[d]_g\ar[r]^{\eta_g} & T_C(B)\ar[dl]^{T_C(g)} \\
C & 
}
\end{gathered}
\end{align}
in $\mathcal{B}$. Since the vertical composition $T_C(g)\circ T(\eta_g\circ f)$ is equivalent to $T(g\circ f)$ over $C$ by assumption, 
it is contained in $\mathcal{E}$. Since $\mathcal{E}$ is replete, to show that $g\circ f$ is again contained in $\mathcal{E}$, it thus 
suffices to show that the unit $\eta_{\eta_g\circ f}$ is an equivalence in $\mathcal{B}^{\Delta^1}$. But the unit
$\eta_g$ is an equivalence, because $g$ is contained in $\mathcal{E}$. Furthermore, since $\mathcal{E}$ is replete, it is always closed under 
postcomposition with equivalences (via the equivalence $f\xrightarrow{\sim}\eta_g\circ f$ in $\mathcal{B}^{\Delta^1}$ with boundaries
$(1_A,\eta_g)$ in this case).
It follows that the unit $\eta_{\eta_g\circ f}$ is an equivalence.

Vice versa, suppose $\mathcal{E}$ is closed under compositions, suppose 
$f,g\in\mathcal{F}$ are concatenable, and consider Diagram~(\ref{diagpropmodop}). Since the composition $T_C(g)\circ T(\eta_g\circ f)$ 
is contained in $\mathcal{E}(C)$ by assumption, it suffices to show that the unit $\eta_{\eta_g\circ f}$ as a map in $\mathcal{F}(C)$ 
is an $\mathcal{E}(C)$-local equivalence. But it is an $\mathcal{E}(T_C(B))$-local equivalence by construction. Furthermore, as
$\mathcal{F}$ is closed under compositions, it admits a cocartesian arrow over every arrow $f$ in the base $\mathcal{B}$ that is 
itself contained in $\mathcal{F}$. The left adjoint to the inclusion $\mathcal{E}\hookrightarrow\mathcal{F}$ preserves the cocartesian 
squares in $\mathcal{F}$ by the proof of Lemma~\ref{lemmarefllocbifib} (applied to the full subcategory $\mathcal{F}$ rather than
$\mathcal{B}^{\Delta^1}$ itself). Via Remark~\ref{remcocartaction}, it follows that the cocartesian action
$\Sigma_{T_C(g)}\colon\mathcal{F}(T_C(B))\rightarrow\mathcal{F}(C)$ takes $\mathcal{E}(T_C(B))$-local equivalences to $\mathcal{E}(C)$-local 
equivalences. In particular, the unit $\eta_{\eta_g\circ f}$ is an $\mathcal{E}(C)$-local equivalence.

\end{proof}

\begin{corollary}\label{corcompunitmnds}
Let $\mathcal{B}$ be a quasi-category.
\begin{enumerate}
\item Then there is a bijection between factorization systems on $\mathcal{B}$ and 
fibered, compositional and idempotent monads on $t_{\mathcal{B}}$. 
\item Suppose $\mathcal{B}$ has pullbacks. Then there is a bijection between modalities on $\mathcal{B}$ and 
idempotent and compositional monads on $t_{\mathcal{B}}$ in $\mathbf{Cart}(\mathcal{B})$. 
\end{enumerate}
\end{corollary}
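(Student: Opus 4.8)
The plan is to read off both bijections by concatenating results already proved in the excerpt, applied to the maximal choice of subfibration $\mathcal{F}=\mathcal{B}^{\Delta^1}$. First I would observe that $t_{\mathcal{B}}\colon\mathcal{B}^{\Delta^1}\twoheadrightarrow\mathcal{B}$ is, trivially, a full and replete subfibration of itself which is closed under composition and contains all identities; hence Proposition~\ref{propcompunitmnds} applies with $\mathcal{F}=\mathcal{B}^{\Delta^1}$, and in this case the requirement on $\mathcal{E}\hookrightarrow\mathcal{F}$ there of being closed under composition is verbatim the one appearing in Proposition~\ref{propfibmods}.

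For Part~1 I would simply chain two bijections: Proposition~\ref{propfibmods}, between factorization systems on $\mathcal{B}$ and fibered reflective replete subcategories $\mathcal{E}\hookrightarrow\mathcal{B}^{\Delta^1}$ of $t_{\mathcal{B}}$ whose objects are closed under composition; and Proposition~\ref{propcompunitmnds}.1, between such $\mathcal{E}$ and compositional fibered idempotent monads $T$ on $t_{\mathcal{B}}$ in $(\mathbf{Cat}_{\infty})_{/\mathcal{B}}$. Their composite is the asserted bijection.

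For Part~2, assume $\mathcal{B}$ has pullbacks, so that $t_{\mathcal{B}}$ is a cartesian fibration and in particular a full cartesian subfibration of itself. Here I would recall the discussion following~(\ref{equpropfibmodsisol}): every fibered reflective localization of $t_{\mathcal{B}}$ is automatically a full cartesian subfibration with inclusion a fibration in $\mathbf{Cart}(\mathcal{B})$ (via Lemma~\ref{subfibcart}), so for such an $\mathcal{E}$ the property of being a \emph{cartesian} reflective subcategory in the sense required by Proposition~\ref{propcompunitmnds}.2 is exactly the property that the fibered reflector $\rho$ is a cartesian functor -- which by Corollary~\ref{corlexfibmods}.1 is equivalent to the associated factorization system being a modality. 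Chaining Corollary~\ref{corlexfibmods}.1 (modalities on $\mathcal{B}$ versus fibered reflective replete subcategories $\mathcal{E}\hookrightarrow\mathcal{B}^{\Delta^1}$ closed under composition with cartesian reflector) with Proposition~\ref{propcompunitmnds}.2 then produces the bijection between modalities on $\mathcal{B}$ and compositional cartesian idempotent monads on $t_{\mathcal{B}}$, which is precisely the class of idempotent and compositional monads on $t_{\mathcal{B}}$ in $\mathbf{Cart}(\mathcal{B})$ by the identification of cartesian monads with monads internal to that $\infty$-cosmos.

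Since the statement is a pure concatenation of previously established bijections, there is no substantial obstacle; the only points requiring attention are bookkeeping ones: matching the terminology ``monad in $\mathbf{Cart}(\mathcal{B})$'' with ``cartesian monad on $t_{\mathcal{B}}$'', and checking that in Part~2 the cartesianness of the reflective subcategory transports under both Corollary~\ref{corlexfibmods}.1 and Proposition~\ref{propcompunitmnds}.2 to the same condition, so that the two restricted bijections glue over the unrestricted one of Part~1. One may additionally note, as in Proposition~\ref{propfibmods}, that these correspondences respect the evident partial orders (inclusion of right classes, resp.\ the induced order on monads), although that refinement is not part of the statement.
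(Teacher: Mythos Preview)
Your proposal is correct and follows exactly the paper's own argument: the paper's proof simply says that the result is immediate from Proposition~\ref{propcompunitmnds} applied to $\mathcal{F}=\mathcal{B}^{\Delta^1}$, against the background of Proposition~\ref{propfibmods} (for Part~1) and Corollary~\ref{corlexfibmods}.1 (for Part~2). Your additional bookkeeping remarks are accurate but not needed for the proof as stated.
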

\begin{proof}
Against the background of Proposition~\ref{propfibmods} and Corollary~\ref{corlexfibmods}.1, respectively, this is immediate by 
Proposition~\ref{propcompunitmnds} applied to $\mathcal{F}=\mathcal{B}^{\Delta^1}$ over $\mathcal{B}$.
\end{proof}

To incorporate the additional structures featured in Corollary~\ref{corlexfibmods}, we recall that there is a formal definition of 
limits and colimits of simplicial diagrams \emph{in} a cartesian fibration $p\colon\mathcal{E}\twoheadrightarrow\mathcal{B}$ as an 
object of $\mathbf{Cart}(\mathcal{B})$ (\cite[Chapters 2 and 4]{riehlverityelements}). In that sense, a cartesian fibration
$p\colon\mathcal{E}\twoheadrightarrow\mathcal{B}$ has finite (simplicially indexed) limits in $\mathbf{Cart}(\mathcal{B})$ if for 
every finite simplicial set $K$, the diagonal $\Delta\colon p\rightarrow p^K$ of cotensors over $\mathcal{B}$ has a cartesian right 
adjoint. 

\begin{proposition}\label{propfiblex}
Let $\mathcal{B}$ be an $\infty$-category.
\begin{enumerate}
\item Suppose $p\colon\mathcal{E}\twoheadrightarrow\mathcal{B}$ is a cartesian fibration.
Then the fibration $p$ has finite limits in $\mathbf{Cart}(\mathcal{B})$ if and only if its Straightening
$\mathrm{St}(p)\colon\mathcal{B}^{op}\rightarrow\mathrm{Cat}_{\infty}$ factors through the $\infty$-category 
$\mathrm{Cat}_{\infty}^{\mathrm{lex}}$ of left exact $\infty$-categories and left exact functors between them.
\item  Suppose $p\colon\mathcal{E}\twoheadrightarrow\mathcal{B}$ and $q\colon\mathcal{F}\twoheadrightarrow\mathcal{B}$ are cartesian 
fibrations with finite limits in $\mathbf{Cart}(\mathcal{B})$. Then a functor $F\colon q\rightarrow p$ in $\mathbf{Cart}(\mathcal{B})$ 
preserves finite limits if and only if the straightened natural transformation
$\mathrm{St}(F)\colon\mathrm{St}(q)\rightarrow\mathrm{St}(p)$ is contained in
$\mathrm{Fun}(\mathcal{B}^{op},\mathrm{Cat}_{\infty}^{\mathrm{lex}})$. 
\end{enumerate}
\end{proposition}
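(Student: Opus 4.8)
The plan is to reduce both statements to the known behaviour of limits in cartesian fibrations under Straightening/Unstraightening, using the fact that finite limits in $\mathbf{Cart}(\mathcal{B})$ are characterized fiberwise plus a Beck--Chevalley-type compatibility. First I would recall that by Riehl--Verity, a cartesian fibration $p\colon\mathcal{E}\twoheadrightarrow\mathcal{B}$ has a limit of a diagram of shape $K$ in $\mathbf{Cart}(\mathcal{B})$ precisely when the diagonal $\Delta\colon p\to p^K$ (cotensor taken in $(\mathbf{Cat}_\infty)_{/\mathcal{B}}$) admits a right adjoint \emph{in} $\mathbf{Cart}(\mathcal{B})$, i.e.\ a fibered right adjoint whose underlying functor is a cartesian functor. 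Unstraightening carries the cotensor $p^K$ to the objectwise cotensor, so $\mathrm{St}(p^K)(b)\simeq \mathrm{Fun}(K,\mathrm{St}(p)(b))$, and the diagonal straightens to the objectwise diagonal. By the pointwise criterion for fibered adjunctions (a fibered adjunction between cartesian fibrations exists iff each fiber admits the adjoint \emph{and} the right adjoints commute with the restriction functors up to coherent equivalence, i.e.\ the Beck--Chevalley/naturality square commutes), $\Delta\colon p\to p^K$ has a cartesian right adjoint iff each fiber $\mathrm{St}(p)(b)$ has $K$-shaped limits and each transition functor $\mathrm{St}(p)(f)\colon \mathrm{St}(p)(b)\to\mathrm{St}(p)(b')$ preserves them. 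Taking $K$ to range over all finite simplicial sets, this says exactly that $\mathrm{St}(p)$ factors through $\mathrm{Cat}_\infty^{\mathrm{lex}}$, giving Part 1.

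For Part 2, I would argue analogously: a functor $F\colon q\to p$ in $\mathbf{Cart}(\mathcal{B})$ preserves the limit of shape $K$ iff the mate of the canonical 2-cell $F\circ(\lim^q)\to(\lim^p)\circ F^K$ — formed from the unit/counit of the two fibered adjunctions $\Delta\dashv\lim$ — is an equivalence in $\mathbf{Cart}(\mathcal{B})$. Since equivalences in $\mathbf{Cart}(\mathcal{B})$ are detected fiberwise (the forgetful functor to $(\mathbf{Cat}_\infty)_{/\mathcal{B}}$ and then evaluation at objects jointly detect equivalences), and Straightening is an equivalence of $\infty$-cosmoi/$(\infty,2)$-categories that carries $\lim^p$ to the objectwise $\lim$ of $\mathrm{St}(p)$ and $F$ to $\mathrm{St}(F)$, the fibered mate is an equivalence iff for every $b\in\mathcal{B}$ the component $\mathrm{St}(F)_b\colon \mathrm{St}(q)(b)\to\mathrm{St}(p)(b)$ preserves $K$-shaped limits. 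Ranging over finite $K$ yields that $F$ preserves finite limits iff $\mathrm{St}(F)$ lands in $\mathrm{Fun}(\mathcal{B}^{op},\mathrm{Cat}_\infty^{\mathrm{lex}})$.

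The main obstacle is the careful identification, under Straightening, of the \emph{fibered} (cosmological) limit with the objectwise limit of the straightened diagram, together with the compatibility constraint: one must check that ``having finite limits in $\mathbf{Cart}(\mathcal{B})$'' encodes not merely fiberwise finite completeness but also that the restriction functors are left exact, and conversely that this fiberwise data assembles into a genuine fibered right adjoint. I would handle this by invoking the pointwise recognition principle for fibered adjunctions in $\mathbf{Cart}(\mathcal{B})$ (the cartesian analogue of \cite[Proposition 4.1.1]{riehlverityelements}, using that a fiberwise adjunction whose right adjoints commute with the cartesian transition functors is automatically a fibered adjunction by cartesianness), and then noting that the cotensor $p^K$ straightens fiberwise to $\mathrm{Fun}(K,-)$ because cotensoring with a constant simplicial set is computed objectwise in $\mathrm{Fun}(\mathcal{B}^{op},\mathrm{Cat}_\infty)$. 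Everything else is a routine unwinding of the mate calculus, which I would not spell out in detail.
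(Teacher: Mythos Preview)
Your proposal is correct and captures the conceptual content of the statement. The paper's own proof, however, takes a more explicit and constructive route. For the harder direction of Part~1 (that factoring through $\mathrm{Cat}_\infty^{\mathrm{lex}}$ implies finite limits in $\mathbf{Cart}(\mathcal{B})$), rather than invoking a general pointwise recognition principle for fibered adjunctions, the paper reduces to the \emph{universal} case: since $\mathrm{St}(p)$ factors through $\mathrm{Cat}_\infty^{\mathrm{lex}}$, the fibration $p$ is a pullback of the universal left exact cartesian fibration $\pi_{\mathrm{lex}} := \iota^{\ast}\pi$ over $(\mathrm{Cat}_\infty^{\mathrm{lex}})^{op}$, and since pullback along $\mathrm{St}(p)$ is cosmological it suffices to show that $\pi_{\mathrm{lex}}$ itself has finite limits. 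The paper then constructs the fibered limit functor for $\pi_{\mathrm{lex}}$ by hand, using the simplicially-natural full subcategory $(\mathcal{C}^{\Delta^0\ast K})_{\mathrm{lim}}\subset\mathcal{C}^{\Delta^0\ast K}$ of limit cones (for $\mathcal{C}$ ranging over $\mathrm{Cat}_\infty^{\mathrm{lex}}$) and the fact that the restriction $(\mathcal{C}^{\Delta^0\ast K})_{\mathrm{lim}}\to\mathcal{C}^K$ is a trivial fibration; a section produces the candidate $\mathrm{lim}_K$, and the adjunction is verified by checking that the induced map of comma objects is an equivalence fiberwise. Part~2 is handled similarly, by lifting the compatibility squares along the trivial fibrations just built.

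Your approach---directly assembling fiberwise right adjoints into a fibered one via a pointwise recognition criterion---is shorter and more transparent; indeed, the paper itself remarks (Remark~\ref{rempropfiblexgen2}) that its proof is ``in essence'' exactly this. The cost is that you must locate or prove that recognition principle precisely in the $\infty$-cosmological setting (you gesture at \cite[Proposition 4.1.1]{riehlverityelements} but that is not quite the statement you need). The paper's universal-fibration argument trades that dependency for an explicit one-time construction that is then transported along cosmological functors. Both approaches ultimately rest on the same fiberwise-to-global passage; the paper just refuses to black-box it.
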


\begin{proof}
We show Part 1. For one direction, suppose $p\colon\mathcal{E}\twoheadrightarrow\mathcal{B}$ has finite limits. Let $K$ be a finite 
simplicial set, and let
\[(\Delta,\mathrm{lim},\epsilon)\colon\underline{\mathrm{Adj}}\rightarrow\mathbf{Cart}(\mathcal{B})\]
be the (homotopically uniquely induced) adjunction with left adjoint $\Delta\colon F\rightarrow F^K$. 
Then for any object $B\in\mathcal{B}$, the pullback
$\{B\}^{\ast}\colon\mathbf{Cart}(\mathcal{B})\rightarrow\mathbf{Cart}(\Delta^0)\xrightarrow{\cong}\mathbf{Cat}_{\infty}$ is induced 
by an enriched right Quillen functor of quasi-categorically enriched model categories and hence cosmological
(the first one is fairly straight-forward and the second one follows from \cite[Proposition 3.1.5.3]{luriehtt}). Push-forward of the 
simplicial functor $(\Delta,\mathrm{lim},\epsilon)$ along $\{B\}^{\ast}$ hence yields a right adjoint $\mathrm{lim}(B)$ to the 
diagonal 
\[\xymatrix{
 & \mathrm{St}(p)(B)^K\\
\mathrm{St}(p)(B)\ar[r]_(.45){\Delta(B)}\ar[ur]^{\Delta_{\mathrm{St}(p)(B)}} & \mathrm{St}(p)^K(B)\ar[u]_{\simeq}.
}\]
This means that the $\infty$-categories $\mathrm{St}(p)(B)$ are left-exact (as left-exactness is stable under equivalence). Furthermore, as 
the right adjoint $\mathrm{lim}\colon p^K\rightarrow p$ is a cartesian functor, for every arrow $f\colon A\rightarrow B$ in $\mathcal{B}$ the 
square
\[\xymatrix{
\mathrm{St}(p)(A)\ar@/^/[r]^{\Delta(A)}_{}& \mathrm{St}(p)(A)^K\ar@/^/[l]^{\mathrm{lim(A)}}  \\
\mathrm{St}(p)(B)\ar@/^/[r]^{\Delta(B)}_{}\ar[u]^{f^{\ast}} & \mathrm{St}(p)(B)^K\ar[u]_{f^{\ast}}\ar@/^/[l]^{\mathrm{lim(B)}}
}\]
commutes up to equivalence in both directions. In other words, the functor $f^{\ast}$ is left-exact as well.

For the other direction, let $\mathbf{Cat}_{\infty}^{\mathrm{lex}}\subset\mathbf{Cat}_{\infty}$ be the simplicial subcategory spanned 
by the left exact quasi-categories and left exact functors between them. Thus,
$\mathrm{Cat}_{\infty}^{\mathrm{lex}}=N_{\Delta}(\mathbf{Cat}_{\infty}^{\mathrm{lex}})$ is its underlying quasi-category. Assume the 
functor $\mathrm{St}(p)\colon\mathcal{B}^{op}\rightarrow\mathrm{Cat}_{\infty}$ factors through the canonical inclusion
$\iota\colon\mathrm{Cat}_{\infty}^{\mathrm{lex}}\hookrightarrow\mathrm{Cat}_{\infty}$. The resulting pullback functor
\[\mathrm{St}(p)^{\ast}\colon\mathbf{Cart}((\mathrm{Cat}_{\infty}^{\mathrm{lex}})^{op})\rightarrow\mathbf{Cart}(\mathcal{B})\]
is again induced by an enriched right Quillen functor of quasi-categorically enriched model categories, and hence is a cosmological functor 
of associated $\infty$-cosmoses itself. Let $\pi\colon\mathrm{Dat}_{\infty}^{op}\twoheadrightarrow\mathrm{Cat}_{\infty}^{op}$ be the 
universal cartesian fibration (\cite[Section 3.3.2]{luriehtt}). Then the pullback functor $\mathrm{St}(p)^{\ast}$ maps the cartesian 
fibration $\iota^{\ast}\pi\in\mathbf{Cart}((\mathrm{Cat}_{\infty}^{\mathrm{lex}})^{op})$ to a fibration equivalent to $p$; it hence 
suffices to show that the the object $\pi_{\mathrm{lex}}:=\iota^{\ast}\pi$ has finite limits as an object of the $\infty$-cosmos
$\mathbf{Cart}((\mathrm{Cat}_{\infty}^{\mathrm{lex}})^{op})$ by \cite[Proposition 10.1.4]{riehlverityelements}.

Therefore, let $K$ be a finite simplicial set, and let $\Delta^0\ast K$ be its (fat) join defined as the pushout
$\Delta^0\cup_{(\{1\}\times K)}(\Delta^1\times K)$. For a quasi-category $\mathcal{C}\in\mathbf{Cat}_{\infty}$, let
\[j_{\mathcal{C}}\colon(\mathcal{C}^{\Delta^0\ast K})_{\mathrm{lim}}\subset\mathcal{C}^{\Delta^0\ast K}\]
be the full subcategory spanned by the limit cones in $\mathcal{C}$. This inclusion gives rise to a simplicial natural transformation
$j\colon(\cdot)^{\Delta^0\ast K}_{\mathrm{lim}}\rightarrow(\cdot)^{\Delta^0\ast K}$ of functors from the simplicially enriched 
subcategory $\mathbf{Cat}_{\infty}^{\mathrm{lex}}$ to $\mathbf{Cat}_{\infty}$, simply by the fact that the functors in
$\mathbf{Cat}_{\infty}^{\mathrm{lex}}$ are limit preserving and hence factor pointwise through the inclusions $j_{\mathcal{C}}$. By composition with the simplicial restriction functor
$\mathrm{res}_K\colon(\cdot)^{\Delta^0\ast K}\rightarrow(\cdot)^{K}$, we obtain a natural transformation
\[\lambda\colon(\cdot)^{\Delta^0\ast K}_{\mathrm{lim}}\rightarrow(\cdot)^K\]
in the category $\mathrm{Fun}(\mathbf{Cat}_{\infty}^{\mathrm{lex}},\mathbf{Cat}_{\infty})$ of simplicially enriched functors.
It is pointwise a trivial fibration of quasi-categories by \cite[Proposition 4.3.2.15]{luriehtt}.
It follows that the natural transformation $\lambda$ is a trivial fibration of fibrant objects in the model category of simplicially 
enriched functors $\mathrm{Fun}(\mathbf{Cat}_{\infty}^{\mathrm{lex}},(\mathbf{S},\mathrm{QCat}))_{\mathrm{proj}}$.
Since Unstraightening is a Quillen right equivalence, and as such preserves both trivial fibrations and limits strictly as well as simplicial 
cotensors up to equivalence, we obtain a diagram as follows.
\[\xymatrix{
(\pi_{\mathrm{lex}})^{\Delta^0\ast K}_{\mathrm{lim}}\ar@{^(->}[dr]\ar@{->>}@/_1pc/[ddr]^(.6){\sim}\ar[rr]^{\simeq} & & \mathrm{Un}((\cdot)^{\Delta^0\ast K}_{\mathrm{lim}})\ar@{^(->}[dr]^{\mathrm{Un}(j)}\ar@{->>}@/_1pc/[ddr]|(.4)\hole^(.6){\sim}_(.6){\mathrm{Un}(\lambda)} & \\
& (\pi_{\mathrm{lex}})^{\Delta^0\ast K}\ar[rr]_{\simeq}\ar@{->>}[d] & & \mathrm{Un}((\cdot)^{\Delta^0\ast K})\ar@{->>}[d]\\
& (\pi_{\mathrm{lex}})^{K}\ar[rr]_{\simeq} & & \mathrm{Un}((\cdot)^{K})
}\]
Here, the vertex on the top left shall denote the pullback of the top span by definition. As the horizontal equivalences are chosen 
individually, the front square a priori only commutes up to equivalence. This homotopy-commutative square however may be replaced by a 
strictly commutative square in $\mathbf{Cart}((\mathrm{Cat}_{\infty}^{\mathrm{lex}})^{op})$ e.g.\ by hand via the homotopy lifting 
property (as the right hand side vertical map is a fibration). We may thus choose a section
$\mathrm{lim}_+\colon(\pi_{\mathrm{lex}})^K\rightarrow(\pi_{\mathrm{lex}})^{\Delta^0\ast K}_{\mathrm{lim}}$, and consider the composite 
map
\begin{align}\label{diagpropfiblex0}
\epsilon_K\colon(\pi_{\mathrm{lex}})^K\xrightarrow{\mathrm{lim}_+} (\pi_{\mathrm{lex}})^{\Delta^0\ast K}_{\mathrm{lim}}\hookrightarrow(\pi_{\mathrm{lex}})^{\Delta^0\ast K}\xrightarrow{} (\pi_{\mathrm{lex}})^{\Delta^1\times K}\xrightarrow{\simeq}((\pi_{\mathrm{lex}})^K)^{\Delta^1}
\end{align}
in $\mathbf{Cart}((\mathrm{Cat}_{\infty}^{\mathrm{lex}})^{op})$. Here, the map
$(\pi_{\mathrm{lex}})^{\Delta^0\ast K}\rightarrow(\pi_{\mathrm{lex}})^{\Delta^1\times K}$ is induced by the coprojection
$\Delta^1\times K\rightarrow\Delta^0\ast K$. This constitutes a 2-cell of the form
\[\xymatrix{
 & & \pi_{\mathrm{lex}}\ar[dd]^{\Delta} \\
 & (\pi_{\mathrm{lex}})^{\Delta^0\ast K}\ar[ur]^{\mathrm{res}_{\Delta^0}}\ar@{}[d]^{\Downarrow\epsilon_K} & \\
(\pi_{\mathrm{lex}})^K\ar@{=}[rr]\ar[ur]^{\mathrm{lim}_+}\ar@/^3pc/[uurr]^{:=\mathrm{lim}_K} & & (\pi_{\mathrm{lex}})^K
}\]
in $\mathbf{Cart}((\mathrm{Cat}_{\infty}^{\mathrm{lex}})^{op})$. To show that the triple $(\Delta,\mathrm{lim}_K,\epsilon_K)$ indeed forms an 
adjunction, by \cite[Theorem 3.5.8]{riehlverityelements} it suffices to show that the associated map
\begin{align}\label{diagpropfiblex1}
\begin{gathered}
\xymatrix{
\pi_{\mathrm{lex}}\downarrow\mathrm{lim}_K\ar[rr]^{\ulcorner\epsilon_K\circ\Delta\urcorner}\ar@{->>}@/_/[dr] & & \Delta\downarrow(\pi_{\mathrm{lex}})^K\ar@{->>}@/^/[dl] \\
 & \pi_{\mathrm{lex}}\times(\pi_{\mathrm{lex}})^K & 
}
\end{gathered}
\end{align}
over $\pi_{\mathrm{lex}}\times(\pi_{\mathrm{lex}})^K$ in $\mathbf{Cart}((\mathrm{Cat}_{\infty}^{\mathrm{lex}})^{op})$ is an equivalence.
Therefore it suffices to show that it is so pointwise over every $\mathcal{C}\in(\mathrm{Cat}_{\infty}^{\mathrm{lex}})^{op}$ (e.g.\ by \cite[Proposition 3.3.1.5]{luriehtt}). But pullback along an object
$\{\mathcal{C}\}^{\ast}\colon\Delta^0\rightarrow(\mathrm{Cat}_{\infty}^{\mathrm{lex}})^{op}$ is 
a cosmological functor and hence takes the triangle (\ref{diagpropfiblex1}) to the triangle
\[\xymatrix{
\mathcal{C}\downarrow\mathrm{lim}_K(\mathcal{C})\ar[rr]^{\ulcorner\epsilon_K(\mathcal{C})\circ\Delta(\mathcal{C})\urcorner}\ar@{->>}@/_/[dr] & & \Delta(\mathcal{C})\downarrow\mathcal{C}^K\ar@{->>}@/^/[dl] \\
 & \mathcal{C}\times\mathcal{C}^K. & 
}
\]
This is an equivalence precisely because the triple $(\Delta_{\mathcal{C}},\mathrm{lim}_K(\mathcal{C}),\epsilon_K(\mathcal{C}))$ is 
(equivalent to) an adjunction in $\mathbf{Cat}_{\infty}^{\mathrm{lex}}\subset\mathbf{Cat}_{\infty}$ as $\mathcal{C}$ is left exact. This 
finishes the proof of Part 1.

For Part 2, let $K$ again be a finite simplicial set. We want to show that the square
\begin{align}\label{diagpropfiblex2}
\begin{gathered}
\xymatrix{
q\ar[rr]^F\ar@/^/[d]^{\Delta} & & p\ar@/^/[d]^{\Delta} \\
q^K\ar[rr]_{F^K}\ar@/^/[u]^{\mathrm{lim}_K} & & p^K\ar@/^/[u]^{\mathrm{lim}_K}
}
\end{gathered}
\end{align}
commutes up to equivalence (in both directions) if and only if for all $B\in\mathcal{B}$ the square 
\begin{align}\label{diagpropfiblex3}
\begin{gathered}
\xymatrix{
q(B)\ar[rr]^{F(B)}\ar@/^/[d]^{\Delta(B)} & & p(B)\ar@/^/[d]^{\Delta(B)} \\
q(B)^K\ar[rr]_{F(B)^K}\ar@/^/[u]^{\mathrm{lim}_K(B)} & & p(B)^K\ar@/^/[u]^{\mathrm{lim}_K(B)}
}
\end{gathered}
\end{align}
commutes up to equivalence (in both directions). Again by applying pointwise evaluation, homotopy commutativity of 
(\ref{diagpropfiblex2}) implies homotopy commutativity of (\ref{diagpropfiblex3}). For the converse direction, the construction of the 
map $\epsilon_K$ in (\ref{diagpropfiblex0}) gives rise to a sequence of commutative squares in $\mathbf{Cart}(\mathcal{B})$ as 
follows.
\[\xymatrix{
p^K& (p^{\Delta^0\ast K})_{\mathrm{lim}}\ar[r]^{\mathrm{Un}(j)}\ar@{->>}[l]^(.6){\sim}_(.6){\mathrm{Un}(\lambda(p))} & p^{\Delta^0\ast K}\ar[r] & p^{\Delta^1\times K}\ar[r]^{\simeq} & (p^K)^{\Delta^1} \\
q^K\ar[u]^{F^K} & (q^{\Delta^0\ast K})_{\mathrm{lim}}\ar[r]^{\mathrm{Un}(j)}\ar@{->>}[l]^(.6){\sim}_(.6){\mathrm{Un}(\lambda(q))}\ar[u]^{(F^{\Delta^0\ast K})_{\mathrm{lim}}}& q^{\Delta^0\ast K}\ar[r]\ar[u]^{F^{\Delta^0\ast K}} & q^{\Delta^1\times K}\ar[r]^{\simeq}\ar[u]^{F^{\Delta^1\times K}} & (q^K)^{\Delta^1}\ar[u]^{(F^{K})^{\Delta^1}}
}\]
The natural transformation $(F^{\Delta^0\ast K})_{\mathrm{lim}}$ exists precisely because $\mathrm{St}(F)$ is assumed to be pointwise 
finite limit preserving (without loss of generality we may assume $F=\mathrm{Un}(\mathrm{St}(F))$ since left exactness is
equivalence-invariant). Then any section $\mathrm{lim}_+$ to $\mathrm{Un}(\lambda)$ as chosen in (\ref{diagpropfiblex0}) gives rise to sections
\[\xymatrix{
p^K\ar@/^1pc/@{-->}[r]^{\mathrm{lim}_+(p)} & (p^{\Delta^0\ast K})_{\mathrm{lim}}\ar@{->>}[l]_(.6){\sim}^(.6){\mathrm{Un}(\lambda(p))} \\
q^K\ar@/_1pc/@{-->}[r]_{\mathrm{lim}_+(q)}\ar[u]^{F^K} & (q^{\Delta^0\ast K})_{\mathrm{lim}}\ar@{->>}[l]^(.6){\sim}_(.6){\mathrm{Un}(\lambda(q))}\ar[u]_{(F^{\Delta^0\ast K})_{\mathrm{lim}}}.
}\]
The resulting square given by the pair $(\mathrm{lim}_+(p),\mathrm{lim}_+(q))$ commutes up to homotopy, because $\mathrm{Un}(\lambda(p))$ is 
a trivial fibration and so lifts against the (cofibrant) object $q^K$ along $F^K$ are homotopically unique. By definition of
$\mathrm{lim}_K$ from $\mathrm{lim}_+$, it readily follows that the square (\ref{diagpropfiblex2}) commutes up to homotopy as well.
\end{proof}

We say that a cartesian fibration $p\colon\mathcal{E}\twoheadrightarrow\mathcal{B}$ over an $\infty$-category $\mathcal{B}$ 
is \emph{left exact} whenever any of the conditions in Proposition~\ref{propfiblex} is satisfied. 

\begin{remark}
Let $\pi\colon\mathrm{Dat}_{\infty}\twoheadrightarrow\mathrm{Cat}_{\infty}$ denote the universal small cocartesian fibration 
(\cite[Section 3.3.3]{luriehtt}). Then it follows from Proposition~\ref{propfiblex} that the pullback
\[\xymatrix{
\mathrm{Dat}_{\infty}^{\mathrm{lex}}\ar@{->>}[d]_{\pi_{\mathrm{lex}}}\ar[r]\ar@{}[dr]|(.3){\pbs} & \mathrm{Dat}_{\infty}\ar@{->>}[d]^{\pi} \\
\mathrm{Cat}_{\infty}^{\mathrm{lex}}\ar@{^(->}[r]_{\iota} & \mathrm{Cat}_{\infty}
}\]
along the canonical inclusion is the universal small left exact cocartesian fibration (here note that the $\infty$-category of cocartesian 
fibrations over $\mathrm{Cat}_{\infty}^{\mathrm{lex}}$ is equivalent to the $\infty$-category of cartesian fibrations over
$(\mathrm{Cat}_{\infty}^{\mathrm{lex}})^{op}$). The fibration $\pi_{\mathrm{lex}}$ is univalent by \cite[Theorem 6.29]{rasekh}, since the 
inclusion $\iota$ is $(-1)$-truncated. In fact, as the inclusion $\iota$ has a left adjoint $F$ give by the free finite limit 
completion of a quasi-category, the cartesian fibration $\pi_{\mathrm{lex}}^{op}$ is represented (via externalization) by the 
complete Segal object $F_{\bullet}(\Delta^{\bullet})$ in $(\mathrm{Cat}_{\infty}^{\mathrm{lex}})^{op}$ for
$\Delta^{\bullet}\colon\Delta\rightarrow\mathrm{Cat}_{\infty}$ the canonical inclusion by \cite[Section 4]{rs_comp}.
\end{remark}

\begin{remark}\label{rempropfiblexgen}
With an according transcription of the proof of \cite[B1.4, Lemma 1.4.1]{elephant} one can furthermore show that a fibration 
$p\colon\mathcal{E}\twoheadrightarrow\mathcal{B}$ over a left exact base $\mathcal{B}$ is left exact itself if and only if the
$\infty$-category $\mathcal{E}$ has finite limits and $p$ is finite limit preserving. Both the proof of Proposition~\ref{propfiblex} 
and this remark generalize to all sorts of shapes of limits, and dually for all sorts of shapes of colimits, too.
\end{remark}

\begin{remark}\label{rempropfiblexgen2}
Proposition~\ref{propfiblex} is an instance of an equivalence between certain cartesian fibered adjunctions and according indexed
($\mathrm{Cat}_{\infty}$-valued) adjunctions over a quasi-category $\mathcal{B}$. Despite its length, the proof is in essence just an
application of the Straightening construction together with the fact that formal indexed adjunctions are  corepresentable and hence computed 
pointwise:
\[\mathrm{Fun}(\mathrm{Adj},\mathrm{Fun}(\mathfrak{C}(\mathcal{B}),\mathbf{S}))\cong\mathrm{Fun}(\mathrm{Adj}\times \mathfrak{C}(\mathcal{B}),\mathbf{S}))\cong\mathrm{Fun}(\mathfrak{C}(\mathcal{B}),\mathrm{Fun}(\mathrm{Adj},\mathbf{S})).\]
\end{remark}

\begin{corollary}\label{corbasecompleteness}
Whenever $\mathcal{B}$ is a quasi-category with pullbacks, the target fibration
$t\colon\mathcal{B}^{\Delta^1}\twoheadrightarrow\mathcal{B}$ -- as well as every full cartesian subfibration
$\mathcal{F}\subseteq\mathcal{B}^{\Delta^1}$ which is fiberwise closed under finite limits -- is left exact in
$\mathbf{Cart}(\mathcal{B})$, and the inclusion $\mathcal{F}\subseteq\mathcal{B}^{\Delta^1}$ preserves those limits.

Whenever $\mathcal{B}$ has furthermore universal $\mathcal{K}$-shaped colimits for some class
$\mathcal{K}$ of simplicial sets, then the target fibration $t\colon\mathcal{B}^{\Delta^1}\twoheadrightarrow\mathcal{B}$ -- as well as 
every full subfibration $\mathcal{F}\subseteq\mathcal{B}^{\Delta^1}$ which is fiberwise closed under $\mathcal{K}$-shaped colimits -- 
has all $\mathcal{K}$-shaped colimits in the $\infty$-cosmos $\mathbf{Cart}(\mathcal{B})$. Furthermore, the inclusion
$\mathcal{F}\subseteq\mathcal{B}^{\Delta^1}$ preserves those colimits.
\end{corollary}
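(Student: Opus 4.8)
The plan is to compute the Straightening of the target fibration explicitly and then invoke Proposition~\ref{propfiblex} together with its colimit analogue afforded by Remark~\ref{rempropfiblexgen}. First I would recall that $\mathrm{St}(t)$ is the slice functor $\mathcal{B}_{/\blank}\colon\mathcal{B}^{op}\rightarrow\mathrm{Cat}_{\infty}$, which sends an object $B$ to the slice $\mathcal{B}_{/B}$ and a morphism $f\colon A\rightarrow B$ to the pullback functor $f^{\ast}\colon\mathcal{B}_{/B}\rightarrow\mathcal{B}_{/A}$ (the right adjoint to $\Sigma_f$). Correspondingly, the Straightening of a full cartesian subfibration $\mathcal{F}\subseteq\mathcal{B}^{\Delta^1}$ is the subfunctor $\mathcal{F}(\blank)\subseteq\mathcal{B}_{/\blank}$ whose value at $B$ is the fiber $\mathcal{F}(B)\subseteq\mathcal{B}_{/B}$; cartesianness of the subfibration is precisely the statement that each $f^{\ast}$ restricts to a functor $\mathcal{F}(B)\rightarrow\mathcal{F}(A)$, and the inclusion $\iota\colon\mathcal{F}\hookrightarrow\mathcal{B}^{\Delta^1}$ straightens to the evident pointwise inclusion.

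For the finite-limit statement: since $\mathcal{B}$ has pullbacks, every slice $\mathcal{B}_{/B}$ has finite limits (the terminal object is $1_B$ and pullbacks in $\mathcal{B}_{/B}$ are computed as in $\mathcal{B}$), and each transition functor $f^{\ast}$ preserves them, being a right adjoint. Hence $\mathrm{St}(t)$ factors through $\mathrm{Cat}_{\infty}^{\mathrm{lex}}$, so $t$ is left exact in $\mathbf{Cart}(\mathcal{B})$ by Proposition~\ref{propfiblex}.1. For $\mathcal{F}$ as above, fiberwise closure under finite limits means that finite limits in each $\mathcal{F}(B)$ agree with those computed in $\mathcal{B}_{/B}$; since $f^{\ast}$ restricts to $\mathcal{F}$ and preserves limits on $\mathcal{B}_{/B}$, the restricted transition functors $f^{\ast}|_{\mathcal{F}(B)}$ are again left exact, so $\mathrm{St}(\mathcal{F})$ too factors through $\mathrm{Cat}_{\infty}^{\mathrm{lex}}$ and $\mathcal{F}$ is left exact in $\mathbf{Cart}(\mathcal{B})$. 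Moreover $\mathrm{St}(\iota)$ is pointwise left exact, so $\iota$ preserves finite limits in $\mathbf{Cart}(\mathcal{B})$ by Proposition~\ref{propfiblex}.2.

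For the colimit statement I would run the dual argument, using the colimit analogue of Proposition~\ref{propfiblex}. If $\mathcal{B}$ has universal $\mathcal{K}$-shaped colimits then, in particular, $\mathcal{B}$ has $\mathcal{K}$-shaped colimits, so every slice $\mathcal{B}_{/B}$ has $\mathcal{K}$-shaped colimits (these are created by the forgetful functor $\mathcal{B}_{/B}\rightarrow\mathcal{B}$), and universality is precisely the assertion that each pullback functor $f^{\ast}$ is $\mathcal{K}$-cocontinuous --- as already recorded in the discussion following Corollary~\ref{corlexfibmods}. Hence $\mathrm{St}(t)$ factors through the $\infty$-category of $\mathcal{K}$-cocomplete $\infty$-categories and $\mathcal{K}$-cocontinuous functors, and the colimit analogue of Proposition~\ref{propfiblex}.1 yields that $t$ has $\mathcal{K}$-shaped colimits in $\mathbf{Cart}(\mathcal{B})$. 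The treatment of a full (cartesian) subfibration $\mathcal{F}\subseteq\mathcal{B}^{\Delta^1}$ fiberwise closed under $\mathcal{K}$-shaped colimits, and of the inclusion $\iota$, is then verbatim the argument of the previous paragraph with ``left exact'' replaced throughout by ``$\mathcal{K}$-cocontinuous''.

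Since the corollary amounts to an unwinding of Proposition~\ref{propfiblex} and its dual along the explicit Straightening of $t$, I expect no substantive obstacle; the only points deserving a moment's care are the identification of the transition functors of $\mathrm{St}(t)$ with the pullback functors $f^{\ast}$, and the observation that ``universal $\mathcal{K}$-shaped colimits'' translates exactly into $\mathcal{K}$-cocontinuity of those functors.
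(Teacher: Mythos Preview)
Your proposal is correct and follows essentially the same approach as the paper's own proof: verify that the Straightening of $t$ (and of $\mathcal{F}$, and of the inclusion) factors through $\mathrm{Cat}_{\infty}^{\mathrm{lex}}$ (respectively the $\infty$-category of $\mathcal{K}$-cocomplete $\infty$-categories with $\mathcal{K}$-cocontinuous functors), and then invoke Proposition~\ref{propfiblex} together with Remark~\ref{rempropfiblexgen}. The paper is terser and justifies left exactness of $f^{\ast}$ by ``limits commute with limits'' rather than your ``right adjoint'' argument, but these are interchangeable here.
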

\begin{proof}
Since $\mathcal{B}$ has pullbacks, all slices of $\mathcal{B}$ have pullbacks as well as a terminal object, and hence are left exact.
The transition maps are limit preserving as they are given by pullback action in $\mathcal{B}$, and limits commute among each other.
For the second statement, we just note that universality of $K$-shaped colimits implies that the canonical indexing of $\mathcal{B}$ 
factors through the $\infty$-category of $\mathcal{K}$-cocomplete $\infty$-categories and $\mathcal{K}$-cocontinuous functors. Hence, both 
statements follow directly from Proposition~\ref{propfiblex} (and Remark~\ref{rempropfiblexgen}).
\end{proof}

Given these formal characterizations, we can extend Proposition~\ref{propcompunitmnds} as follows.

\begin{corollary}\label{cormodoplex}
Let $\mathcal{B}$ be a quasi-category with pullbacks.  Let $\mathcal{F}\subseteq\mathcal{B}^{\Delta^1}$ be a full and replete cartesian
subfibration which is closed under compositions and fiberwise closed under finite limits. Then there is a bijection between 
\begin{enumerate}
\item cartesian reflective and replete subcategories $\iota\colon\mathcal{E}\hookrightarrow\mathcal{F}$ with left exact reflector $\rho$ such 
that $\mathcal{E}$ is closed under compositions, and
\item cartesian left exact compositional idempotent monads $T$ on $\mathcal{F}$ in $\mathbf{Cart}(\mathcal{B})$.
\end{enumerate}
\end{corollary}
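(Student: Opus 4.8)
The plan is to build on the bijection already furnished by Proposition~\ref{propcompunitmnds}.2, which matches cartesian reflective and replete subcategories $\iota\colon\mathcal{E}\hookrightarrow\mathcal{F}$ with $\mathcal{E}$ closed under compositions against compositional cartesian idempotent monads $T$ on $\mathcal{F}$ in $\mathbf{Cart}(\mathcal{B})$, the underlying endofunctor of $T$ being $\iota\rho$ and its unit the unit $\eta$ of the localization. It then remains only to check that under this correspondence the reflector $\rho$ is left exact precisely when the monad $T$ is --- where, as in Theorem~\ref{thmlocmodopacc}, ``left exact'' for the monad means that its underlying endofunctor preserves finite limits in the $\infty$-cosmos $\mathbf{Cart}(\mathcal{B})$ in the sense characterized by Proposition~\ref{propfiblex}.

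First I would pin down the ambient completeness. By Corollary~\ref{corbasecompleteness} the subfibration $\mathcal{F}$ is left exact in $\mathbf{Cart}(\mathcal{B})$ and the inclusion $\mathcal{F}\subseteq\mathcal{B}^{\Delta^1}$ preserves finite limits. For each $B\in\mathcal{B}$, pullback along $\{B\}\colon\Delta^0\to\mathcal{B}$ is cosmological, so by \cite[Proposition 10.1.4]{riehlverityelements} it carries the cartesian reflective localization $\rho\dashv\iota$ to a reflective localization $\rho_B\dashv\iota_B$ of $\mathcal{F}(B)$; since reflective subcategories are closed under all limits that exist, each $\mathcal{E}(B)$ is closed under finite limits in $\mathcal{F}(B)$, hence in the fiber $\mathcal{B}_{/B}$. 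By Remark~\ref{remcocartaction} the transition maps of $\mathcal{E}$ are the restrictions of those of $\mathcal{F}$, hence left exact, so $\mathcal{E}$ is again left exact in $\mathbf{Cart}(\mathcal{B})$ by Proposition~\ref{propfiblex}.1, and the inclusion $\iota\colon\mathcal{E}\hookrightarrow\mathcal{F}$ preserves finite limits (Corollary~\ref{corbasecompleteness} applied to $\mathcal{E}$), as does $\rho\iota\simeq\mathrm{id}_{\mathcal{E}}$ trivially.

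With this in place, both directions reduce to a short fiberwise computation via Proposition~\ref{propfiblex}.2. If $\rho$ is left exact then $T\simeq\iota\rho$ is a composite of finite-limit-preserving functors in $\mathbf{Cart}(\mathcal{B})$, hence left exact. Conversely, suppose $T$ is left exact, so that each $T_B=\iota_B\rho_B\colon\mathcal{F}(B)\to\mathcal{F}(B)$ preserves finite limits. For a finite diagram $D\colon K\to\mathcal{F}(B)$ the diagram $\iota_B\rho_B D$ lands in $\mathcal{E}(B)$, which is closed under finite limits in $\mathcal{F}(B)$, so $\lim(\iota_B\rho_B D)\simeq\iota_B\bigl(\lim^{\mathcal{E}(B)}\rho_B D\bigr)$; combining this with $\iota_B\rho_B(\lim D)\simeq\lim(\iota_B\rho_B D)$ and the full faithfulness of $\iota_B$ gives $\rho_B(\lim D)\simeq\lim^{\mathcal{E}(B)}\rho_B D$, i.e.\ each $\rho_B$ is left exact, whence $\rho$ is left exact by Proposition~\ref{propfiblex}.2.

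I do not expect a serious obstacle; the only real content is in correctly reading ``left exact'' on both sides as the formal notion of finite-limit preservation in $\mathbf{Cart}(\mathcal{B})$, so that Proposition~\ref{propfiblex} can be invoked to pass freely between the cosmological statement and its fiberwise avatar --- the fiberwise avatar being exactly what makes the full-faithfulness argument for the converse go through, and the cotensor/diagonal description of limits being what licenses treating $T$, $\iota$ and $\rho$ simultaneously as cosmological functors and as pointwise functors of $\infty$-categories.
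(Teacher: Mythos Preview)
Your proof is correct and follows essentially the same approach as the paper: both reduce to Proposition~\ref{propcompunitmnds}.2 and then argue that left exactness of $\rho$ is equivalent to left exactness of $T\simeq\iota\rho$, using that $\iota$ preserves limits for one direction and that $\iota$ (being fully faithful) reflects limits for the other. The only cosmetic difference is that you spell out the converse fiberwise via Proposition~\ref{propfiblex}.2, whereas the paper invokes the formal fact that fully faithful functors reflect limits directly in $\mathbf{Cart}(\mathcal{B})$ (\cite[Proposition 2.4.7]{riehlverityelements}); your intermediate paragraph establishing that $\mathcal{E}$ is itself left exact in $\mathbf{Cart}(\mathcal{B})$ is a welcome bit of bookkeeping that the paper leaves implicit.
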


\begin{proof}
By Proposition~\ref{propfiblex}, the cartesian reflector in 1.\ is left exact (as a functor of quasi-categories and thus fiberwise 
left exact) if and only if it is formally left exact as a functor in $\mathbf{Cart}(\mathcal{B})$. In virtue of 
Proposition~\ref{propcompunitmnds}, we are hence only left to show that formal left exactness of the left adjoint $\rho$ is equivalent to 
formal left exactness of the underlying endofunctor $T\simeq \iota\rho$ of 
the associated idempotent monad in $\mathbf{Cart}(\mathcal{B})$. One direction follows immediately from the fact that the right adjoint is 
left exact in $\mathbf{Cart}(\mathcal{B})$. The other direction follows from the fact that $\iota$ is fully faithful and hence reflects 
limits in $\mathbf{Cart}(\mathcal{B})$ (\cite[Proposition 2.4.7]{riehlverityelements}).
\end{proof}

\begin{corollary}\label{cormodopacc}
Suppose $\mathcal{B}$ is a quasi-category with pullbacks and universal $K$-shaped colimits for some class $K$ of simplicial sets, 
and suppose $\mathcal{F}\subseteq\mathcal{B}^{\Delta^1}$ is a full and replete cartesian subfibration whose domain
$\mathcal{F}\subseteq\mathcal{B}^{\Delta^1}$ is fiberwise closed under $K$-shaped colimits (and finite limits). Then there is a bijection 
between
\begin{enumerate}
\item cartesian (left exact) full reflective and replete subcategories $\mathcal{E}\hookrightarrow\mathcal{F}$ which are fiberwise closed 
under $K$-colimits and closed under compositions, and
\item cartesian (left exact) compositional and $K$-colimits preserving idempotent monads $T$ on $\mathcal{F}$  in
$\mathbf{Cart}(\mathcal{B})$.
\end{enumerate}
\end{corollary}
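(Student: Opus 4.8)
The strategy is to reduce this to the bijection of Corollary~\ref{cormodoplex} together with a transfer of $K$-cocontinuity between the reflector/inclusion and the associated idempotent monad, performed first fiberwise and then formally in $\mathbf{Cart}(\mathcal{B})$. The hypotheses place us in the range of Corollary~\ref{corbasecompleteness}: since $\mathcal{B}$ has universal $K$-shaped colimits and $\mathcal{F}\subseteq\mathcal{B}^{\Delta^1}$ is fiberwise closed under $K$-colimits, the object $\mathcal{F}$ has all $K$-shaped colimits in $\mathbf{Cart}(\mathcal{B})$ and the inclusion $\mathcal{F}\hookrightarrow t_{\mathcal{B}}$ preserves them. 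Moreover, if $\mathcal{E}\hookrightarrow\mathcal{F}$ is a cartesian reflective and replete subcategory that is fiberwise closed under $K$-colimits, then its transition maps $f^{\ast}$ are the restrictions of those of $\mathcal{F}$ (the inclusion being a cartesian functor, cf.\ Lemma~\ref{subfibcart} and the discussion after Corollary~\ref{corlexfibmods}) and hence are again $K$-cocontinuous; so by the colimit analogue of Proposition~\ref{propfiblex}.1 recorded in Remark~\ref{rempropfiblexgen}, $\mathcal{E}$ is a $K$-cocomplete object of $\mathbf{Cart}(\mathcal{B})$, with colimits computed by applying $\rho$ to the colimits of $\mathcal{F}$.

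With these completeness facts in hand, the colimit analogue of Proposition~\ref{propfiblex}.2 (again Remark~\ref{rempropfiblexgen}) lets me replace formal $K$-cocontinuity in $\mathbf{Cart}(\mathcal{B})$ by fiberwise $K$-cocontinuity throughout: a cartesian endofunctor $T$ of $\mathcal{F}$ preserves $K$-colimits in $\mathbf{Cart}(\mathcal{B})$ iff each $T_B\colon\mathcal{F}(B)\to\mathcal{F}(B)$ does, and $\mathcal{E}$ is fiberwise closed under $K$-colimits iff each inclusion $\iota_B\colon\mathcal{E}(B)\hookrightarrow\mathcal{F}(B)$ preserves $K$-colimits. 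It therefore suffices to prove, for a fixed reflective localization $\rho_B\adj\iota_B$ of the $K$-cocomplete $\infty$-category $\mathcal{F}(B)$, that $\mathcal{E}(B)$ is closed under $K$-colimits iff the idempotent monad $T_B=\iota_B\rho_B$ preserves $K$-colimits. One direction is immediate: $\rho_B$ is a left adjoint and hence preserves all colimits that exist in $\mathcal{F}(B)$, so if $\iota_B$ preserves $K$-colimits then so does $T_B=\iota_B\rho_B$. For the converse, assume $T_B$ preserves $K$-colimits, take $S\in K$ and a diagram $d\colon S\to\mathcal{E}(B)$, and set $c=\mathrm{colim}_{\mathcal{F}(B)}\,\iota_B d$. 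Since $T_B$ preserves this colimit and the unit $\eta$ of the monad is natural, $\eta_c$ is the colimit of the maps $\eta_{\iota_B d_k}$, each of which is an equivalence because $\iota_B d_k$ is $\mathcal{E}(B)$-local; a colimit of equivalences is an equivalence, so $\eta_c$ is an equivalence and $c$ is $\mathcal{E}(B)$-local, i.e.\ $\mathcal{E}(B)$ is closed under $K$-colimits.

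Assembling the pieces: Proposition~\ref{propcompunitmnds} gives the underlying bijection between cartesian reflective replete subcategories $\mathcal{E}\hookrightarrow\mathcal{F}$ closed under composition and cartesian compositional idempotent monads $T$ on $\mathcal{F}$, and carries the compositionality clause along; Corollary~\ref{cormodoplex} refines it by matching left exact reflectors with (formally) left exact monads, which accounts for the parenthetical variant since left exactness and $K$-cocontinuity are independent conditions imposed on disjoint pieces of structure; and the fiberwise-to-formal transfer above matches fiberwise closure of $\mathcal{E}$ under $K$-colimits with $K$-cocontinuity of $T$ in $\mathbf{Cart}(\mathcal{B})$. The one point deserving care — and the step I would check most carefully — is precisely this last transfer: verifying that Remark~\ref{rempropfiblexgen} applies simultaneously to $\rho$, $\iota$ and $T$, and in particular that $\mathcal{E}$ genuinely is a $K$-cocomplete object of $\mathbf{Cart}(\mathcal{B})$, so that ``$T$ preserves $K$-colimits'' and ``$\mathcal{E}$ is closed under $K$-colimits'' are being compared in the same formal sense.
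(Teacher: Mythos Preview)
Your proposal is correct and follows essentially the same approach as the paper. The paper's proof is a one-line citation of Corollary~\ref{corbasecompleteness}, Corollary~\ref{cormodoplex}, and \cite[Proposition 2.4.7]{riehlverityelements} (fully faithful functors reflect colimits); your write-up is a faithful unpacking of precisely these ingredients, with the fiberwise argument for ``$\mathcal{E}(B)$ closed under $K$-colimits iff $T_B$ preserves $K$-colimits'' making explicit what the paper leaves to the cited reflection property.
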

\begin{proof}
Immediate from Corollary~\ref{corbasecompleteness}, Corollary~\ref{cormodoplex} (and \cite[Proposition 2.4.7]{riehlverityelements} applied to 
$K$-shaped colimits).
\end{proof}

We will say that a functor $f\colon A\rightarrow B$ in an $\infty$-cosmos $\mathcal{V}$ is \emph{$\kappa$-accessible} for some regular 
cardinal $\kappa$ whenever it preserves all (simplicially indexed) $\kappa$-filtered colimits that exist in $A$. It is \emph{accessible} 
whenever it is $\kappa$-accessible for some regular cardinal $\kappa$.
In particular, whenever $A$ has all $\kappa$-filtered colimits, the map $f\colon A\rightarrow B$ is
$\kappa$-accessible if and only if $f\circ \mathrm{colim}_A\simeq\mathrm{colim}_B\circ f^K$ for all $\kappa$-filtered simplicial sets $K$.

\begin{definition}
Let $\mathcal{B}$ be a quasi-category with pullbacks. An \emph{(accessible) higher closure operator} $T$ on $\mathcal{B}$ is a 
left exact, accessible and compositional idempotent monad on $t_{\mathcal{B}}\colon\mathcal{B}^{\Delta^1}\twoheadrightarrow\mathcal{B}$ in
$\mathbf{Cart}(\mathcal{B})$.
\end{definition}

In the following, we will implicitly assume accessibility whenever we refer to higher closure operators.

\begin{theorem}\label{thmlocmodopacc}
Suppose $\mathcal{B}$ is a presentable quasi-category with universal colimits. Then there is a bijection between
\begin{enumerate}
\item left exact modalities of small generation on $\mathcal{B}$, and
\item higher closure operators on $\mathcal{B}$.
\end{enumerate}
\end{theorem}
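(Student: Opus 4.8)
The plan is to obtain the bijection by chaining together correspondences already established, the backbone being Corollary~\ref{corlexfibmods} together with Corollary~\ref{cormodoplex}, and then reconciling the two side conditions (small generation versus accessibility). Since $\mathcal{B}$ is presentable with universal colimits it has pullbacks, so by Corollary~\ref{corbasecompleteness} the total space $\mathcal{F}:=\mathcal{B}^{\Delta^1}$ of the target fibration is a full cartesian subfibration of itself which is closed under compositions and fiberwise closed under finite limits, hence a legitimate input for Corollary~\ref{cormodoplex}. Now a left exact modality $(\mathcal{L},\mathcal{R})$ on $\mathcal{B}$ is in particular a modality, so by Corollary~\ref{corlexfibmods}.1 it corresponds to a fibered reflective and replete subcategory $\iota\colon\mathcal{E}=\sum_{B\in\mathcal{B}}\mathcal{R}(B)\hookrightarrow\mathcal{B}^{\Delta^1}$ whose fibered reflector $\rho$ is a cartesian functor, so the whole adjunction lies in $\mathbf{Cart}(\mathcal{B})$; by Proposition~\ref{propfibmods} the subcategory $\mathcal{E}$ is closed under compositions, and by Corollary~\ref{corlexfibmods}.2 left-exactness of the modality is equivalent to left-exactness of $\rho$. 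Feeding this into Corollary~\ref{cormodoplex} (with the above $\mathcal{F}$) transports such $\iota$ bijectively to cartesian, left exact, compositional idempotent monads $T\simeq\iota\rho$ on $t_{\mathcal{B}}$ in $\mathbf{Cart}(\mathcal{B})$. So it remains only to see that, across this dictionary, smallness of generation of $(\mathcal{L},\mathcal{R})$ matches accessibility of $T$.

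Next I would reconcile the two accessibility conditions. By Corollary~\ref{corlexfibmods}.3, smallness of generation of the left exact modality is equivalent to the inclusion $\iota$ being fiberwise accessible, i.e.\ to each $\iota_B\colon\mathcal{R}(B)\hookrightarrow\mathcal{B}_{/B}$ being accessible. On the other hand, since $\mathcal{B}$ has universal colimits, for any regular cardinal $\kappa$ the $\kappa$-filtered colimits of $t_{\mathcal{B}}$ in $\mathbf{Cart}(\mathcal{B})$ are computed fiberwise (Corollary~\ref{corbasecompleteness} and Remark~\ref{rempropfiblexgen}), and whether a functor in $\mathbf{Cart}(\mathcal{B})$ preserves them is likewise detected fiberwise on Straightenings; hence $T=\iota\rho$ is $\kappa$-accessible in $\mathbf{Cart}(\mathcal{B})$ if and only if each $T_B=\iota_B\rho_B$ preserves $\kappa$-filtered colimits in $\mathcal{B}_{/B}$, which — each $\rho_B$ being a left adjoint and hence cocontinuous, with $\rho_B\iota_B\simeq\mathrm{id}$ — is equivalent to each $\iota_B$ being $\kappa$-accessible. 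Thus accessibility of $T$ immediately forces $\iota$ to be fiberwise accessible, hence $(\mathcal{L},\mathcal{R})$ to be of small generation. For the converse, if $(\mathcal{L},\mathcal{R})$ is of small generation then by the remark following Corollary~\ref{corlexfibmods} the class $\mathcal{R}$, viewed as a class of objects of $\mathcal{B}^{\Delta^1}$, is exactly the class of objects local with respect to a single set of maps; since $\mathcal{B}^{\Delta^1}$ is presentable this exhibits $\iota$ as an accessible functor for one regular cardinal $\kappa$ uniformly in $B$, and restricting to $\kappa$-filtered colimits constant over $\mathcal{B}$ (which coincide with those taken fiberwise) shows $T=\iota\rho$ is $\kappa$-accessible in $\mathbf{Cart}(\mathcal{B})$.

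Putting these together, a left exact modality of small generation on $\mathcal{B}$ corresponds bijectively — via Corollary~\ref{corlexfibmods} (parts 1--3), Corollary~\ref{cormodoplex}, and the accessibility reconciliation above — to a left exact, accessible, compositional idempotent monad on $t_{\mathcal{B}}$ in $\mathbf{Cart}(\mathcal{B})$, that is, to a higher closure operator on $\mathcal{B}$; compatibility with the evident partial orders (inclusions of right classes on one side, of reflective subcategories on the other) is inherited from Proposition~\ref{propfibmods}. I expect the only genuinely delicate point to be the accessibility reconciliation: one has to pass between ``each fibre $\iota_B$ is accessible'' — a priori with $B$-dependent cardinals — and ``$T$ is accessible'' in the cosmos, which demands a single uniform $\kappa$, and smallness of generation, through the one generating set and the presentability of $\mathcal{B}^{\Delta^1}$, is precisely what supplies that uniformity. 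Everything else is a concatenation of bijections whose hypotheses are checked off against Corollary~\ref{corbasecompleteness}.
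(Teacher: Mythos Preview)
Your proof is correct and follows essentially the same route as the paper's. The paper's own proof is a two-line citation of Corollary~\ref{corlexfibmods}.3 and Corollary~\ref{cormodopacc} (applied with $K$ the class of $\kappa$-filtered simplicial sets); you instead invoke Corollary~\ref{cormodoplex} and then carry out the accessibility reconciliation by hand, which is precisely the content that Corollary~\ref{cormodopacc} packages (via Corollary~\ref{corbasecompleteness} and the fiberwise characterization of formal colimit preservation). Your careful remark about needing a single uniform $\kappa$ is apt, and your use of the remark following Corollary~\ref{corlexfibmods} to obtain it from total accessibility of $\iota$ on $\mathcal{B}^{\Delta^1}$ is exactly the mechanism the paper intends.
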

\begin{proof}
Immediate from Corollary~\ref{corlexfibmods}.3 
and Corollary~\ref{cormodopacc} applied to the class $K$ of $\kappa$-filtered simplicial sets for the according regular cardinal $\kappa$.
\end{proof}

\subsection{Higher Lawvere-Tierney operators}\label{secsubhlttops}

In this section we internalize the cartesian reflective localizations from Section~\ref{secsubfsrl} (or equivalently the closure 
operators from Section~\ref{sechigherclosureops}) whenever the base $\mathcal{B}$ is an $\infty$-topos. Therefore we use, first, that 
the target fibration $t$ over $\mathcal{B}$ is the externalization of an internal category object in $\mathcal{B}$ (up to a cofinal sequence 
of truncations), and, second, that the externalization functor is (close enough to be) a cosmological embedding and so both preserves and 
reflects all relevant higher categorical structures.

\begin{remark}
Although technically we want to internalize higher closure operators in an $\infty$-topos to obtain a notion of higher Lawvere-Tierney 
operators on its object classifiers, we will instead internalize the according reflective localizations over $\mathcal{B}$ and again apply 
the Beck Monadicity Theorem to the $\infty$-cosmos of internal $\infty$-categories within $\mathcal{B}$. The reason for this additional 
gymnastic exercise is that the theory of homotopy-coherent monads per se does not lift well along cosmological biequivalences: a
homotopy-coherent monad is defined to be a simplicial functor out of the free monad, a lift along a cosmological biequivalence however generally does not preserve strict simplicial action. 
Instead, we can lift the according adjunction however (which is determined by a finite initial segment of simplicial data and lifts perfectly 
fine) and recover a corresponding monad by the Monadicity Theorem applied to the corresponding $\infty$-cosmos. 
\end{remark}

%

\subsubsection*{Polymorphic families of accessible cartesian endofunctors}
Whenever $\mathcal{B}$ is presentable, every sufficiently large regular cardinal $\kappa$
defines a fiberwise left exact and $\kappa$-closed full and replete cartesian subfibration
\[\xymatrix{
(\mathcal{B}^{\Delta^1})_{\kappa}\ar@{^(->}[r]\ar@/_/@{->>}[dr]_{t_{\kappa}} & \mathcal{B}^{\Delta^1}\ar@{->>}[d]^t \\
 & \mathcal{B}
}\]
spanned by the relatively $\kappa$-compact maps. Every fiberwise accessible endofunctor on $t$ over $\mathcal{B}$ restricts to an accessible 
endofunctor on $t_{\kappa}$ for suitably large regular cardinals $\kappa$ by the following lemma.

\begin{lemma}\label{lemmaaccsmall}
\begin{enumerate}
\item Let $F\colon\mathcal{A}\rightarrow\mathcal{B}$ be an accessible functor between presentable $\infty$-categories. 
Then there is a cardinal $\mu$ such that $F$ preserves $\kappa$-compact objects for all $\kappa\gg\mu$ (\cite[Definition 5.4.2.8]{luriehtt}).
\item Let $\mathcal{B}$ be presentable and $T\colon t_{\mathcal{B}}\rightarrow t_{\mathcal{B}}$ be a cartesian endofunctor that is fiberwise 
accessible. Then there is a cardinal $\mu$ such that for all $\kappa\gg\mu$, the functor $T$ preserves relatively $\kappa$-compact maps 
(objectwise).
\end{enumerate}
\end{lemma}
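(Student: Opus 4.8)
The plan is to deduce Part 2 from Part 1 by observing that a cartesian endofunctor $T$ of $t_{\mathcal{B}}$ which is fiberwise accessible is, after straightening, a natural transformation of $\mathrm{Cat}_\infty$-valued functors on $\mathcal{B}^{op}$ whose components are accessible functors between presentable $\infty$-categories (the slices $\mathcal{B}_{/B}$), and then to run Part 1 pointwise while controlling the dependence of the bounding cardinal on $B$ by compactness/smallness of $\mathcal{B}$ itself. So first I would establish Part 1 by the standard argument: if $F\colon\mathcal{A}\to\mathcal{B}$ is $\lambda$-accessible, then $F$ preserves $\lambda$-filtered colimits; choosing any regular $\kappa\gg\lambda$ (in the sense of \cite[Definition 5.4.2.8]{luriehtt}, so in particular $\kappa\vartriangleright\lambda$ and $\kappa$ uncountable), a $\kappa$-compact object $a\in\mathcal{A}$ can be written as a $\kappa$-small $\lambda$-filtered colimit of $\lambda$-compact objects (using that $\mathcal{A}$ is $\lambda$-accessible for large enough base $\lambda$, refining as needed); $F$ sends each $\lambda$-compact object to a $\mu$-compact object for a fixed $\mu$ depending only on $F$ and $\lambda$ (since there is, up to equivalence, only a set of $\lambda$-compact objects in $\mathcal{A}$, so one takes $\mu$ large enough to dominate all their images); and then for $\kappa\gg\mu$ the $\kappa$-small colimit of $\mu$-compact objects $F(a)$ is again $\kappa$-compact by \cite[Corollary 5.3.4.15]{luriehtt}. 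That gives the cardinal $\mu$ in Part 1.

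For Part 2, I would first note that $\mathcal{B}$ presentable implies each slice $\mathcal{B}_{/B}$ is presentable, and the relatively $\kappa$-compact maps with codomain $B$ are precisely (for $\kappa$ large enough) the $\kappa$-compact objects of $\mathcal{B}_{/B}$; this identification of $(\mathcal{B}^{\Delta^1})_\kappa$ fiberwise with the subcategories of $\kappa$-compact objects is essentially \cite[Lemma 6.1.6.6, Proposition 6.1.6.7]{luriehtt}. Fiberwise accessibility of $T$ means each component $T_B\colon\mathcal{B}_{/B}\to\mathcal{B}_{/B}$ is accessible, so Part 1 supplies for each $B$ a cardinal $\mu_B$ past which $T_B$ preserves compact objects. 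The key point is that these $\mu_B$ can be chosen uniformly: pick a regular cardinal $\lambda$ such that $\mathcal{B}$ is $\lambda$-presentable and (using that $T$ is a single cartesian functor, hence $\lambda'$-accessible fiberwise for one fixed $\lambda'$ once $\lambda'$ is large enough — this is again a smallness argument, since $\mathcal{B}$ has a set of generators and $t$ is a single object of $\mathbf{Cart}(\mathcal{B})$) one gets a single $\lambda'$ working for all fibers; then, since the collection of $\lambda'$-compact objects across all fibers $\mathcal{B}_{/B}$, as $B$ ranges over a set of $\lambda$-compact generators of $\mathcal{B}$, is still only a set, their images under the $T_B$ are bounded by a single $\mu$. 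Finally, cartesian naturality of $T$ (preservation of cartesian arrows, i.e.\ compatibility with pullback) lets one propagate the conclusion from the generating objects $B$ to all $B\in\mathcal{B}$, using that every object is a colimit of generators and that relative $\kappa$-compactness is pullback-stable and colimit-detected in a presentable base.

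The main obstacle I expect is precisely the uniformity step: extracting a single cutoff $\mu$ that works in every fiber simultaneously. Fiberwise accessibility a priori only gives, for each $B$, \emph{some} regular cardinal $\lambda_B$ at which $T_B$ preserves filtered colimits, and one must argue that these can be taken from a bounded set. The clean way to do this is to not argue fiber-by-fiber at all but to invoke that $T$ being accessible \emph{in the $\infty$-cosmos $\mathbf{Cart}(\mathcal{B})$} (the notion just defined in the excerpt) is equivalent to $\mathrm{St}(T)$ landing in accessible functors with a common bound — which follows because $\mathrm{Cart}(\mathcal{B})$, for presentable $\mathcal{B}$ with universal colimits, is itself a presentable $\infty$-category and accessibility there is witnessed by a single cardinal; alternatively one invokes \cite[Proposition 5.4.7.10]{luriehtt} on the accessibility of functor categories. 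Once the uniform $\lambda'$ and hence uniform $\mu$ are in hand, the rest is the pointwise application of Part 1 plus the pullback-stability remark, which is routine.
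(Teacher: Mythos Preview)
Your approach is essentially the same as the paper's: apply Part~1 on the fibers over a set of $\lambda$-compact generators $G$ of $\mathcal{B}$, obtain a uniform bound $\mu$ there, and then use cartesianness of $T$ (so $b^{\ast}T(f)\simeq T(b^{\ast}f)$ for $b\colon C\to B$ with $C\in G$) together with the identification of $\kappa$-compact objects in $\mathcal{B}_{/C}$ with maps of $\kappa$-compact domain to propagate to arbitrary $B$.

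One simplification: the uniformity obstacle you flag is lighter than you make it. You do not need a single accessibility rank $\lambda'$ working in every fiber, nor do you need to invoke accessibility of $T$ as an object of $\mathbf{Cart}(\mathcal{B})$ (which is strictly more than the hypothesis ``fiberwise accessible''). It is enough to apply Part~1 as a black box to each $T_C$ for $C\in G$: this produces cardinals $\mu_C$, and since $G$ is a set you may choose a single regular $\mu$ with $\mu\gg\mu_C$ for all $C\in G$. The paper does exactly this. Your detour through a uniform $\lambda'$ is not wrong, but it is extra work that the argument does not require.
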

\begin{proof}
Part 1 is a version of the ``Uniformization Theorem'' (\cite[Theorem 2.19]{adamekrosicky}) and is stated in \cite[Remark 5.4.2.15]{luriehtt}. 
A full proof is given in \cite[Lemma 8.3.4]{thesis}. 

For Part 2, let $\mathcal{B}$ be $\lambda$-presentable and $G$ be a set of $\lambda$-compact generators of
$\mathcal{B}$. For every $C\in G$ the slice $\mathcal{B}_{/C}$ is $\lambda$-presentable and as such generated 
by the maps with domain in $G$, and so for each $\kappa\gg\lambda$ the $\kappa$-compact objects in $\mathcal{B}_{/C}$ are 
exactly the maps with $\kappa$-compact domain in $\mathcal{B}$. It follows that
$T_C\colon\mathcal{B}_{/C}\rightarrow\mathcal{B}_{/C}$ preserves maps with $\kappa$-compact domain for all $C\in G$ and all
$\kappa\gg\mu\gg\lambda$ where $\mu$ is chosen so that $T_C$ is $\lambda$-accessible for all $C\in G$. Given such $\kappa$ and a 
relative $\kappa$-compact map $f\colon A\rightarrow B$ between two arbitrary objects $A,B$ in $\mathcal{B}$, we want to show that
$T(f)\in\mathcal{B}_{/B}$ is relatively $\kappa$-compact again. Therefore, let $C\in G$ be a generator and $b\colon C\rightarrow B$ a 
map. Then $b^{\ast}T(f)\simeq T(b^{\ast}f)$ over $C$ since $T$ is a cartesian functor. Now, $b^{\ast}A$ is $\kappa$-compact by assumption, 
and hence $b^{\ast}f\in\mathcal{B}_{/C}$ is $\kappa$-compact as well by the above. It follows that $T(b^{\ast}f)\in\mathcal{B}_{/C}$ is
$\kappa$-compact, and thus so is its domain. This finishes the proof.
\end{proof}

Since every map in a presentable $\infty$-category is relatively $\kappa$-compact for some regular cardinal $\kappa$, the target fibration
$t\colon\mathcal{B}^{\Delta^1}\twoheadrightarrow\mathcal{B}$ allows a filtration
\[\mathcal{B}^{\Delta^1}=\bigcup_{\kappa}(\mathcal{B}^{\Delta^1})_{\kappa}\]
indexed by any cofinal sequence of regular cardinals
$\kappa$. Thus, we may scatter a given accessible idempotent cartesian monad $T$ on $t$ across initial segments $t_{\kappa}$ and glue them 
back together without any essential loss of information. More precisely, we can do the following. 

Suppose $\mathbb{M}$ is a quasi-categorically enriched model category. Then for any small Reedy category $I$, the 
functor category $\mathrm{Fun}(I,\mathbb{M})$ comes equipped with the Reedy model structure which is again quasi-categorically 
enriched. Its full simplicial subcategory spanned by the fibrant objects hence gives rise to an $\infty$-cosmos as defined in
\cite[Definition 2.1.1]{rvyoneda}, but fails to give rise to an $\infty$-cosmos as used so far in this paper (following 
\cite{riehlverityelements}), because fibrant objects in $(\mathrm{Fun}(I,\mathbb{M}),\mathrm{Reedy})$ are generally not cofibrant.
While this still may be enough to develop much of the theory of $\infty$-cosmoses generally in a suitable sense, in some special cases the 
model category $(\mathrm{Fun}(I,\mathbb{M}),\mathrm{Reedy})$ does give rise to an honest $\infty$-cosmos after all. For one, this 
is the case whenever $I$ is an elegant Reedy category and $\mathbb{M}$ is a Cisinski model category. For the constructions in this section we 
exploit a different but similar situation (see the following lemma).

Therefore, first, we have to address that given an $\infty$-category $\mathcal{B}$ which is not necessarily small (but only locally small 
instead), the $\infty$-category $\mathrm{Cart}(\mathcal{B})$ is not even locally small. To manage the usual issues associated with size, we 
will assume the existence of a Grothendieck universe $\mathcal{V}$ -- for instance given by an inaccessible cardinal $\nu$ -- which contains 
$\mathcal{B}$. More precisely, 
whenever $\mathcal{B}$ is presentable, we will assume that the model categorical presentation $\mathbb{B}$ is given by a left Bousfield 
localization of $\nu$-small simplicial sets valued simplicial presheaves over an indexing simplicial $\infty$-category of size less than
$\nu$. Thus in the following the class ``Ord'' of ordinals will refer to the set $\nu$ of ordinals smaller than $\nu$. 
In particular, the category $\mathrm{Fun}(\mathrm{Ord},\mathrm{Cart}(\mathcal{B}))$ is again locally small in the metatheory, and so is
$\mathrm{Fun}(\mathrm{Ord},\mathbb{M})$ for any $\nu$-small model category $\mathbb{M}$.

Given the standing assumption of such a metatheoretical universe $\mathcal{V}_{\nu}$, for the constructions associated to an $\infty$-topos 
$\mathcal{E}=\hat{\mathcal{C}}[T^{-1}]$ in this section we have two choices: one deducing structure from the top down and one inducing 
structure from the bottom up. 
On the one hand, the top-down approach considers the ``large'' object classifier $\pi_{\nu}$ of relatively $\nu$-small maps in the enlarged
$\infty$-topos $\mathcal{E}^+:=\mathrm{Fun}(\mathcal{C}^{op},\mathcal{S}^+)[T^{-1}]$ of sheaves valued in (not necessarily $\nu$-small) 
spaces, and uses it basically as a logical framework for $\mathcal{E}\simeq\mathcal{E}^+_{\nu}$ (which however lies 
outside of $\mathcal{E}$) to derive the desired functorial contructions in $\mathcal{E}$. This is an approach in essence taken for instance 
in the work of \cite{martiniwolf_ihtt} to internalize various notions of higher topos theory. The ``bottom-up'' approach on the other hand 
works in $\mathcal{E}$ directly and coheres pointwise notions to the desired diagrammatic structures by various inductive arguments.
Both approaches have their subtleties, and in large rely on the same methods. In the following, we will pursue the ``bottom-up'' 
approach.



\begin{lemma}\label{lemmadirectedmc}
Let $\mathbb{M}$ be a model category such that the cofibrations are exactly the monomorphisms and such that the underlying category of
$\mathbb{M}$ is finitely presentable. Let $I$ be an ordinal (or the class of all ordinals).
\begin{enumerate}
\item 
A diagram $B\colon I\rightarrow\mathbb{M}$ is Reedy cofibrant if and only if $B(0)$ is cofibrant and for all $i\leq j$ the designated map 
$B(i)\rightarrow B(j)$ is a cofibration. A diagram $B\colon I\rightarrow\mathbb{M}$ is Reedy fibrant if and only if each $B(i)\in\mathbb{M}$ 
is fibrant.
\item The full simplicial subcategory $\mathbf{Fun}(I,\mathbf{M})$ spanned by the 
bifibrant objects in $(\mathrm{Fun}(I,\mathbb{M}),\mathrm{Reedy})$ is an $\infty$-cosmos. 
\end{enumerate}
\end{lemma}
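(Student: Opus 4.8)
The plan is to prove both parts of Lemma~\ref{lemmadirectedmc} directly from the structure of the Reedy model category associated to a direct category, exploiting that $I$ (being an ordinal or the class of all ordinals) is a \emph{direct} Reedy category, i.e.\ one all of whose non-identity morphisms raise degree. First I would recall that for a direct category the latching objects are the colimits over the (full) subcategory of objects strictly below a given $i$, while the matching objects are trivial (terminal), since there are no degree-lowering maps. For Part 1 this immediately gives the fibrancy characterization: a Reedy fibrant diagram is one for which each relative matching map $B(i)\rightarrow M_i B$ is a fibration, but $M_i B \simeq \ast$, so Reedy fibrancy reduces to $B(i)$ being fibrant for every $i$. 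For the cofibrancy claim I would argue by transfinite induction along $I$. Reedy cofibrancy says each latching map $L_i B \rightarrow B(i)$ is a cofibration; for $i=0$ the latching object is the colimit over the empty category, i.e.\ the initial object, so this is exactly cofibrancy of $B(0)$. For the inductive step I would use that cofibrations in $\mathbb{M}$ are precisely the monomorphisms and that monomorphisms are closed under the relevant transfinite composites and pushouts in the finitely presentable (hence cocomplete, well-powered) category underlying $\mathbb{M}$: one shows that if every $B(i)\rightarrow B(j)$ for $i\le j<\lambda$ is a monomorphism then the latching map $L_\lambda B = \operatorname{colim}_{i<\lambda} B(i) \rightarrow B(\lambda)$ is a monomorphism iff $B(\mu)\rightarrow B(\lambda)$ is one for every $\mu<\lambda$ (for $\lambda$ a successor the latching object is just $B(\lambda-1)$, making this transparent; for $\lambda$ a limit one uses that filtered colimits of monos along monos are monos in a finitely presentable category, which holds since such categories are locally finitely presentable and in particular regular). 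Conversely, if all the maps $B(i)\rightarrow B(j)$ are monos, each latching map is a mono, i.e.\ the diagram is Reedy cofibrant.

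For Part 2, the strategy is to verify the axioms of an $\infty$-cosmos (\cite[Definition 2.1.1 / Section 1.2]{riehlverityelements}) for the full simplicial subcategory $\mathbf{Fun}(I,\mathbf{M})$ of bifibrant objects in $(\mathrm{Fun}(I,\mathbb{M}),\mathrm{Reedy})$, and crucially to observe that here — unlike for a general Reedy index category — every Reedy fibrant object is automatically Reedy cofibrant. Indeed, if $B$ is Reedy fibrant then each $B(i)$ is fibrant, hence cofibrant (as \emph{every} object of $\mathbb{M}$ is cofibrant, cofibrations being the monomorphisms and $\varnothing\hookrightarrow X$ always being monic), and each $B(i)\rightarrow B(j)$ is a map between fibrant objects; but it is also a monomorphism because... here is the one point needing care, since an arbitrary transition map in a Reedy fibrant diagram over a direct category need not be monic. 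Actually the cleaner route is: the model structure $(\mathrm{Fun}(I,\mathbb{M}),\mathrm{Reedy})$ is itself a model category in which all objects are cofibrant — this is inherited because $\mathbb{M}$ has all objects cofibrant and $I$ is an elegant (indeed direct) Reedy category, so Reedy cofibrancy of $B$ is equivalent to sectionwise cofibrancy of $B$, which is automatic. Thus the bifibrant objects are exactly the Reedy fibrant ones, and $\mathbf{Fun}(I,\mathbf{M})$ is the full subcategory of fibrant objects of a quasi-categorically enriched model category \emph{all of whose objects are cofibrant}, so \cite[Proposition E.1.1]{riehlverityelements} applies verbatim to produce the $\infty$-cosmos. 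I would state the argument in that order: (a) all objects of $\mathbb{M}$ cofibrant; (b) hence all objects of the Reedy model structure on $\mathrm{Fun}(I,\mathbb{M})$ cofibrant (citing that $I$ is direct/elegant); (c) the Reedy model structure is quasi-categorically enriched (standard, as $\mathbb{M}$ is and enriched Reedy structures inherit this); (d) invoke \cite[Proposition E.1.1]{riehlverityelements}.

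The main obstacle I anticipate is precisely step (b): justifying that \emph{every} object of $(\mathrm{Fun}(I,\mathbb{M}),\mathrm{Reedy})$ is cofibrant, equivalently that Reedy cofibrancy is no condition at all when $I$ is direct and $\mathbb{M}$ has all objects cofibrant. The honest proof requires showing the latching maps $L_i B \rightarrow B(i)$ are always cofibrations; this is where one genuinely uses that the cofibrations of $\mathbb{M}$ are the monomorphisms and that $\mathbb{M}$ is finitely presentable (so that the relevant colimits defining $L_i B$ are well-behaved and monomorphisms are stable under them). One subtlety is that $L_i B \rightarrow B(i)$ being monic is not automatic for an arbitrary diagram even when every $B(i)$ is an object of a nice category — it genuinely uses that the transition maps assemble coherently — but in fact for a direct category the latching map factors the canonical cocone, and in a locally finitely presentable (hence regular, with effective unions) category one checks monicity of $L_i B \to B(i)$ by a transfinite induction identical to the one in Part 1. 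So I would actually prove (b) by the same induction as Part 1, or, more economically, derive (b) as a corollary of Part 1 together with the observation that \emph{every} object of $\mathbb{M}$ is cofibrant. A second, minor obstacle is confirming that the limits of the $\infty$-cosmos (products, cotensors, pullbacks of fibrations, inverse limits of towers of fibrations) exist in $\mathbf{Fun}(I,\mathbf{M})$; but these are computed sectionwise from the corresponding limits in $\mathbf{M}$ (which is an $\infty$-cosmos by \cite[Proposition E.1.1]{riehlverityelements}) and Reedy fibrancy of the result over a direct category reduces, by Part 1, to sectionwise fibrancy, which these sectionwise limits manifestly enjoy — so this reduces cleanly to the already-known case of $\mathbb{M}$ itself.
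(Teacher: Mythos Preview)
Your treatment of Part 1 is essentially the paper's argument and is fine.

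Your Part 2, however, contains a genuine error. You claim that every object of $(\mathrm{Fun}(I,\mathbb{M}),\mathrm{Reedy})$ is Reedy cofibrant, on the grounds that $I$ is direct/elegant and every object of $\mathbb{M}$ is cofibrant. But this contradicts your own Part 1: Reedy cofibrancy of $B$ requires not only that $B(0)$ be cofibrant, but that every transition map $B(i)\to B(j)$ be a monomorphism. The latter is not automatic. Concretely, take $\mathbb{M}=\mathrm{Set}$ (finitely presentable, cofibrations $=$ monomorphisms) and $I=\{0\to 1\}$. A diagram is just a function $f\colon A\to B$, and the latching map at $1$ is $f$ itself; so $B$ is Reedy cofibrant iff $f$ is injective, which of course fails for a generic $f$. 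The induction you propose ``identical to the one in Part 1'' cannot manufacture monicity of the transition maps out of nothing --- in Part 1 that induction \emph{assumes} the transition maps are monic (or that the Reedy latching maps are) and propagates this along the ordinal; it does not prove it ab initio. The appeal to elegance does not help either: for a direct Reedy category the latching maps at successor stages \emph{are} the transition maps, so ``Reedy cofibrant $=$ levelwise cofibrant'' simply fails here.

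The paper's route is different and avoids this trap. It does \emph{not} claim all objects are cofibrant. Instead it invokes the proof of \cite[Proposition~E.1.1]{riehlverityelements} and checks by hand that the class of \emph{bi}fibrant diagrams is closed under the $\infty$-cosmos limit constructions (terminal object, products, pullbacks along fibrations, simplicial cotensors, limits of towers of fibrations). The point is that all of these are computed pointwise in $\mathrm{Fun}(I,\mathbb{M})$, and the class of monomorphisms in $\mathbb{M}$ is closed under all limits and under cotensors; hence if each input diagram has monic transition maps, so does the output. Fibrancy is preserved for the same pointwise reason. The terminal object (the constant terminal diagram) is bifibrant by Part 1. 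This is the argument you should give: drop the false claim that everything is cofibrant, and instead verify closure of the bifibrant objects under the required operations using that monomorphisms are limit-closed.
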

\begin{proof}
For Part 1, we note that all matching objects exist and are trivial, and so the fibrancy condition follows readily. By definition, a diagram
$B\colon I\rightarrow\mathbb{M}$ is Reedy cofibrant if and only if
\begin{itemize}
\item $B(0)$ is cofibrant,
\item all designated maps $B(i)\rightarrow B(i+1)$ are cofibrations in $\mathbb{M}$, and
\item for all limit ordinals $\lambda\in I$, the map $\mathrm{colim}_{i<\lambda}B(i)\rightarrow B(\lambda)$ is a cofibration.
\end{itemize}
Thus, whenever $B$ is Reedy cofibrant, we can show that $B(i)\rightarrow B(j)$ is a cofibration for all $i\leq j\leq\lambda$ for all
$\lambda\in I$ by induction. 
Clearly the identity $B(0)\rightarrow B(0)$ is a cofibration, and whenever $B(i)\rightarrow B(j)$ is a cofibration for all
$i\leq j\leq\lambda$, then so is the composition $B(i)\rightarrow B(j)\rightarrow B(\lambda+1)$ for all $i\leq\lambda+1$. For limit 
ordinals $\lambda$, assume that $B(i)\rightarrow B(j)$ is a cofibration 
for all $i\leq j<\lambda$. Then for every $i<\lambda$ the cocone map $B(i)\rightarrow\mathrm{colim}_{i<\lambda}B(i)$ is monic due to finite 
presentability of $\mathbb{M}$ (\cite[Proposition 1.62]{adamekrosicky}); it follows that the composition
$B(i)\hookrightarrow\mathrm{colim}_{i<\lambda}B(i)\hookrightarrow B(\lambda)$ is monic as well. This finishes the induction.

For the other direction, whenever all designated maps $B(i)\rightarrow B(j)$ are monic, we are left to show 
that the map $\mathrm{colim}_{i<\lambda}B(i)\rightarrow B(\lambda)$ is monic for $\lambda\in I$ a limit ordinal. This again however follows 
from finite representability of $\mathbb{M}$.

For Part 2, as $\mathbb{M}$ is quasi-categorically enriched (as is $I$), the Reedy model structure on $\mathrm{Fun}(I,\mathbb{M})$ is again 
enriched over ($\nu$-small) quasi-categories. Given the proof of \cite[Proposition E.1.1]{riehlverityelements} we are hence only left to show 
that the class of cofibrant objects in $(\mathrm{Fun}(I,\mathbb{M}),\mathrm{Reedy})_f$ is closed under all operations required in 
\cite[Definition 1.2.1]{riehlverityelements}. The terminal object $1\in\mathrm{Fun}(I,\mathbb{M})$ however is the constant terminal 
object functor and as such is bifibrant 
by Part 1. The other properties all follow from Part 1, from the fact that limits and cotensors in $\mathrm{Fun}(I,\mathbb{M})$ are computed 
pointwise, and from the fact that the class of monomorphisms in $\mathbb{M}$ is closed under all limits and cotensors.
\end{proof}

\begin{remark}
Given that we only assume finite presentability of $\mathbb{M}$ in Lemma~\ref{lemmadirectedmc}, it may be worth to point out that 
``monomorphism'' here does not mean regular monomorphism (or any other variation of the notion).
\end{remark}

\begin{example}
Crucially, for a regular cardinal $\mu$ we may consider the unbounded class $\mathrm{Shl}(\mu)$ of regular cardinals which are sharply 
larger than $\mu$, together with its according monotone $\mathrm{Ord}$-indexed enumeration $\mathrm{Ord}\rightarrow \mathrm{Shl}(\mu)$.
Given a quasi-category $\mathcal{B}$, we may apply 
Lemma~\ref{lemmadirectedmc} to the model category $(\mathbf{S}^+/\mathcal{B}^{\sharp},\mathrm{Cart})$. Indeed, the category $\mathbf{S}^+/\mathcal{B}^{\sharp}$ is finitely presentable by the fact that $\mathbf{S}^+$ is so and by \cite[Proposition 1.57]{adamekrosicky}. The cofibrations in
$(\mathbf{S}^+/\mathcal{B}^{\sharp},\mathrm{Cart})$ are by definition exactly the maps which induce cofibrations of underlying simplicial 
sets; those are exactly the monomorphisms in $\mathbf{S}^+$ since the forgetful functor $U\colon \mathbf{S}^{+}\rightarrow \mathbf{S}$ both 
preserves and reflects monomorphisms. Furthermore, the target fibration $t\in\mathbf{Cart}(\mathbf{\mathcal{B}})$ induces the diagram
$t_{\bullet}\colon \mathrm{Ord}\rightarrow \mathrm{Shl}(\mu)\rightarrow\mathbf{Cart}(\mathcal{B})$. This diagram is Reedy bifibrant by Lemma~\ref{lemmadirectedmc}.1. 
As such it is an object of the $\infty$-cosmos $\mathbf{Fun}(\mathrm{Shl}(\mu),\mathbf{Cart}(\mathcal{B}))$.
\end{example}

\begin{definition}\label{defpolyfam}
Let $\mu$ be a regular cardinal and $\mathrm{Shl}(\mu)$ be the class of regular cardinals sharply larger than $\mu$. Let
$\mathbb{M}$ be a model category as in Lemma~\ref{lemmadirectedmc}, and let $B_{\bullet}\colon \mathrm{Shl}(\mu)\rightarrow\mathbb{M}$ be a 
Reedy bifibrant diagram. Then a \emph{$\mu$-polymorphic family of (left exact/accessible/\dots) reflective localizations of $B_{\bullet}$} 
in $\mathbf{M}$ is a (left exact/accessible/\dots) reflective localization of the diagram $B_{\bullet}$ in the $\infty$-cosmos
$\mathbf{Fun}(\mathrm{Shl}(\mu),\mathbf{M})$. 

A pair of $\mu_i$-polymorphic families $E_i\rightarrow B_i$ $(i\in\{0,1\})$ of reflective localizations in $\mathbf{M}$ are 
\emph{equivalent} if there is a cardinal $\mu\gg\mu_1,\mu_2$ and a $\mu$-polymorphic family $E\rightarrow B$ of reflective localizations in
$\mathbf{M}$ such that $(E_i\rightarrow B_i)|_{\mathrm{Shl}(\mu)}\simeq(E\rightarrow B)$ in $\mathbf{Fun}(\mathrm{Shl}(\mu),\mathbf{M})$. 

Given a quasi-category $\mathcal{B}$ with pullbacks, a \emph{polymorphic family of reflective localizations of
$t\in\mathbf{Cart}(\mathcal{B})$} is regular cardinal $\mu$ together with a $\mu$-indexed polymorphic family of 
reflective localizations of $t_{\bullet}\colon \mathrm{Shl}(\mu)\rightarrow\mathbf{Cart}(\mathcal{B})$. 

\end{definition}

Definition~\ref{defpolyfam} induces an equivalence relation on the class of polymorphic families of reflective localizations 
in $\mathbf{M}$ of the form $(\mu,\rho)$ for $\mu$ a regular cardinal and $\rho$ a $\mu$-polymorphic family of reflective localizations in
$\mathbf{M}$. 
Here, note that the restriction of Reedy bifibrant diagrams to subclasses of regular cardinals in that context
preserves Reedy bifibrancy by Lemma~\ref{lemmadirectedmc}. 

\begin{proposition}\label{propmonadscatter}
Let $\mathcal{B}$ be a presentable quasi-category. Then every accessible (left exact) reflective localization of the target fibration 
$t\in\mathbf{Cart}(\mathcal{B})$ yields a polymorphic family of accessible (left exact) reflective localizations of 
$t\in\mathbf{Cart}(\mathcal{B})$ by restriction. Vice versa, every regular cardinal $\mu$ together with a polymorphic family of accessible 
(left exact) reflective localizations of $t_{\bullet}\colon \mathrm{Shl}(\mu)\rightarrow\mathbf{Cart}(\mathcal{B})$ defines an accessible 
(left exact) cartesian reflective localization of $t\in\mathbf{Cart}(\mathcal{B})$ via union. On equivalence classes, the two assignment are 
mutually inverse to one another.
\end{proposition}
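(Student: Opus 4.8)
The plan is to write down both assignments explicitly, verify that each lands in the asserted class, and then check that restriction-then-union and union-then-restriction recover the input up to the relevant equivalence; essentially everything except one accessibility argument is bookkeeping with restrictions of full subfibrations and the fact that (reflective) adjunctions in an $\infty$-cosmos are pinned down by a small amount of coherent data.

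First I would treat restriction. Let $\rho\dashv\iota$ be an accessible cartesian reflective localization of $t\in\mathbf{Cart}(\mathcal{B})$ with local subfibration $\mathcal{E}\hookrightarrow t$, underlying idempotent cartesian endofunctor $T\simeq\iota\rho$ and unit $\eta\colon t\to T$. Since $\mathcal{B}$ is presentable and $T$ is fiberwise accessible, Lemma~\ref{lemmaaccsmall}.2 supplies a regular cardinal $\mu$ with $T$ preserving relatively $\kappa$-compact maps objectwise for every $\kappa\in\mathrm{Shl}(\mu)$; enlarging $\mu$ we may assume in addition that $(\mathcal{B}^{\Delta^1})_\kappa$ is fiberwise closed under finite limits for all such $\kappa$ and that a single regular cardinal $\lambda\gg\mu$ witnesses the accessibility of $\iota$. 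For each $\kappa\in\mathrm{Shl}(\mu)$, $T$ then restricts to a cartesian endofunctor $T_\kappa$ of $t_\kappa$ and the restriction of $\eta$ exhibits $\mathcal{E}_\kappa:=\mathcal{E}\cap(\mathcal{B}^{\Delta^1})_\kappa$ as a full, replete, composition-closed reflective localization of $t_\kappa$ in $\mathbf{Cart}(\mathcal{B})$, because a reflective localization in an $\infty$-cosmos is determined up to a contractible space of coherences by its local objects together with the associated initial arrows (\cite[Proposition 4.1.6]{riehlverityelements}), and both restrict along the full cartesian subfibration inclusion $t_\kappa\hookrightarrow t$; left exactness of $T_\kappa$ follows from Proposition~\ref{propfiblex} (fiberwise left exactness being preserved by restricting to the finite-limit-closed $t_\kappa$), and the accessibility bound $\lambda$ persists. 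Because the units $\eta_\kappa$ are restrictions of one global unit they are strictly compatible with the transition inclusions $t_\kappa\hookrightarrow t_{\kappa'}$, so the diagram $\kappa\mapsto(\mathcal{E}_\kappa\hookrightarrow t_\kappa)$ is Reedy bifibrant by Lemma~\ref{lemmadirectedmc}.1, is a levelwise fully faithful inclusion, and the $\eta_\kappa$ assemble into a unit 2-cell exhibiting a $\mu$-polymorphic family of (left exact/accessible) reflective localizations, again by \cite[Proposition 4.1.6]{riehlverityelements} applied in the $\infty$-cosmos $\mathbf{Fun}(\mathrm{Shl}(\mu),\mathbf{Cart}(\mathcal{B}))$.

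For the union, start from $\mu$ and a polymorphic family $(\rho_\bullet,\iota_\bullet)$ of reflective localizations of $t_\bullet\colon\mathrm{Shl}(\mu)\to\mathbf{Cart}(\mathcal{B})$, with idempotent cartesian endofunctors $T_\kappa$, units $\eta_\kappa$, and local subfibrations $\mathcal{E}_\kappa\hookrightarrow t_\kappa$; naturality of the family makes the $\mathcal{E}_\kappa$ and the $T_\kappa$ compatible with the transition inclusions up to coherent equivalence. Set $\mathcal{E}:=\bigcup_{\kappa\in\mathrm{Shl}(\mu)}\mathcal{E}_\kappa\subseteq\mathcal{B}^{\Delta^1}=\bigcup_\kappa(\mathcal{B}^{\Delta^1})_\kappa$, a full, replete, composition-closed subcategory, and for $f\in\mathcal{B}^{\Delta^1}$ relatively $\kappa$-compact put $T(f):=T_\kappa(f)$ with unit $\eta_\kappa(f)$; naturality makes this well defined up to coherent equivalence. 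The inclusion $\mathcal{E}\hookrightarrow t$ then has a fibered left adjoint, since $\eta_\kappa(f)$ is an initial arrow from $f$ into $\mathcal{E}$ — it is initial already in each $t_{\kappa'}$ and the transition inclusions are fully faithful — so by \cite[Proposition 4.1.6]{riehlverityelements} and Lemma~\ref{subfibcart} we obtain a cartesian reflective localization of $t$ in $\mathbf{Cart}(\mathcal{B})$ (cartesian because each $\rho_\kappa$ and each transition inclusion is a cartesian functor); in the left exact case $\rho$ is fiberwise left exact (finite limits in $\mathcal{B}_{/B}$ being computed at bounded cardinality and each $\rho_\kappa$ being fiberwise left exact) hence formally left exact by Proposition~\ref{propfiblex}. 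The hard part will be accessibility of this assembled localization: from the single bound $\lambda$ provided by the accessible polymorphic family one must produce a genuine \emph{set} of maps generating the class $\mathcal{L}$ of $\mathcal{E}$-local equivalences, for which I would run the small-generation argument from the proof of Corollary~\ref{corlexfibmods}.3 — fix a set $G$ of presentable generators of $\mathcal{B}$, use that each $\rho_\kappa$ is fiberwise a localization of the essentially small $(\mathcal{B}_{/B})_\kappa$ with $\lambda$-bounded generators, glue these along $G$ into one set $H$, and then invoke \cite{as_soa} to conclude small generation of the associated factorization system and hence accessibility of the localization via Corollary~\ref{corlexfibmods}.3. This cardinality bookkeeping, converting levelwise-with-common-bound accessibility into accessibility of the localization of the whole target fibration, is the only genuinely delicate step.

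Finally, for mutual inverseness on equivalence classes: restriction-then-union recovers $\rho\dashv\iota$ up to equivalence, since $\mathcal{E}=\bigcup_\kappa\bigl(\mathcal{E}\cap(\mathcal{B}^{\Delta^1})_\kappa\bigr)$ and the reassembled reflector agrees with $\rho$ on every relatively $\kappa$-compact map, hence on all of $\mathcal{B}^{\Delta^1}$, and a cartesian reflective localization of $t$ is determined up to equivalence by its essential image (as in the proof of Proposition~\ref{propfibmods}); union-then-restriction recovers the polymorphic family up to the equivalence relation of Definition~\ref{defpolyfam}, since restricting the union via the cardinal $\mu'\gg\mu$ produced by Lemma~\ref{lemmaaccsmall} returns the original family on $\mathrm{Shl}(\mu')$. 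Hence both composites are the identity on equivalence classes, which is exactly the asserted bijection.
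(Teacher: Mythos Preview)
Your restriction direction is essentially the paper's argument (the paper invokes \cite[Lemma B.4.2]{riehlverityelements} rather than Proposition~4.1.6, but this is an immaterial difference). The union direction, however, is handled differently and your accessibility argument has a gap.

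For the union, the paper takes a purely formal route: it realises $\bar{\mathcal{E}}\hookrightarrow\mathcal{B}^{\Delta^1}$ as the colimit in $\mathbf{Cart}(\mathcal{B})$ of the Reedy cofibrant diagram $\bar{\mathcal{E}}_\bullet\hookrightarrow(\mathcal{B}^{\Delta^1})_\bullet$, and checks that this colimit is a reflective localization via \cite[Lemma B.4.2]{riehlverityelements} --- the 2-cells $\epsilon$, $\bar{\rho}\eta$, $\eta\bar{\iota}$ are homotopy-colimits of pointwise equivalences, hence equivalences. This replaces your hands-on ``define $T(f):=T_\kappa(f)$ for $f$ relatively $\kappa$-compact and check initial arrows'' and sidesteps having to make that pointwise definition coherent.

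The real discrepancy is accessibility. You flag this as the delicate step and propose to re-run the small-generation argument of Corollary~\ref{corlexfibmods}.3. But that argument needs Lemma~\ref{lemmafibmodscons}.2, which in turn requires the left class $\mathcal{L}$ to be left exact; so your argument covers only the parenthetical left exact case and leaves the bare accessible case open. The paper's argument is both simpler and more general: since the indexing ordinal $\mathrm{Shl}(\mu)\cong\mathrm{Ord}$ is $\kappa$-filtered for every small regular $\kappa$, the cotensor $(\cdot)^K$ with any small simplicial set $K$ commutes with the large sequential colimit in $\mathbf{Cart}(\mathcal{B})$. Formal $\kappa$-accessibility is a condition phrased in terms of such cotensors and colimit functors, so it passes directly from the uniformly $\kappa$-accessible $\bar{\iota}_\lambda$ to their colimit $\bar{\iota}$. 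No generators, no left-exactness, one line. Your mutual-inverse paragraph is fine and matches the paper's claim that this is straightforward.
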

\begin{proof}
Let $\iota\colon\mathcal{E}\hookrightarrow t$ be an accessible cartesian reflective subcategory with cartesian 
(left exact) left adjoint $\rho$. As $\iota$ is accessible, let $\mu$ be the regular cardinal such that Lemma~\ref{lemmaaccsmall}.2 applies 
to all cardinals in $\mathrm{Shl}(\mu)$ to the the composition $\iota\rho\colon t\rightarrow t$.
For $\lambda\in \mathrm{Shl}(\mu)$, let $\mathcal{E}_{\lambda}\subseteq\mathcal{E}$ be the full subcategory given by the (strict) pullback
\[\xymatrix{
\mathcal{E}_{\lambda}\ar@{^(->}[r]\ar@{^(->}[d]^{\iota_{\lambda}}\ar@{}[dr]|(.3){\pbs} & \mathcal{E}\ar@{^(->}[d]^{\iota} \\
(\mathcal{B}^{\Delta^1})_{\lambda}\ar@{^(->}[r]\ar@/^1pc/@{-->}[u]^{\rho_{\lambda}} & \mathcal{B}^{\Delta^1}\ar@/^1pc/@{-->}[u]^{\rho}
}\]
We obtain natural transformations $\iota_{\bullet}\colon\mathcal{E}_{\bullet}\hookrightarrow(\mathcal{B}^{\Delta^1})_{\bullet}$ and 
$\rho_{\bullet}\colon(\mathcal{B}^{\Delta^1})_{\bullet}\hookrightarrow\mathcal{E}_{\bullet}$ together with 2-cells
$\eta_{\bullet}\colon 1\rightarrow \iota_{\bullet}\rho_{\bullet}$ and $\epsilon_{\bullet}\colon\rho_{\bullet}\iota_{\bullet}\rightarrow 1$ in 
the simplicial functor category $\mathrm{Fun}(\mathrm{Shl}(\mu),\mathbf{Cart}(\mathcal{B}))$ induced by restriction of the unit and counit of 
the adjunction $\rho\mbox{ }\adj\mbox{ }\iota$ in $\mathbf{Cart}(\mathcal{B})$. 
The diagrams $\mathcal{E}_{\bullet}$ and $(\mathcal{B}^{\Delta^1})_{\bullet}$ are both Reedy bifibrant by Lemma~\ref{lemmadirectedmc}.1. It 
follows that the tuple $(\rho_{\bullet},\iota_{\bullet},\eta_{\bullet},\epsilon_{\bullet})$ yields a reflective localization in the
$\infty$-cosmos $\mathbf{Fun}(\mathrm{Shl}(\mu),\mathbf{Cart}(\mathcal{B}))$ for example via \cite[Lemma B.4.2]{riehlverityelements}, as the 
homotopy equivalences in the $\infty$-cosmos of diagrams are exactly the natural transformations which are componentwise equivalences in
$\mathbf{Cart}(\mathcal{B)}$. Furthermore, as simplicial cotensors and limits in $\mathbf{Fun}(S,\mathbf{Cart}(\mathcal{B}))$ are computed 
pointwise as well, the reflective localization $(\rho_{\bullet},\iota_{\bullet})$ in $\mathbf{Fun}(S,\mathbf{Cart}(\mathcal{B}))$ is left 
exact and $\kappa$-accessible for some $\kappa$ whenever each pair $(\rho_{\lambda},\iota_{\lambda})$ in $\mathbf{Cart}(\mathcal{B})$ is so 
(via \cite[Theorem 3.5.3]{riehlverityelements}). This in turn is the case whenever the pair $(\rho,\iota)$ is left exact and
$\kappa$-accessible.

Vice versa, suppose we are given a regular cardinal $\mu$ together with a $\mu$-polymorphic family
$\iota_{\bullet}\colon\mathcal{E}_{\bullet}\rightarrow(\mathcal{B}^{\Delta^1})_{\bullet}$ of accessible (left exact) reflective localizations 
with left adjoint $\rho_{\bullet}$. First, we may replace the localization with an equivalent diagram of pointwise reflective subcategories: 
every $\lambda\in\mathrm{Shl}(\mu)$ yields an accessible (left exact) reflective localization
\[\xymatrix{
\iota_{\lambda}\colon\mathcal{E}_{\lambda}\ar@{^(->}@<-.5ex>[r] & (\mathcal{B}^{\Delta^1})_{\lambda}\colon\rho_{\lambda}\ar@<-.5ex>@/_/[l]
}\]
by evaluation at $\lambda$, each of which is equivalent (in $\mathbf{Cart}(\mathcal{B})$) to the cartesian full reflective subcategory 
$\bar{\mathcal{E}}_{\lambda}$ spanned by the objects that are local with respect to the class 
of $\rho_{\lambda}$-equivalences in $(\mathcal{B}^{\Delta^1})_{\lambda}$. The factorizations
$\mathcal{E}_{\lambda}\xrightarrow{\simeq}\bar{\mathcal{E}}_{\lambda}\hookrightarrow(\mathcal{B}^{\Delta^1})_{\lambda}$ are natural in $\mathrm{Shl}(\mu)$, so we obtain a factorization of diagrams
$\mathcal{E}_{\bullet}\xrightarrow{\simeq}\bar{\mathcal{E}}_{\bullet}\overset{\bar{\iota}_{\bullet}}{\hookrightarrow}(\mathcal{B}^{\Delta^1})_{\bullet}$ together with a natural left adjoint $\bar{\rho}_{\bullet}\colon(\mathcal{B}^{\Delta^1})_{\bullet}\xrightarrow{\rho}\mathcal{E}_{\bullet}\xrightarrow{\simeq}\bar{\mathcal{E}}_{\bullet}$ of $\bar{\iota}_{\bullet}$.
Taking colimits of the diagram 
\[\xymatrix{
\bar{\iota}_{\bullet}\colon\bar{\mathcal{E}}_{\bullet}\ar@{^(->}@<-.5ex>[r] & (\mathcal{B}^{\Delta^1})_{\bullet}\colon\bar{\rho}_{\bullet}\ar@<-.5ex>@/_/[l]
}\] 
yields a diagram 
\[\xymatrix{
\bar{\iota}\colon\bar{\mathcal{E}}\ar@{^(->}@<-.5ex>[r] & \mathcal{B}^{\Delta^1}\colon\bar{\rho}\ar@<-.5ex>@/_/[l]
}\]
in $\mathbf{Cart}(\mathcal{B})$, where $\bar{\mathcal{E}}\subseteq\mathcal{B}^{\Delta^1}$ is simply the union of the full subcategories
$\mathcal{E}_{\lambda}\subset\mathcal{B}^{\Delta^1}$. The unit and counit 2-cells of the pair $(\bar{\iota}_{\bullet},\bar{\rho}_{\bullet})$ 
in $\mathbf{Fun}(S,\mathbf{Cart}(\mathcal{B}))$ yield 2-cells $\eta\colon1\rightarrow\bar{\iota}\bar{\rho}$ and 
$\epsilon\colon\bar{\rho}\bar{\iota}\rightarrow 1$ in $\mathbf{Cart}(\mathcal{B})$ accordingly. To show that this data constitutes a 
reflective localization in $\mathbf{Cart}(\mathcal{B})$, again by \cite[Lemma B.4.2]{riehlverityelements} it suffices to show that
the 2-cells $\epsilon$, $\bar{\rho}\eta$ and $\eta \bar{\iota}$ are natural equivalences. But the diagrams $\bar{\mathcal{E}}$ and
$\mathcal{B}^{\Delta^1}_{\bullet}$ are Reedy (and hence projectively) cofibrant, and so it follows that their colimits $\bar{\mathcal{E}}$ 
and $\mathcal{B}^{\Delta^1}$ are their homotopy-colimits as well. As the 2-cells $\epsilon$, $\bar{\rho}\eta$ and $\eta \bar{\iota}$ are thus 
the homotopy-colimits of $\epsilon_{\bullet}$, $\bar{\rho}_{\bullet}\eta_{\bullet}$ and $\eta_{\bullet}\bar{\iota}_{\bullet}$, respectively, 
and all three of these 2-cells are pointwise equivalences by assumption,  it follows that $\epsilon$, $\bar{\rho}\eta$ and $\eta \bar{\iota}$ 
are so, too. This shows that the pair $(\bar{\iota},\bar{\rho})$ is part of a reflective localization in $\mathbf{Cart}(\mathcal{B})$. The 
left adjoint $\bar{\rho}$ is left exact whenever $\rho_{\bullet}$ -- and hence $\bar{\rho}_{\bullet}$ -- is (pointwise) left exact, and the 
right adjoint $\bar{\iota}$ is $\kappa$-accessible whenever $\iota_{\bullet}$ -- and hence $\bar{\iota}_{\bullet}$ -- is 
(pointwise) $\kappa$-accessible for some regular cardinal $\kappa$. That is, because for every simplicial set $K$ (with small cardinality), 
the cotensor $(\cdot)^K\colon\mathbf{Cart}(\mathcal{B})\rightarrow\mathbf{Cart}(\mathcal{B})$ preserves large sequential colimits (as such 
are $\kappa$-filtered for all small regular cardinals $\kappa$).
\end{proof}

\begin{definition}\label{defcomppolyfam}
We say that a polymorphic family $\mathcal{E}_{\bullet}\colon\mathrm{Shl}(\mu)\rightarrow\mathrm{Cart}(\mathcal{B})$ of reflective 
localizations of $t_{\mathcal{B}}$ is \emph{compositional} if for each $\lambda\in\mathrm{Shl}(\mu)$, the associated class
$\bar{\mathcal{E}}_{\lambda}\subseteq(\mathcal{B}^{\Delta^1})_{\lambda}$ of local objects for $\rho_{\lambda}$-equivalences is 
(objectwise) closed under compositions.
\end{definition}

We obtain the following stratification of Theorem~\ref{thmlocmodopacc}.

\begin{proposition}\label{prophcopsstrat}
Let $\mathcal{B}$ be a presentable $\infty$-category and $\kappa$ be a regular cardinal. Then there is a bijection between the 
following two collections.
\begin{enumerate}
\item Equivalence classes of $\kappa$-accessible (left exact and compositional) idempotent monads $T$ on $t\in\mathbf{Cart}(\mathcal{B})$.
\item Equivalence classes of polymorphic families of $\kappa$-accessible (left exact and compositional) reflective 
localizations of $t\in\mathbf{Cart}(\mathcal{B})$.
\end{enumerate}
\end{proposition}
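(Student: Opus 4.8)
The plan is to produce the claimed bijection as a composite of two correspondences established earlier. On the one hand, Corollary~\ref{corbeckmonadicity} applied to the $\infty$-cosmos $\mathbf{Cart}(\mathcal{B})$ with the object $t=t_{\mathcal{B}}$ gives a bijection between equivalence classes of idempotent monads $T$ on $t$ in $\mathbf{Cart}(\mathcal{B})$ and equivalence classes of cartesian reflective localizations of $t$. On the other hand, Proposition~\ref{propmonadscatter} gives a bijection between equivalence classes of accessible (left exact) cartesian reflective localizations of $t$ and equivalence classes of polymorphic families of accessible (left exact) reflective localizations of $t$, with the accessibility rank $\kappa$ preserved in both directions. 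Composing the two and restricting to the $\kappa$-accessible objects on both ends yields the statement, provided the three adjectives \emph{$\kappa$-accessible}, \emph{left exact} and \emph{compositional} are tracked correctly at each step.

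First I would check that these adjectives match under the monadicity bijection, with $\mathcal{F}=\mathcal{B}^{\Delta^1}$ as the ambient subfibration. Here $\mathcal{B}^{\Delta^1}$ is a full, replete cartesian subfibration of $t_{\mathcal{B}}$, closed under composition, containing all identities by Lemma~\ref{lemmaidreduct}, and fiberwise closed under finite limits (all slices of the presentable, hence finitely complete, $\mathcal{B}$ have finite limits). Thus Proposition~\ref{propcompunitmnds}.2 applies directly and shows that the reflective subcategory $\mathcal{E}\hookrightarrow\mathcal{B}^{\Delta^1}$ attached to an idempotent monad $T$ is closed under compositions if and only if $T$ is compositional; Corollary~\ref{cormodoplex} likewise matches left exactness of the reflector $\rho$ with formal left exactness of $T$. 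For $\kappa$-accessibility one uses that $\rho$ is a left adjoint in $\mathbf{Cart}(\mathcal{B})$, hence preserves all colimits that exist in $t$, so that the underlying endofunctor $T\simeq\iota\rho$ preserves $\kappa$-filtered colimits precisely when the fully faithful inclusion $\iota$ does, i.e.\ precisely when the reflective localization is $\kappa$-accessible in the sense used in Definition~\ref{defpolyfam}.

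It then remains to supplement Proposition~\ref{propmonadscatter} with the observation that its scatter/gather assignments respect compositionality, since that proposition is phrased only for the accessible and left exact variants. In the scatter direction, $\mathcal{E}_{\lambda}$ is the strict pullback of $\mathcal{E}\hookrightarrow\mathcal{B}^{\Delta^1}$ along $(\mathcal{B}^{\Delta^1})_{\lambda}\hookrightarrow\mathcal{B}^{\Delta^1}$ (and $\bar{\mathcal{E}}_{\lambda}$ is a full subcategory equivalent to it); since the composite of two relatively $\lambda$-compact maps is again relatively $\lambda$-compact --- computing the relevant base change along a $\lambda$-compact object in two stages --- and $\mathcal{E}$ is closed under composition, each $\bar{\mathcal{E}}_{\lambda}$ is closed under composition, so the resulting family is compositional in the sense of Definition~\ref{defcomppolyfam}. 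In the gather direction, $\bar{\mathcal{E}}=\bigcup_{\lambda}\bar{\mathcal{E}}_{\lambda}$ with the $\bar{\mathcal{E}}_{\lambda}$ increasing (the diagram being Reedy cofibrant by Lemma~\ref{lemmadirectedmc}.1); any two composable arrows of $\bar{\mathcal{E}}$ already lie in a common $\bar{\mathcal{E}}_{\lambda}$, hence so does their composite, and therefore $\bar{\mathcal{E}}$ is closed under composition. Everything else --- preservation of $\kappa$-accessibility and left exactness with the rank intact, and mutual invertibility on equivalence classes --- is exactly the content of Proposition~\ref{propmonadscatter}.

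The only part that is not a bare citation is this compositionality transfer: \emph{compositional} is an objectwise (cardinal-by-cardinal) condition by Definition~\ref{defcomppolyfam}, so one must see it survives both the pullback-restriction to relatively $\lambda$-compact maps and the sequential union over $\mathrm{Shl}(\mu)$. Both are elementary, resting only on the closure of relatively $\lambda$-compact maps under composition and on exhaustiveness of the cardinal filtration. I do not anticipate any deeper obstacle, since the genuine technical content of the statement --- the monadicity comparison and the scattering of accessible localizations --- was already isolated in Corollary~\ref{corbeckmonadicity}, Proposition~\ref{propcompunitmnds}, Corollary~\ref{cormodoplex} and Proposition~\ref{propmonadscatter}.
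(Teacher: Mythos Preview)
Your proposal is correct and follows essentially the same route as the paper's proof: invoke the monadicity correspondence (Corollary~\ref{corbeckmonadicity}) to pass between idempotent monads and reflective localizations of $t$ in $\mathbf{Cart}(\mathcal{B})$, then invoke Proposition~\ref{propmonadscatter} to pass between such localizations and polymorphic families, and finally check that $\kappa$-accessibility, left exactness and compositionality transfer through both steps. The paper's own proof is much terser---it cites Proposition~\ref{propmonadscatter} for the accessible and left exact parts and dismisses the compositionality transfer as ``fairly immediate''---so your version simply makes explicit what the paper leaves implicit, in particular the argument that relatively $\lambda$-compact maps are closed under composition (scatter direction) and that the sequential union inherits closure under composition (gather direction).
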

\begin{proof}
The proof of Proposition~\ref{propmonadscatter} yields a bijection between equivalence classes of (left exact and)
$\kappa$-accessible reflective localizations of $t\in\mathbf{Cart}(\mathcal{B})$, and equivalence classes of polymorphic families 
of (left exact and) $\kappa$-accessible reflective localizations of $t$. The fact that compositionality translates back and forth as well is fairly immediate.
\end{proof}

\subsubsection*{Internalizing accessible reflective localizations of an $\infty$-topos}

Whenever the presentable quasi-category $\mathcal{B}$ is an $\infty$-topos, we can internalize the notion of 
polymorphic families of reflective localizations of $t\in\mathbf{Cart}(\mathcal{B})$ -- and hence obtain an according notion of 
Lawvere-Tierney operators -- as follows.\\

For any presentable quasi-category $\mathcal{B}$ there is a category $\mathbb{B}$ of simplicial presheaves (over an associated small 
simplicial category which for the following may remain anonymous) equipped with the injective model structure, together with a set
$T\subset\mathbb{B}^{\Delta^1}$ of arrows such that the left Bousfield localization $\mathbb{B}[T^{-1}]$ presents $\mathcal{B}$ 
(\cite[Proposition A.3.7.6]{luriehtt}). The model category $\mathbb{B}[T^{-1}]$ is combinatorial, simplicial and cartesian, its cofibrations 
are exactly the monomorphisms. In the following, $\mathbb{B}$ will denote the simplicial category $\mathbb{B}$ equipped with the model 
structure $\mathbb{B}[T^{-1}]$.

The quasi-category $\mathrm{Cat}_{\infty}(\mathcal{B})$ of complete Segal objects in $\mathcal{B}$ is again presented by a 
combinatorial, simplicial and cartesian model category $(\mathbb{B}^{\Delta^{op}},\mathrm{CS})$ whose cofibrations are exactly the 
monomorphisms (constructed in this generality in Barwick's thesis \cite{barwickthesis}, or subsequently for example in 
\cite[Proposition E.3.7]{riehlverityelements}). Here the category $\mathbb{B}^{\Delta^{op}}$ is equipped with the Reedy model structure over
$\mathbb{B}$ and again left Bousfield localized at a suitable set of arrows. Now, we observe that the model 
category $(\mathbb{B}^{\Delta^{op}},\mathrm{CS})$ is in fact not only again simplicial as provided by the general theory of left Bousfield localizations (say with hom-functor $\mathrm{Hom}_{\mathrm{Kan}}$), but it comes equipped with a canonical quasi-categorical enrichment
$\mathrm{Hom}_{\mathrm{QCat}}$ whose core $\mathrm{Hom}_{\mathrm{QCat}}(A,B)^{\simeq}$ for complete Segal objects
$A,B\in\mathbb{B}^{\Delta^{op}}$ is naturally homotopy equivalent to the original simplicial enrichment
$\mathrm{Hom}_{\mathrm{Kan}}(A,B)$ (see \cite{rs_mcext}; for Segal spaces this is \cite[Proposition E.2.2]{riehlverityelements}). The 1-cells in these quasi-categories correspond exactly to the internal natural 
transformations between functors of the complete Segal objects $A,B$. We thus obtain an $\infty$-cosmos
$\mathbf{Cat}_{\infty}(\mathbb{B})=(\mathbb{B}^{\Delta^{op}},\mathrm{CS})_f$ which represents an $(\infty,2)$-category of complete Segal 
objects in $\mathcal{B}$ -- rather than the underlying $(\infty,1)$-category $\mathrm{Cat}_{\infty}(\mathcal{B})$ only, which in turn is 
presented by the locally $\infty$-groupoidal $\infty$-cosmos given by the $\mathrm{Hom}_{\mathrm{Kan}}$-enrichment of
$\mathbb{B}^{\Delta^{op}}$. The latter is the $\infty$-cosmos of Rezk objects considered in \cite[Proposition E.3.7]{riehlverityelements} and 
\cite[Proposition 2.2.9]{rvyoneda}). 

Every complete Segal object in a left exact $\infty$-category $\mathcal{B}$ may be \emph{externalized} to yield a fibered
$\infty$-category over $\mathcal{B}$. This defines the externalization functor
\[\mathrm{Ext}\colon\mathrm{Cat}_{\infty}(\mathcal{B})\rightarrow\mathrm{Cart}(\mathcal{B})\]
from the $\infty$-category of internal $\infty$-categories in $\mathcal{B}$ to the $\infty$-category of cartesian fibrations over
$\mathcal{B}$ (\cite[Section 4]{rs_comp}). By definition, $\mathrm{Ext}(X)$ is the Unstraightening of the indexed quasi-category $\mathrm{Ext}(X)\colon\mathcal{C}^{op}\rightarrow\mathrm{Cat}_{\infty}$ given pointwise by the horizontal hom-quasi category
\begin{align}\label{motivdefext}
\mathrm{Ext}(X)(C):=\mathcal{B}(C,X_{\bullet})_0
\end{align}
up to a Reedy fibrant replacement of the simplicial space $\mathcal{B}(C,X_{\bullet})$.
We will make crucial use of the following $\infty$-cosmological version of \cite[Proposition 7.3.8]{jacobsttbook} in the special case of 
presentable bases $\mathcal{B}$, together with a Yoneda lemma for ``Segal-representable indexed quasi-categories'' stated in the next lemma. 
The proofs of both statements are fairly straight-forward but will be found in \cite{rs_mcext}.

\begin{proposition}[{\cite{rs_mcext}}]\label{propcosmicext}
Suppose $\mathcal{B}$ is a presentable quasi-category. Then the $(\infty,1)$-categorical externalization functor
\[\mathrm{Ext}\colon\mathrm{Cat}_{\infty}(\mathcal{B})\rightarrow\mathrm{Cart}(\mathcal{B})\]
is induced by a pseudo-cosmological embedding
\begin{align}\label{equcosmicext}
\mathrm{Ext}\colon\mathbf{Cat}_{\infty}(\mathbb{B})\rightarrow\mathbf{Cart}(\mathcal{B})
\end{align}
of $\infty$-cosmoses. Here, that means that the functor $\mathrm{Ext}$ is a simplicially enriched functor which induces locally categorical 
equivalences of hom-quasi-categories, and furthermore
\begin{enumerate}
\item preserves fibrations and all ordinary limits, and
\item preserves simplicial cotensors up to natural homotopy equivalence.
\end{enumerate}
Every such fully faithful pseudo-cosmological functor preserves and reflects $\infty$-cosmological structures such as equivalences, 
adjunctions, and the property of an arrow to be limit or colimit preserving for any class of simplicial shapes.
\end{proposition}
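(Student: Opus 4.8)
The plan is to realize $\mathrm{Ext}$ on the nose as a simplicially enriched functor between the presenting model categories, check the two defining clauses of a pseudo-cosmological embedding together with local full faithfulness, and then deduce the concluding sentence from the general $\infty$-cosmos formalism. Since $\mathcal{B}$ is presentable it is in particular left exact, so externalization is defined, and by the formula~(\ref{motivdefext}) it factors as a composite: (i) the functor sending a localization-local, Reedy-fibrant simplicial object $X_\bullet\in\mathbb{B}^{\Delta^{op}}$ to the indexed (marked) simplicial set $C\mapsto\mathcal{B}(C,X_\bullet)$, using that for a complete Segal object $X_\bullet$ the simplicial space $\mathcal{B}(C,X_\bullet)$ is a complete Segal space — $\mathcal{B}(C,-)$ preserving the Segal and completeness limit conditions — whose associated quasi-category is extracted via the right adjoint of the Joyal–Tierney equivalence between complete Segal spaces and quasi-categories (this is the ``$(\cdot)_0$ after Reedy fibrant replacement'' of~(\ref{motivdefext})); followed by (ii) Lurie's unstraightening $\mathrm{Un}_{\mathcal{B}}$, viewed as a right Quillen equivalence between the projective model structure on $\mathrm{Fun}(\mathfrak{C}(\mathcal{B})^{op},\mathbf{S}^+)$ and $(\mathbf{S}^+_{/\mathcal{B}^{\sharp}},\mathrm{Cart})^{\flat}$, hence cosmological on the associated $\infty$-cosmoses. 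The functor in (i) is simplicially enriched because $\mathcal{B}(C,-)$ and the complete-Segal-to-quasi-category passage are, and it lands in fibrant objects.

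Next I would prove local full faithfulness, i.e.\ that $\mathrm{Ext}$ induces equivalences $\mathrm{Hom}_{\mathbf{Cat}_{\infty}(\mathbb{B})}(X,Y)\xrightarrow{\simeq}\mathrm{Fun}_{\mathcal{B}}(\mathrm{Ext}(X),\mathrm{Ext}(Y))$. This is the $\infty$-cosmological upgrade of the classical fact (\cite[Proposition 7.3.8]{jacobsttbook}) that externalization of internal categories is fully faithful. Unstraightening is a Dwyer–Kan equivalence on hom-quasi-categories, so it suffices to treat (i): there $\mathrm{Fun}_{\mathcal{B}}(\mathrm{Ext}(X),\mathrm{Ext}(Y))$ computes as the quasi-category of natural transformations $\bigl(C\mapsto\mathcal{B}(C,X_\bullet)\bigr)\Rightarrow\bigl(C\mapsto\mathcal{B}(C,Y_\bullet)\bigr)$, and by the Yoneda lemma for Segal-representable indexed quasi-categories stated in the next lemma this is equivalent to the internal hom $\mathrm{Hom}_{\mathbf{Cat}_{\infty}(\mathbb{B})}(X,Y)$. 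For clause~(1), preservation of fibrations follows since both (i) and (ii) are induced by right Quillen functors, and preservation of ordinary limits holds because limits in both $\infty$-cosmoses are computed levelwise in $\mathbb{B}$, $\mathcal{B}(C,-)$ preserves them, and unstraightening preserves limits as a right adjoint. For clause~(2), the canonical comparison map $\mathrm{Ext}(X^K)\to\mathrm{Ext}(X)^K$ for a simplicial set $K$ is checked to be an equivalence over $\mathcal{B}$ by straightening and evaluating at each $C\in\mathcal{B}$: both sides become $\mathcal{B}(C,X)^K\simeq\mathcal{B}(C,X^K)$ in $\mathrm{Cat}_{\infty}$, where the identification is classical.

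The concluding sentence is then formal. A simplicially enriched functor that is a local categorical equivalence on hom-quasi-categories and that preserves isofibrations, ordinary limits and cotensors up to equivalence meets the hypotheses under which the Riehl–Verity machinery (\cite[Proposition 10.1.4]{riehlverityelements} for preservation, together with the local-equivalence hypothesis for reflection) shows that equivalences, adjunctions, the property of a map being limit- or colimit-preserving, and the property of a cone being a (co)limit cone are all both preserved and reflected — each such notion being encoded by finite diagrams in hom-quasi-categories assembled from limits and cotensors, which $\mathrm{Ext}$ respects up to equivalence and, being locally an equivalence, detects.

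The main obstacle I expect is clause~(2), and more generally pinning down the point-set definition in (i): the cotensors in $\mathbf{Cat}_{\infty}(\mathbb{B})$ and $\mathbf{Cart}(\mathcal{B})$ are genuinely computed in different model structures — the Reedy/complete-Segal structure on $\mathbb{B}^{\Delta^{op}}$ versus the cartesian structure on marked simplicial sets over $\mathcal{B}$ — so the comparison map must be produced by hand and its pointwise-equivalence verified, which is exactly what forces ``pseudo'' rather than strict. Relatedly, one must be careful that the Reedy-fibrant-replacement step implicit in~(\ref{motivdefext}) is performed functorially enough to define a strict simplicial functor on the subcategory of fibrant objects before invoking the model-categorical comparisons.
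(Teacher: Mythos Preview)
The paper does not actually prove this proposition: it is stated with a citation to the external manuscript \cite{rs_mcext} and closed immediately with a \qed, and the surrounding text explicitly says ``The proofs of both statements are fairly straight-forward but will be found in \cite{rs_mcext}.'' So there is no proof in the paper to compare your proposal against.

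That said, your outline is in line with what the paper signals. The remark directly following the proposition confirms your diagnosis of the ``pseudo'': the only obstruction to a strict cosmological functor is that Unstraightening preserves simplicial cotensors only up to natural homotopy equivalence (the paper cites \cite[Corollary 3.2.1.15]{luriehtt} for this), which is exactly the obstacle you flag in clause~(2). Your use of the Segal--Yoneda lemma for local full faithfulness matches the role Lemma~\ref{lemmaextyoneda} plays in the paper (and that lemma is likewise deferred to \cite{rs_mcext}). Your concluding appeal to \cite[Proposition 10.1.4]{riehlverityelements} for preservation, plus local full faithfulness for reflection, is the same mechanism the paper invokes elsewhere (e.g.\ in the proof of Corollary~\ref{corcosmicext} and Corollary~\ref{corpolyfam1-1}).

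The one place your sketch is genuinely soft is the point-set construction of step~(i) as a \emph{strict} simplicially enriched functor between the quasi-categorically enriched model categories --- you need a specific model for ``$\mathcal{B}(C,X_\bullet)_0$ after Reedy fibrant replacement'' that is on-the-nose functorial and simplicially enriched before composing with $\mathrm{Un}_{\mathcal{B}}$, and you correctly flag this. Since the paper defers the details to \cite{rs_mcext}, there is nothing further here to check your resolution against.
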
\qed

\begin{remark}
A pseudo-cosmological embedding as in Proposition~\ref{propcosmicext} is almost a cosmological biequivalence onto its essential 
image in the sense of \cite[Section 10.2]{riehlverityelements}. The only difference is that the simplicial cotensors are not necessarily
preserved up to natural isomorphism but rather up to natural homotopy equivalence only. This tweak of the definition is enforced to 
accommodate the fact that $\mathrm{Ext}$ is defined in terms of a precomposition with Lurie's Unstraightening functor; and Unstraightening is 
simplicially enriched but preserves simplicial cotensors only up to natural homotopy equivalence indeed (via
\cite[Corollary 3.2.1.15]{luriehtt}).
\end{remark}

%
The externalization functor is a direct generalization of the Yoneda embedding via the pullback square
\[\xymatrix{
\mathcal{B}\ar[r]^y\ar@{^(->}[d] & \mathrm{RFib}(\mathcal{B})\ar@{^(->}[d] \\
\mathrm{Cat}_{\infty}(\mathcal{B})\ar[r]_{\mathrm{Ext}} & \mathrm{Cart}(\mathcal{B}) 
}\]
where the vertical inclusion $\mathcal{B}\hookrightarrow\mathrm{Cat}_{\infty}(\mathcal{B})$ identifies an object $B\in\mathcal{B}$ with its 
associated internal complete Segal groupoid (\cite[Section 1.1]{lgood}). Indeed, one can prove a generalized Yoneda Lemma for 
complete Segal objects and cartesian fibrations which we state for later use as well.

\begin{lemma}[The Segal-Yoneda Lemma]\label{lemmaextyoneda}
Let $\mathcal{B}$ be a quasi-category with pullbacks, $X\in\mathrm{Cat}_{\infty}(\mathcal{B})$ be a complete Segal object in
$\mathcal{B}$ and $F\colon\mathcal{B}^{op}\rightarrow\mathrm{Cat}_{\infty}(\mathcal{S})$ be a $\mathcal{B}$-indexed complete Segal space. Then there is a binatural equivalence
\[\mathrm{Fun}(\mathcal{B}^{op},\mathrm{Cat}_{\infty}(\mathcal{S}))(\mathcal{B}(\blank,X),F)\xrightarrow{\simeq}\int\limits_{n\in\Delta^{op}}F_n(X_n)\]
of spaces, which is pointwise induced by the Yoneda Lemma for presheaves. Given the equivalences
$((\cdot)^{\Delta^{\bullet}})^{\simeq}\colon\mathrm{Cat}_{\infty}\xleftrightarrow{\simeq}\mathrm{Cat}_{\infty}(\mathcal{S})\colon\mathbb{R}(\cdot)_{\bullet 0}$ induced by Joyal and Tierney's Quillen equivalence in \cite[Theorem 4.12]{jtqcatvsss},
for every $\mathcal{B}$-indexed quasi-category $F\colon\mathcal{B}^{op}\rightarrow\mathrm{Cat}_{\infty}$ we obtain a binatural equivalence
\[\mathrm{Fun}(\mathcal{B}^{op},\mathrm{Cat}_{\infty})(\mathrm{Ext}(X),F)\xrightarrow{\simeq}\int\limits_{n\in\Delta^{op}}(F(X_n)^{\Delta^n})^{\simeq}\]
of spaces. Informally, given the formula (\ref{motivdefext}), the equivalence 
maps a natural transformation $\alpha\colon\mathrm{Ext}(X)\rightarrow F$ to the tuple 
$(\alpha_{X_{n}}(1_{X_{n}})|n\geq 0)\in\int\limits_{n\in\Delta^{op}}(F(X_n)^{\Delta^n})^{\simeq}$.
\end{lemma}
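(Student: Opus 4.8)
The statement is a Yoneda-type lemma for complete Segal objects, and the natural strategy is to reduce it to the ordinary Yoneda lemma applied levelwise, exactly as the informal description at the end of the statement suggests. First I would unwind what $\mathrm{Ext}(X)$ is: by the defining formula $(\ref{motivdefext})$, the $\mathcal{B}$-indexed quasi-category $\mathrm{Ext}(X)$ sends $C\in\mathcal{B}^{op}$ to $\mathcal{B}(C,X_\bullet)_0$, i.e.\ the quasi-category obtained as the $0$-th row of the Reedy-fibrant-replaced bisimplicial space $\mathcal{B}(C,X_\bullet)$. In the complete Segal space world, before collapsing along Joyal--Tierney, the relevant indexed object is $\mathcal{B}(\blank,X)\colon\mathcal{B}^{op}\to\mathrm{Cat}_\infty(\mathcal{S})$, which at $C$ is the complete Segal space $\mathcal{B}(C,X_\bullet)$ assembled from the representables $\mathcal{B}(\blank,X_n)$ as $n$ varies.

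**Key steps.** I would prove the first displayed equivalence and then transport it along the Joyal--Tierney equivalence. Step one: observe that a $\mathcal{B}$-indexed complete Segal space is a functor $\Delta^{op}\times\mathcal{B}^{op}\to\mathcal{S}$ (subject to Segal/completeness conditions on the $\Delta^{op}$ variable), so $\mathrm{Fun}(\mathcal{B}^{op},\mathrm{Cat}_\infty(\mathcal{S}))$ is a full subcategory of $\mathrm{Fun}(\Delta^{op}\times\mathcal{B}^{op},\mathcal{S})$. Step two: under this identification $\mathcal{B}(\blank,X)$ becomes the functor $(n,C)\mapsto\mathcal{B}(C,X_n)$, which in the $C$-variable is levelwise representable by $X_n\in\mathcal{B}$. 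Step three: compute the mapping space into $F$ as an end over $\Delta^{op}$ of mapping spaces in $\mathrm{Fun}(\mathcal{B}^{op},\mathcal{S})$, i.e.
\[
\mathrm{Fun}(\mathcal{B}^{op},\mathrm{Cat}_\infty(\mathcal{S}))(\mathcal{B}(\blank,X),F)\;\simeq\;\int_{n\in\Delta^{op}}\mathrm{Fun}(\mathcal{B}^{op},\mathcal{S})\big(\mathcal{B}(\blank,X_n),F_n\big),
\]
using that the subcategory inclusion into $\mathrm{Fun}(\Delta^{op}\times\mathcal{B}^{op},\mathcal{S})$ is fully faithful so mapping spaces agree, and that mapping spaces in a functor category are computed as ends. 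Step four: apply the ordinary Yoneda lemma in $\mathrm{Fun}(\mathcal{B}^{op},\mathcal{S})$ to each term, $\mathrm{Fun}(\mathcal{B}^{op},\mathcal{S})(\mathcal{B}(\blank,X_n),F_n)\simeq F_n(X_n)$, naturally in $n$, and commute this equivalence past the end to get $\int_{n}F_n(X_n)$. This yields the first displayed equivalence, and naturality in $F$ (and in $X$) is inherited from naturality of the ordinary Yoneda equivalence.

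**Transporting to quasi-categories.** For the second displayed equivalence, I would feed the Joyal--Tierney Quillen equivalence $((\cdot)^{\Delta^\bullet})^\simeq\colon\mathrm{Cat}_\infty\leftrightarrows\mathrm{Cat}_\infty(\mathcal{S})\colon\mathbb{R}(\cdot)_{\bullet 0}$ through the already-proven equivalence. Concretely, given $F\colon\mathcal{B}^{op}\to\mathrm{Cat}_\infty$, let $\widetilde F$ be the corresponding $\mathcal{B}$-indexed complete Segal space, so $\widetilde F_n(B)\simeq(F(B)^{\Delta^n})^\simeq$ by definition of the Joyal--Tierney functor applied pointwise; then
\[
\mathrm{Fun}(\mathcal{B}^{op},\mathrm{Cat}_\infty)(\mathrm{Ext}(X),F)\;\simeq\;\mathrm{Fun}(\mathcal{B}^{op},\mathrm{Cat}_\infty(\mathcal{S}))(\mathcal{B}(\blank,X),\widetilde F)\;\simeq\;\int_{n\in\Delta^{op}}\widetilde F_n(X_n)\;\simeq\;\int_{n\in\Delta^{op}}(F(X_n)^{\Delta^n})^\simeq,
\]
where the first equivalence uses that $\mathrm{Ext}(X)$ is by definition obtained from $\mathcal{B}(\blank,X)$ by postcomposing with the Joyal--Tierney collapse and that this collapse induces an equivalence on the relevant mapping spaces. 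The identification of the resulting map with $\alpha\mapsto(\alpha_{X_n}(1_{X_n})\mid n\ge 0)$ comes from tracking what the ordinary Yoneda equivalence does on elements: it sends a natural transformation to its value on the identity of the representing object, and here the representing object at level $n$ is $X_n$ with universal element $1_{X_n}\in\mathcal{B}(X_n,X_n)$.

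**Main obstacle.** The genuinely delicate point is the compatibility of mapping spaces across the several model presentations — in particular that the mapping space in $\mathrm{Fun}(\mathcal{B}^{op},\mathrm{Cat}_\infty(\mathcal{S}))$ between $\mathcal{B}(\blank,X)$ and $F$ really is the end of levelwise presheaf mapping spaces, given that complete Segal spaces sit inside simplicial spaces as a reflective (Bousfield-localized) subcategory and $\mathcal{B}$ itself is presented by a Bousfield localization. One must check that $\mathcal{B}(\blank,X_\bullet)$ is already (Reedy-fibrantly) a complete Segal space when $X$ is, so that no fibrant replacement distorts the hom-spaces, and that the relevant ends/limits are homotopy limits computed correctly in the localized model structures; this is exactly the kind of bookkeeping deferred to \cite{rs_mcext}. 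Everything else is a formal consequence of the ordinary Yoneda lemma together with the fact that ends commute with pointwise equivalences of diagrams.
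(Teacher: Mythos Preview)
The paper does not actually prove this lemma: its proof reads in full ``See \cite{rs_mcext} for details.'' There is therefore no in-paper argument to compare your proposal against.

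That said, your approach is the natural and correct one, and almost certainly the one carried out in the cited reference. Viewing a $\mathcal{B}$-indexed complete Segal space as a functor $\Delta^{op}\times\mathcal{B}^{op}\to\mathcal{S}$, computing the mapping space as an end over $\Delta^{op}$ of presheaf mapping spaces, and then applying the ordinary Yoneda lemma levelwise is exactly how such ``enriched'' or ``parametrized'' Yoneda lemmas are proved. Your transport along the Joyal--Tierney equivalence to obtain the second form is likewise standard, and your identification of the resulting map as evaluation at identities matches the informal description in the lemma statement itself.

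You have also correctly located the only genuine technical content: verifying that the end of presheaf mapping spaces really computes the mapping space in $\mathrm{Fun}(\mathcal{B}^{op},\mathrm{Cat}_\infty(\mathcal{S}))$ requires knowing that $\mathcal{B}(\blank,X_\bullet)$ is already a (Reedy-fibrant) complete Segal space when $X$ is, so that the full-subcategory inclusion into simplicial presheaves does not distort hom-spaces, and that the ends in question are homotopy ends. This is precisely the model-categorical bookkeeping the paper outsources to \cite{rs_mcext}.
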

\begin{proof}
See {\cite{rs_mcext}} for details.
\end{proof}

Now, as simplicial presheaf categories are finitely presentable (\cite{kellyfinlimits}), we may apply Lemma~\ref{lemmadirectedmc} 
to the model category $\mathbb{B}$. For every regular cardinal $\mu$ we thus obtain an $\infty$-cosmos
$\mathbf{Fun}(\mathrm{Shl}(\mu),\mathbf{Cat}_{\infty}(\mathbb{B}))$ together with the following corollary.

\begin{corollary}\label{corcosmicext}
Suppose $\mathcal{B}$ is a presentable quasi-category. For every regular cardinal $\mu$, the externalization functor (\ref{equcosmicext}) 
induces a pseudo-cosmological embedding
\[\mathrm{Ext}_{\ast}\colon\mathbf{Fun}(\mathrm{Shl}(\mu),\mathbf{Cat}_{\infty}(\mathbb{B}))\rightarrow\mathbf{Fun}(\mathrm{Shl}(\mu),\mathbf{Cart}(\mathcal{B})),\]
by postcomposition.
\end{corollary}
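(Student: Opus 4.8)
The plan is to deduce Corollary~\ref{corcosmicext} from Proposition~\ref{propcosmicext} by observing that postcomposition with a pseudo-cosmological functor along a fixed Reedy-indexed diagram category preserves all of the structural properties that make up the definition of a pseudo-cosmological embedding. Concretely, given the pseudo-cosmological embedding $\mathrm{Ext}\colon\mathbf{Cat}_{\infty}(\mathbb{B})\rightarrow\mathbf{Cart}(\mathcal{B})$ from Proposition~\ref{propcosmicext}, for each regular cardinal $\mu$ we form the functor $\mathrm{Ext}_{\ast}$ sending a Reedy bifibrant diagram $X_{\bullet}\colon\mathrm{Shl}(\mu)\rightarrow\mathbf{Cat}_{\infty}(\mathbb{B})$ to the composite $\mathrm{Ext}\circ X_{\bullet}$, and similarly on morphisms of diagrams. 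The first thing to check is that this is well-defined, i.e.\ that $\mathrm{Ext}$ sends a Reedy bifibrant diagram to a Reedy bifibrant diagram: by Lemma~\ref{lemmadirectedmc}.1 (applicable since both $\mathbb{B}$ and $\mathbf{S}^+/\mathcal{B}^{\sharp}$ are finitely presentable model categories whose cofibrations are the monomorphisms), Reedy bifibrancy over an ordinal amounts to pointwise fibrancy together with all transition maps being cofibrations; $\mathrm{Ext}$ preserves fibrant objects and fibrations by Proposition~\ref{propcosmicext}.1, and it preserves monomorphisms because it is built from Unstraightening, which preserves monomorphisms, and the horizontal hom construction in~(\ref{motivdefext}), which is a right adjoint and hence limit-preserving.

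\emph{Next I would} verify the three defining features of a pseudo-cosmological embedding for $\mathrm{Ext}_{\ast}$. Since simplicial cotensors, limits, and the mapping quasi-categories in both $\mathbf{Fun}(\mathrm{Shl}(\mu),\mathbf{Cat}_{\infty}(\mathbb{B}))$ and $\mathbf{Fun}(\mathrm{Shl}(\mu),\mathbf{Cart}(\mathcal{B}))$ are all computed pointwise (as noted repeatedly in the excerpt, e.g.\ in the proof of Proposition~\ref{propmonadscatter}), each property transfers levelwise: the mapping quasi-categories of diagrams are (weighted) limits of the pointwise mapping quasi-categories, and $\mathrm{Ext}$ induces locally categorical equivalences pointwise, hence so does $\mathrm{Ext}_{\ast}$ (using that a pointwise categorical equivalence of Reedy-fibrant diagrams is a categorical equivalence, e.g.\ via \cite[Lemma B.4.2]{riehlverityelements} as invoked in Proposition~\ref{propmonadscatter}). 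Similarly, $\mathrm{Ext}_{\ast}$ preserves fibrations in the diagram $\infty$-cosmos because fibrations there are the levelwise fibrations of the underlying $\infty$-cosmoses (here again one uses Lemma~\ref{lemmadirectedmc} to see that Reedy fibrations between Reedy bifibrant diagrams over an ordinal are exactly the pointwise fibrations), and it preserves ordinary limits because those are pointwise and $\mathrm{Ext}$ preserves them by Proposition~\ref{propcosmicext}.1. Finally, $\mathrm{Ext}_{\ast}$ preserves simplicial cotensors up to natural homotopy equivalence because cotensors of diagrams are computed pointwise and $\mathrm{Ext}$ preserves cotensors up to natural homotopy equivalence by Proposition~\ref{propcosmicext}.2; the homotopy equivalences assemble into a natural homotopy equivalence of diagrams since they are natural in the argument.

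\textbf{The main obstacle} I anticipate is the bookkeeping around Reedy (bi)fibrancy: one must be careful that the diagram $\infty$-cosmos $\mathbf{Fun}(\mathrm{Shl}(\mu),\mathbf{Cat}_{\infty}(\mathbb{B}))$ is genuinely an $\infty$-cosmos in the sense used throughout the paper, which is precisely the content of Lemma~\ref{lemmadirectedmc}.2 applied to $\mathbb{B}$ (valid because simplicial presheaf categories are finitely presentable by \cite{kellyfinlimits}, as the statement itself points out), and that $\mathrm{Ext}_{\ast}$ actually lands in the subcategory of bifibrant diagrams rather than merely Reedy-fibrant ones. Once the bifibrancy is pinned down on both sides, the argument is entirely formal: every property of a pseudo-cosmological embedding is a levelwise condition, and postcomposition with a levelwise pseudo-cosmological embedding between honest $\infty$-cosmoses inherits all of them. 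The final clause of Proposition~\ref{propcosmicext} — that such embeddings preserve and reflect equivalences, adjunctions, and (co)limit-preservation — then applies verbatim to $\mathrm{Ext}_{\ast}$ and is exactly what will be used in the sequel to transport polymorphic families of reflective localizations between the internal and external settings.
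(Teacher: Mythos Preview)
Your proposal is correct and follows essentially the same approach as the paper's own proof: both argue that all relevant structures (fibrations, equivalences, limits, cotensors, hom-quasi-categories) in the diagram $\infty$-cosmoses are computed pointwise, so the pseudo-cosmological embedding properties of $\mathrm{Ext}$ transfer to $\mathrm{Ext}_{\ast}$, with the preservation of Reedy bifibrant objects handled via Lemma~\ref{lemmadirectedmc} and the fact that $\mathrm{Ext}$ preserves monomorphisms. Your write-up is more detailed than the paper's (which dispatches the argument in a short paragraph), but the logical skeleton is identical.
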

\begin{proof}
As the externalization in (\ref{equcosmicext}) is a simplicially enriched functor, so is the postcomposition $\mathrm{Ext}_{\ast}$. The 
fibrations and equivalences in both $\infty$-cosmoses are exactly the pointwise fibrations and equivalences, respectively, hence they are 
preserved by push-forward along any functor that preserves fibrations and equivalences. In the same vein, 
all simplicial cotensors and limits are computed pointwise in both cases. The fact that the push-forward $\mathrm{Ext}_{\ast}$ 
preserves Reedy cofibrant objects is given by Lemma~\ref{lemmadirectedmc} as $\mathrm{Ext}$ preserves monomorphisms. It follows
that $\mathrm{Ext}_{\ast}$ is a pseudo-cosmological functor. The fact that fully faithfulness of $\mathrm{Ext}$ implies fully faithfulness of 
$\mathrm{Ext}_{\ast}$ follows readily from the bifibrancy conditions and the fact that homotopy-limits preserve pointwise weak 
equivalences.
\end{proof}

Thus, in particular, any formal theory of adjunctions between diagrams in the image of the functor $\mathrm{Ext}_{\ast}$ in
$\mathbf{Fun}(\mathrm{Shl}(\mu),\mathbf{Cart}(\mathcal{B}))$ is equivalent to an associated theory of adjunctions between according objects 
in $\mathbf{Fun}(\mathrm{Shl}(\mu),\mathbf{Cat}_{\infty}(\mathbb{B}))$ (\cite[Proposition 10.1.4]{riehlverityelements}). The notions of 
invertibility, left exactness and accessibility as featured in Proposition~\ref{prophcopsstrat}.2 are purely formal altogether and hence are 
all preserved and reflected along such pseudo-cosmological embeddings.

The remaining crucial fact in the case when $\mathcal{B}$ is an $\infty$-topos is that there is a regular cardinal $\mathfrak{b}$ such that 
the ordinal sequence $t_{\bullet}\colon \mathrm{Shl}(\mathfrak{b})\rightarrow\mathbf{Cart}(\mathcal{B})$ and any reflective localization 
thereof is contained in the essential image of the externalization functor.
Therefore, given a quasi-category $\mathcal{B}$ with pullbacks and small colimits, recall that $\mathcal{B}$ satisfies \emph{descent} if
the canonical indexing (of large quasi-categories)
\begin{align}\label{equcanind}
\mathrm{\mathcal{B}_{/\blank}}\colon\mathcal{B}^{op}\xrightarrow{\mathcal{B}_{/\blank}}\mathrm{CAT}_{\infty}
\end{align}
preserves small limits (\cite{aneljoyaltopos}). In this generality, the canonical indexing (\ref{equcanind}) is defined as the Straightening 
of the cartesian target fibration $t\colon\mathcal{B}^{\Delta^1}\twoheadrightarrow\mathcal{B}$. For each sufficiently large regular cardinal 
$\kappa$ we obtain a subfunctor $(\mathcal{B}_{\blank})_{\kappa}:=\mathrm{Un}(t_{\kappa}\hookrightarrow t)$. It follows from the explicit 
construction of the Straightening functor (\cite[Section 3.2.1]{luriehtt}) that each
$(\mathcal{B}_{\blank})_{\kappa}\subseteq\mathcal{B}_{/\blank}$ is pointwise the full subcategory spanned by the relatively $\kappa$-compact 
maps. Furthermore, each inclusion $t_{\lambda}\subseteq t_{\kappa}$ for $\lambda\leq\kappa$ is mapped to the natural inclusion
$(\mathcal{B}_{\blank})_{\lambda}\subseteq(\mathcal{B}_{\blank})_{\kappa}$.

\begin{remark}\label{remaltslice}
Whenever the quasi-category $\mathcal{B}$ is presentable and hence arises as the underlying quasi-category
$N_{\Delta}(\mathbb{B}_{\mathrm{cf}})$ of a simplicial right proper model category
$\mathbb{B}=\mathcal{L}_T\mathbf{sPsh}(\mathbf{C})_{\mathrm{inj}}$ of simplicial presheaves, we may instead consider a canonical indexing of 
$\mathcal{B}$ defined as the functor given by
\[\mathbb{B}^{op}_{\mathrm{cf}}\xrightarrow{(\mathbb{B}_{/\blank})_{\mathrm{cf}}}(\mathrm{Cat}_{\Delta})_{\mathrm{f}}\xrightarrow{N_{\Delta}}\mathrm{QCat}\]
evaluated at the right Quillen functor $N_{\Delta}\colon\mathrm{Cat}_{\Delta}\rightarrow(\mathbf{S},\mathrm{QCat})$. It follows from 
\cite[Proposition 3.1.2, Corollary 6.1.2]{harpazprasmamodfib} that the two functors
$\mathrm{Ho}_{\infty}(\mathbb{B}_{/\blank})=N_{\Delta}(N_{\Delta}\circ(\mathbb{B}_{/\blank})_{\mathrm{cf}})$ and
$\mathcal{B}_{/\blank}$ are naturally equivalent to one another. The same holds for the truncated indexings
$\mathrm{Ho}_{\infty}((\mathbb{B}_{/\blank})_{\kappa})=N_{\Delta}(N_{\Delta}\circ((\mathbb{B}_{/\blank})_{\mathrm{cf}})_{\kappa})$ and
$(\mathcal{B}_{/\blank})_{\kappa}$, respectively, as well as for their respective inclusions for all sufficiently large 
cardinals $\kappa$. Here, $((\mathbb{B}_{/B})_{\mathrm{cf}})_{\kappa}$ denotes the full simplicial subcategory of
$(\mathbb{B}_{/B})_{\mathrm{cf}}$ spanned by the 1-categorically relatively $\kappa$-compact fibrations. ``Sufficiently large'' in this 
context is specified concretely in Notation~\ref{remdefB} in terms of an associated regular cardinal $\mathfrak{b}$.
\end{remark}

A presentable quasi-category $\mathcal{B}$ satisfies descent if and only if it is an
$\infty$-topos (\cite[Section 6.1.3]{luriehtt}). In that case, given any sufficiently large regular cardinal $\kappa$, 
the small presheaf 
\[(\mathrm{\mathcal{B}_{/\blank}})_{\kappa}^{\simeq}\colon\mathcal{B}^{op}\xrightarrow{(\mathcal{B}_{/\blank})_{\kappa}}\mathrm{Cat}_{\infty}\xrightarrow{(\cdot)^{\simeq}}\mathcal{S}\]
spanned by the relatively $\kappa$-compact maps is small limit preserving as well and hence is represented by the relatively $\kappa$-compact
\emph{object classifier} $U_{\kappa}\in\mathcal{B}$ (\cite[Section 6.1.6]{luriehtt}). This object comes with a canonically associated 
relatively $\kappa$-compact map
\[\pi_{\kappa}\colon \tilde{U}_{\kappa}\rightarrow U_{\kappa}\]
such that every relatively $\kappa$-compact map in $\mathcal{B}$ arises by pullback from $\pi_{\kappa}$ in an essentially unique way. Since
$\infty$-toposes are locally cartesian closed, representability of the indexed core
$(\mathrm{\mathcal{B}_{/\blank}})_{\kappa}^{\simeq}\colon\mathcal{B}^{op}\rightarrow\mathcal{S}$ is in fact equivalent to 
representability of the $\kappa$-small canonical indexing
\[(\mathrm{\mathcal{B}_{/\blank}})_{\kappa}\colon\mathcal{B}^{op}\rightarrow\mathrm{Cat}_{\infty}\]
itself by an internal $\infty$-category in $\mathcal{B}$ itself. More precisely, the \emph{nerves}
$\mathcal{U}_{\kappa}:=\mathcal{N}(\pi_{\kappa})$ of the object classifiers $\pi_{\kappa}$ (\cite[Definition 6.21]{rasekh}) are complete 
Segal objects in $\mathcal{B}$. 
Here, the object $(\mathcal{U}_{\kappa})_0$ is equivalent to the base $U_{\kappa}$. The object
$(\mathcal{U}_{\kappa})_1\simeq\mathrm{Fun}(\pi_{\kappa})$ is the ``generic function type'' associated to $\pi_{\kappa}$. The higher objects 
$(\mathcal{U}_{\kappa})_n$ are all equivalent to the pullback
$((\mathcal{U}_{\kappa})_1)_{/(\mathcal{U}_{\kappa})_0})^n:=(\mathcal{U}_{\kappa})_1\times_{(\mathcal{U}_{\kappa})_0}\dots\times_{(\mathcal{U}_{\kappa})_0}(\mathcal{U}_{\kappa})_1$ via their corresponding Segal maps.
That means, for all sufficiently large regular cardinals $\kappa$, there is an equivalence
\begin{align}\label{equextobjclass}
\mathrm{Ext}(\mathcal{U}_{\kappa}))\simeq t_{\kappa}
\end{align}
in the quasi-category $\mathrm{Cart}(\mathcal{B})$.
Indeed, the $\kappa$-small object classifier $\pi_{\kappa}$ in $\mathcal{B}$ is a univalent map which witnesses that the indexed 
$\infty$-category $(\mathcal{B}_{/\blank})_{\kappa}\colon\mathcal{B}^{op}\rightarrow\mathrm{Cat}_{\infty}$ is 
\emph{small} in the sense of \cite[Section 3]{rs_comp}. Local cartesian closedness of $\mathcal{B}$ hence implies the statement by 
\cite[Proposition 4.21.2]{rs_comp}. 

\begin{notation}\label{remdefB}
Given furthermore the model categorical presentation $\mathbb{B}$ of $\mathcal{B}$ from \cite[Proposition A.3.7.6]{luriehtt} (whose 
associated Bousfield localization of the injective model structure on the category $\mathbb{B}$ is left exact assuming
$\mathcal{B}$ an $\infty$-topos), there is regular cardinal $\mathfrak{b}$ such that we can represent the object 
classifiers $\pi_{\kappa}$ by the universal relatively $\kappa$-compact fibration
$p_{\kappa}\colon\tilde{V}_{\kappa}\twoheadrightarrow V_{\kappa}$ in $\mathbb{B}$ for every $\kappa\in\mathrm{Shl}(\mathfrak{b})$(\cite{shulmanuniverses}, \cite[Corollary 4.1]{rs_small} ). Here, note that the given cardinal bound $\mathfrak{b}$ itself 
is by definition sharply larger than the presentability index of both $\mathcal{B}$ and $\mathbb{B}$. Here, relative $\kappa$-compactness of 
a map in $\mathbb{B}$ refers to the according 1-categorical notion. For the following, let $\beta\colon\mathrm{Ord}\rightarrow\mathrm{Shl}(\mathfrak{b})$ be a strictly increasing enumeration of $\mathrm{Shl}(\mathfrak{b})$. 
\end{notation}

Let $\mathcal{F}_{\mathbb{B}}\subseteq\mathbb{B}^{\Delta^1}$ 
denote the full subcategory of the category of arrows in $\mathbb{B}$ spanned by the fibrations in $\mathbb{B}$.
Let $\mathcal{F}_{\mathbb{B}}^{\times}\subset\mathcal{F}_{\mathbb{B}}$ denote the wide subcategory spanned by the homotopy-cartesian squares 
between fibrations (which includes strictly cartesian squares between fibrations due to right-properness of $\mathbb{B}$).

\begin{lemma}\label{lemmafunctsequenceofclass}
Let $\mathbb{B}$ be a type theoretic model topos with associated notion of fibered structure $\mathbb{F}$
(\cite[Section 3]{shulmanuniverses}). Then there is a diagram
\[p_{\bullet}\colon \mathrm{Shl}(\mathfrak{b})\rightarrow\mathcal{F}_{\mathbb{B}}^{\times}\]
such that
\begin{enumerate}
\item each $p_{\beta_i}$ for $i\in \mathrm{Ord}$ is a univalent universe with fibrant base for the notion of fibered 
structure $\mathbb{F}_{\beta_i}$ spanned by the relatively $\beta_i$-compact fibrations in $\mathbb{B}$, and
\item each map $p_{i\leq j}\colon p_{\beta_i}\rightarrow p_{\beta_j}$ is strictly cartesian in $\mathbb{B}$ with a cofibration for base map.
\end{enumerate}
\end{lemma}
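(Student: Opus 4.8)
The plan is to build the diagram $p_\bullet$ by transfinite recursion along $\mathrm{Ord}$, using at each successor and limit stage the existence of univalent universes in a type theoretic model topos together with the Reedy-cofibrancy analysis from Lemma~\ref{lemmadirectedmc}. First I would recall from Shulman's work (\cite{shulmanuniverses}) — as already invoked in Notation~\ref{remdefB} — that for every regular cardinal $\kappa\in\mathrm{Shl}(\mathfrak{b})$ the notion of fibered structure $\mathbb{F}_\kappa$ given by the relatively $\kappa$-compact fibrations admits a univalent universe $p_\kappa\colon\tilde V_\kappa\twoheadrightarrow V_\kappa$ with fibrant (indeed cofibrant–fibrant) base, classifying $\mathbb{F}_\kappa$ in the strict sense that every relatively $\kappa$-compact fibration arises as a strict pullback of $p_\kappa$. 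This secures condition~1 pointwise for any choice of the $p_{\beta_i}$; the work is entirely in arranging the comparison maps $p_{i\le j}$ to be \emph{strictly} cartesian squares in $\mathbb{B}$ whose base components are cofibrations, so that $p_\bullet$ is a genuine functor $\mathrm{Shl}(\mathfrak{b})\to\mathcal{F}_{\mathbb{B}}^\times$ and, in particular, a Reedy-cofibrant diagram.

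The recursion runs as follows. At stage $0$, pick any univalent universe $p_{\beta_0}$ for $\mathbb{F}_{\beta_0}$ with cofibrant–fibrant base. At a successor stage $i+1$, suppose $p_{\beta_i}\colon\tilde V_{\beta_i}\twoheadrightarrow V_{\beta_i}$ has been constructed. Since $\beta_i<\beta_{i+1}$ in $\mathrm{Shl}(\mathfrak{b})$, the fibration $p_{\beta_i}$ is itself relatively $\beta_{i+1}$-compact, hence is classified by \emph{some} univalent universe $q$ for $\mathbb{F}_{\beta_{i+1}}$: there is a strictly cartesian square from $p_{\beta_i}$ to $q$ in $\mathbb{B}$. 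Its base map $V_{\beta_i}\to V_q$ need not be a cofibration, so I would factor it as a (trivial) cofibration followed by a trivial fibration $V_{\beta_i}\rightarrowtail W\xrightarrow{\sim} V_q$ in the model structure on $\mathbb{B}$, pull $q$ back along $W\xrightarrow{\sim}V_q$ to get a universe $p_{\beta_{i+1}}$ over $W$ (still univalent, still classifying $\mathbb{F}_{\beta_{i+1}}$, since pullback along a trivial fibration between cofibrant–fibrant objects preserves univalence by the standard argument, e.g.\ \cite{shulmanuniverses}), and take $p_{i\le i+1}$ to be the resulting strictly cartesian square whose base is the chosen cofibration $V_{\beta_i}\rightarrowtail W$. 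Functoriality of the comparison maps below $i+1$ is then arranged by composing: a composite of strictly cartesian squares is strictly cartesian, and a composite of cofibrations is a cofibration, so no coherence obstruction arises from the successor step itself. At a limit stage $\lambda$, set $V_{\beta_\lambda}:=\operatorname{colim}_{i<\lambda}V_{\beta_i}$ — a colimit of a $\lambda$-chain of cofibrations between cofibrant objects, hence cofibrant, with each $V_{\beta_i}\to V_{\beta_\lambda}$ a cofibration by the argument in the proof of Lemma~\ref{lemmadirectedmc}.1 — together with the induced cocone on the $\tilde V_{\beta_i}$; one must then check that the resulting map $\operatorname{colim}_{i<\lambda}\tilde V_{\beta_i}\to V_{\beta_\lambda}$ is a univalent universe for $\mathbb{F}_{\beta_\lambda}$. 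This uses that the union of an increasing chain of notions of fibered structure is again such a notion, that filtered colimits of the chain along the (strict, cartesian) transition squares compute the classifying object for the union, and that univalence of a universe is detected fiberwise and is preserved under such colimits — precisely the kind of ``accumulated universe'' construction available in a type theoretic model topos.

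The main obstacle I expect is the limit step: one has to verify that $p_{\beta_\lambda}$ genuinely classifies \emph{all} relatively $\beta_\lambda$-compact fibrations (not merely those pulled back from some $p_{\beta_i}$, $i<\lambda$) and that it is univalent. For the classification, the key point is that a relatively $\beta_\lambda$-compact fibration over a $\beta_\lambda$-compact — hence, after a factorization, a $\mathfrak{b}$-accessible — base is, by the cofinality of the chain $(\beta_i)_{i<\lambda}$ inside $\beta_\lambda$, already relatively $\beta_i$-compact for some $i<\lambda$, so it is detected at a finite stage; this reduces to a smallness/cofinality bookkeeping of the sort already used in Lemma~\ref{lemmaaccsmall}. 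For univalence at the limit, I would either invoke that the nerve $\mathcal{N}(p_{\beta_\lambda})$ of the colimit is the colimit of the nerves $\mathcal{N}(p_{\beta_i})$ and that completeness of a Segal object is a limit condition stable under such filtered colimits of ``levelwise cartesian'' maps, or appeal directly to the corresponding statement in \cite{shulmanuniverses} that the accumulated universe of a chain of univalent universes is univalent. Everything else — strict cartesianness and the cofibration condition on the transition maps — is then a formal consequence of right-properness of $\mathbb{B}$ (ensuring homotopy-cartesian squares between fibrations that are strictly cartesian stay in $\mathcal{F}_{\mathbb{B}}^\times$) and of the stability of cofibrations under composition and transfinite composition.
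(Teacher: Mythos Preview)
Your base case and successor step are essentially the paper's: pick a Shulman universe for the larger cardinal, classify the previously built universe by it, factor the base map as a cofibration followed by a trivial fibration, and pull back. That is fine.

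The limit step, however, has a genuine gap. You propose to set $p_{\beta_\lambda}$ equal to the colimit $\operatorname{colim}_{i<\lambda}p_{\beta_i}$ and then argue that this colimit is itself a univalent universe for $\mathbb{F}_{\beta_\lambda}$, invoking ``cofinality of the chain $(\beta_i)_{i<\lambda}$ inside $\beta_\lambda$''. But that cofinality simply fails. The map $\beta\colon\mathrm{Ord}\to\mathrm{Shl}(\mathfrak{b})$ is only the monotone enumeration of the class of regular cardinals sharply larger than $\mathfrak{b}$; at a limit ordinal $\lambda$ the supremum $\sup_{i<\lambda}\beta_i$ has cofinality at most $\mathrm{cf}(\lambda)$, and already for $\lambda=\omega$ this supremum is singular and hence not in $\mathrm{Shl}(\mathfrak{b})$. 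Thus $\beta_\lambda$ is strictly larger than $\sup_{i<\lambda}\beta_i$, and there are relatively $\beta_\lambda$-compact fibrations not classified by any $p_{\beta_i}$ with $i<\lambda$. The colimit therefore cannot classify $\mathbb{F}_{\beta_\lambda}$, and the accompanying univalence argument is moot.

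The paper's fix is not to ask this of the colimit at all. It treats the limit stage exactly like a successor stage: one first forms $\operatorname{colim}_{i<\lambda}p_{\beta_i}$ and checks (using descent in the Grothendieck topos $\mathbb{B}$ and local representability of $\mathbb{F}$) that this is a relatively $\beta_\lambda$-compact fibration with fibrant base, with strictly cartesian cocone maps whose base components are monomorphisms. One then classifies this single fibration by a \emph{fresh} Shulman universe $q_{\beta_\lambda}$ for $\mathbb{F}_{\beta_\lambda}$, factors the classifying base map as a cofibration followed by a trivial fibration, and pulls back $q_{\beta_\lambda}$ to obtain $p_{\beta_\lambda}$. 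The transition maps $p_{\beta_i}\to p_{\beta_\lambda}$ for $i<\lambda$ are the composites of the (monic, cartesian) cocone maps into the colimit with the (cofibration-based, cartesian) map from the colimit into $p_{\beta_\lambda}$, hence satisfy condition~2. That re-classification step is the missing idea in your proposal.
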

\begin{proof}
The proof uses the results about universes in \cite[Chapter 5]{shulmanuniverses} pointwise for every $\beta_i$, together with a suitable
ordinal recursion to achieve the functorial properties as stated in 1.\ and 2. Thus, in the following we show that for all ordinals $j$ 
there is a diagram 
\[(p_{\beta_{\bullet}})_{j}\colon j\rightarrow\mathcal{F}_{\mathbb{B}}^{\times}\]
which satisfies Properties 1.\ and 2., and such that for all pairs of ordinals $i\leq j$, we have
$(p_{\beta_{\bullet}})_i\subseteq(p_{\beta{\bullet}})_j$. By union we hence obtain a diagram $p_{\beta_{\bullet}}$ as stated.

Therefore, first we simply take the fibration $p_{\beta_0}\colon\tilde{U}_{\beta_0}\twoheadrightarrow U_{\beta}$ in
$\mathcal{F}_{\mathbb{B}}$ obtained by \cite[Theorem 5.22]{shulmanuniverses} (specifically via \cite[Proposition 5.12]{shulmanuniverses}). 
Whenever $j\in\mathrm{Ord}$ is a limit ordinal, we let $(p_{\beta_{\bullet}})_j\colon j\rightarrow\mathcal{F}_{\mathbb{B}}^{\times}$ be the 
union $\bigcup_{i<j}(p_{\beta_{\bullet}})_i$. For the successor case, suppose $j=k+1$ for some $k\in\mathrm{Ord}$ and we have constructed a 
sequence
\begin{align}\label{diaglemmafunctsequenceofclass1}
(p_{\beta_{\bullet}})_k\colon k\rightarrow\mathcal{F}_{\mathbb{B}}^{\times}
\end{align}
for every $i<j$ as stated. We denote the fibrations $(p_{\beta_i})_k$ simply by
$p_{\beta_i}\colon\tilde{U}_{\beta_i}\twoheadrightarrow U_{\beta_i}$. First, assume $k$ itself is a successor ordinal, so $k=i+1$ for 
some ordinal $i$.\footnote{This step is not really necessary, but decompresses the chain of arguments for the sake of readability.} In 
particular, we have a fibration $p_{\beta_{i}}\colon\tilde{U}_{\beta_{i}}\twoheadrightarrow U_{\beta_{i}}$ as 
in 1. To construct a suitable fibration $p_{\beta_{i+1}}\colon\tilde{U}_{\beta_{i+1}}\twoheadrightarrow U_{\beta_{i+1}}$, choose a universe 
$q_{\beta_{i+1}}\colon\tilde{V}_{\beta_{i+1}}\twoheadrightarrow V_{\beta_{i+1}}$ for the class of
$\beta_{i+1}$-relatively compact fibrations via \cite[Theorem 5.22]{shulmanuniverses}. By construction, it comes with an acyclic fibration
$\mathbb{B}(\blank,V_{\beta_{i+1}})\overset{\sim}{\twoheadrightarrow}\mathbb{F}_{\beta_{i+1}}$ (in the sense of
\cite[Definition 5.1]{shulmanuniverses}) in the (very large) 2-category $\mathcal{PSH}(\mathbb{B},\mathcal{GPD})$ of (large) stacks. Here, 
the codomain $\mathbb{F}_{\beta_{i+1}}$ is defined as the pullback
$\mathbb{F}\times_{\mathbb{B}_{/\blank}}(\mathbb{B}_{/\blank})_{\beta_{i+1}}$ where $\mathbb{F}$ is the associated notion of fibered 
structure of $\mathbb{B}$, and $(\mathbb{B}_{/\blank})_{\beta_{i+1}}$ is the fibered structure of relatively $\beta_{i+1}$-compact maps in
$\mathbb{B}$. The square 
\[\xymatrix{
\mathbb{B}(\blank,\emptyset)\ar@{^(->}[rr]\ar@{^(->}[d] & &  \mathbb{B}(\blank,V_{\beta_{i+1}})\ar@{->>}[d]^{\sim} \\
\mathbb{B}(\blank,U_{\beta_{i}})\ar@{->>}[r]^(.6){\sim} & \mathbb{F}_{\beta_{i}}\ar@{^(->}[r] & \mathbb{F}_{\beta_{i+1}}
}\]
thus admits a lift $b_{\beta_i}\colon U_{\beta_{i}}\rightarrow V_{\beta_{i+1}}$ which in turn gives a cartesian square
\[\xymatrix{
\tilde{U}_{\beta_{i}}\ar@{->>}[d]_{p_{\beta_{i}}}\ar[r] & \tilde{V}_{\beta_{i+1}}\ar@{->>}[d]^{q_{\beta_{i+1}}}\\
U_{\beta_{i}}\ar[r] & V_{\beta_{i+1}}
}\]
in $\mathbb{B}$ by evaluation in $\mathbb{F}_{\beta_{i+1}}$. Factoring the base map into a cofibration followed by a trivial fibration
$U_{\beta_{i}}\hookrightarrow U_{\beta_{i+1}}\overset{\sim}{\twoheadrightarrow}V_{\beta_{i+1}}$ induces a cartesian square
\begin{align}\label{diaglemmafunctsequenceofclass2}
\begin{gathered}
\xymatrix{
\tilde{U}_{\beta_{i}}\ar@{->>}[d]_{p_{\beta_{i}}}\ar[r] & \tilde{U}_{\beta_{i+1}}\ar@{->>}[d]^{p_{\beta_{i+1}}}\\
U_{\beta_{i}}\ar@{^(->}[r] & U_{\beta_{i+1}}
}
\end{gathered}
\end{align}
by pullback of $q_{\beta_{i+1}}$ along $U_{\beta_{i+1}}\overset{\sim}{\twoheadrightarrow}V_{\beta_{i+1}}$ with a cofibration as base. The 
base $U_{\beta_{i+1}}$ is again fibrant by construction, the fibration $p_{\beta_{i+1}}$ is equivalent to $q_{\beta_{i+1}}$ and hence again 
univalent. We obtain a composite acyclic fibration
$\mathbb{B}(\blank,U_{\beta_{i+1}})\overset{\sim}{\twoheadrightarrow}\mathbb{B}(\blank,V_{\beta_{i+1}})\overset{\sim}{\twoheadrightarrow}\mathbb{F}_{\beta_{i+1}}$. We thus may define
\[(p_{\beta_{\bullet}})_{i+2}\colon i+2\rightarrow\mathcal{F}_{\mathbb{B}}^{\times}\]
by extension of (\ref{diaglemmafunctsequenceofclass1}) by (\ref{diaglemmafunctsequenceofclass2}) in the obvious way. This finishes the case 
if $k$ itself is a successor ordinal.

Given that $k$ is a limit ordinal, we have defined $(p_{\beta_{\bullet}})_k=\bigcup_{i<k}(p_{\beta_{\bullet}})_i$. We note that the colimit
\begin{align}\label{equlemmafunctsequenceofclass1}
\mathrm{colim}_{i<k}p_{\beta_{i}}\colon\mathrm{colim}_{i<k}\tilde{U}_{\beta_{i}}\rightarrow\mathrm{colim}_{i<k}U_{\beta_{i}}
\end{align}
is a relatively $\beta_{k}$-compact fibration. That is, because the natural transformation
$(p_{\beta_{\bullet}})_k\colon k\rightarrow\mathbb{M}$ is cartesian and each $p_{\beta_{i_1\leq i_2}}$ is a monomorphism by assumption; it 
follows that each cocone map $p_i\rightarrow\mathrm{colim}_{i<k}p_{\beta_{i}}$ is cartesian by the descent properties of Grothendieck toposes 
(\cite[Proposition 2.4.2]{rezkhtytps}). Thus, relative $\beta_{k}$-compactness of the colimit (\ref{equlemmafunctsequenceofclass1}) follows 
readily. It also follows that each base map $U_{\beta_i}\rightarrow \mathrm{colim}_{i<k}U_{\lambda_{i}}$ is monic again. Fibrancy follows 
similarly in virtue of local representability of the fibered structure $\mathbb{F}$ (\cite[Proposition 3.18]{shulmanuniverses}). Now, let 
$q_{\beta_{k}}\colon\tilde{V}_{\beta_{k}}\twoheadrightarrow V_{\beta_{k}}$ again be a universe for the class of relatively
$\beta_{k}$-compact fibrations via \cite{shulmanuniverses}. We thus 
obtain a classifying cartesian square
\[\xymatrix{
\mathrm{colim}_{i<k}\tilde{U}_{\beta_{i}}\ar@{->>}[d]_{\mathrm{colim}_{i<k}p_{\beta_{i}}}\ar[r] & \tilde{V}_{\beta_{k}}\ar@{->>}[d]^{q_{\beta_{k}}}\\
\mathrm{colim}_{i<k}U_{\beta_{i}}\ar[r] & V_{\beta_{k}}
}\]
in $\mathbb{M}$. We again factor the base map into a cofibration $\mathrm{colim}_{i<k}U_{\beta_{i}}\hookrightarrow U_{\beta_k}$ 
followed by an acyclic fibration $U_{\beta_k}\twoheadrightarrow V_{\beta_{k}}$. Then pullback of $q_{\beta_k}$ along 
$U_{\beta_k}\twoheadrightarrow V_{\beta_{k}}$ yields a relatively $\beta_k$-compact fibration
\[p_{\beta_k}\colon\tilde{U}_{\beta_k}\twoheadrightarrow U_{\beta_k}\]
with fibrant base which satisfies 1. It extends the diagram $(p_{\beta_{\bullet}})_k\colon k\rightarrow\mathcal{F}_{\mathbb{B}}^{\times}$ to 
a diagram $(p_{\bullet})_{k+1}\colon k+1\rightarrow\mathcal{F}_{\mathbb{B}}^{\times}$ such that 2.\ holds by construction. This finishes the 
induction.

\end{proof}

With Lemma~\ref{lemmafunctsequenceofclass} at hand, we may consider the composition
\begin{align}\label{equdefobjclasssequence}
N(p_{\bullet})\colon\mathrm{Shl}(\mathfrak{b})\xrightarrow{p_{\bullet}}\mathcal{F}_{\mathbb{B}}^{\times}\xrightarrow{N}\mathbb{B}^{\Delta^{op}},
\end{align}
where $N$ is the internal nerve construction from \cite[Section 2]{rs_uc}. Reedy fibrant replacement of the composition (\ref{equdefobjclasssequence}) 
yields a diagram 
\[\mathbb{U}_{\bullet}:=\mathbb{R}(N(p_{\bullet}))\colon \mathrm{Shl}(\mathfrak{b})\rightarrow\mathbf{Cat}_{\infty}(\mathbb{B})\]
which itself is not only Reedy fibrant, but Reedy cofibrant as well. That is, first, because $N(p_{\bullet})$ is Reedy cofibrant by Lemma~\ref{lemmadirectedmc}, 
Lemma~\ref{lemmafunctsequenceofclass} and the fact that $N$ preserves monomorphisms. And second, because there is an acyclic cofibration
$N(p_{\bullet})\overset{\sim}{\hookrightarrow}\mathbb{R}(N(p_{\bullet}))$ in
$\mathrm{Fun}(\mathrm{Shl}(\mathfrak{b}),\mathbb{B}^{\Delta^{op}})$ equipped with the Reedy model structure by construction. Thus, each value
$\mathbb{U}_{\kappa}\in\mathbb{B}^{\Delta^{op}}$ is hence in particular Reedy fibrant and Reedy equivalent to the strict Segal object 
$N(p_{\kappa})$; it follows that each $\mathbb{U}_{\kappa}$ is a complete Segal object in $\mathbb{B}$ by \cite[Section 4]{rs_uc}. The 
diagram $\mathbb{U}_{\bullet}$ is therefore an object in the $\infty$-cosmos
$\mathbf{Fun}(\mathrm{Shl}(\mathfrak{b}),\mathbf{Cat}_{\infty}(\mathbb{B}))$.

For each $\kappa\in \mathrm{Shl}(\mathfrak{b})$, the equivalence $p_{\kappa}\simeq\pi_{\kappa}$ in $\mathcal{B}$, the equivalences
$\mathbb{U}_{\kappa}\simeq N(p_{\kappa})\simeq\mathcal{U}_{\kappa}$ in $\mathrm{Fun(\Delta^{op},\mathcal{B})}$, and 
Proposition~\ref{propcosmicext} induce an equivalence
\[\mathrm{Ext}(\mathbb{U}_{\kappa})\simeq \mathrm{Ext}(\mathcal{U}_{\kappa})\]
in $\mathrm{Cart}(\mathcal{B})$. Via (\ref{equextobjclass}) we obtain a composite equivalence 
\begin{align}\label{equtargetextptwequivs}
e_{\kappa}\colon\mathrm{Ext}(\mathbb{U}_{\kappa})\xrightarrow{\simeq}t_{\kappa}
\end{align}
in $\mathbf{Cart}(\mathcal{B})$. In the next lemma, we show that these equivalences can be chosen so to assemble into a natural equivalence 
indexed by $\mathrm{Shl}(\mathfrak{b})$.

\begin{remark}[Univalent $n$-cells]
Under Lemma~\ref{lemmaextyoneda}, the equivalences (\ref{equtargetextptwequivs}) correspond to elements
\[\gamma_{\kappa}\in\int\limits_{n\in\Delta^{op}}\left((\mathcal{B}_{/(\mathcal{U}_{\kappa})_n})^{\Delta^n}\right)^{\simeq}.\]
These may be informally thought of as tuples
$\gamma_{\kappa}=(e_{\kappa}(1_{(\mathcal{U}_{\kappa})_n})\in(\mathcal{B}_{/(\mathcal{U}_{\kappa})_n})^{\Delta^n}|n\geq 0)$
of homotopy-coherent $n$-simplices in $\mathcal{B}_{/(\mathcal{U}_{\kappa})_n}$, and be depicted as diagrams of the form
\[\xymatrix{
& & &  \\
(\mathrm{Fun}(\pi_{\kappa})/U_{\kappa})^n & (\mathcal{U}_{\kappa})_n\ar[l]_(.4){\simeq}\ar@{-->}@/^1pc/[dd]\ar@{}[u]|{\vdots} & \dots(\gamma_{\kappa})_n\dots\ar[l]& \\
 & & \{0\}^{\ast}(\tilde{U}_{\kappa})\ar[dr]^{d_2^{\ast}(\gamma_{\kappa})_1}\ar[dd]_{d_1^{\ast}(\gamma_{\kappa})_1}\ar@/_1pc/[dl] & \\
\mathrm{Fun}(\pi_{\kappa})\times_{U_{\kappa}}\mathrm{Fun}(\pi_{\kappa}) & (\mathcal{U}_{\kappa})_2\ar[l]_(.35){\simeq}\ar@{-->}@/^1pc/[uu]\ar@<2ex>[dd]\ar[dd]\ar@<-2ex>[dd] & & \{1\}^{\ast}(\tilde{U}_{\kappa})\ar[dl]^{d_0^{\ast}(\gamma_{\kappa})_1}\ar@/^1.5pc/[ll]|(.5)\hole \\
  & & \{2\}^{\ast}(\tilde{U}_{\kappa})\ar@/^/[ul]\ar@{}[uur]^(.45){(\gamma_{\kappa})_2} & \\
& \mathrm{Fun}(\pi_{\kappa})\ar@<1ex>[dd]\ar@<-1ex>[dd]\ar@<1ex>[uu]\ar@<-1ex>[uu] & d_1^{\ast}(\tilde{U}_{\kappa})\ar[l]\ar@/^/[d]^{(\gamma_{\kappa})_1}\\
& & d_0^{\ast}(\tilde{U}_{\kappa})\ar@/^/[ul]\\
& U_{\kappa}\ar[uu] & \tilde{U}_{\kappa}\ar[l]^{\pi_{\kappa}\simeq(\gamma_{\kappa})_0} 
}\]
such that for all simplicial operators $f\colon [n]\rightarrow [m]$ we have (suitably cohering) homotopies
$\mathcal{U}_{\kappa}(f)^{\ast}((\gamma_{\kappa})_n)\simeq f^{\ast}(\gamma_{\kappa})_m$. Each diagram
$(\gamma_{\kappa})_n\colon\Delta^n\rightarrow\mathcal{B}_{/(\mathcal{U}_{\kappa})_n}$ is the image of the identity
$1_{(\mathcal{U}_{\kappa})_n}\colon(\mathcal{U}_{\kappa})_n\rightarrow(\mathcal{U}_{\kappa})_n$ under the equivalence
\[\xymatrix{
\mathcal{B}_{/(\mathcal{U}_{\kappa})_n}\ar[rr]^{(e_{\kappa})_n}_{\simeq}\ar@/_1pc/@{->>}[dr] & & (t_{\kappa}^{\Delta^n})^{\times}\ar@/^1pc/@{->>}[dl]\\
 & \mathcal{B} & 
}\]
of right fibrations over $\mathcal{B}$. Here, the codomain $(t_{\kappa}^{\Delta^n})^{\times}$ denotes the largest right fibration
(i.e.\ the core) of the cotensor $t_{\kappa}^{\Delta^n}$ in $\mathrm{Cart}(\mathcal{B})$.  In particular, each such
$(\gamma_{\kappa})_n\in(t_{\kappa}^{\Delta^n})^{\times}$ is a terminal object (in the corresponding total $\infty$-category). In that 
sense, the section $\gamma_{\kappa}$ is a 
simplicial diagram of ``univalent'' $n$-cells $(\gamma_{\kappa})_n$ in $\mathcal{B}$, each of which represents the presheaf of diagrams 
of the form $B\mapsto((\mathcal{B}_{/B})^{\Delta^n})^{\simeq}$ in just the same way the object classifier
$\pi_{\kappa}\simeq(\gamma_{\kappa})_0$ represents the presheaf of $0$-cells $B\mapsto(\mathcal{B}_{/B})^{\simeq}$.
\end{remark}

\begin{proposition}\label{corextobjclass}
Suppose $\mathcal{B}$ is an $\infty$-topos. Let $p_{\bullet}\colon \mathrm{Shl}(\mathfrak{b})\rightarrow\mathcal{F}_{\mathbb{B}}^{\times}$ be 
the sequence of universal fibrations in $\mathbb{B}$ as constructed in Lemma~\ref{lemmafunctsequenceofclass}. 
Then there is a natural equivalence
\[\mathrm{Ext}_{\ast}(\mathbb{U}_{\bullet}))\simeq t_{\bullet}\]
in $\mathbf{Fun}(\mathrm{Shl}(\mathfrak{b}),\mathbf{Cart}(\mathcal{B}))$.
\end{proposition}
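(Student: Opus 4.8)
The plan is to upgrade the pointwise equivalences $e_{\kappa}\colon\mathrm{Ext}(\mathbb{U}_{\kappa})\xrightarrow{\simeq}t_{\kappa}$ from (\ref{equtargetextptwequivs}) to a single equivalence of $\mathrm{Shl}(\mathfrak{b})$-indexed diagrams. Both $\mathrm{Ext}_{\ast}(\mathbb{U}_{\bullet})$ and $t_{\bullet}$ are Reedy bifibrant objects of $\mathbf{Fun}(\mathrm{Shl}(\mathfrak{b}),\mathbf{Cart}(\mathcal{B}))$: the diagram $\mathbb{U}_{\bullet}$ is Reedy bifibrant by construction and $\mathrm{Ext}$ preserves monomorphisms and fibrations (Corollary~\ref{corcosmicext}), while $t_{\bullet}$ is Reedy bifibrant by Lemma~\ref{lemmadirectedmc}. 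Hence, as in the proof of Proposition~\ref{propmonadscatter} (via \cite[Lemma B.4.2]{riehlverityelements}), any $1$-cell $\mathrm{Ext}_{\ast}(\mathbb{U}_{\bullet})\rightarrow t_{\bullet}$ in this $\infty$-cosmos -- that is, a strictly natural transformation -- whose components are equivalences in $\mathbf{Cart}(\mathcal{B})$ is automatically an equivalence in $\mathbf{Fun}(\mathrm{Shl}(\mathfrak{b}),\mathbf{Cart}(\mathcal{B}))$; more generally it suffices to connect the two diagrams by a zig-zag of strictly natural pointwise equivalences between Reedy bifibrant diagrams. It therefore remains to produce such a comparison realizing the $e_{\kappa}$.

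For the construction I would exploit that the $e_{\kappa}$ are not arbitrary but the \emph{canonical} comparisons. Under the Segal--Yoneda Lemma~\ref{lemmaextyoneda}, the equivalence $\mathrm{Ext}(\mathcal{U}_{\kappa})\simeq t_{\kappa}$ of (\ref{equextobjclass}) -- provided by \cite[Proposition 4.21.2]{rs_comp} -- is the classification functor: its $B$-component sends an internal functor $B\rightarrow\mathcal{U}_{\kappa}$, equivalently (since $\mathbb{U}_{\kappa}\simeq\mathcal{N}(\pi_{\kappa})$) a relatively $\kappa$-compact map over $B$, to that very map regarded as an object of $(\mathcal{B}_{/B})_{\kappa}$, and it corresponds to the section $\gamma_{\kappa}=(e_{\kappa}(1_{(\mathcal{U}_{\kappa})_{n}}))_{n\ge 0}$. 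Now the sequence $p_{\bullet}$ of Lemma~\ref{lemmafunctsequenceofclass} has \emph{strictly cartesian} transition maps, and the internal nerve -- a strict functor into $\mathbb{B}^{\Delta^{op}}$ preserving cartesian squares between fibrations levelwise -- together with the strictly functorial, fibration- and monomorphism-preserving model-categorical externalization identifies $\mathrm{Ext}(\mathbb{U}_{i\le j})$ with the inclusion of relatively $\beta_{i}$-compact maps into relatively $\beta_{j}$-compact ones induced by pulling back $\pi_{\beta_{j}}$ along $U_{\beta_{i}}\hookrightarrow U_{\beta_{j}}$. Thus, for $i\le j$, classifying a relatively $\beta_{i}$-compact map and then including it into $(\mathcal{B}_{/B})_{\beta_{j}}$ agrees with first applying $\mathrm{Ext}(\mathbb{U}_{i\le j})$ and then classifying -- which is exactly the cartesianness of $p_{i\le j}$ -- so the $e_{\kappa}$ are compatible with the transition maps; the coherences filling these naturality squares are what the Segal--Yoneda description of the relevant mapping spaces (and of their limit over $\mathrm{Shl}(\mathfrak{b})$ computing the $1$-cells in the diagram $\infty$-cosmos) supplies through a compatible choice of the sections $\gamma_{\kappa}$.

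The step I expect to be the main obstacle is precisely this last one: promoting the family $(e_{\kappa})_{\kappa}$, each a priori defined only up to homotopy, to a genuinely natural comparison of Reedy bifibrant diagrams rather than a mere objectwise family. I would handle it at the model-categorical level, where Lemma~\ref{lemmafunctsequenceofclass} furnishes a strict diagram $p_{\bullet}$, the internal nerve and the simplicially enriched externalization $\mathbf{Cat}_{\infty}(\mathbb{B})\rightarrow\mathbf{Cart}(\mathcal{B})$ are strict functors, and Remark~\ref{remaltslice} identifies the $\infty$-categorical slice indexing underlying $t_{\bullet}$ with its model-categorical counterpart: the classification comparison then becomes a zig-zag of strictly natural, pointwise weak equivalences between projectively cofibrant and fibrant $\mathrm{Shl}(\mathfrak{b})$-diagrams, hence an equivalence in the diagram $\infty$-cosmos by the first paragraph. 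Everything else -- Reedy bifibrancy, the pointwise equivalences (\ref{equtargetextptwequivs}), and the passage from a pointwise equivalence of diagrams to an equivalence in $\mathbf{Fun}(\mathrm{Shl}(\mathfrak{b}),\mathbf{Cart}(\mathcal{B}))$ -- is either already established in the excerpt or a formal consequence of the Reedy and enriched machinery.
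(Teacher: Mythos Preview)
Your strategy matches the paper's: both recognize that the pointwise equivalences $e_{\kappa}$ must be upgraded to a natural comparison, both invoke the Segal--Yoneda Lemma to identify the $e_{\kappa}$ with sections $\gamma_{\kappa}$, and both exploit the strict cartesianness of the transition maps $p_{i\le j}$ from Lemma~\ref{lemmafunctsequenceofclass} to argue that $\iota_{ij}^{\ast}\gamma_{\beta_j}\simeq\gamma_{\beta_i}$ at the model-categorical level. The paper carries this out in detail by passing through the strict totalizations $\mathrm{tot}(\mathbb{B}_{/N(p_{\beta_i})})$ and the comparison maps to the homotopy totalizations, which is precisely the kind of computation you gesture at in your final paragraph.

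The one place your proposal diverges from the paper is the endgame. You propose producing a zig-zag of \emph{strictly} natural pointwise equivalences between Reedy bifibrant diagrams; the paper does not do this. Instead, the hard work only yields, for each pair $i\le j$, a \emph{homotopy}-commutative square (\ref{diagcorextobjclass3}), i.e.\ a natural isomorphism $e_{\bullet}$ in $\mathrm{Fun}(\mathrm{Shl}(\mathfrak{b}),\mathrm{Ho}(\mathrm{Cart}(\mathcal{B})))$. The paper then invokes the fact that $\mathrm{Shl}(\mathfrak{b})\cong\mathrm{Ord}$ is the free category on its underlying graph, so the canonical functor
\[\mathrm{Ho}\bigl(\mathrm{Fun}(\mathrm{Shl}(\mathfrak{b}),\mathrm{Cart}(\mathcal{B}))\bigr)\rightarrow\mathrm{Fun}\bigl(\mathrm{Shl}(\mathfrak{b}),\mathrm{Ho}(\mathrm{Cart}(\mathcal{B}))\bigr)\]
is smothering (\cite[Lemma 3.1.4]{riehlverityelements}) and in particular conservative; this lifts the isomorphism to an equivalence between the bifibrant diagrams. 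This trick is cleaner than constructing a strict zig-zag directly and is worth knowing: it lets you stop once you have pairwise homotopy-commutativity, without having to thread coherences through an infinite tower of compositions.
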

\begin{proof}
In the following, we construct a homotopy-coherent natural equivalence
$\mathrm{Ext}_{\ast}(\mathcal{U}_{\bullet})\simeq (\mathcal{B}_{/\blank})_{\bullet}$ of $\mathrm{Shl}(\mathfrak{b})$-indexed diagrams in
$\mathrm{Fun}(\mathcal{B}^{op},\mathrm{Cat}_{\infty})$, and then use general arguments about the 
strictification of such homotopy-coherent diagrams to obtain a (strict) natural equivalence of indexed cartesian fibrations as stated. We 
therefore will make extensive use of Lemma~\ref{lemmaextyoneda}. Yet, to avoid the daring task of constructing a path between fixed endpoints 
in the space $\int_{n\in\Delta^{op}}((\mathcal{B}_{/(\mathcal{U}_{\kappa})_n})^{\Delta^n})^{\simeq}$ by hand, we will exploit
the 1-categorical constructions we have at hand.

With that in mind, let us first show that for all cardinals $\beta_i\in\mathrm{Shl}(\mathfrak{b})$ there is a 
specific section $\gamma_{\beta_i}\in\int_{n\in\Delta^{op}}((\mathcal{B}_{/(\mathcal{U}_{\beta_i})_n})^{\Delta^n})^{\simeq}$ such 
that for all $i\leq j$, we have $\gamma_{\beta_i}\simeq\iota_{ij}^{\ast}\gamma_{\beta_j}$ in
$\int_{n\in\Delta^{op}}((\mathcal{B}_{/(\mathcal{U}_{\beta_i})_n})^{\Delta^n})^{\simeq}$ where
$\iota_{ij}\colon\mathcal{U}_{\beta_i}\rightarrow\mathcal{U}_{\beta_j}$ denotes the canonical embedding induced by the 
inclusion $U_{\beta_i}\hookrightarrow U_{\beta_j}$ in Lemma~\ref{lemmafunctsequenceofclass}.

Therefore, we recall that the fibrant replacement $\rho_{\bullet}\colon N(p_{\bullet})\xrightarrow{\sim}\mathbb{U}_{\bullet}$ induces a 
pointwise fibrant replacement $\rho_{\beta_i}\colon N(p_{\beta_i})\xrightarrow{\sim}\mathbb{U}_{\beta_i}$ in $\mathbb{B}^{\Delta^{op}}$ for 
each $i\in\mathrm{Ord}$. For all pairs $i\leq j$ we thus obtain commutative squares in $\mathcal{B}^{\Delta^{op}}$ as 
follows.
\[\xymatrix{
N(p_{\beta_i})\ar[r]^{\rho_{\beta_i}}\ar@{^(->}[d]_{\iota_{ij}} & \mathbb{U}_{\beta_i}\ar[d]^{\mathbb{R}(\iota_{ij})} \\
N(p_{\beta_j})\ar[r]_{\rho_{\beta_j}} & \mathbb{U}_{\beta_j}
}\]
Postcomposition of any simplicial object $X\in\mathbb{B}^{\Delta^{op}}$ with the simplicially enriched slice functor
$\mathbb{B}_{/\blank}\colon\mathbb{B}^{op}\rightarrow\mathbf{Cat}_{\Delta}$ yields a cosimplicial diagram
$\mathbb{B}_{/X}\colon\Delta\rightarrow\mathbf{Cat}_{\Delta}$ of simplicial model categories. The natural equivalence
$\rho_{\kappa}\colon N(p_{\beta_i})\xrightarrow{\sim}\mathbb{U}_{\beta_i}$  induces a pointwise Quillen right adjoint
$(\rho_{\beta_i})^{\ast}\colon\mathbb{B}_{/\mathbb{U}_{\beta_i}}\xrightarrow{\sim}\mathbb{B}_{/N(p_{\beta_i})}$, and hence, as all objects 
in both model categories are cofibrant (and the cosimplicial operators in each case are given by pullback action as well and so preserve 
fibrant objects), a natural equivalence
\[(\rho_{\beta_i})^{\ast}\colon(\mathbb{B}_{/\mathbb{U}_{\beta_i}})_{\mathrm{cf}}\xrightarrow{\sim}(\mathbb{B}_{/N(p_{\beta_i})})_{\mathrm{cf}}\]
between cosimplicial diagrams of fibrant objects in the model category of simplicial categories. As the simplicial nerve functor $N_{\Delta}\colon\mathbf{Cat}_{\Delta}\rightarrow(\mathbf{S},\mathrm{QCat})$ is a right Quillen functor itself, we obtain a natural equivalence
$N_{\Delta}(\rho_{\beta_i}^{\ast})\colon N_{\Delta}((\mathbb{B}_{/\mathbb{U}_{\beta_i}})_{\mathrm{cf}})\xrightarrow{\sim}N_{\Delta}((\mathbb{B}_{/N(p_{\beta_i})})_{\mathrm{cf}})$
of cosimplicial objects in quasi-categories. Since the Joyal model structure is cartesian closed, this induces a natural equivalence
\[N_{\Delta}(\rho_{\beta_i}^{\ast})^{\Delta^{\bullet}}\colon N_{\Delta}((\mathbb{B}_{/\mathbb{U}_{\beta_i}})_{\mathrm{cf}})^{\Delta^{\bullet}}\xrightarrow{\sim}N_{\Delta}((\mathbb{B}_{/N(p_{\beta_i})})_{\mathrm{cf}})^{\Delta^{\bullet}}\]
of functors of type $\Delta^{op}\times\Delta\rightarrow\mathrm{QCat}$, and eventually, by precomposition with the discrete fibration
$\mathrm{Tw}(\Delta)\twoheadrightarrow\Delta^{op}\times\Delta$ of twisted arrows on the one hand, and postcomposition with the core functor $(\cdot)^{\simeq}$ on the other hand, a natural equivalence of functors of type $\mathrm{Tw}(\Delta)\rightarrow\mathrm{Kan}$. This natural 
equivalence eventually induces a homotopy equivalence
\begin{align}\label{equcorextobjclass1}
\underset{\mathrm{Tw}(\Delta)}{\mathrm{holim}}\left(N_{\Delta}(\rho_{\beta_i}^{\ast})^{\Delta^{\bullet}}\right)^{\simeq}\colon\underset{\mathrm{Tw}(\Delta)}{\mathrm{holim}}\left(N_{\Delta}\left((\mathbb{B}_{/\mathbb{U}_{\beta_i}})_{\mathrm{cf}}\right)^{\Delta^{\bullet}}\right)^{\simeq}\xrightarrow{\sim}\underset{\mathrm{Tw}(\Delta)}{\mathrm{holim}}\left(N_{\Delta}\left((\mathbb{B}_{/N(p_{\beta_i})})_{\mathrm{cf}}\right)^{\Delta^{\bullet}}\right)^{\simeq}
\end{align}
of homotopy limits in the model category $(\mathbf{S},\mathrm{Kan})$ (say, by a functorial choice of injective fibrant replacements in the 
according model category of diagrams). We will refer to this equivalence short-hand by
\[\mathrm{htot}(\rho_{\beta_i}^{\ast})\colon\mathrm{htot}(\mathbb{B}_{/\mathbb{U}_{\beta_i}})\xrightarrow{\sim}\mathrm{htot}(\mathbb{B}_{/N p_{\beta_i}}).\]
Now, on the one hand, the domain of this equivalence presents the limit $\mathrm{lim}_{\mathrm{Tw}(\Delta)}((\mathcal{B}_{/\mathcal{U}_{\beta_i}})^{\Delta^{\bullet}})^{\simeq}$ in $\mathcal{S}$ and as such the end 
$\int_{n\in\Delta^{op}}((\mathcal{B}_{/(\mathcal{U}_{\beta_i})_n})^{\Delta^n})^{\simeq}$ via Remark~\ref{remaltslice}.
On the other hand, we have a sequence of canonical maps into the codomain as follows.
\begin{align}\label{equcorextobjclass2}
\notag N_{\Delta}\left(\underset{\mathrm{Tw}(\Delta)}{\mathrm{lim}}(\mathbb{B}_{/N(p_{\beta_i})})_{\mathrm{cf}}^{\Delta^{\bullet}}\right) & \xrightarrow{\cong} 
\underset{\mathrm{Tw}(\Delta)}{\mathrm{lim}}N_{\Delta}\left((\mathbb{B}_{/N(p_{\beta_i})})_{\mathrm{cf}}^{\Delta^{\bullet}}\right) \\
\notag & \rightarrow \underset{\mathrm{Tw}(\Delta)}{\mathrm{holim}}N_{\Delta}\left((\mathbb{B}_{/N(p_{\beta_i})})_{\mathrm{cf}}^{\Delta^{\bullet}}\right) \\
\notag & \rightarrow \underset{\mathrm{Tw}(\Delta)}{\mathrm{holim}}N_{\Delta}\left((\mathbb{B}_{/N(p_{\beta_i})})_{\mathrm{cf}}^{\mathfrak{C}(\Delta^{\bullet})}\right) \\
\notag & \rightarrow \underset{\mathrm{Tw}(\Delta)}{\mathrm{holim}}N_{\Delta}\left((\mathbb{B}_{/N(p_{\beta_i})})_{\mathrm{cf}}\right)^{N_{\Delta}\mathfrak{C}(\Delta^{\bullet})} \\
\notag & \xrightarrow{\sim} \underset{\mathrm{Tw}(\Delta)}{\mathrm{holim}}N_{\Delta}\left((\mathbb{B}_{/N(p_{\beta_i})})_{\mathrm{cf}}\right)^{\Delta^{\bullet}} \\
 & = \mathrm{htot}(\mathbb{B}_{/N p_{\beta_i}})
\end{align}
The limit $\mathrm{tot}(\mathbb{B}_{/N(p_{\beta_i})}):=\mathrm{lim}_{\mathrm{Tw}(\Delta)}((\mathbb{B}_{/N(p_{\beta_i})})_{\mathrm{cf}}^{\Delta^{\bullet}})^{\simeq}$ is an ordinary end in 
the category $\mathrm{Cat}_{\Delta}$ of simplicial categories. The fibration
$(\gamma_{\beta_i})_0:= p_{\beta_i}\colon\tilde{U}_{\beta_i}\twoheadrightarrow U_{\beta_i}$ together with 
the (strictly) universal arrow
$(\gamma_{\beta_i})_1:=\ulcorner 1_{\mathrm{Fun}(p_{\beta_i})}\urcorner\colon s^{\ast}\tilde{U}_{\beta_i}\rightarrow t^{\ast}\tilde{U}_{\beta_i}$ over $\mathrm{Fun}(p_{\kappa})$ between relatively $\beta_i$-compact fibrations yields a diagram
\[((\gamma_{\beta_i})_1)/(\gamma_{\beta_i})_0)^n\colon \Delta^n\rightarrow\mathbb{B}_{/N(p_{\beta_i})_n}\]
for every $n\geq 0$ by pullback along the according projections $N(p_{\beta_i})_n\rightarrow\mathrm{Fun}(p_{\beta_i})$. These diagrams are 
easily seen to assemble into a section
\[((\gamma_{\beta_i})_1)/(\gamma_{\beta_i})_0)^{\bullet}\in\mathrm{tot}(\mathbb{B}_{/N(p_{\beta_i})}).\]
Mapping this section into the homotopy limit along the sequence of maps (\ref{equcorextobjclass2}) yields a homotopically unique section
\[\gamma_{\beta_i}\in\mathrm{htot}(\mathbb{B}_{/\mathbb{U}_{\beta_i}})\]
by the homotopy equivalence (\ref{equcorextobjclass1}).

Then, for all $i\leq j$, we obtain a commutative diagram in $\mathbb{B}$ as follows.
\begin{align}\label{diagcorextobjclass1}
\begin{gathered}
\xymatrix{
 & \mathrm{htot}(\mathbb{B}_{/\mathbb{U}_{\beta_j}})\ar[rr]_{\sim}^{\mathrm{htot}(\rho_{\beta_j}^{\ast})}\ar[dl]_{\mathbb{R}(\iota_{ij})^{\ast}} & & \mathrm{htot}(\mathbb{B}_{/N\pi_{\beta_j}})\ar[dl]^{\iota_{ij}^{\ast}} \\
\mathrm{htot}(\mathbb{B}_{/\mathbb{U}_{\beta_i}})\ar[rr]_{\sim}^{\mathrm{htot}(\rho_{\beta_i}^{\ast})} & & \mathrm{htot}(\mathbb{B}_{/N\pi_{\beta_i}}) & \\
 & & & N_{\Delta}(\mathrm{tot}(\mathbb{B}_{/N(p_{\beta_j})}))\ar[uu]\ar[dl]_{\iota_{ij}^{\ast}} \\
 & & N_{\Delta}(\mathrm{tot}(\mathbb{B}_{/N(p_{\beta_i})}))\ar[uu]  & 
}
\end{gathered}
\end{align}
In virtue of the fact that $p_{\beta_i}\cong\iota_{ij}^{\ast}p_{\beta_j}$ in $\mathbb{B}$ as well as in virtue of strict 
universality of the maps $(\gamma_{\beta_k})_1\colon s^{\ast}\tilde{U}_{\beta_k}\rightarrow t^{\ast}\tilde{U}_{\beta_k}$ for $k=i,j$, it 
follows that the sections $\iota_{ij}^{\ast}(((\gamma_{\beta_j})_1)/(\gamma_{\beta_j})_0)^{\bullet})$ and
$((\gamma_{\beta_i})_1)/(\gamma_{\eta_i})_0)^{\bullet}$ are isomorphic in the simplicial category
$\mathrm{tot}(\mathbb{B}_{/N(p_{\beta_i})})$. It follows from commutativity of 
Diagram~(\ref{diagcorextobjclass1}) that $\iota_{ij}^{\ast}(\gamma_{\beta_j})\simeq\gamma_{\beta_i}$ in the space
$\mathrm{htot}(\mathbb{B}_{/\mathbb{U}_{\beta_i}})\simeq\int_{n\in\Delta^{op}}((\mathcal{B}_{/(\mathcal{U}_{\beta_i})_n})^{\Delta^n})^{\simeq}$ as we had claimed.

Thus, by Lemma~\ref{lemmaextyoneda}, for every pair $i\leq j$ of ordinals we obtain a homotopy-commutative square of the form
\begin{align}\label{diagcorextobjclass2}
\begin{gathered}
\xymatrix{
\mathrm{Ext}(\mathbb{U}_{\beta_i})\ar[r]_(.5){e_{\beta_i}}^(.7){}\ar@{^(->}[d]_{\mathrm{Ext}(\mathbb{R}(\iota_{ij}))} & (\mathcal{B}_{/\blank})_{\beta_i} \ar@{^(->}[d]^{} \\
\mathrm{Ext}(\mathbb{U}_{\beta_j})\ar[r]_(.5){e_{\beta_j}}^(.7){} & (\mathcal{B}_{/\blank})_{\beta_j}
}
\end{gathered}
\end{align}
in $\mathrm{Fun}(\mathcal{B}^{op},\mathrm{Cat}_{\infty})$. More precisely, these in fact are the 0-th rows of squares 
\begin{align*}
\begin{gathered}
\xymatrix{
\mathrm{Fun}(\blank,\mathbb{U}_{\beta_i})\ar[r]_(.5){e_{\beta_i}}^(.7){}\ar@{^(->}[d]_{y(\mathbb{R}(\iota_{ij}))} & (((\mathcal{B}_{/\blank})_{\beta_i})^{\Delta^{\bullet}})^{\simeq} \ar@{^(->}[d]^{} \\
\mathrm{Fun}(\blank,\mathbb{U}_{\beta_j})\ar[r]_(.5){e_{\beta_j}}^(.7){} & (((\mathcal{B}_{/\blank})_{\beta_j})^{\Delta^{\bullet}})^{\simeq}
}
\end{gathered}
\end{align*}
in $\mathrm{Fun}(\mathcal{B}^{op},\mathrm{Cat}_{\infty}(\mathcal{S}))$.
For $k\in\{i,j\}$, the natural transformation
\[(e_{\beta_k})_0\colon\mathcal{B}(\blank,U_{\beta_k})\rightarrow(\mathcal{B}_{/\blank})_{\beta_k}^{\simeq}\]
of $0$-th columns is determined by $1_{U_{\beta_k}}\mapsto p_{\beta_k}$ and the natural transformation
\[(e_{\beta_k})_1\colon\mathcal{B}(\blank,\mathrm{Fun}(p_{\beta_k}))\rightarrow(\mathcal{B}_{/\blank})_{\beta_k}^{\Delta^1})^{\simeq}\]
of first columns is determined by $1_{\mathrm{Fun}(p_{\beta_k})}\mapsto \ulcorner_{\mathrm{Fun}(p_{\beta_k})}\urcorner$. Both of these are 
natural equivalences of presheaves by virtue of universality of each. Furthermore, for all $n\geq 2$ the squares
\[\xymatrix{
\mathcal{B}(\blank,(\mathbb{U}_{\beta_k})_n)\ar[r]^{(e_{\beta_k})_n}\ar[d]^{\sim} & (\mathcal{B}_{/\blank})_{\beta_k}^{\Delta^n})^{\simeq}\ar[d]^{\sim}\\
(\mathcal{B}(\blank,\mathrm{Fun}(p_{\beta_k}))/\mathcal{B}(\blank,U_{\beta_k}))^n\ar[r]^(.65){(e_{\beta_k})_1^n}_(.65){\sim} & (\mathcal{B}_{/\blank})_{\beta_k}^{S^n})^{\simeq}
}\]
commute by naturality of $e_{\beta_k}$, and so it follows that all natural transformations $(e_{\beta_k})_n$ of presheaves are natural 
equivalences. It follows that the natural transformations $e_{\beta_k}$ are pointwise (in $\mathcal{B}$) equivalences of complete Segal 
spaces, and hence induce equivalences of quasi-categories after postcomposition with the first row
$(\cdot)_{\bullet 0}\colon\mathrm{Cat}_{\infty}(\mathcal{S})\rightarrow\mathrm{Cat}_{\infty}$. In particular, the cartesian morphisms
\[e_{\beta_k}\colon\mathrm{Ext}(\mathbb{U}_{\beta_k})\rightarrow (\mathcal{B}_{/\blank})_{\beta_k}\]
in (\ref{diagcorextobjclass2}) are equivalences themselves. By Unstraightening we eventually obtain homotopy-commutative squares
\begin{align}\label{diagcorextobjclass3}
\begin{gathered}
\xymatrix{
\mathrm{Ext}(\mathbb{U}_{\beta_i})\ar[r]_(.6){e_{\beta_i}}^(.7){}\ar@{^(->}[d]_{\mathrm{Ext}(\mathbb{R}(\iota_{ij}))} & t_{\beta_i} \ar@{^(->}[d]^{} \\
\mathrm{Ext}(\mathbb{U}_{\beta_j})\ar[r]_(.6){e_{\beta_j}}^(.7){} & t_{\beta_j}
}
\end{gathered}
\end{align}
in $\mathrm{Cart}(\mathcal{B})$.
We can strictify the squares (\ref{diagcorextobjclass3}) to obtain a natural transformation
\[\mathrm{Ext}(\mathbb{U}_{\bullet})\xrightarrow{\simeq}t_{\bullet}\]
in $\mathbf{Fun}(\mathrm{Shl}(\mathfrak{b}),\mathbf{Cart}(\mathcal{B}))$ now directly by abstract properties of the homotopy category functor. Namely, the equivalences $e_{\beta_i}$ together with the individual squares (\ref{diagcorextobjclass3}) yield a natural isomorphism $e_{\bullet}\colon\mathrm{Ext}(\mathbb{U}_{\bullet})\rightarrow t_{\bullet}$ in the functor category
$\mathrm{Fun}(\mathrm{Shl}(\mathfrak{b}),\mathrm{Ho}(\mathrm{Cart}(\mathcal{B}))$. But the canonical functor 
\[\mathrm{Ho}(\mathrm{Fun}(\mathrm{Shl}(\mathfrak{b}),\mathrm{Cart}(\mathcal{B}))\rightarrow\mathrm{Fun}(\mathrm{Shl}(\mathfrak{b}),\mathrm{Ho}(\mathrm{Cart}(\mathcal{B}))\]
is smothering, since $\mathrm{Shl}(\mathfrak{b})\cong\mathrm{Ord}$ is the free category on its underlying graph
(see e.g.\ \cite[Lemma 3.1.4]{riehlverityelements}). In particular, the functor is conservative. It follows that there is an  
equivalence between the two diagrams in the homotopy category $\mathrm{Ho}(\mathrm{Fun}(\mathrm{Shl}(\mathfrak{b}),\mathrm{Cart}(\mathcal{B}))$. As both diagrams are bifibrant, the equivalence can be lifted to a homotopy equivalence between $\mathrm{Ext}(\mathbb{U}_{\bullet})$ and $t_{\bullet}$ in $\mathbf{Fun}(\mathrm{Shl}(\mathfrak{b}),\mathbf{Cart}(\mathcal{B}))$ as was to show.
\end{proof}


We proceed to show that accessible cartesian reflective localizations of the target fibration yield Segal-representable sequences as well.
Therefore, given a left exact and accessible reflective localization $\rho\colon t\rightarrow\mathcal{E}$ in $\mathbf{Cart}(\mathcal{B})$ 
over an $\infty$-topos $\mathcal{B}$, let $a\colon\mathbb{B}\rightarrow\mathbb{E}$ be its associated left exact Bousfield localization via 
Theorem~\ref{thmlocmodopacc}. We denote the corresponding functorial fibrant replacement functor by $\mathbb{R}^{\rho}\colon\mathbb{B}^{[1]}\rightarrow\mathbb{B}^{[1]}$; as the localization is left exact, the functor restricts to
\[\mathbb{R}^{\rho}\colon\mathcal{F}_{\mathbb{B}}^{\times}\rightarrow\mathcal{F}_{\mathbb{E}}^{\times}.\]
On the one hand, applied to the sequence $p_{\bullet}$ of universal fibrations from Lemma~\ref{lemmafunctsequenceofclass} we obtain a 
sequence $\mathbb{R}^{\rho}(p_{\bullet})\colon \mathrm{Shl}(\mathfrak{b})\rightarrow\mathcal{F}_{\mathbb{E}}^{\times}$. This sequence is 
technically less well behaved than $p_{\bullet}$ itself, because its bases are not fibrant in $\mathbb{E}$ and because it furthermore 
consists of (merely) homotopy-cartesian squares only; these squares thus generally do not induce a cofibrant 
$\mathrm{Shl}(\mathfrak{b})$-indexed nerve in $\mathbf{Cat}_{\infty}(\mathbb{B})$. Therefore, on the other hand, we may apply Lemma~\ref{lemmafunctsequenceofclass} directly to the type
theoretic model topos $\mathbb{E}$ to obtain a sequence
\[(q_{\bullet}\colon\tilde{V}_{\bullet}\twoheadrightarrow V_{\bullet})\colon \mathrm{Shl}(\mathfrak{e})\rightarrow\mathcal{F}_{\mathbb{E}}^{\times}\]
satisfying the properties listed in the lemma (which we will consider canonically embedded into the model category $\mathbb{B}$). Here, 
$\mathfrak{e}\in\mathrm{Shl}(\mathfrak{b})$ is the regular cardinal such that, without loss of generality, the derived Bousfield 
localization $(\mathbb{L}a,\mathbb{R}\iota)\colon\mathbb{B}\rightarrow\mathbb{E}$ is $\mathfrak{e}$-accessible 
both 1-categorically as well as on underlying quasi-categories. As the sharply larger relation on regular cardinals is transitive, we have 
$\mathrm{Shl}(\mathfrak{e})\subseteq \mathrm{Shl}(\mathfrak{b})$.

\begin{proposition}\label{corextobjclassloc}
Suppose $\mathcal{B}$ is an $\infty$-topos. Let $p_{\bullet}\colon\mathrm{Shl}(\mathfrak{b})\rightarrow\mathcal{F}_{\mathbb{B}}^{\times}$ be 
the sequence of universal fibrations in $\mathbb{B}$ from Lemma~\ref{lemmafunctsequenceofclass}.
Let $\rho\colon t\rightarrow\mathcal{E}$ be a left exact reflective localization in $\mathbf{Cart}(\mathcal{B})$ and let the left exact 
Bousfield localization $a\colon\mathbb{B}\rightarrow\mathbb{E}$, the regular cardinal $\mathfrak{e}\in\mathrm{Shl}(\mathfrak{b})$, and the 
sequence of universal fibrations $q_{\bullet}\colon\mathrm{Shl}(\mathfrak{e})\rightarrow\mathcal{F}_{\mathbb{E}}^{\times}$ be as above. Then 
there is a natural equivalence
\[\mathrm{Ext}_{\ast}(\mathbb{R}(N(\iota(q_{\bullet})))\simeq \mathcal{E}_{\bullet}\]
in $\mathbf{Fun}(E,\mathbf{Cart}(\mathcal{B}))$.
\end{proposition}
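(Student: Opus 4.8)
The plan is to reduce the statement to Proposition~\ref{corextobjclass}, applied to the subtopos $\mathcal{B}_0$ underlying $\mathbb{E}$ --- i.e.\ the full subcategory of $\rho$-local objects of $\mathcal{B}$, with $\mathbb{E}$ as model presentation and $L\colon\mathcal{B}\to\mathcal{B}_0$ the accessible left exact reflector associated to $\rho$ --- together with a base-change identification; here the indexing category is $E=\mathrm{Shl}(\mathfrak{e})$. First I would record that, as already observed, $\mathbb{E}$ is itself a type theoretic model topos, so Lemma~\ref{lemmafunctsequenceofclass} supplies the sequence $q_\bullet\colon\mathrm{Shl}(\mathfrak{e})\to\mathcal{F}_{\mathbb{E}}^{\times}$ of univalent universes for the relatively $\kappa$-compact $\mathbb{E}$-fibrations, with fibrant bases and strictly cartesian, monic transition maps. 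Since in a left Bousfield localization the fibrations between fibrant objects and the cofibrations coincide with those of the ambient model category, the inclusion $\iota\colon\mathcal{F}_{\mathbb{E}}^{\times}\hookrightarrow\mathcal{F}_{\mathbb{B}}^{\times}$ is well defined and preserves monomorphisms; hence $N(\iota(q_\bullet))$ is Reedy cofibrant in $\mathbb{B}^{\Delta^{op}}$ by Lemma~\ref{lemmadirectedmc}, its Reedy fibrant replacement $\mathbb{U}^{\rho}_\bullet:=\mathbb{R}(N(\iota(q_\bullet)))$ is Reedy bifibrant and pointwise a complete Segal object in $\mathbb{B}$ which is, up to Reedy equivalence, levelwise $\mathbb{E}$-fibrant, so $\mathbb{U}^{\rho}_\bullet$ is an object of $\mathbf{Fun}(\mathrm{Shl}(\mathfrak{e}),\mathbf{Cat}_{\infty}(\mathbb{B}))$ and $\mathrm{Ext}_\ast(\mathbb{U}^{\rho}_\bullet)$ is defined.

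The first key step is to compare externalization over $\mathcal{B}$ with externalization over $\mathcal{B}_0$. For a complete Segal object $X$ in $\mathcal{B}$ all of whose levels $X_n$ are $\rho$-local, the defining formula $\mathrm{Ext}^{\mathcal{B}}(X)(B)=\mathcal{B}(B,X_\bullet)_0$ together with the unit equivalences $\mathcal{B}(B,X_n)\simeq\mathcal{B}_0(LB,X_n)$ yields --- via the Segal--Yoneda Lemma~\ref{lemmaextyoneda} at the level of $\mathrm{Cat}_{\infty}$-valued indexed diagrams, followed by Unstraightening --- a natural equivalence $\mathrm{Ext}^{\mathcal{B}}(X)\simeq L^{\ast}\,\mathrm{Ext}^{\mathcal{B}_0}(X)$ of cartesian fibrations over $\mathcal{B}$, where $L^{\ast}$ denotes restriction of the indexed diagram along $L$; this is natural in $X$. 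Applying it pointwise to $X=\mathbb{U}^{\rho}_\kappa$ and invoking Proposition~\ref{corextobjclass} for the $\infty$-topos $\mathcal{B}_0$ with presentation $\mathbb{E}$ and the sequence $q_\bullet$ --- which gives $\mathrm{Ext}^{\mathcal{B}_0}_\ast(\mathbb{R}(N(q_\bullet)))\simeq(\mathcal{B}_0^{\Delta^1})_\bullet$ in $\mathbf{Fun}(\mathrm{Shl}(\mathfrak{e}),\mathbf{Cart}(\mathcal{B}_0))$ --- one obtains an objectwise equivalence $\mathrm{Ext}_\ast(\mathbb{U}^{\rho}_\bullet)\simeq L^{\ast}\bigl((\mathcal{B}_0^{\Delta^1})_\bullet\bigr)$ compatible with the transition maps in $\mathrm{Shl}(\mathfrak{e})$.

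The second step identifies $L^{\ast}\bigl((\mathcal{B}_0^{\Delta^1})_\bullet\bigr)$ with $\mathcal{E}_\bullet$. Here I would use the standard base-change description of the left exact modality $(\mathcal{L},\mathcal{R})$ underlying $\rho$ (cf.\ \cite{abjfsheavesI}): a map $f\colon A\to B$ lies in $\mathcal{R}$ precisely when its $L$-naturality square is a pullback, so that $f\mapsto Lf$ defines a natural equivalence $\mathcal{R}(B)\xrightarrow{\simeq}(\mathcal{B}_0)_{/LB}$ with inverse $g\mapsto\eta_B^{\ast}g$, intertwining (by left exactness of $L$) the cartesian action of $f$ over $\mathcal{B}$ with that of $Lf$ over $\mathcal{B}_0$. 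As $\eta_B^{\ast}$ is a pullback and $L$ preserves $\kappa$-compact objects for all $\kappa\in\mathrm{Shl}(\mathfrak{e})$ --- the cardinal $\mathfrak{e}$ having been chosen large enough, with $\mathrm{Shl}(\mathfrak{e})\subseteq\mathrm{Shl}(\mathfrak{b})$ by transitivity of the sharply-larger relation (Lemma~\ref{lemmaaccsmall}) --- this equivalence restricts to $\mathcal{R}(B)_\kappa\simeq((\mathcal{B}_0)_{/LB})_\kappa$, compatibly with the inclusions $\mathcal{E}_\kappa\hookrightarrow\mathcal{E}_\lambda$; that is, $\mathcal{E}_\bullet\simeq L^{\ast}\bigl((\mathcal{B}_0^{\Delta^1})_\bullet\bigr)$ as diagrams in $\mathbf{Cart}(\mathcal{B})$. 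Composing the two steps yields the required objectwise equivalence, which I would finally promote to an equivalence in the $\infty$-cosmos $\mathbf{Fun}(\mathrm{Shl}(\mathfrak{e}),\mathbf{Cart}(\mathcal{B}))$ exactly as in the last paragraph of the proof of Proposition~\ref{corextobjclass}: the canonical functor $\mathrm{Ho}(\mathrm{Fun}(\mathrm{Shl}(\mathfrak{e}),\mathrm{Cart}(\mathcal{B})))\to\mathrm{Fun}(\mathrm{Shl}(\mathfrak{e}),\mathrm{Ho}(\mathrm{Cart}(\mathcal{B})))$ is smothering, since $\mathrm{Shl}(\mathfrak{e})\cong\mathrm{Ord}$ is free on its underlying graph, hence conservative, and as both diagrams are bifibrant the objectwise equivalence lifts to a homotopy equivalence of diagrams.

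The main obstacle I anticipate is making the comparison $\mathrm{Ext}^{\mathcal{B}}(X)\simeq L^{\ast}\mathrm{Ext}^{\mathcal{B}_0}(X)$ genuinely precise --- being careful about in which model category each Reedy fibrant replacement is taken, and about the fact that $\mathbb{U}^{\rho}_\bullet$ is levelwise $\rho$-local only up to Reedy equivalence, which is all that the externalization formula (being homotopy-invariant in its argument) requires --- and verifying that the whole chain of identifications is natural in the cardinal parameter $\kappa$, so that it takes place in the diagram $\infty$-cosmos rather than merely objectwise. The base-change equivalence $\mathcal{R}(B)\simeq(\mathcal{B}_0)_{/LB}$ and the uniform control of $\kappa$-compactness are routine given the way $\mathfrak{e}$ was fixed. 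A self-contained alternative would be to re-run the proof of Proposition~\ref{corextobjclass} verbatim with $(\mathbb{E},\mathcal{B}_0)$ in place of $(\mathbb{B},\mathcal{B})$ and then transport the resulting internal datum over $\mathcal{B}$ along $L$; this avoids citing Proposition~\ref{corextobjclass} as a black box but is considerably longer.
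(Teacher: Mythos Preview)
Your proposal is correct and is in fact one of the two approaches the paper explicitly mentions. The paper's proof opens by noting that ``one may apply Proposition~\ref{corextobjclass} directly to the target fibration over the $\infty$-topos $\mathcal{E}(1)$ and pull-back the resulting equivalence along the left adjoint $L\colon\mathcal{E}(1)\rightarrow\mathcal{B}$'' --- which is precisely your strategy --- but then opts for a more concise alternative: rather than going through $\mathcal{B}_0$ and base-changing along $L$, it simply reruns the construction of the homotopy-coherent squares from the proof of Proposition~\ref{corextobjclass} with $q_\kappa$ in place of $p_\kappa$, using that each $q_{\kappa}$ is universal in $\mathcal{E}$ and that the inclusion $\iota\colon\mathcal{E}\hookrightarrow\mathcal{B}$ is fully faithful (so that $\iota(\mathrm{Fun}(q_{\kappa}))\cong\mathrm{Fun}(\iota(q_{\kappa}))$). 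The assembly into a diagram-level equivalence via the smothering-functor argument is then identical.

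What each approach buys: the paper's direct rerun avoids the two auxiliary identifications you need --- $\mathrm{Ext}^{\mathcal{B}}(X)\simeq L^{\ast}\mathrm{Ext}^{\mathcal{B}_0}(X)$ for levelwise $\rho$-local $X$, and $\mathcal{E}_\bullet\simeq L^{\ast}((\mathcal{B}_0^{\Delta^1})_\bullet)$ --- and hence sidesteps exactly the naturality-in-$\kappa$ and Reedy-replacement bookkeeping you flag as the main obstacle. Your approach, on the other hand, has the conceptual advantage of reducing cleanly to the previous proposition as a black box, and the two extra identifications you supply (via the unit equivalence and the pullback description of $\mathcal{R}(B)$) are standard and correctly stated. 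Amusingly, what you describe at the end as a ``self-contained alternative'' is closer in spirit to what the paper actually does.
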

\begin{proof}
Against the background of Proposition~\ref{corextobjclass} there are at least two ways to show the statement. For one, one may apply 
Proposition~\ref{corextobjclass} directly to the target fibration over the $\infty$-topos $\mathcal{E}(1)$ and pull-back the resulting 
equivalence along the left adjoint $L\colon\mathcal{E}(1)\rightarrow\mathcal{B}$. More concisely however, for every
pair $\kappa_1\leq\kappa_2$ in $\mathrm{Shl}(\mathfrak{e})$ we find a homotopy-coherent square
\begin{align}\label{diagcorextobjclassloc1}
\begin{gathered}
\xymatrix{
\mathrm{Ext}(\mathbb{R}N(q_{\kappa_1}))\ar[r]_(.7){e_{\kappa_1}}^(.7){\simeq}\ar@{^(->}[d]_{\mathrm{Ext}(\mathbb{R}N(q_{\kappa_1\leq\kappa_2}))} & \mathcal{E}_{\kappa_1} \ar@{^(->}[d]^{} \\
\mathrm{Ext}(\mathbb{R}N(q_{\kappa_2}))\ar[r]_(.7){e_{\kappa}}^(.7){\simeq} & \mathcal{E}_{\kappa_2}
}
\end{gathered}
\end{align}
by virtually the same construction given to construct the squares (\ref{diagcorextobjclass2}), using that each $q_{\kappa_i}$ is universal in
$\mathcal{E}$, and that the inclusion $\iota\colon\mathcal{E}\hookrightarrow\mathcal{B}$ is fully faithful (so in particular
$\iota(\mathrm{Fun}(q_{\kappa_i}))\cong\mathrm{Fun}(\iota(q_{\kappa_i}))$ over the base $V_{\kappa_i}\times V_{\kappa_i}$).
These can be again assembled into a natural equivalence
\[\mathrm{Ext}_{\ast}(\mathbb{R}(N(\iota(q_{\bullet})))\simeq \mathcal{E}_{\bullet}\]
in $\mathbf{Fun}(\mathrm{Shl}(\mathfrak{e}),\mathbf{Cart}(\mathcal{B}))$ in the very same way.
\end{proof}

Thus, both the domain and codomain of any accessible reflective localization of the target fibration over an $\infty$-topos $\mathcal{B}$ are 
contained in the essential image of the externalization functor. By Proposition~\ref{propcosmicext} it follows that the whole localization 
lies in the essential image, and so it can be lifted to a (polymorphic family of) reflective localizations of internal $\infty$-categories in 
$\mathcal{B}$.
Informally, the polymorphic family of reflective localizations in $\mathbf{Cat}_{\infty}(\mathbb{B})$ associated to an accessible reflective 
localization $\rho\colon t\rightarrow\mathcal{E}$ is a compatible sequence of accessible left exact reflections of the following form.
\[\xymatrix{
\mathbb{R}N(q_{\epsilon_0})\ar@{^(->}[r]\ar@{^(->}@/_1pc/[d] & \mathbb{R}N(q_{\epsilon_1})\ar@{^(->}[r]\ar@{^(->}@/_1pc/[d] & \dots\ar@{^(->}[r] & \mathbb{R}N(q_{\epsilon_i})\ar@{^(->}[r]\ar@{^(->}@/_1pc/[d] & \dots \\
\mathbb{R}N(p_{\epsilon_0})\ar@{^(->}[r]\ar@/_1pc/[u]\ar@{}[u]|{\vdash} & \mathbb{R}N(p_{\epsilon_1})\ar@{^(->}[r]\ar@/_1pc/[u]\ar@{}[u]|{\vdash} & \dots\ar@{^(->}[r] & \mathbb{R}N(p_{\epsilon_i})\ar@{^(->}[r]\ar@/_1pc/[u]\ar@{}[u]|{\vdash} & \dots
}\]

\begin{corollary}\label{corpolyfam1-1}
Let $\mathcal{B}$ be a $\infty$-topos. Then there is a bijection between the following two collections.
\begin{enumerate}
\item Equivalence classes of $\mathrm{Shl}(\mathfrak{b})$-indexed polymorphic families of $\mu$-accessible (and left exact) reflective 
localizations $\mathcal{E}_{\bullet}$ of $t_{\bullet}$ in $\mathbf{Cart}(\mathcal{B})$ (for some regular $\mu\in\mathrm{Shl}(\mathfrak{b})$).
\item Equivalence classes of polymorphic families of $\mu$-accessible (and left exact) reflective localizations
$\mathcal{E}_{\bullet}$ of $\mathbb{U}_{\bullet}$ in $\mathbf{Cat}_{\infty}(\mathbb{B})$ (for some regular $\mu\in\mathrm{Shl}(\mathfrak{b})$).
\end{enumerate}
\end{corollary}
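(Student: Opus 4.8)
The plan is to assemble the pointwise equivalences from Proposition~\ref{corextobjclass} and Proposition~\ref{corextobjclassloc} into a statement about reflective localizations in the $\infty$-cosmos $\mathbf{Fun}(\mathrm{Shl}(\mathfrak{b}),\mathbf{Cat}_{\infty}(\mathbb{B}))$ and then transport it along the pseudo-cosmological embedding $\mathrm{Ext}_{\ast}$ of Corollary~\ref{corcosmicext}. The crucial structural input is Proposition~\ref{propcosmicext}: since $\mathrm{Ext}$ (and hence $\mathrm{Ext}_{\ast}$) is a fully faithful pseudo-cosmological functor, it preserves and reflects all formal $\infty$-cosmological structure -- equivalences, adjunctions, fully-faithfulness of right adjoints, left-exactness of functors, and accessibility (preservation of $\kappa$-filtered colimits) -- between objects in its essential image.

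First I would observe that, by Proposition~\ref{corextobjclass}, the diagram $t_{\bullet}\colon\mathrm{Shl}(\mathfrak{b})\rightarrow\mathbf{Cart}(\mathcal{B})$ is equivalent (in the $\infty$-cosmos $\mathbf{Fun}(\mathrm{Shl}(\mathfrak{b}),\mathbf{Cart}(\mathcal{B}))$) to $\mathrm{Ext}_{\ast}(\mathbb{U}_{\bullet})$, hence lies in the essential image of $\mathrm{Ext}_{\ast}$. Given a polymorphic family $\rho_{\bullet}\colon t_{\bullet}\rightarrow\mathcal{E}_{\bullet}$ of $\mu$-accessible (left exact) reflective localizations as in 1., I would first restrict to $\mathrm{Shl}(\mathfrak{e})$ for a large enough $\mathfrak{e}$ so that Proposition~\ref{corextobjclassloc} applies; this exhibits $\mathcal{E}_{\bullet}$ (up to equivalence and restriction) as $\mathrm{Ext}_{\ast}(\mathbb{R}(N(\iota(q_{\bullet}))))$ for the associated Bousfield localization $\mathbb{E}$, so both the domain and codomain of the reflective localization lie in the essential image of $\mathrm{Ext}_{\ast}$. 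Since $\mathrm{Ext}_{\ast}$ is fully faithful, the adjunction $\rho_{\bullet}\adj\iota_{\bullet}$ together with its unit and counit is (up to equivalence) in the image of a unique adjunction in $\mathbf{Fun}(\mathrm{Shl}(\mathfrak{e}),\mathbf{Cat}_{\infty}(\mathbb{B}))$; its right adjoint is fully faithful because $\mathrm{Ext}_{\ast}$ reflects fully-faithfulness, so it is a reflective localization there. Left exactness and $\mu$-accessibility are purely formal and hence reflected as well, producing the polymorphic family in 2. Conversely, given a polymorphic family of reflective localizations of $\mathbb{U}_{\bullet}$ in $\mathbf{Cat}_{\infty}(\mathbb{B})$, applying $\mathrm{Ext}_{\ast}$ directly and using $\mathrm{Ext}_{\ast}(\mathbb{U}_{\bullet})\simeq t_{\bullet}$ yields a polymorphic family of reflective localizations of $t_{\bullet}$ with all the same formal properties preserved.

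The two assignments are mutually inverse on equivalence classes because $\mathrm{Ext}_{\ast}$ is fully faithful (so the lift of an adjunction is unique up to equivalence) and because the equivalence relation on polymorphic families from Definition~\ref{defpolyfam} is stable under restriction to and union over cofinal subclasses of $\mathrm{Shl}(\mathfrak{b})$; passing between an index $\mathfrak{b}$, $\mathfrak{e}$, or any larger regular cardinal does not change the equivalence class. The main obstacle I anticipate is bookkeeping the index sets: the localization lives on $\mathrm{Shl}(\mathfrak{e})$ while the ambient diagrams live on $\mathrm{Shl}(\mathfrak{b})$, and one must check that the equivalence of Proposition~\ref{corextobjclassloc}, stated over $E=\mathrm{Shl}(\mathfrak{e})$, glues correctly with the $\mathrm{Shl}(\mathfrak{b})$-indexed equivalence $\mathrm{Ext}_{\ast}(\mathbb{U}_{\bullet})\simeq t_{\bullet}$ -- i.e.\ that the two use compatible choices of the universal fibrations $p_{\bullet}$ and $q_{\bullet}$ and compatible restrictions. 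Once this compatibility is in place, everything else is a formal consequence of Proposition~\ref{propcosmicext} and the pointwise nature of limits, cotensors, and weak equivalences in diagram $\infty$-cosmoses.
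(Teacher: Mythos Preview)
Your approach is essentially the paper's own: identify $t_{\bullet}$ and $\mathcal{E}_{\bullet}$ with objects in the essential image of $\mathrm{Ext}_{\ast}$ via Propositions~\ref{corextobjclass} and~\ref{corextobjclassloc}, then lift and reflect the adjunction (together with left exactness and accessibility) along the pseudo-cosmological embedding of Corollary~\ref{corcosmicext}.

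The one place where the paper is more explicit than your sketch is the converse direction. You write that applying $\mathrm{Ext}_{\ast}$ to a family in~2.\ ``yields a polymorphic family of reflective localizations of $t_{\bullet}$'', but the family in~2.\ is only $\mathrm{Shl}(\mathfrak{e})$-indexed for some $\mathfrak{e}\in\mathrm{Shl}(\mathfrak{b})$, so $\mathrm{Ext}_{\ast}$ produces a localization of $t_{\bullet}|_{\mathrm{Shl}(\mathfrak{e})}$ rather than of the full $\mathrm{Shl}(\mathfrak{b})$-indexed diagram. The paper closes this gap by invoking Proposition~\ref{propmonadscatter} twice: first taking the union to obtain an honest accessible reflective localization of $t$ in $\mathbf{Cart}(\mathcal{B})$, and then re-scattering to recover a $\mathrm{Shl}(\mathfrak{b})$-indexed family. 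Your final paragraph alludes to this (``stable under restriction to and union over cofinal subclasses'') but does not name the mechanism; the union/re-scatter step is what actually produces a representative of the equivalence class in~1., and Proposition~\ref{propmonadscatter} is the result that guarantees this works. Once you cite it there, your argument matches the paper's.
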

\begin{proof}
Let $\mu\in\mathrm{Shl}(\mathfrak{b})$ and $\rho_{\bullet}\colon t_{\bullet}\rightarrow\mathcal{E}_{\bullet}$ be 
a $\mathfrak{b}$-polymorphic family of $\mu$-accessible (and left exact) reflective localizations in $\mathbf{Cart}(\mathcal{B})$. Via
Proposition~\ref{corextobjclass} and Proposition~\ref{corextobjclassloc} we obtain a cardinal $\mathfrak{e}\in\mathrm{Shl}(\mathfrak{b})$  
such $\rho_{\bullet}$ restricts to a reflective localization
$\rho_{\bullet}|_{\mathrm{Shl}(\mathfrak{e})}\colon t_{\bullet}|_{\mathrm{Shl}(\mathfrak{e})}\rightarrow\mathcal{E}_{\bullet}|_{\mathrm{Shl}(\mathfrak{e})}$ in $\mathrm{Fun}(\mathrm{Shl}(\mathfrak{e}),\mathbf{Cart}(\mathcal{B}))$, which again is (left exact and) accessible 
(however with a potentially sharply larger rank $\mu\sprime$). 
The cardinal $\mathfrak{e}$ is chosen in such a way that both $t_{\bullet}|_{\mathrm{Shl}(\mathfrak{e})}$ and $\mathcal{E}_{\bullet}|_{\mathrm{Shl}(\mathfrak{e})}$ are contained in the essential image of
$\mathrm{Ext}_{\ast}$. As the latter is a pseudo-cosmological embedding (Proposition~\ref{propcosmicext}, Corollary~\ref{corcosmicext}), we 
obtain a $\mu\sprime$-accessible (and left exact) reflective localization between the preimages 
$\mathbb{U}_{\bullet}$ and $\mathbb{R}N(q_{\bullet})$ in $\mathbf{Fun}(\mathrm{Shl}(\mathfrak{b}),\mathbf{Cat}_{\infty}(\mathbb{B}))$.

Vice versa, given a cardinal $\mathfrak{e}\in \mathrm{Shl}(\mathfrak{b})$, and a $\mu$-accessible (and left exact) $\mathfrak{e}$-polymorphic 
family of reflective localizations 
$\rho\colon\mathbb{U}_{\bullet}\rightarrow\mathcal{E}_{\bullet}$ in $\mathbf{Cat}_{\infty}(\mathbb{B})$, we obtain a $\mu$-accessible (and 
left exact) $\mathfrak{e}$-polymorphic family of reflective localizations
$\mathrm{Ext}_{\ast}(\rho)\colon\mathrm{Ext}_{\ast}(\mathbb{U}_{\bullet}))\rightarrow\mathrm{Ext}_{\ast}(\mathcal{E}_{\bullet})$ in
$\mathbf{Cart}(\mathcal{B})$ again by Proposition~\ref{propcosmicext} and Corollary~\ref{corcosmicext}. As
$\mathfrak{e}\in\mathrm{Shl}(\mathfrak{b})$, we 
have a natural equivalence
$t_{\bullet}|_{\mathrm{Shl}(\mathfrak{e})}\simeq\mathrm{Ext}_{\ast}(\mathbb{U}_{\bullet}|_{\mathrm{Shl}(\mathfrak{e})})$ by restriction of 
Proposition~\ref{corextobjclass}. The composite reflective localization 
$t_{\bullet}|_{\mathrm{Shl}(\mathfrak{e})}\rightarrow\mathrm{Ext}_{\ast}(\mathcal{E}_{\bullet})$ induces a $\mu$-accessible (and left exact) 
reflective localization $t\rightarrow\mathcal{E}\sprime$ for $\mathcal{E}\sprime:=\bigcup\mathrm{Ext}_{\ast}(\mathcal{E}_{\bullet})$ in
$\mathbf{Cart}(\mathcal{B})$ by Proposition~\ref{propmonadscatter}. An application of Proposition~\ref{propmonadscatter} again yields a
$\mu\sprime$-accessible (and left exact) reflective localization $t_{\bullet}\rightarrow\mathrm{Ext}_{\ast}(\mathcal{E}\sprime_{\bullet})$ in
$\mathbf{Fun}(\mathrm{Shl}(\mathfrak{b}),\mathbf{Cart}(\mathcal{B}))$ for some $\mu\sprime\in\mathrm{Shl}(\mathfrak{b})$.

The fact that the two constructions are mutually inverse (up to natural homotopy equivalence) is straight-forward.
\end{proof}

\begin{remark}
Corollary~\ref{corpolyfam1-1} together with Proposition~\ref{corextobjclassloc} in fact implies that every polymorphic family of
accessible reflective localizations $\mathcal{E}_{\bullet}$ of $\mathbb{U}_{\bullet}$ in $\mathbf{Cat}_{\infty}(\mathbb{B})$ is of the 
form $\mathbb{R}N(q_{\bullet})$ for some sequence of univalent universal fibrations $q_{\bullet}$.
\end{remark}

We are left to incorporate compositionality as a property of indexed internal $\infty$-categories in $\mathcal{B}$. We therefore make note 
of the following lemma. 

\begin{lemma}\label{lemmadefsumsintly}	
Let $\mathcal{B}$ be a $\infty$-topos and $\mathcal{E}_{\bullet}$ be a polymorphic family of reflective localizations of 
$t_{\bullet}$ in $\mathbf{Cart}(\mathcal{B})$. Let
$\mathbb{R}N(q_{\bullet})\colon\mathrm{Shl}(\mathfrak{e})\rightarrow\mathbf{Cat}_{\infty}(\mathbb{B})$ be the associated polymorphic family 
of reflective localizations of $\mathbb{U}_{\bullet}$ in
$\mathbf{Cat}_{\infty}(\mathbb{B})$. Then $\mathcal{E}_{\bullet}$ is compositional (Definition~\ref{defcomppolyfam}) if and only if for every 
$\kappa\in\mathrm{Shl}(\mathfrak{e})$, for every map $\alpha\colon C\rightarrow V_{\kappa}$ into the codomain of $q_{\kappa}$, and every map
$\beta\colon s(q_{\kappa}^{\ast}\alpha)\rightarrow V_{\kappa}$, the essentially unique name
$\ulcorner \beta^{\ast}q_{\kappa}\circ \alpha^{\ast}q_{\kappa}\urcorner\colon C\rightarrow U_{\kappa}$ of the 
composition $\beta^{\ast}q_{\kappa}\circ \alpha^{\ast}q_{\kappa}\in\mathcal{B}_{/C}$ factors through the inclusion
$V_{\kappa}\hookrightarrow U_{\kappa}$ up to equivalence.
\end{lemma}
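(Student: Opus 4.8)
The statement is a translation of the compositionality condition (Definition~\ref{defcomppolyfam}) through the externalization equivalence of Proposition~\ref{corextobjclassloc}. The plan is to unwind both sides of the claimed equivalence in terms of the externalized data and reduce to a purely fibrational comparison. First I would recall that, by Proposition~\ref{corextobjclassloc}, the polymorphic family $\mathrm{Ext}_\ast(\mathbb{R}N(q_\bullet))$ is naturally equivalent to $\mathcal{E}_\bullet$ in $\mathbf{Fun}(\mathrm{Shl}(\mathfrak{e}),\mathbf{Cart}(\mathcal{B}))$, and that under this equivalence the object $\mathrm{Ext}(\mathbb{R}N(q_\kappa))(C) = \mathcal{B}(C,\mathbb{R}N(q_\kappa)_\bullet)_0$ is precisely the full subcategory $\bar{\mathcal{E}}_\kappa(C) \subseteq (\mathcal{B}_{/C})_\kappa$ of relatively $\kappa$-compact maps over $C$ that are local with respect to $\rho_\kappa$-equivalences (the map $\alpha\colon C\to V_\kappa$ being the ``name'' of the object $\alpha^\ast q_\kappa \in \mathcal{E}_\kappa(C)$, where $V_\kappa = (\mathbb{R}N(q_\kappa))_0$ classifies such objects by univalence of $q_\kappa$). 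So a relatively $\kappa$-compact map $g\in\mathcal{B}_{/C}$ lies in $\mathcal{E}_\kappa(C)$ if and only if its classifying map $\ulcorner g\urcorner\colon C\to U_\kappa$ factors through $V_\kappa\hookrightarrow U_\kappa$ up to equivalence — this is exactly the content of $q_\kappa$ being the $\rho_\kappa$-localization of the universal relatively $\kappa$-compact fibration $p_\kappa$, together with univalence.

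Next I would rephrase the compositionality condition of Definition~\ref{defcomppolyfam}: $\mathcal{E}_\bullet$ is compositional iff for each $\kappa$ the class $\bar{\mathcal{E}}_\kappa$ is objectwise closed under composition, i.e.\ for all concatenable relatively $\kappa$-compact maps $f,g$ with $f,g\in\bar{\mathcal{E}}_\kappa$, the composite $g\circ f$ again lies in $\bar{\mathcal{E}}_\kappa$. Given a map $\alpha\colon C\to V_\kappa$, the fibration $\alpha^\ast q_\kappa\colon s(\alpha^\ast q_\kappa)\to C$ is the generic object of $\bar{\mathcal{E}}_\kappa$ over $C$ (it lies in $\bar{\mathcal{E}}_\kappa(C)$ by construction), and a further map $\beta\colon s(\alpha^\ast q_\kappa)\to V_\kappa$ names the generic object $\beta^\ast q_\kappa\in\bar{\mathcal{E}}_\kappa(s(\alpha^\ast q_\kappa))$ over the domain. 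The composite $\beta^\ast q_\kappa\circ\alpha^\ast q_\kappa\in\mathcal{B}_{/C}$ is then the ``generic composition'' of two $\bar{\mathcal{E}}_\kappa$-objects, and it is relatively $\kappa$-compact (a composite of relatively $\kappa$-compact maps is relatively $\kappa$-compact for $\kappa$ uncountable, which holds here). Thus its name $\ulcorner\beta^\ast q_\kappa\circ\alpha^\ast q_\kappa\urcorner\colon C\to U_\kappa$ exists, and by the preceding paragraph it factors through $V_\kappa$ iff this generic composite lies in $\bar{\mathcal{E}}_\kappa(C)$. The forward direction of the lemma is then immediate: if $\bar{\mathcal{E}}_\kappa$ is closed under composition, this particular composite lies in $\bar{\mathcal{E}}_\kappa(C)$, hence its name factors through $V_\kappa$.

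For the converse, I would argue that closure of $\bar{\mathcal{E}}_\kappa$ under composition on an arbitrary base $C'$ and for arbitrary concatenable $f',g'\in\bar{\mathcal{E}}_\kappa$ follows from the generic case by naturality and pullback-stability. Concretely: given concatenable $f'\colon A\to B$, $g'\colon B\to C'$ both in $\bar{\mathcal{E}}_\kappa(C')$ over $C'$, univalence of $q_\kappa$ provides names $\alpha'\colon C'\to V_\kappa$ with $(\alpha')^\ast q_\kappa\simeq g'$ over $C'$, and $\beta'\colon B\simeq s((\alpha')^\ast q_\kappa)\to V_\kappa$ with $(\beta')^\ast q_\kappa\simeq f'$; then $g'\circ f'$ is a pullback of the generic composite $\beta^\ast q_\kappa\circ\alpha^\ast q_\kappa$ (for the generic $\alpha,\beta$) along $C'\to V_\kappa$, and membership in $\bar{\mathcal{E}}_\kappa$ is detected by the factorization of the classifying map through $V_\kappa$, which is preserved under precomposition with $C'\to V_\kappa$. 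Hence if the generic composite has a name factoring through $V_\kappa$, so does $\ulcorner g'\circ f'\urcorner$, whence $g'\circ f'\in\bar{\mathcal{E}}_\kappa(C')$. I expect the main obstacle to be a bookkeeping point rather than a conceptual one: namely making precise that $\beta$ ranges over names of generic objects \emph{over the domain} $s(\alpha^\ast q_\kappa)$ — i.e.\ that the pair $(\alpha,\beta)$ genuinely parametrizes all concatenable pairs in $\bar{\mathcal{E}}_\kappa$ up to pullback — and checking that relative $\kappa$-compactness is stable under the composite in question (which is where the standing assumption $\kappa\in\mathrm{Shl}(\mathfrak{b})$, hence $\kappa$ sufficiently large, is used). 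Both of these are routine given the descent properties of $\mathcal{B}$ and the construction of $q_\bullet$ in Lemma~\ref{lemmafunctsequenceofclass}.
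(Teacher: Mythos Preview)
Your proposal is correct and takes essentially the same approach as the paper: the paper's proof is a one-liner stating that this is ``but a translation of closure under compositions of each $\mathcal{E}_{\kappa}$ along the equivalence $\mathrm{Ext}(\mathbb{R}N(q_{\kappa}))\xrightarrow{\simeq}\mathcal{E}_{\kappa}$, using univalence of the object classifier $p_{\kappa}$.'' You have simply unpacked that translation in full detail, making explicit the back-and-forth between objects of $\bar{\mathcal{E}}_\kappa(C)$ and names factoring through $V_\kappa$, and the genericity argument for the converse direction.
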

\begin{proof}
This is but a translation of closure under compositions of each $\mathcal{E}_{\kappa}$ along the equivalence
\[\mathrm{Ext}(\mathbb{R}N(q_{\kappa}))\xrightarrow{\simeq}\mathcal{E}_{\kappa},\]
using univalence of the object classifier $p_{\kappa}\colon\tilde{U}_{\kappa}\rightarrow U_{\kappa}$.
\end{proof}

We say that a polymorphic family of reflective localizations $\mathbb{R}N(q_{\bullet})$ of $\mathbb{U}_{\bullet}$ in
$\mathbf{Cat}_{\infty}(\mathbb{B})$ \emph{has sums} if either of the conditions in Lemma~\ref{lemmadefsumsintly} is satisfied.

\begin{remark}
Given a polymorphic family of reflective localizations $(\rho_{\bullet},\iota_{\bullet})\colon t_{\bullet}\rightarrow \mathcal{E}_{\bullet}$ 
in $\mathbf{Cart}(\mathcal{B})$, by Corollary~\ref{corpolyfam1-1} we obtain an associated polymorphic family of reflective localizations
$\mathbb{R}N(q_{\bullet})$ of $\mathbb{U}_{\bullet}$ in $\mathbf{Cat}_{\infty}(\mathbb{B})$. For each regular cardinal $\kappa$ in the 
corresponding class $\mathrm{Shl}(\mathfrak{e})$, the cartesian fibration
$\mathcal{E}_{\kappa}\simeq\mathrm{Ext}_{\ast}(\mathbb{R}(N(q_{\kappa})))$ is a reflective localization of the cartesian fibration
$t_{\kappa}\colon(\mathcal{B}^{\Delta^1})_{\kappa}\twoheadrightarrow\mathcal{B}$. The fibration $t_{\kappa}$ is cocartesian, and hence so is
$\mathcal{E}_{\kappa}\twoheadrightarrow\mathcal{B}$ by Lemma~\ref{lemmarefllocbifib}. Thus, for every map $f\colon C\rightarrow D$ in
$\mathcal{B}$, we obtain an adjoint pair of the form
\[\xymatrix{
\mathrm{Ext}(\mathbb{R}(N(q_{\kappa})))(C)\ar@/^1pc/[r]^{\Sigma_f}\ar@{}[r]|{\bot} & \mathrm{Ext}(\mathbb{R}(N(q_{\kappa})))(D)\ar@/^1pc/[l]^{f^{\ast}}.
}\]
On vertices, the cartesian action $f^{\ast}$ associated to a map $f\colon C\rightarrow D$ takes a map $D\rightarrow V_{\kappa}$ to its 
precomposition with $f$; the cocartesian action takes a map $\gamma\colon C\rightarrow V_{\kappa}$ to the name
$\ulcorner \rho_D(f\circ\gamma^{\ast}(q_{\kappa})\urcorner\colon D\rightarrow V_{\kappa}$ of the localization of the
``$\Sigma$-type'' $f\circ\gamma^{\ast}(q_{\kappa})$ in $\mathcal{B}$.

By Lemma~\ref{lemmacharTglue} (and Lemma~\ref{corlexmodetale}), the cartesian subfibrations $\mathcal{E}_{\kappa}\subseteq t_{\kappa}$ are
closed under compositions if and only if the cocartesian action (of $\mathcal{E}_{\kappa}$-arrows) on $\mathcal{E}_{\kappa}$ -- and hence on
$\mathrm{Ext}(\mathbb{R}(N(q_{\kappa})))$ -- is conservative. Type theoretically, the fact that conservativity of these cocartesian 
actions is a consequence of $\Sigma$-closure of the localization $\mathcal{E}$ is stated in \cite[Lemma 1.40]{rss_hottmod}. The fact that 
the other direction holds as well can be phrased and shown accordingly.
\end{remark}

An application of the Monadicity Theorem in the $\infty$-cosmos $\mathbf{Fun}(\mathrm{Shl}(\mathfrak{b}),\mathbf{Cat}_{\infty}(\mathbb{B}))$ 
yields an equivalence between (left exact and accessible) reflective localizations and according idempotent monads in
$\mathbf{Fun}(\mathrm{Shl}(\mathfrak{b}),\mathbf{Cat}_{\infty}(\mathbb{B}))$. We may again say that such an idempotent monad is 
\emph{compositional} if its associated reflective localization has sums. We hence make the following definition.

\begin{definition}\label{defhlvops}
A \emph{higher Lawvere-Tierney operator} on an $\infty$-topos $\mathcal{B}$ consists of a cardinal
$\mathfrak{e}\in\mathrm{Shl}(\mathfrak{b})$ together with a compositional, left exact, and $\mu$-accessible idempotent monad $T$ on
$\mathbb{U}_{\bullet}|_{\mathrm{Shl}(\mathfrak{e})}\in\mathbf{Fun}(\mathrm{Shl}(\mathfrak{e}),\mathbf{Cat}_{\infty}(\mathbb{B}))$ for some $\mu\in\mathrm{Shl}(\mathfrak{b})$.
\end{definition}

We thus have proven the following theorem.

\begin{theorem}\label{thmcharlvtops}
Let $\mathcal{B}$ be an $\infty$-topos. Then there is a bijection between
\begin{enumerate}
\item equivalence classes of accessible left exact localizations of $\mathcal{B}$, and 	
\item equivalence classes of higher Lawvere Tierney operators on $\mathcal{B}$.
\end{enumerate}
\end{theorem}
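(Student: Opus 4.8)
The plan is to chain together the equivalences already established in the excerpt so that Theorem~\ref{thmcharlvtops} becomes a nearly formal consequence. The two collections to be matched are (1) equivalence classes of accessible left exact localizations of $\mathcal{B}$, and (2) equivalence classes of higher Lawvere-Tierney operators, i.e.\ compositional, left exact, $\mu$-accessible idempotent monads on $\mathbb{U}_{\bullet}|_{\mathrm{Shl}(\mathfrak{e})}$ in the $\infty$-cosmos $\mathbf{Fun}(\mathrm{Shl}(\mathfrak{e}),\mathbf{Cat}_{\infty}(\mathbb{B}))$ for suitable $\mathfrak{e},\mu\in\mathrm{Shl}(\mathfrak{b})$. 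First I would recall, via Proposition~\ref{thmprelim} and Corollary~\ref{corlexfibmods}.3, that accessible left exact localizations of $\mathcal{B}$ correspond bijectively to left exact modalities of small generation on $\mathcal{B}$, which by Corollary~\ref{corlexmodetale} (together with Corollary~\ref{corlexfibmods}) correspond to cartesian reflective and replete subcategories $\mathcal{E}\hookrightarrow\mathcal{B}^{\Delta^1}$ in $\mathbf{Cart}(\mathcal{B})$ with left exact reflector $\rho$ whose cocartesian actions $\Sigma_f\colon\mathcal{E}(A)\to\mathcal{E}(B)$ are conservative; and since $\mathcal{B}$ is an $\infty$-topos, hence presentable with universal colimits, such localizations are automatically accessible in $\mathbf{Cart}(\mathcal{B})$.

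Next I would pass from the ``total'' cartesian reflective localization to its stratification. By Proposition~\ref{propmonadscatter} and Proposition~\ref{prophcopsstrat}, accessible (left exact, compositional) reflective localizations of $t\in\mathbf{Cart}(\mathcal{B})$ correspond, on equivalence classes, to polymorphic families of accessible (left exact, compositional) reflective localizations of $t_{\bullet}$; and by Corollary~\ref{corbeckmonadicity} applied in $\mathbf{Fun}(\mathrm{Shl}(\mathfrak{b}),\mathbf{Cart}(\mathcal{B}))$ such polymorphic families of reflective localizations are the same as polymorphic families of idempotent monads with the same adjectives, compositionality being the property ``has sums'' that translates across the Monadicity correspondence by Proposition~\ref{propcompunitmnds} (relativized to the subfibrations $t_\kappa$). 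The last and decisive step is to internalize: by Proposition~\ref{corextobjclass} there is a natural equivalence $\mathrm{Ext}_{\ast}(\mathbb{U}_{\bullet})\simeq t_{\bullet}$ in $\mathbf{Fun}(\mathrm{Shl}(\mathfrak{b}),\mathbf{Cart}(\mathcal{B}))$, and by Proposition~\ref{corextobjclassloc} the codomain of any accessible left exact reflective localization is likewise in the essential image of $\mathrm{Ext}_{\ast}$ (indeed of the form $\mathrm{Ext}_{\ast}(\mathbb{R}N(\iota(q_{\bullet})))$). Since $\mathrm{Ext}_{\ast}$ is a pseudo-cosmological embedding (Proposition~\ref{propcosmicext}, Corollary~\ref{corcosmicext}), it preserves and reflects adjunctions, fully faithfulness of right adjoints (hence reflective localizations), left exactness, accessibility, and the ``has sums'' condition (Lemma~\ref{lemmadefsumsintly}); therefore Corollary~\ref{corpolyfam1-1} gives the bijection between polymorphic families of accessible (left exact, compositional) reflective localizations of $t_{\bullet}$ in $\mathbf{Cart}(\mathcal{B})$ and those of $\mathbb{U}_{\bullet}$ in $\mathbf{Cat}_{\infty}(\mathbb{B})$. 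One final application of the Monadicity Theorem, this time in $\mathbf{Fun}(\mathrm{Shl}(\mathfrak{e}),\mathbf{Cat}_{\infty}(\mathbb{B}))$ (Corollary~\ref{corbeckmonadicity}, with compositionality carried along as in Proposition~\ref{propcompunitmnds}), converts the internal reflective localizations into the compositional, left exact, $\mu$-accessible idempotent monads of Definition~\ref{defhlvops}.

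Putting these bijections end to end yields the theorem: accessible left exact localizations of $\mathcal{B}$ $\leftrightarrow$ left exact modalities of small generation $\leftrightarrow$ cartesian reflective localizations of $t$ with left exact reflector and conservative cocartesian action $\leftrightarrow$ higher closure operators $\leftrightarrow$ polymorphic families of accessible left exact compositional reflective localizations of $t_\bullet$ $\leftrightarrow$ polymorphic families of the same in $\mathbf{Cat}_\infty(\mathbb{B})$ $\leftrightarrow$ higher Lawvere-Tierney operators on $\mathcal{B}$. Each arrow is a result stated earlier, and the proof is just the bookkeeping needed to observe that the adjectives (left exact, accessible, compositional) are matched at every stage and that the equivalence relations on the two ends (``equivalence classes of localizations'' versus ``equivalence classes of higher Lawvere-Tierney operators'' in the sense of Definition~\ref{defhlvops}) are respected. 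The main obstacle is not any single step but the care required in tracking the size parameters: one must check that the passage to and from the stratified picture is well defined on equivalence classes, so that a choice of cardinal $\mathfrak{e}\in\mathrm{Shl}(\mathfrak{b})$ large enough for a given localization to be both in the essential image of $\mathrm{Ext}_\ast$ and $\mu$-accessible produces, after enlarging $\mu$ if necessary, a genuinely well-defined map on equivalence classes whose inverse recovers the original localization up to equivalence — exactly the bookkeeping carried out in Proposition~\ref{propmonadscatter} and Corollary~\ref{corpolyfam1-1}, which I would simply invoke.

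\begin{proof}
Combining Proposition~\ref{thmprelim}, Corollary~\ref{corlexfibmods}.3 and Corollary~\ref{corlexmodetale}, equivalence classes of accessible left exact localizations of $\mathcal{B}$ are in bijection with cartesian reflective and replete subcategories $\mathcal{E}\hookrightarrow\mathcal{B}^{\Delta^1}$ in $\mathbf{Cart}(\mathcal{B})$ with left exact reflector whose cocartesian actions are conservative; since $\mathcal{B}$ is presentable with universal colimits, Theorem~\ref{thmlocmodopacc} identifies these with higher closure operators on $\mathcal{B}$, i.e.\ with left exact, accessible, compositional idempotent monads on $t\in\mathbf{Cart}(\mathcal{B})$ (compositionality corresponding to conservativity of the cocartesian action by Lemma~\ref{lemmacharTglue}, Corollary~\ref{corlexmodetale} and Proposition~\ref{propcompunitmnds}). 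By Proposition~\ref{propmonadscatter} and Proposition~\ref{prophcopsstrat}, these are in bijection with equivalence classes of polymorphic families of accessible left exact compositional reflective localizations of $t_{\bullet}$ in $\mathbf{Cart}(\mathcal{B})$. By Proposition~\ref{corextobjclass} and Proposition~\ref{corextobjclassloc} both the domain and the codomain of any such family lie in the essential image of the pseudo-cosmological embedding $\mathrm{Ext}_{\ast}$ of Proposition~\ref{propcosmicext} and Corollary~\ref{corcosmicext}, so by Corollary~\ref{corpolyfam1-1} (with compositionality translated via Lemma~\ref{lemmadefsumsintly}) they correspond to polymorphic families of accessible left exact reflective localizations of $\mathbb{U}_{\bullet}$ in $\mathbf{Cat}_{\infty}(\mathbb{B})$ which have sums. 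Finally, applying Corollary~\ref{corbeckmonadicity} in the $\infty$-cosmos $\mathbf{Fun}(\mathrm{Shl}(\mathfrak{e}),\mathbf{Cat}_{\infty}(\mathbb{B}))$, together with Proposition~\ref{propcompunitmnds} for the ``has sums'' property, converts these internal reflective localizations into compositional, left exact, $\mu$-accessible idempotent monads on $\mathbb{U}_{\bullet}|_{\mathrm{Shl}(\mathfrak{e})}$, which are exactly the higher Lawvere-Tierney operators of Definition~\ref{defhlvops}. The composite of these bijections respects the stated equivalence relations, which proves the theorem.
\end{proof}
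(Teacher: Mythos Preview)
Your proof is correct and follows essentially the same approach as the paper's own proof, which is also just a concatenation of Corollary~\ref{corbeckmonadicity}, Lemma~\ref{lemmadefsumsintly}, Corollary~\ref{corpolyfam1-1}, Proposition~\ref{propmonadscatter}, Corollary~\ref{corlexfibmods} and Theorem~\ref{thmprelim}. You are somewhat more explicit about the intermediate steps (invoking Theorem~\ref{thmlocmodopacc}, Corollary~\ref{corlexmodetale}, Proposition~\ref{prophcopsstrat}, Propositions~\ref{corextobjclass} and~\ref{corextobjclassloc}, and Proposition~\ref{propcompunitmnds}), but these are exactly the ingredients the paper has already packaged into the cited results, so the two arguments are the same.
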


\begin{proof}
The proof is a now a direct concatenation of the Beck Monadicity Theorem (in the reduced form of Corollary~\ref{corbeckmonadicity}) applied 
to the $\infty$-cosmoses $\mathrm{Fun}(\mathrm{Shl}(\mathfrak{e}),\mathbf{Cat}_{\infty}(\mathcal{B}))$, together with 
Lemma~\ref{lemmadefsumsintly}, Corollary~\ref{corpolyfam1-1}, Proposition~\ref{propmonadscatter} and Corollary~\ref{corlexfibmods} and 
Theorem~\ref{thmprelim}.
\end{proof}

We end this section with a few remarks. 

\begin{remark}
One can write down a compositionality formula for the monads in $\mathbf{Cat}_{\infty}(\mathcal{B})$ which characterizes its associated 
reflective localization to have sums in the same vein of Definition~\ref{defcompmonads} for the monads in $\mathbf{Cart}(\mathcal{B})$. 
Informally, the formula computes the value of the underlying pointed endofunctor
$T_{\kappa}\colon\mathbb{U}_{\kappa}\rightarrow\mathbb{U}_{\kappa}$ at the object of morphisms $\mathrm{Fun}(p_{\kappa})$ entirely by 
its values on the object $U_{\kappa}$ of objects. In that sense, compositional idempotent monads on $\mathbb{U}_{\kappa}$ are fully 
determined by their pointed endomap $(T_{\kappa})_0\colon U_{\kappa}\rightarrow U_{\kappa}$ in $\mathcal{B}$. This compares to the fact that 
traditional Lawvere-Tierney topologies are - while functors of an internal poset - fully determined by their values on objects of the 
subobject classifier. Indeed, here due to posetality, 
functoriality is a property, not additional structure. In that sense, higher Lawvere-Tierney operators are determined entirely 
by internal $\infty$-groupoidal (rather than $(\infty,1)$-categorical) data. This is responsible for the relation to type theoretic notions 
referred to in Remark~\ref{remlvtopstt}.
\end{remark}

\begin{remark}\label{remelemlvtops}
Although fairly involved as a definition, higher Lawvere-Tierney operators as in Definition~\ref{defhlvops} can be defined accordingly in 
more general settings. For example, given an elementary higher topos $\mathcal{B}$ (\cite{rasekheltops}) together with a directed poset $P$ 
and a diagram $\mathbb{U}\colon P\rightarrow\mathbb{B}^{\Delta^1}$ of univalent maps in $\mathcal{B}$, a higher Lawvere-Tierney operator on
$\mathcal{B}$ may be defined as a left exact and compositional idempotent monad on $\mathbb{R}N\circ\mathbb{U}$ in 
$\mathrm{Fun}(P,\mathrm{Cat}_{\infty}(\mathcal{B}))$. Here we drop the assumption of accessibility, 
because elementary toposes are generally not assumed to be accessible. The definition of such higher Lawvere-Tierney operators 
has the benefit of being a perfectly formal $\infty$-categorical definition -- although given the lack of an $\infty$-cosmological 
presentation of $\mathcal{B}$, the definition of a monad is now specifically the one of \cite[Section 4.7]{lurieha}.

Furthermore, there is no apparent way to internalize the global theory of left exact localizations (or left exact modalities for that matter) 
of a higher elementary topos $\mathcal{B}$ by ``scattering'' it along the indexed quasi-categories presented by the complete Segal objects
$\mathbb{R}N(\mathbb{U}(i))$. The assumption of accessibility in combination with the scattering along sizes indexed by regular cardinals was 
indeed crucial in the according internalization procedure in the case of (presentable) $\infty$-toposes. Therefore, such higher
Lawvere-Tierney operators appear to be a stronger notion on an elementary higher topos than left exact localizations of $\mathcal{B}$ (and 
hence left exact modalities in the sense of \cite{abjfsheavesI} are.
\end{remark}

\begin{remark}[Relation to type theoretical $\Sigma$-closed reflective subuniverses]\label{remlvtopstt}
The authors of \cite{rss_hottmod} characterize (left exact) modalities in Homotopy Type Theory in various ways, one of them being (left 
exact) $\Sigma$-closed reflective subuniverses (\cite[Section 1.3]{rss_hottmod}). By way of their definition, a left exact $\Sigma$-closed 
reflective subuniverse corresponds semantically in a suitable $\infty$-category $\mathcal{B}$ to a left exact reflective subcategory of the 
target fibration as in Corollary~\ref{corlexfibmods}.2 (\cite[Theorem A.7]{rss_hottmod}). 
These subuniverses are hence not ``polymorphically'' parametrized notions defined for each internal universe, but rather structures imposed 
by judgemental rules on the calculus as a whole (\cite[Section A.1]{rss_hottmod}). In particular, the associated $\Sigma$-closed reflective subuniverses are not types themselves, but rather classes of types closed under various type constructors. 

The fact that (left exact and compositional) reflective subcategories of the target fibration $t$ in $\mathbf{Cart}(\mathcal{B})$ (as in 
Corollary~\ref{corlexfibmods}.2) over an $\infty$-category $\mathcal{B}$ that arises from a type theoretic model category
$\mathbb{B}$ correspond to (lex and $\Sigma$-closed) ``weakly stable'' modalities in the associated syntax is essentially given in
\cite[Sections A.2 and A.3]{rss_hottmod}.
Whenever $\mathcal{B}$ is an $\infty$-topos and $\mathbb{B}$ is its ``excellent type theoretic model categorical presentation'' 
(\cite[Section A.4]{rss_hottmod}), then this establishes an equivalence of higher Lawvere-Tierney operators and such lex and
$\Sigma$-closed ``weakly stable'' modalities which furthermore are generated by a map, so to capture the condition of accessibility, see 
\cite[Section A.4]{rss_hottmod}.

\end{remark}

\section{Fibered structures over generating $\infty$-categories}\label{sectopdef}

In light of the 1-categorical background considered in Section~\ref{secfirstorder}, we will think of a small $\infty$-category $\mathcal{C}$ 
to come equipped with a canonical higher order hyperdoctrine $\mathcal{O}_{\mathcal{C}}$ which induces notions of topology on
$\mathcal{C}$ by considering higher order nuclei $\mathcal{E}$ (i.e.\ left exact modalities) on it. In analogy to 
Remark~\ref{remlocisnull}, we can describe a generalized topology $J$ associated to an indexed left exact modality $\mathcal{E}$ 
over a small $(\infty,1)$-category $\mathcal{C}$ as the associated collection of objects in $\mathcal{O}_{\mathcal{C}}(C)$ for
$C\in\mathcal{C}$ which are nullified in $\mathcal{E}(C)$. We will refer to such generalized topologies as \emph{geometric kernels} since the 
term ``generalized topology'' means different things in various settings. \\

Therefore, let $\mathcal{C}$ be a small $\infty$-category. While $\mathcal{C}$ itself may not have much of an 
intrinsic internal logic itself, it does possess various external sheaf semantics simply by being enriched in the $\infty$-cosmos
$\mathcal{S}$.
Indeed, the $\infty$-category $\mathcal{S}$ of spaces has an internal logic given by a suitable univalent version of Homotopy Type Theory 
(\cite{klvsimp}). Due to the ``propositions as types'' paradigm in Homotopy Type Theory, the predicate-classifier of $(\infty,1)$-category 
theory is given by $\mathcal{S}$ itself.
Thus, every presheaf over $\mathcal{C}$ classifies a $\mathcal{C}$-indexed predicate, represented by its associated right fibration 
over $\mathcal{C}$. 	
We may consider the indexed quasi-category of (external) predicates over $\mathcal{C}$ defined as follows.
\begin{align}\label{equdefOC}
\mathcal{O}_{\mathcal{C}}\colon\mathcal{C}^{op}\xrightarrow{\mathcal{C}_{/(\cdot)}}\mathrm{Cat}_{\infty}^{op}\xrightarrow{\mathrm{Fun}((\cdot)^{op},\mathcal{S})}\mathrm{CAT}_{\infty}
\end{align}
Here, $\mathrm{CAT}_{\infty}$ is the $\infty$-category of large $\infty$-categories, and we consider the canonical
``co-indexing'' $\mathcal{C}_{/(\cdot)}$ acting covariantly via postcomposition $f\mapsto \Sigma_f$. We have natural 
pointwise equivalences
\begin{align*}
\mathcal{O}_{\mathcal{C}}(C) & \simeq\widehat{\mathcal{C}_{/C}} \\
 & \simeq\hat{\mathcal{C}}_{/yC} \\
 & \simeq\mathrm{RFib}(\mathcal{C}_{/C}).
\end{align*}
This indexed $\infty$-topos is precisely the ``logical structure sheaf'' on $\mathcal{C}$ we will consider in 
Section~\ref{secsublogstrsheaf}.


\subsection{Sheaves of $\mathcal{O}_{\mathcal{C}}$-ideals}\label{secsublogstrsheaf}

In this section we define fibered local versions over a small $\infty$-category $\mathcal{C}$ associated to the structures 
fibered over $\hat{\mathcal{C}}$ considered in Section~\ref{secsubfsrl}, and show that they yield equivalent data. These local structures 
will be defined as substructures of the logical structure sheaf $\mathcal{O}_{\mathcal{C}}$ given in (\ref{equdefOC}), mirroring the 
corresponding first order notions from Section~\ref{secfirstorder}.

Therefore, we recall the (super-large) $\infty$-category of $\infty$-toposes and \'{e}tale geometric morphisms between 
such.
Let $\mathrm{RTop}$ be the $\infty$-category of $\infty$-toposes and geometric morphisms, and $\mathrm{LTop}$ be its 
opposite. Thus, the arrows in $\mathrm{LTop}$ are the left exact cocontinuous functors
(\cite[6.3.1]{luriehtt}).
A geometric morphism $f_{\ast}\colon\mathcal{F}\rightarrow\mathcal{E}$ between $\infty$-toposes is an \emph{embedding} 
if the right adjoint $f_{\ast}$ is fully faithful. 
A geometric morphism $f_{\ast}\colon\mathcal{F}\rightarrow\mathcal{E}$ between $\infty$-toposes is
\emph{\'{e}tale} if its left adjoint $f^{\ast}$ is equivalent to one of the form
$(\blank\times E)\colon\mathcal{E}\rightarrow\mathcal{E}_{/E}$ for some object $E\in\mathcal{E}$. By
\cite[Proposition 6.3.5.11]{luriehtt}, $f_{\ast}\colon\mathcal{F}\rightarrow\mathcal{E}$ is \'{e}tale if and only if 
the following three conditions hold.
\begin{enumerate}
\item The left adjoint $f^{\ast}$ admits a further left adjoint $f_!\colon\mathcal{F}\rightarrow\mathcal{E}$.
\item The left adjoint $f_!$ is conservative.
\item The pair $(f_!,f^{\ast})$ satisfies the \emph{projection formula}, i.e.\ for every $X\rightarrow Y$ in
$\mathcal{E}$, every object $Z\in\mathcal{F}$ and every morphism $f_!Z\rightarrow Y$, the induced square
\[\xymatrix{
f_!(f^{\ast}X\times_{f^{\ast}Y}Z)\ar[r]\ar[d] & f_!Z\ar[d] \\
X\ar[r] & Y
}\]
is cartesian in $\mathcal{E}$.
\end{enumerate}

Let $(\mathrm{RTop},\mathrm{Et})$ be the $\infty$-category of $\infty$-toposes and \'{e}tale geometric 
morphisms between them, and let $(\mathrm{LTop},\mathrm{Et})$ again be the opposite (\cite[Section 6.3.5]{luriehtt}).

More generally recall the notion of $\infty$-logoi from \cite{aneljoyaltopos}, which are left exact and cocomplete $\infty$-categories which 
satisfy descent. In particular, $\infty$-topos are exactly the presentable $\infty$-logoi. We define the notions of geometric morphisms, 
geometric embeddings and \'{etale} geometric morphisms between $\infty$-logoi in the same way: In the following, a geometric morphism 
(embedding) between $\infty$-logoi shall be a (fully faithful) functor with a left exact left adjoint. Such a geometric morphism between
$\infty$-logoi is defined to be \'{e}tale whenever the three conditions above are satisfied. This defines the $\infty$-categories
$\mathrm{RLog}$ and $(\mathrm{RLog},\mathrm{Et})$ together with their opposites
$\mathrm{Log}$ and $(\mathrm{Log},\mathrm{Et})$ analogously.

Let $\mathcal{C}$ be a small $\infty$-category. While $\mathcal{C}$ itself is generally not left exact and hence 
exhibits no canonical indexing over itself, its presheaf $\infty$-category $\hat{\mathcal{C}}$ does. We thus may 
consider the canonical indexing of $\hat{\mathcal{C}}$ over itself restricted to $\mathcal{C}$ as follows.
\[\mathcal{C}^{op}\xrightarrow{y^{op}}(\hat{\mathcal{C}})^{op}\xrightarrow{\hat{\mathcal{C}}_{/(\cdot)}}\mathrm{CAT}_{\infty}.\]
Each such value $\hat{\mathcal{C}}_{/yC}\simeq\widehat{\mathcal{C}_{/C}}$ is an $\infty$-topos, and the induced 
transition maps $f^{\ast}\colon \hat{\mathcal{C}}_{/yD}\rightarrow \hat{\mathcal{C}}_{/yC}$ for $f\colon C\rightarrow D$ 
in $\mathcal{C}$ are part of the \'{e}tale geometric morphisms $(\Sigma_f,f^{\ast},\prod_f)$. We will denote the induced 
composition by
\[\mathcal{O}_{\mathcal{C}}\colon\mathcal{C}^{op}\rightarrow(\mathrm{LTop},\mathrm{Et})\]
and refer to $\mathcal{O}_{\mathcal{C}}$ as the \emph{logical structure sheaf} (the term ``sheaf'' again will be justified in 
Proposition~\ref{propinftysemsheaf}).

\begin{definition}\label{defindexedmod}
Let $\mathcal{C}$ be a small $\infty$-category.
A sheaf $\mathcal{E}$ of $\mathcal{O}_{\mathcal{C}}$-ideals is a natural geometric embedding
$\mathcal{O}_{\mathcal{C}}\rightarrow\mathcal{E}$ in $\mathrm{Fun}(\mathcal{C}^{op},\mathrm{Log})$ such that
$\mathcal{E}\colon\mathcal{C}^{op}\rightarrow\mathrm{Log}$ factors through $(\mathrm{Log},\mathrm{Et})$.

A sheaf $\mathcal{E}$ of accessible $\mathcal{O}_{\mathcal{C}}$-ideals is a natural geometric embedding
$\mathcal{O}_{\mathcal{C}}\rightarrow\mathcal{E}$ in $\mathrm{Fun}(\mathcal{C}^{op},\mathrm{LTop})$ such that
$\mathcal{E}\colon\mathcal{C}^{op}\rightarrow\mathrm{LTop}$ factors through $(\mathrm{LTop},\mathrm{Et})$.

\end{definition}

\begin{proposition}\label{propetalemod}
Let $\mathcal{B}$ be an $\infty$-topos. Then there is a bijection between
\begin{enumerate}
\item left exact modalities on $\mathcal{B}$, and 
\item equivalence classes of natural geometric embeddings $\rho\colon\mathcal{B}_{/\blank}\rightarrow{\mathcal{E}}$ in
$\mathrm{Fun}(\mathcal{B}^{op},\mathrm{Log})$ such that $\mathcal{E}\colon\mathcal{C}^{op}\rightarrow\mathrm{Log}$ factors through
$(\mathrm{Log},\mathrm{Et})$.
\end{enumerate}
It restricts to a bijection between 
\begin{enumerate}
\item left exact modalities of small generation on $\mathcal{B}$, and 
\item equivalence classes of natural geometric embeddings $\rho\colon\mathcal{B}_{/\blank}\rightarrow{\mathcal{E}}$ in
$\mathrm{Fun}(\mathcal{B}^{op},\mathrm{LTop})$ such that $\mathcal{E}\colon\mathcal{C}^{op}\rightarrow\mathrm{LTop}$ factors through
$(\mathrm{LTop},\mathrm{Et})$.
\end{enumerate}
\end{proposition}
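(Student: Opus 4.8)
The plan is to reduce Proposition~\ref{propetalemod} to the already-established correspondences of Section~\ref{secsubfsrl}, in particular Corollary~\ref{corlexfibmods}.2 and Corollary~\ref{corlexmodetale}, together with Lemma~\ref{lemmarefllocbifib} and Remark~\ref{remcocartaction}. The essential observation is that by Corollary~\ref{corlexfibmods}.2 a left exact modality on $\mathcal{B}$ is the same datum as a fibered reflective subcategory $\iota\colon\mathcal{E}\hookrightarrow\mathcal{B}^{\Delta^1}$ with left exact fibered reflector $\rho$, and by Corollary~\ref{corlexmodetale} the additional requirement that $\mathcal{E}$ be closed under composition in $\mathcal{B}^{\Delta^1}$ corresponds exactly to conservativity of all cocartesian actions $\Sigma_f\colon\mathcal{E}(A)\rightarrow\mathcal{E}(B)$. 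What remains is to translate the data on the total-category side to the indexed (Straightened) side and to verify that ``left exact modality'' matches ``$\mathcal{E}$ factors through $(\mathrm{Log},\mathrm{Et})$ and $\mathcal{B}_{/\blank}\rightarrow\mathcal{E}$ is a natural geometric embedding''.

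First I would pass from the fibered picture to the indexed picture via Straightening: a left exact modality gives, by Remark~\ref{remcocartaction}, a subfunctor $\mathcal{E}\hookrightarrow\mathcal{B}_{/\blank}$ in $\mathrm{Fun}(\mathcal{B}^{op},\mathrm{Cat}_{\infty})$ together with a pointwise reflective localization $\rho_B\colon\mathcal{B}_{/B}\rightarrow\mathcal{E}(B)$, natural in $\mathcal{B}$ (naturality being exactly homotopy-commutativity of the two squares in~(\ref{diagremcocartaction})). Since each $\mathcal{B}_{/B}$ is an $\infty$-topos and $\rho_B$ is an accessible left exact localization of it, each $\mathcal{E}(B)$ is an $\infty$-topos and $\rho_B$ is a geometric surjection whose right adjoint $\iota_B$ is fully faithful; hence the natural transformation $\rho\colon\mathcal{B}_{/\blank}\rightarrow\mathcal{E}$, read in $\mathrm{Fun}(\mathcal{B}^{op},\mathrm{LTop})$ (its components pointing the ``geometric'' way), is a natural geometric embedding, and the transition maps $f^{\ast}\colon\mathcal{E}(B)\rightarrow\mathcal{E}(A)$ are restrictions of the étale maps $f^{\ast}$ on slices. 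Next I would check that the diagram $\mathcal{E}\colon\mathcal{B}^{op}\rightarrow\mathrm{LTop}$ factors through $(\mathrm{LTop},\mathrm{Et})$: by Lemma~\ref{lemmarefllocbifib} the fibration $\mathcal{E}\twoheadrightarrow\mathcal{B}$ is bicartesian, so each $f^{\ast}$ has a left adjoint $\Sigma_f$ given by~(\ref{equcocartformula}); conservativity of $\Sigma_f$ is precisely Corollary~\ref{corlexmodetale}; and the projection formula for $(\Sigma_f,f^{\ast})$ follows from the projection formula for the slice étale morphisms together with left exactness of $\rho$ and the Beck--Chevalley property from Remark~\ref{remcocartaction} (left exactness of the modality makes $\mathcal{E}\twoheadrightarrow\mathcal{B}$ Beck--Chevalley). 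For the non-accessible version one drops accessibility of the localizations $\rho_B$ and works in $\mathrm{Log}$ rather than $\mathrm{LTop}$, using that each $\mathcal{B}_{/B}$ is an $\infty$-logos and left exact localizations of $\infty$-logoi are again $\infty$-logoi; the accessible version is then the evident restriction.

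Conversely, given a natural geometric embedding $\rho\colon\mathcal{B}_{/\blank}\rightarrow\mathcal{E}$ with $\mathcal{E}$ factoring through $(\mathrm{Log},\mathrm{Et})$ (resp.\ $(\mathrm{LTop},\mathrm{Et})$), I would Unstraighten to obtain a fibered reflective subcategory $\iota\colon\widehat{\mathcal{E}}\hookrightarrow\mathcal{B}^{\Delta^1}$ with left exact fibered reflector $\rho$ — left exactness of $\rho$ as a functor in $\mathbf{Cart}(\mathcal{B})$ being equivalent to pointwise left exactness of each $\rho_B$ by Proposition~\ref{propfiblex}.2, which holds since each $\rho_B$ is the left adjoint of a geometric embedding. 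By Corollary~\ref{corlexfibmods}.2 this is a left exact modality provided $\widehat{\mathcal{E}}$ is closed under composition, and by Corollary~\ref{corlexmodetale} this closure is exactly conservativity of the cocartesian actions $\Sigma_f$; but the étale hypothesis on $\mathcal{E}\colon\mathcal{B}^{op}\rightarrow\mathrm{Log}$ supplies precisely these left adjoints $\Sigma_f=f_!$ and their conservativity. One then checks the two assignments are mutually inverse, which is immediate from the uniqueness clauses already used in Proposition~\ref{propfibmods} and Corollary~\ref{corlexfibmods} (a reflective localization is determined by its local objects; a left exact modality by its right class). Finally, for the accessible statement, fiberwise accessibility of $\iota$ is exactly the condition that each $\mathcal{E}(B)\in\mathrm{LTop}$, and by the remark following Corollary~\ref{corlexfibmods} (using universality of colimits in the $\infty$-topos $\mathcal{B}$) fiberwise accessibility upgrades to total accessibility, i.e.\ small generation of the modality; this is the content of Corollary~\ref{corlexfibmods}.3.

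The main obstacle I anticipate is the careful bookkeeping of the projection formula and the Beck--Chevalley condition under Straightening: one must verify that the formula $\Sigma_f(g)=\rho_B(\Sigma_f(\iota_A(g)))$ genuinely produces an étale-compatible structure, i.e.\ that the composite left adjoints $f_!$ on the $\mathcal{E}(B)$ inherit the projection formula from the slices, and conversely that an abstract étale structure on $\mathcal{E}$ forces the cocartesian action on the Unstraightened fibration to be the one induced by $t_{\mathcal{B}}$. This is where left exactness of the modality is genuinely needed (it is what makes $\mathcal{E}\twoheadrightarrow\mathcal{B}$ Beck--Chevalley, by Remark~\ref{remcocartcharfacsys}.2), and it is the one place where the proof is more than a mechanical transport along Straightening; everything else is a routine reassembly of results already proved in Sections~\ref{secsubfsrl} and~\ref{secfibglobal}.
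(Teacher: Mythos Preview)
Your proposal is correct and follows essentially the same route as the paper: both reduce to the fibered characterizations of Corollary~\ref{corlexfibmods} and Corollary~\ref{corlexmodetale}, pass to the indexed picture via Straightening (invoking Proposition~\ref{propfiblex} and Remark~\ref{remcocartaction}), and isolate the projection formula as the one substantive verification not already packaged in Section~\ref{secsubfsrl}. The paper's argument for the projection formula is exactly the kind of bookkeeping you anticipate---it uses that the formula holds for $\mathcal{B}_{/\blank}$, that the inclusions $\iota_C$ are fully faithful, and that the mate $\Sigma_f\iota_C\Rightarrow\iota_D f_{\sharp}$ is an $\mathcal{E}_D$-local equivalence---so your identification of this as the crux is on the mark.
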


\begin{proof}
Via Parts 2 and 3 of Corollary~\ref{corlexfibmods}, Corollary~\ref{corlexmodetale} and Remark~\ref{remlawverecomp}, 
left exact modalities (of small generation) on $\mathcal{B}$ stand in 1-1 correspondence to equivalence classes of 
(accessible) left exact reflective localizations $\mathcal{E}\hookrightarrow\mathcal{B}^{\Delta^1}$ in $\mathbf{Cart}(\mathcal{B})$ 
such that the cocartesian transition functors $(\mathcal{E}_f)_{!}$ are conservative.
Under the Straightening equivalence to the $\infty$-category $\mathrm{Fun}(\mathcal{B}^{op},\mathrm{CAT}_{\infty})$ of large
$\mathcal{B}$-indexed $\infty$-categories, such correspond bijectively to equivalence classes of 
natural transformations $\rho\colon\mathcal{B}_{/\blank}\rightarrow{\mathcal{E}}$ in
$\mathrm{Fun}(\mathcal{C}^{op},\mathrm{CAT}_{\infty})$ such that each $\rho_C$ is left exact and admits an (accessible) right adjoint 
inclusion, and such that each $f^{\ast}\colon\mathcal{E}_D\rightarrow\mathcal{E}_C$ has a conservative left adjoint (via 
Proposition~\ref{propfiblex} and Remark~\ref{rempropfiblexgen2}).
We claim that these are exactly of the form in 2. Clearly, every natural geometric embedding as in 2.\ yields a natural transformation 
of this form by postcomposition with the forgetful functors
$(\mathrm{LTop},\mathrm{Et})\rightarrow(\mathrm{Log},\mathrm{Et})\rightarrow\mathrm{CAT}_{\infty}$. Vice versa, given a 
natural transformation $\rho\colon\mathcal{B}_{/\blank}\rightarrow{\mathcal{E}}$ corresponding to a left exact modality (of small generation) 
on $\mathcal{B}$ as above, the transition maps
$(\mathcal{E}_f)^{\ast}\colon\mathcal{E}_D\rightarrow\mathcal{E}_C$ for $f\colon C\rightarrow D$ in $\mathcal{B}$ fit into squares of 
the form
\begin{align*}
\begin{gathered}
\xymatrix{
\mathcal{E}_D\ar[d]_{(\mathcal{E}_f)^{\ast}}\ar@{^(-->}@/^/[r]^{\iota_D} & \mathcal{B}_{/D}\ar[d]^{f^{\ast}}\ar[l]^{\rho_D}\\
\mathcal{E}_C\ar@{^(-->}@/^/[r]^{\iota_C} & \mathcal{B}_{/C}\ar[l]^{\rho_C}
}
\end{gathered}
\end{align*}
which commute in both directions up to homotopy. As the horizontal pairs are left exact (accessible) reflective localizations, 
and the slice functor $\mathcal{B}_{/\blank}$ factors through $(\mathrm{LTop},\mathrm{Et})$,  each $\mathcal{E}_C$ is automatically an 
$\infty$-sublogos ($\infty$-subtopos) of $\mathcal{B}_{/C}$, and each $(\mathcal{E}_f)^{\ast}\colon\mathcal{E}_D\rightarrow\mathcal{E}_C$ is 
cocontinuous and left exact. Thus, the transition functors $(\mathcal{E}_f)^{\ast}$ are always part of a geometric morphism, and so we obtain 
a factorization $\mathcal{E}\colon\mathcal{C}^{op}\rightarrow\mathrm{Log}$ ($\mathcal{E}\colon\mathcal{C}^{op}\rightarrow\mathrm{LTop}$). These transition functors furthermore exhibit a 
conservative left adjoint $(\mathcal{E}_f)_{!}$ themselves by assumption.
The projection formula for $\mathcal{E}$ with respect to a given map $f\colon C\rightarrow D$ can be shown to hold using that, first, 
it holds for $\mathcal{B}_{/\blank}$, second, that the inclusions $\iota_C\colon\mathcal{E}_C\rightarrow\mathcal{B}_{/C}$ are fully 
faithful, and, third, that the natural transformations $\Sigma_f\iota_C\Rightarrow \iota_D f_{\sharp}$ between the functors of type
$\mathcal{E}_C\rightarrow\mathcal{B}_{/D}$ obtained as a mate to $\iota_C(\eta)\colon \iota_C\Rightarrow\iota_C f^{\ast}f_{\sharp}$  
are $\mathcal{E}_D$-local equivalences. It follows that $\mathcal{E}\colon\mathcal{C}^{op}\rightarrow\mathrm{CAT}_{\infty}$ factors 
through $(\mathrm{Log},\mathrm{Et})$ ($(\mathrm{LTop},\mathrm{Et})$) as well, and $\rho\colon\mathcal{B}_{/\blank}\rightarrow\mathcal{E}$ is 
a pointwise geometric embedding.
\end{proof}

\begin{remark}
Accordingly, one may define \emph{sheaves of $\mathcal{O}_{\mathcal{C}}$-modules} simply as natural geometric morphisms
$\mathcal{O}_{\mathcal{C}}\rightarrow\mathcal{E}$ in $\mathrm{Fun}(\mathcal{C}^{op},\mathrm{Log})$ such that
$\mathcal{E}\colon\mathcal{C}^{op}\rightarrow\mathrm{Log}$ factors through $(\mathrm{Log},\mathrm{Et})$ (and the same for $\mathrm{LTop}$). 
The intuition is that of commutative algebra: given two rings $R$ and $S$, the set of $R$-module structures on $S$ is in bijection to 
the set of ring homomorphisms from $R$ to $S$. We will leave the study of such however to another time.
\end{remark}

The following theorem establishes an $\infty$-categorical analogue to the equivalence of structures 1.\ and 3.\ in 
Proposition~\ref{proplocalglobalfirstorder}.

\begin{theorem}\label{thmlocaltoglobal}
Let $\mathcal{C}$ be a small $\infty$-category. Then there is a bijection between sheaves of (accessible)
$\mathcal{O}_{\mathcal{C}}$-ideals and equivalences classes of left exact modalities (of small generation) on $\hat{\mathcal{C}}$.
\end{theorem}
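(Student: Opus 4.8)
The plan is to reduce Theorem~\ref{thmlocaltoglobal} to Proposition~\ref{propetalemod} applied to the presheaf $\infty$-topos $\mathcal{B}=\hat{\mathcal{C}}$, by showing that the restriction-along-Yoneda operation furnishes a bijection between the two kinds of indexed data. Concretely, I would first observe that by Proposition~\ref{propetalemod}, left exact modalities (of small generation) on $\hat{\mathcal{C}}$ correspond bijectively to equivalence classes of natural geometric embeddings $\rho\colon\hat{\mathcal{C}}_{/\blank}\to\mathcal{E}$ in $\mathrm{Fun}(\hat{\mathcal{C}}^{op},\mathrm{Log})$ (resp.\ $\mathrm{Fun}(\hat{\mathcal{C}}^{op},\mathrm{LTop})$) whose target factors through $(\mathrm{Log},\mathrm{Et})$ (resp.\ $(\mathrm{LTop},\mathrm{Et})$). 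On the other side, a sheaf of (accessible) $\mathcal{O}_{\mathcal{C}}$-ideals is by Definition~\ref{defindexedmod} such a structure indexed over $\mathcal{C}$ rather than over $\hat{\mathcal{C}}$, with $\mathcal{O}_{\mathcal{C}}\simeq y^{\ast}(\hat{\mathcal{C}}_{/\blank})$. So the content of the theorem is exactly that restriction along $y\colon\mathcal{C}\hookrightarrow\hat{\mathcal{C}}$ is an equivalence on the relevant slices of indexed-category data.

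The key step is therefore a left Kan extension argument: I would show that the functor $y^{\ast}\colon\mathrm{Fun}(\hat{\mathcal{C}}^{op},\mathrm{Log})\to\mathrm{Fun}(\mathcal{C}^{op},\mathrm{Log})$ restricts to an equivalence between the full subcategory of natural geometric embeddings out of $\hat{\mathcal{C}}_{/\blank}$ with étale target and the full subcategory of sheaves of $\mathcal{O}_{\mathcal{C}}$-ideals, with inverse given by $\mathrm{Lan}_{y^{op}}$. One direction is immediate: restricting a natural geometric embedding $\rho\colon\hat{\mathcal{C}}_{/\blank}\to\mathcal{E}$ along $y$ yields a natural geometric embedding $\mathcal{O}_{\mathcal{C}}\to y^{\ast}\mathcal{E}$ whose target still factors through $(\mathrm{Log},\mathrm{Et})$, because geometric embeddings, the projection formula, and conservativity of the left adjoints are all pointwise conditions preserved by restriction; this is precisely the content of Example~\ref{exmplehypdocint} in the $\infty$-categorical setting. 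For the converse and the fact that the two assignments are mutually inverse, the crucial input is that $\hat{\mathcal{C}}_{/\blank}$ is the left Kan extension $\mathrm{Lan}_{y^{op}}\mathcal{O}_{\mathcal{C}}$ in $\mathrm{Fun}(\hat{\mathcal{C}}^{op},\mathrm{CAT}_\infty)$ -- this is the descent statement ($\hat{\mathcal{C}}$ is an $\infty$-topos, hence every presheaf is the colimit of representables and $\hat{\mathcal{C}}_{/\blank}$ preserves this colimit), the direct analogue of $\Omega_{\hat{\mathcal{C}}}^{\mathrm{int}}\cong\mathrm{Lan}_y(\Omega_{\mathcal{C}})$ recorded in Example~\ref{exmplehypdocint}. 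Given a sheaf $\mathcal{E}$ of $\mathcal{O}_{\mathcal{C}}$-ideals, left Kan extending the embedding $\mathcal{O}_{\mathcal{C}}\hookrightarrow\mathcal{E}$ produces a natural transformation $\hat{\mathcal{C}}_{/\blank}=\mathrm{Lan}_{y^{op}}\mathcal{O}_{\mathcal{C}}\to\mathrm{Lan}_{y^{op}}\mathcal{E}$, and one checks that $\mathrm{Lan}_{y^{op}}\mathcal{E}$ again lands in $(\mathrm{Log},\mathrm{Et})$ and that the transformation is still a pointwise geometric embedding -- i.e.\ that local-to-global reconstruction along the canonical colimit of representables preserves all the structure. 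This last verification is essentially Proposition~\ref{prop1stidealsheaf}'s higher analogue, which is stated as Proposition~\ref{propinftysemsheaf} later in the paper (and which the excerpt is clearly building toward); I would cite it once it is available, or prove the needed instance directly via monic-type descent in $\hat{\mathcal{C}}$.

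I expect the main obstacle to be verifying that the left Kan extension of a sheaf of $\mathcal{O}_{\mathcal{C}}$-ideals is again a genuinely \emph{pointwise} geometric embedding with étale transition maps, i.e.\ that the left-exactness, the conservativity of the further left adjoints, and the projection formula survive the Kan extension -- these are not purely formal consequences of Kan extending a diagram, but rely on the specific descent properties of the $\infty$-topos $\hat{\mathcal{C}}$. The cleanest route is probably to avoid recomputing these conditions from scratch and instead transport the statement through the fibrational picture: a sheaf of $\mathcal{O}_{\mathcal{C}}$-ideals unstraightens to a full cartesian subfibration of $t\colon(\mathcal{C}^{\Delta^1})\twoheadrightarrow\mathcal{C}$-type data over $\mathcal{C}$ with a left exact fibered reflector and conservative cocartesian action, and the colimit-of-representables presentation of $\hat{\mathcal{C}}$ lets one extend this uniquely to the corresponding structure over $\hat{\mathcal{C}}$; then Corollary~\ref{corlexmodetale}, Corollary~\ref{corlexfibmods}.3 and Remark~\ref{remlawverecomp} identify that with a left exact modality (of small generation) on $\hat{\mathcal{C}}$. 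Accessibility transfers because the reflector is generated fiberwise by a set of maps over the representables, which generate $\hat{\mathcal{C}}$, exactly as in the proof of Corollary~\ref{corlexfibmods}.3. Finally I would note the bijection is manifestly order-preserving and that the ``accessible'' and ``non-accessible'' statements are proved by the identical argument with $\mathrm{LTop}$ in place of $\mathrm{Log}$.
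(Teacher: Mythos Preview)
Your overall strategy is exactly the paper's: reduce to Proposition~\ref{propetalemod} with $\mathcal{B}=\hat{\mathcal{C}}$, and then show that restriction along $y$ sets up a bijection between the $\hat{\mathcal{C}}$-indexed data appearing there and the $\mathcal{C}$-indexed data of Definition~\ref{defindexedmod}. The execution, however, has the Kan extension pointing the wrong way, and this is not merely a labelling issue.

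Descent says that $\hat{\mathcal{C}}_{/\blank}\colon\hat{\mathcal{C}}^{op}\to\mathrm{Log}$ takes colimits in $\hat{\mathcal{C}}$ to limits, i.e.\ it is a \emph{limit-preserving} functor out of $\hat{\mathcal{C}}^{op}$. Since $\hat{\mathcal{C}}^{op}$ is the free completion of $\mathcal{C}^{op}$, this identifies $\hat{\mathcal{C}}_{/\blank}$ with the \emph{right} Kan extension $y_{\ast}\mathcal{O}_{\mathcal{C}}$, not with $\mathrm{Lan}_{y^{op}}\mathcal{O}_{\mathcal{C}}$. (The informal phrase ``Left Kan extension'' in Example~\ref{exmplehypdocint} is loose; in the present proof the paper writes $y_{\ast}$ and invokes \cite[Theorem~5.1.5.6]{luriehtt}, which is the equivalence between functors on $\mathcal{C}^{op}$ and continuous functors on $\hat{\mathcal{C}}^{op}$.) If you actually compute the pointwise left Kan extension along $y^{op}$ you are taking a colimit over the under-category $X/\mathcal{C}$ rather than a limit over $\mathcal{C}\downarrow X$, and you do not recover $\hat{\mathcal{C}}_{/X}$.

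Once you use $y_{\ast}$, the ``main obstacle'' you flag evaporates and the fibrational detour and the appeal to Proposition~\ref{propinftysemsheaf} are unnecessary. The paper simply observes: (i) the class of geometric embeddings in $\mathrm{Log}$ (resp.\ $\mathrm{LTop}$) is closed under limits, so $y_{\ast}$ sends pointwise geometric embeddings to pointwise geometric embeddings; and (ii) the inclusion $(\mathrm{LTop},\mathrm{Et})\subset\mathrm{LTop}$ preserves and reflects limits \cite[Proposition~6.3.5.13]{luriehtt} (and likewise for $\mathrm{Log}$), so $y_{\ast}\mathcal{E}$ again factors through the \'etale subcategory. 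Together with $y^{\ast}$ being the obvious restriction, this gives the bijection immediately. No separate verification of left-exactness, conservativity, or the projection formula at non-representable objects is needed; these are packaged into the limit-closure statements just cited.
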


\begin{proof}
Given Proposition~\ref{propetalemod}, the proof is a fairly straight-forward argument by restriction and right Kan extension along the 
Yoneda embedding of $\mathcal{C}$, exploiting descent of $\hat{\mathcal{C}}$.
Indeed, since both $\mathrm{Log}$ and $\mathrm{LTop}$ are complete (\cite[Proposition 6.3.2.3]{luriehtt}), for any small $\infty$-category
$\mathcal{C}$ the Yoneda embedding $y\colon\mathcal{C}\hookrightarrow\hat{\mathcal{C}}$ induces reflective localizations
\[\xymatrix{
\mathrm{Fun}(\hat{\mathcal{C}}^{op},\mathrm{Log})\ar@/^.5pc/[r]^{y^{\ast}}_{\bot} & \mathrm{Fun}(\mathcal{C}^{op},\mathrm{Log})\ar@/^/[l]^{y_\ast},
}\]
\[\xymatrix{
\mathrm{Fun}(\hat{\mathcal{C}}^{op},\mathrm{LTop})\ar@/^.5pc/[r]^{y^{\ast}}_{\bot} & \mathrm{Fun}(\mathcal{C}^{op},\mathrm{LTop})\ar@/^/[l]^{y_\ast}.
}\]
Via \cite[Theorem 5.1.5.6.]{luriehtt}, they restrict to equivalences
\[\xymatrix{
\mathrm{Fun}(\hat{\mathcal{C}}^{op},\mathrm{Log})_{\mathrm{lim}}\ar@/^.5pc/[r]^{y^{\ast}}_{\simeq} & \mathrm{Fun}(\mathcal{C}^{op},\mathrm{Log})\ar@/^/[l]^{y_{\ast}},
}\]
\[\xymatrix{
\mathrm{Fun}(\hat{\mathcal{C}}^{op},\mathrm{LTop})_{\mathrm{lim}}\ar@/^.5pc/[r]^{y^{\ast}}_{\simeq} & \mathrm{Fun}(\mathcal{C}^{op},\mathrm{LTop})\ar@/^/[l]^{y_{\ast}},
}\]
where $\mathrm{Fun}(\hat{\mathcal{C}}^{op},\mathrm{LTop})_{\mathrm{lim}}\subset\mathrm{Fun}(\hat{\mathcal{C}}^{op},\mathrm{LTop})$ 
denotes the full $\infty$-subcategory of continuous functors (and same for $\mathrm{Log}$). By virtue of descent of $\hat{\mathcal{C}}$, the 
slice functor $\hat{\mathcal{C}}_{/\blank}\colon\hat{\mathcal{C}}\rightarrow\mathrm{LTop}$ is contained in
$\mathrm{Fun}(\hat{\mathcal{C}}^{op},\mathrm{LTop})_{\mathrm{lim}}$ (as the forgetful functor $\mathrm{LTop}\rightarrow\mathrm{CAT}_{\infty}$ creates limits, see \cite[Proposition 6.3.2.3]{luriehtt}), and so we obtain equivalences of undercategories
\[\xymatrix{
(\mathrm{Fun}(\hat{\mathcal{C}}^{op},\mathrm{Log})_{\mathrm{lim}})_{(\hat{\mathcal{C}}_{/\blank})/}\ar@/^.5pc/[r]^{y^{\ast}}_{\simeq} & \mathrm{Fun}(\mathcal{C}^{op},\mathrm{Log})_{\mathcal{O}_{\mathcal{C}}/}\ar@/^/[l]^{y_{\ast}}
}\]
and
\[\xymatrix{
(\mathrm{Fun}(\hat{\mathcal{C}}^{op},\mathrm{LTop})_{\mathrm{lim}})_{(\hat{\mathcal{C}}_{/\blank})/}\ar@/^.5pc/[r]^{y^{\ast}}_{\simeq} & \mathrm{Fun}(\mathcal{C}^{op},\mathrm{LTop})_{\mathcal{O}_{\mathcal{C}}/}.\ar@/^/[l]^{y_{\ast}}
}\]
This further restricts to equivalences
\[\xymatrix{
\mathrm{GeoEmb}(\hat{\mathcal{C}}_{/\blank})\ar@/^.5pc/[r]^{y^{\ast}}_{\simeq} & \mathrm{GeoEmb}(\mathcal{O}_{\mathcal{C}}).\ar@/^/[l]^{y_{\ast}}
}\]
between the full $\infty$-subcategories each spanned by the pointwise (accessible) geometric embeddings out of
$\hat{\mathcal{C}}_{/\blank}$ and $\mathcal{O}_{\mathcal{C}}$, respectively. Indeed, on the one hand, it is clear that 
the restriction functor $y^{\ast}$ preserves pointwise (accessible) geometric embeddings; on the other hand, the right Kan extension
$y_{\ast}$ does so, too, because the class of geometric embeddings in $\mathrm{Log}$ (and in $\mathrm{LTop}$) is closed under limits. Lastly, 
whenever $\mathcal{E}\colon\hat{\mathcal{C}}^{op}\rightarrow\mathrm{LTop}$ factors through $(\mathrm{LTop},\mathrm{Et})$, clearly so does the 
restriction $y^{\ast}\mathcal{E}\colon\mathcal{C}^{op}\rightarrow\mathrm{LTop}$ (and the same for $\mathrm{Log}$). Vice versa, whenever
$\mathcal{E}\colon\mathcal{C}^{op}\rightarrow\mathrm{LTop}$ factors through $(\mathrm{LTop},\mathrm{Et})$, so does
$y_{\ast}\mathcal{E}\colon\hat{\mathcal{C}}^{op}\rightarrow\mathrm{LTop}$, because the inclusion
$(\mathrm{LTop},\mathrm{Et})\subset\mathrm{LTop}$ preserves and reflects limits (\cite[Proposition 6.3.5.13]{luriehtt}). The same again holds 
for $\mathrm{Log}$.
\end{proof}

\begin{remark}
Whenever $\mathcal{C}$ has a terminal object $1\in\mathcal{C}$, a sheaf $\mathcal{E}$ of
$\mathcal{O}_{\mathcal{C}}$-ideals is fully determined by its ``underlying'' $\infty$-topos $\mathcal{E}(1)$ since all 
transition maps are assumed to be \'{e}tale. In this case, $\mathcal{E}(1)$ is exactly the accessible left exact localization of
$\hat{\mathcal{C}}$ associated to $\mathcal{E}$ via Theorem~\ref{thmlocaltoglobal} and Proposition~\ref{thmprelim}.
\end{remark}

\begin{remark}\label{remlargeocideals}
The proof of Theorem~\ref{thmlocaltoglobal} becomes problematic when one allows to assume that $\mathcal{C}$ itself is a locally small but
potentially large $\infty$-category, as the right Kan extension of the ``small'' Yoneda embedding
$y\colon\mathcal{C}\rightarrow\mathrm{Fun}(\mathcal{C}^{op},\mathcal{S})$ generally need not exist. A more hands-on proof however (using that 
$\hat{\mathcal{C}}:=\mathrm{Fun}(\mathcal{C},\mathcal{S})$ has descent for all colimits that exist in $\hat{\mathcal{C}}$ rather than for all 
small colimits only, and that the ``small'' Yoneda embedding $y\colon\mathcal{C}\rightarrow\mathrm{Fun}(\mathcal{C}^{op},\mathcal{S})$ is 
still dense) shows an appropriate ``large'' version of the theorem: there is an equivalence between sheaves of
$\mathcal{O}_{\mathcal{C}}$-ideals on locally small (but potentially large) $\infty$-categories $\mathcal{C}$ and equivalences classes of 
left exact modalities on $\mathrm{Fun}(\mathcal{C}^{op},\mathcal{S})$.
\end{remark}

\begin{remark}\label{remtoplideals}
It may be noteworthy to point out that the proof of Theorem~\ref{thmlocaltoglobal} as stated requires descent and hence is 
characteristic to $\infty$-topos theory. Similarly, the according equivalence of 1.\ and 3.\ in Proposition~\ref{proplocalglobalfirstorder} 
is characteristic to finite dimensional topos theory, as in $\infty$-topos theory not every accessible left exact localization of a 
presheaf $\infty$-topos is topological.

In the 1-categorical case, left exact reflective localizations of a presheaf topos $\hat{\mathcal{C}}$ have been characterized by 
closure operators on $\hat{\mathcal{C}}$, which are indexed structures on the hyperdoctrine $\Omega_{\mathcal{C}}$ (rather than on the 
restriction of the full slice $\hat{\mathcal{C}}_{/\blank}$ along $y$). They are not required to be \'{e}tale and in fact generally simply 
are not. That is, because although the cocartesian action on
$\Omega_{\mathcal{C}}$-ideals $\mathcal{E}$ exists (via the image factorization in $\mathcal{E}$), it forgets too much 
information to be conservative. Indeed, conservativity would require that for monomorphisms
$x,y\in\mathrm{Sub}(A)$ with $x\leq y$ and an arbitrary map $f\colon A\rightarrow B$ in $\hat{\mathcal{C}}$, 
if $fx\colon X\rightarrow B$ and $fy\colon Y\rightarrow B$ have the same image in $B$, then $x=y$. This is generally not true if $f$ is not 
monic itself. At the same time conservativity (and hence \'{e}taleness) is not required in the proof of 
Proposition~\ref{proplocalglobalfirstorder}, essentially because of the specific interaction between pullbacks and monomorphisms. 
\end{remark}

\begin{remark}
The proof of Theorem~\ref{thmlocaltoglobal} can also be given more concretely by introducing indexed versions of left exact modalities 
and congruences over $\mathcal{C}$. Say, a \emph{congruence of small generation on $\mathcal{O}_{\mathcal{C}}$} is a collection of 
congruences $\mathcal{L}_C\subseteq\mathcal{O}_{\mathcal{C}}(C)^{\Delta^1}$ of small generation such that 
$f^{\ast}[\mathcal{L}_D]\subseteq\mathcal{L}_C$, $\Sigma_f[\mathcal{L}_C]\subseteq \mathcal{L}_D$ and
$\Sigma_f^{-1}[\mathcal{L}_D]\subseteq\mathcal{L}_C$ for all maps $f\colon C\rightarrow D$ in $\mathcal{C}$.
A \emph{left exact modality (of small generation) on $\mathcal{O}_{\mathcal{C}}$} is a family of left exact 
modalities $(\mathcal{L}_C,\mathcal{R}_C)$ (of small generation) on $\mathcal{O}_{\mathcal{C}}(C)$ for $C\in\mathcal{C}$ such 
that for all $f\colon C\rightarrow D$ in $\mathcal{C}$ the following conditions hold.
\begin{enumerate}
\item $f^{\ast}[\mathcal{L}_D]\subseteq\mathcal{L}_C$,
\item $f^{\ast}[\mathcal{R}_D]\subseteq\mathcal{R}_C$,
\item $\Sigma_f^{-1}[\mathcal{L}_D]\subseteq\mathcal{L}_C$.
\end{enumerate}
Then one can show that the classes of sheaves of $\mathcal{O}_{\mathcal{C}}$-ideals, of congruences of small generation on
$\mathcal{O}_{\mathcal{C}}$, and of left exact modalities (of small generation) on $\mathcal{O}_{\mathcal{C}}$ stand in bijective 
correspondence to one another, and that left exact modalities (of small generation) on $\mathcal{O}_{\mathcal{C}}$ stand in bijective 
correspondence to left exact modalities (of small generation) in $\hat{\mathcal{C}}$.
\end{remark}

We so far have introduced notion to prove 1.\, 2.\, and 3.\	 of Proposition 2.3. We end this paper with a notion for 4.

\subsection{Geometric kernels}\label{subsectopdef}

In Remark~\ref{remlocisnull} we noted that Grothendieck topologies on an ordinary category $\mathcal{C}$ can be 
characterized as the collections of predicates which are locally nullified in their associated sheaf of first order
$\mathcal{O}_{\mathcal{C}}$-ideals. Analogously, given a small $\infty$-category $\mathcal{C}$ and a sheaf $\mathcal{E}$ of
$\mathcal{O}_{\mathcal{C}}$-ideals, we may consider the $\infty$-subcategory
\[J(C)=\rho_C^{-1}[(-2)\text{-types}]\subseteq\hat{\mathcal{C}}_{/yC}\]
of objects which are mapped to contractible objects via the left adjoint $\rho_C\colon\hat{\mathcal{C}}_{/yC}\rightarrow\mathcal{E}_C$.
While we are not merely considering contractibility of monomorphisms anymore, it stays true that $\mathcal{O}_{\mathcal{C}}$-ideals are 
entirely determined by these collections $J(C)$ of objects they nullify locally.
To not overuse the notion ``generalized topologies'' which in the literature already has been applied to denote different things in different 
contexts, we will refer to such collections $J$ associated to a sheaf $\mathcal{E}$ of $\mathcal{O}_{\mathcal{C}}$-ideals as 
\emph{geometric kernels} on $\mathcal{C}$. 

\begin{definition}\label{defgeokernel}
Let $\mathcal{C}$ be a small $(\infty,1)$-category. Let $J$ be a full cartesian subfibration 
\[\xymatrix{
J\ar@{^(->}[r]\ar@/_/@{->>}[dr]_j & \sum_{C\in\mathcal{C}}\hat{\mathcal{C}}_{/yC}\ar@{->>}[d]\ar@{}[dr]|(.3){\pbs}\ar@{^(->}[r] & \hat{\mathcal{C}}^{\Delta^1}\ar@{->>}[d]^t \\
 & \mathcal{C}\ar@{^(->}[r]_y & \hat{\mathcal{C}}
}\]
over $\mathcal{C}$ such that each fiber $J(C)\subseteq\hat{\mathcal{C}}_{/yC}$ is a finitely accessible and left exact subcategory. For each 
$C\in\mathcal{C}$ let
\[J_C:=\bigcup_{g\in\mathcal{C}_{/C}}\Sigma_g[J(t(g))]\subseteq(\hat{\mathcal{C}}_{/yC})^{\Delta^1}.\]
We say that $J$ is an \emph{accessible geometric kernel} on $\mathcal{C}$ whenever each $J(C)\subseteq\hat{\mathcal{C}}_{/yC}$ is spanned 
exactly by those arrows over $yC$ which are nullified in the localization $\hat{\mathcal{C}}_{/yC}[(J_C)^{-1}]$.
\end{definition}

The localization $\hat{\mathcal{C}}_{/yC}[(J_C)^{-1}]$ in Definition~\ref{defgeokernel} exists in virtue of the local accessibility assumption on $J$: Each $J(C)$ is generated by a set $M(C)$ under filtered colimits, and so
$\hat{\mathcal{C}}_{/yC}[(J_C)^{-1}]=\hat{\mathcal{C}}_{/yC}[(M_C)^{-1}]$, where $M_C$ is defined as the according union of $M$-maps. Indeed, 
every arrow of the form $\Sigma_g(f)$ in $J_C$ is still a colimit of arrows in $M_C$ essentially because each $\Sigma_g$ is a left adjoint, 
and because every inclusion of the form $(\hat{\mathcal{C}}_{/yC})_{/g}\hookrightarrow(\hat{\mathcal{C}}_{/yC})^{\Delta^1}$ preserves 
filtered colimits (which in turn follows from the fact that filtered $\infty$-categories are weakly contractible,
\cite[Lemma 5.3.1.18]{luriehtt}).

\begin{remark}\label{remlexmods}
In \cite{as_soa}, the authors introduce a notion of lex modulators on $\infty$-toposes. Such are shown to be generators of accessible left 
exact localizations of $\hat{\mathcal{C}}$ in the following sense. The authors show that, first, every 
accessible left exact localization of $\hat{\mathcal{C}}$ is generated by some lex modulator on $\hat{\mathcal{C}}$, 
and, second, that the localization of $\hat{\mathcal{C}}$ at a lex modulator $M$ is always accessible and left exact. 
Furthermore, they show that the class $\mathcal{L}$ of arrows in $\hat{\mathcal{C}}$ which are inverted by such a 
localization is not only the strong saturation of $M$ (as is always the case), but is the saturation already 
(\cite[Theorem 3.4.2]{as_soa}). Every accessible geometric kernel on a small $\infty$-category $\mathcal{C}$ as defined above is hence 
generated by a lex modulator on $\hat{\mathcal{C}}$ by raising the index of accessibility of each $J(C)$ large enough so the generating sets
$M(C)\subset J(C)$ assemble to a pointwise left exact full cartesian subfibration $M\subset J$. In fact, generation of $J(C)$ by a small 
(left exact) set under filtered colimits can be replaced by generation under the transfinite plus construction.
\end{remark}

\begin{theorem}\label{thmgeomkernel}
Let $\mathcal{C}$ be a small $(\infty,1)$-category. Then there is a bijection between the class of accessible geometric kernels on
$\mathcal{C}$ and the class of sheaves of accessible $\mathcal{O}_{\mathcal{C}}$-ideals.
\end{theorem}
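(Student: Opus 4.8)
The plan is to establish the bijection by exhibiting mutually inverse assignments between accessible geometric kernels $J$ on $\mathcal{C}$ and sheaves of accessible $\mathcal{O}_{\mathcal{C}}$-ideals, using Theorem~\ref{thmlocaltoglobal} and Proposition~\ref{propetalemod} as the bridge. By those results, sheaves of accessible $\mathcal{O}_{\mathcal{C}}$-ideals are in bijection with equivalence classes of left exact modalities of small generation on $\hat{\mathcal{C}}$, equivalently (via Proposition~\ref{thmprelim}) with accessible left exact localizations of $\hat{\mathcal{C}}$. So it suffices to construct a bijection between accessible geometric kernels on $\mathcal{C}$ and accessible left exact localizations of $\hat{\mathcal{C}}$, and to check that the passage through the geometric kernel is compatible with the nullification description baked into Definition~\ref{defgeokernel}.

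First I would go from a sheaf $\mathcal{E}$ of accessible $\mathcal{O}_{\mathcal{C}}$-ideals to a geometric kernel. Via Proposition~\ref{propetalemod} and the Straightening equivalence, $\mathcal{E}$ corresponds to a natural reflective localization $\rho\colon\hat{\mathcal{C}}_{/\blank}\rightarrow\mathcal{E}$ restricted along $y$, i.e.\ to a $\mathcal{C}$-indexed family of accessible left exact reflective subcategories $\mathcal{E}_C\hookrightarrow\hat{\mathcal{C}}_{/yC}$ compatible with the \'etale transition maps (cartesian action $f^{\ast}$ and cocartesian action $\Sigma_f$). I would set $J(C):=\rho_C^{-1}[(-2)\text{-types}]\subseteq\hat{\mathcal{C}}_{/yC}$, the objects sent to contractible objects by $\rho_C$. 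That each $J(C)$ is left exact follows from left-exactness of $\rho_C$ (preimage of the terminal object under a left exact functor is closed under finite limits), and finite accessibility follows as in the proof of Corollary~\ref{corlexfibmods}.3, where preimages of accessible classes under accessible reflectors are shown accessible by \cite[Proposition 5.4.6.6]{luriehtt}. The compatibility conditions making $\bigcup_C J(C)$ a full cartesian subfibration over $\mathcal{C}$ come from naturality of $\rho$: $f^{\ast}$ preserves $\mathcal{E}(D)$-local objects hence sends $J(D)$ into $J(C)$. The key point to verify is that this $J$ satisfies the \emph{fixed-point} condition of Definition~\ref{defgeokernel}, namely that $J(C)$ is \emph{exactly} the class of maps over $yC$ nullified by the localization at $J_C=\bigcup_{g\in\mathcal{C}_{/C}}\Sigma_g[J(t(g))]$; this is where Corollary~\ref{corlexfibmods}.3 does the real work, since it shows the fibered reflective localization is generated fiberwise by exactly such unions of cocartesian images of generating sets, and Lemma~\ref{lemmafibmodscons}.2 identifies the $\mathcal{E}(C)$-local equivalences with the class $\mathcal{L}_{/C}$.

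Conversely, given an accessible geometric kernel $J$, I would form the fiberwise localizations $\mathcal{E}_C:=\hat{\mathcal{C}}_{/yC}[(J_C)^{-1}]$, which exist and are accessible by the discussion following Definition~\ref{defgeokernel} (each $J(C)$ is generated by a small set $M(C)$ under filtered colimits, so one localizes at the set $M_C$). Left-exactness of this localization is precisely the content of the lex-modulator machinery recalled in Remark~\ref{remlexmods}: after raising the accessibility index the generators $M(C)$ assemble into a pointwise left exact full cartesian subfibration $M\subset J$, i.e.\ a lex modulator on $\hat{\mathcal{C}}$, and by \cite[Theorem 3.4.2]{as_soa} the localization at it is accessible and left exact. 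Naturality in $\mathcal{C}$ — that the $\mathcal{E}_C$ vary along \'etale geometric morphisms — follows because $J_C$ is defined using the cocartesian action $\Sigma_g$, so that the $\Sigma$-closure built into the union makes the assignment $C\mapsto\mathcal{E}_C$ a subfunctor of $\hat{\mathcal{C}}_{/\blank}$ closed under composition; Corollary~\ref{corlexmodetale} and Remark~\ref{remcocartaction} then give the \'etale structure (conservativity of the cocartesian actions being exactly the compositional closure). Right Kan extension along $y$, using descent of $\hat{\mathcal{C}}$ as in Theorem~\ref{thmlocaltoglobal}, produces a left exact modality of small generation on $\hat{\mathcal{C}}$, hence a sheaf of accessible $\mathcal{O}_{\mathcal{C}}$-ideals.

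That the two assignments are mutually inverse is then the combination of: (i) a left exact modality is uniquely determined by the class of maps it inverts, and by Corollary~\ref{corlexfibmods}.3 the fiberwise generating sets $J_C$ determine and are determined by that class; and (ii) the fixed-point condition in Definition~\ref{defgeokernel} guarantees that starting from $J$, localizing, and taking the nullified objects returns $J(C)$ on the nose, while starting from $\mathcal{E}$ and taking $\rho_C^{-1}[(-2)\text{-types}]$ returns a kernel whose associated localization is $\mathcal{E}$ by Lemma~\ref{lemmafibmodscons}.2. The main obstacle I expect is the fixed-point/saturation bookkeeping in step (i): one must check that the union $J_C=\bigcup_g\Sigma_g[J(t(g))]$ — which mixes the cocartesian action over all slices — localizes to the same $\infty$-category as the fiberwise generating set supplied abstractly by Corollary~\ref{corlexfibmods}.3, and that the resulting class of local equivalences is the saturation (not merely the strong saturation) so that the kernel is recovered exactly; this is handled by the lex-modulator results of \cite{as_soa} invoked in Remark~\ref{remlexmods}, together with the observation (made right after Definition~\ref{defgeokernel}) that each $\Sigma_g(f)$ is a filtered colimit of $M_C$-maps because $\Sigma_g$ is a left adjoint and the slice inclusions preserve filtered colimits.
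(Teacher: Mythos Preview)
Your proposal is correct and follows essentially the same approach as the paper: both directions use the lex-modulator machinery of \cite{as_soa} (as in Remark~\ref{remlexmods}) to produce the left exact modality from $J$, define the kernel associated to $\mathcal{E}$ as the fiberwise preimage of the terminal object, and pass through Theorem~\ref{thmlocaltoglobal} and Proposition~\ref{propetalemod} to mediate between sheaves of $\mathcal{O}_{\mathcal{C}}$-ideals and left exact modalities on $\hat{\mathcal{C}}$. The only organizational difference is that the paper routes the $J\mapsto\mathcal{E}$ direction through the global modality on $\hat{\mathcal{C}}$ first and then restricts, whereas you build the fiberwise localizations $\mathcal{E}_C$ and verify the \'etale conditions before extending; and the paper verifies the fixed-point condition on $J$ indirectly by showing the two assignments are mutually inverse rather than checking it head-on as you propose via Lemma~\ref{lemmafibmodscons}.2 and Corollary~\ref{corlexfibmods}.3.
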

\begin{proof}
First, given an accessible geometric kernel $J$ on $\mathcal{C}$, as the base $\mathcal{C}$ is small we find a small full subfibration 
$M\subset J$ such that each set $M(C)\subseteq J(C)$ is closed under finite limits and generates the $\infty$-category $J(C)$ under
$\kappa$-filtered colimits 
for some $\kappa$ large enough. In particular, $M$ is a lex modulator on $\hat{\mathcal{C}}$ in the sense of \cite{as_soa}. Therefore, it 
generates a left exact modality $(\mathcal{L}_J,\mathcal{R}_J)$ on $\hat{\mathcal{C}}$ and hence an according sheaf $\mathcal{E}_J$ of
$\mathcal{O}_{\mathcal{C}}$-ideals given pointwise by localization of the class $(\mathcal{L}_J)_{/yC}$ of $\hat{\mathcal{C}}_{/yC}$ for each 
$C\in\mathcal{C}$ (via Proposition~\ref{propetalemod} and Theorem~\ref{thmlocaltoglobal}).
The two classes $J_C\subseteq(\mathcal{L}_J)_{/yC}$ induce the same localization of $\hat{\mathcal{C}}_{/y(C)}$ as for instance can be seen 
from the fact that every element in $(\mathcal{L}_J)_{/yC}$ is equivalent to a transfinite iteration of the plus-construction with respect to 
$M(C)\subset J(C)$.

Second, given a sheaf $\rho\colon\mathcal{O}_{\mathcal{C}}\rightarrow\mathcal{E}$ of $\mathcal{O}_{\mathcal{C}}$-ideals, for
$C\in\mathcal{C}$ let $J	_{\mathcal{E}}(C)\subseteq\hat{\mathcal{C}}_{/yC}$ be the class of objects which are mapped to a terminal object in 
$\mathcal{E}_C$ via the localization $\rho_{C}$. Since 
each such localization is left exact, the full subcategories $J_{\mathcal{E}}(C)\subseteq\hat{\mathcal{C}}_{/yC}$ are closed under finite 
limits. The subcategories $J_{\mathcal{E}}(C)\subseteq\hat{\mathcal{C}}_{/y(C)}$ are finitely accessible, because the left adjoints
$\rho_C$ preserve all colimits and so we may apply 
\cite[Proposition A.2.6.5]{luriehtt}. For the remaining condition to hold, it suffices to show that the two constructions are mutually 
inverse. 

And indeed, given a sheaf $\rho\colon\mathcal{O}_{\mathcal{C}}\rightarrow\mathcal{E}$ of $\mathcal{O}_{\mathcal{C}}$-ideals, to show that
the sheaves $\mathcal{E}$ and $\mathcal{E}_J$ of $\mathcal{O}_{\mathcal{C}}$-ideals coincide, it suffices to show that each
$\mathcal{E}_C\subseteq\hat{\mathcal{C}}_{/y(C)}$ consists exactly of the $J_C$-local objects. This however follows from the definition of 
$\mathcal{O}_{\mathcal{C}}$-ideals in fairly straight-forward fashion.

Vice versa, given an accessible geometric kernel $J$, the fact that $J$ and $J_{\mathcal{E}_J}$ coincide follows from the equalities
$\mathcal{E}_J(C)=\hat{\mathcal{C}}_{/yC}[(J_C)^{-1}]$ and the assumption that each $J(C)$ is spanned exactly by the objects nullified in
$\hat{\mathcal{C}}_{/yC}[(J_C)^{-1}]$.
\end{proof}

\begin{remark}\label{remlargegeokerns}
Given a locally small but potentially large $\infty$-category $\mathcal{C}$, we can define a geometric kernel $J$ on
$\mathcal{C}$ to be a full cartesian subfibration over $\mathcal{C}$ as in Definition~\ref{defgeokernel} such that each fiber
$J(C)\subseteq\hat{\mathcal{C}}_{/yC}$ is a left exact subcategory, and each $J(C)\subseteq\hat{\mathcal{C}}_{/yC}$ is spanned exactly by 
those arrows over $yC$ which are nullified in the localization $\hat{\mathcal{C}}_{/yC}[(J_C)^{-1}]$ which now has to be assumed to exist 
explicitly. Then again we obtain a bijection between geometric kernels and sheaves of $\mathcal{O}_{\mathcal{C}}$-ideals as in 
Theorem~\ref{thmgeomkernel}.
\end{remark}

\begin{remark}\label{remgeomkernelandhlvops}
Given a small $\infty$-category $\mathcal{C}$, every higher Lawvere-Tierney operator $T$ on $\hat{\mathcal{C}}$ induces a sequence 
$(T_0)_{\bullet}\colon U_{\bullet}\rightarrow U_{\bullet}$ of endomorphisms on the bases of the object classifiers
$p_{\bullet}\colon\tilde{U}_{\bullet}\rightarrow U_{\bullet}$ in $\hat{\mathcal{C}}$. The fibers 
\[\xymatrix{
J_{\bullet}\ar@{^(->}[d]\ar[r]\ar@{}[dr]|(.3){\pbs} & 1\ar@{^(->}[d]^{\{1\}} \\
U_{\bullet}\ar[r]_{(T_0)_{\bullet}} & U_{\bullet}
}\]
yield a presheaf $J:=\mathrm{colim}J_{\bullet}$ in $\hat{\mathcal{C}}$ which exactly returns the associated accessible geometric kernel. That 
means, for each object $C\in\mathcal{C}$ the value $J(C)$ is exactly the class of arrows contained in the associated geometric kernel at $C$.
In that sense, accessible geometric kernels are exactly the (unions of the) fibers of their associated higher Lawvere-Tierney operators.
\end{remark}

\begin{remark}\label{remgeomkernelandlocequiv}
Given a geometric kernel $J$ on an $\infty$-category $\mathcal{C}$, let $\hat{\mathcal{C}}\rightarrow\mathcal{E}$ be its associated 
left exact reflective localization. Then $J$ consists exactly of the $\mathcal{E}$-local equivalences with representable codomain. In 
particular, the class of $\mathcal{E}$-local equivalences in $\hat{\mathcal{C}}$ (that is, the associated congruence) is exactly the class of 
maps which are locally contained in $J$.
\end{remark}

\begin{remark}\label{remgrothtop}
Every geometric kernel $J$ on a small $\infty$-category $\mathcal{C}$ induces a Grothendieck topology
$J_{-1}:=J\cap\mathcal{M}$ on $\mathcal{C}$. Indeed, unitality and pullback-stability are immediate; transitivity is straight-forward to 
show.
The respective localizations
$\hat{\mathcal{C}}\rightarrow\mathrm{Sh}_{J_{-1}}(\mathcal{C})\rightarrow\mathrm{Sh}_{J}(\mathcal{C})$ give a 
factorization of the composite into a topological followed by a cotopological localization.

A Grothendieck topology $J_{-1}$ on a small $\infty$-category $\mathcal{C}$ in the sense of \cite[6.2.2]{luriehtt} is 
not quite a geometric kernel on $\hat{\mathcal{C}}$ itself, but each such $J_{-1}$ is a lex modulator. By Remark~\ref{remlexmods}, it thus 
generates an accessible  geometric kernel $J$ on $\mathcal{C}$ such that $J_{-1}=J\cap\mathcal{M}$, and such that they both generate the same 
left exact (topological) localization on $\hat{\mathcal{C}}$. The definition of a geometric kernel however does restrict to the definition of 
a Grothendieck topology in the monic context in the following sense. Consider the pullback
\[\xymatrix{
\sum_{C\in\mathcal{C}}\mathrm{Sub}(yC)\ar@{->>}[d]\ar[r]\ar@{}[dr]|(.3){\pbs} & \sum_{Z\in\hat{\mathcal{C}}}\mathrm{Sub}(Z)\ar@{->>}[d]^t \\
\mathcal{C}\ar[r]_y & \hat{\mathcal{C}}.
}\]
Then a Grothendieck topology on $\mathcal{C}$ is a full cartesian subfibration $J\subseteq\sum_{C\in\mathcal{C}}\mathrm{Sub}(yC)$ such that 
each $J(C)\subseteq\mathrm{Sub}(yC)$ is, first, closed under meets, and second, is spanned exactly by those monomorphisms over $yC$ which are 
nullified in the localization $\hat{\mathcal{C}}_{/yC}[(J_C)^{-1}]$ for
\[J_C:=\bigcup_{g\in\mathcal{C}_{/C}}\Sigma_g[J(t(g))]\subseteq\mathrm{Sub}(yC).\]
\end{remark}

We end this section with a proof of the higher categorical version of Proposition~\ref{proplocalglobalfirstorder} which 
justifies the denotation ``\emph{sheaves} of $\mathcal{O}_{\mathcal{C}}$-ideals'' in as much as we show that every such
sheaf $\mathcal{E}$ of $\mathcal{O}_{\mathcal{C}}$-ideals is a stack of $\infty$-toposes (and \'{e}tale maps between such) 
for the geometric kernel $J$ associated to $\mathcal{E}$.

\begin{proposition}\label{propinftysemsheaf}
Given a small $\infty$-category  $\mathcal{C}$ together with a geometric kernel $J$ on $\mathcal{C}$, the associated sheaf of
$\mathcal{O}_{\mathcal{C}}$-ideals
\[\mathcal{E}\colon\mathcal{C}^{op}\rightarrow(\mathrm{Log},\mathrm{Et})\]
is a $J$-stack in the following sense: every $J$-cover $s\colon S\rightarrow yC$ of presheaves induces an equivalence
\begin{align*}
\mathcal{E}_C & \simeq\{yC,\mathcal{E}\}\xrightarrow{s^{\ast}}\{S,\mathcal{E}\}
\end{align*}
of weighted limits. In particular, whenever the geometric kernel $J$ is accessible, the sheaf of accessible
$\mathcal{O}_{\mathcal{C}}$-ideals
\[\mathcal{E}\colon\mathcal{C}^{op}\rightarrow(\mathrm{LTop},\mathrm{Et})\]
is a $J$-stack as well.
\end{proposition}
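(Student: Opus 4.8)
The plan is to reduce the statement to the general weighted-limit characterization of stacks recalled in the Remark following Proposition~\ref{prop1stidealsheaf}, using descent of $\hat{\mathcal{C}}$. First I would fix a $J$-cover $s\colon S\hookrightarrow yC$, meaning a monomorphism of presheaves which is nullified in $\mathcal{E}_C$ (equivalently, $s$ is an $\mathcal{E}$-local equivalence with representable codomain, via Remark~\ref{remgeomkernelandlocequiv}). Since $\mathcal{O}_{\mathcal{C}}$ factors through $(\mathrm{Log},\mathrm{Et})$ and $\mathrm{Log}$ is complete with the forgetful functor to $\mathrm{CAT}_{\infty}$ creating limits (\cite[Proposition 6.3.2.3]{luriehtt}), both weighted limits $\{yC,\mathcal{E}\}$ and $\{S,\mathcal{E}\}$ exist and may be computed as weighted limits of the underlying indexed $\infty$-categories; and $\{yC,\mathcal{E}\}\simeq\mathcal{E}_C$ by the (co)Yoneda lemma for weighted limits, exactly as in the 1-categorical case.

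The key step is to identify $\{S,\mathcal{E}\}$ with an $\infty$-topos (or $\infty$-logos) of ``relative $J$-sheaves over $S$'' and then to produce the equivalence $s^{\ast}$. For this I would use that the slice functor $\hat{\mathcal{C}}_{/\blank}\colon\hat{\mathcal{C}}\to(\mathrm{LTop},\mathrm{Et})$ is continuous by descent of $\hat{\mathcal{C}}$ (the proof of Theorem~\ref{thmlocaltoglobal}), so $\hat{\mathcal{C}}_{/S}\simeq\{S,\hat{\mathcal{C}}_{/\blank}\}$, computed as the limit over the category of elements $(\mathcal{C}\downarrow S)$. Restricting the natural geometric embedding $\rho\colon\mathcal{O}_{\mathcal{C}}\to\mathcal{E}$ along the projection $(\mathcal{C}\downarrow S)^{op}\to\mathcal{C}^{op}$ and taking the limit yields a geometric embedding $\{S,\hat{\mathcal{C}}_{/\blank}\}\to\{S,\mathcal{E}\}$, whose essential image consists exactly of those objects of $\hat{\mathcal{C}}_{/S}$ which are componentwise relative $J$-sheaves; this is the direct analogue of the equalizer computation in the proof of Proposition~\ref{prop1stidealsheaf}, now with monic descent replaced by full descent. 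Since $S\to yC$ is an $\mathcal{E}$-local equivalence, pullback $s^{\ast}\colon\hat{\mathcal{C}}_{/yC}\to\hat{\mathcal{C}}_{/S}$ restricts on $\mathcal{E}$-local objects to an equivalence $\mathcal{E}_C\xrightarrow{\simeq}\{S,\mathcal{E}\}$: its inverse is $\rho_C\circ\Sigma_s$, and the unit/counit being equivalences follows from the localization identity $\rho_C s^{\ast}\iota\simeq\mathrm{id}$ together with the fact that $\Sigma_s$ of any $\mathcal{E}$-local equivalence is an $\mathcal{E}$-local equivalence (homotopy-commutativity of the squares in Remark~\ref{remcocartaction}, applied here to $\hat{\mathcal{C}}$ and the map $s$).

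Finally, the accessible case. Here I would simply observe that the whole argument is insensitive to the accessibility hypothesis: if $J$ is an accessible geometric kernel, then $\mathcal{E}$ factors through $(\mathrm{LTop},\mathrm{Et})$ by Theorem~\ref{thmgeomkernel}, the forgetful functor $(\mathrm{LTop},\mathrm{Et})\hookrightarrow(\mathrm{Log},\mathrm{Et})$ preserves and reflects limits (\cite[Proposition 6.3.5.13]{luriehtt}, \cite[Proposition 6.3.2.3]{luriehtt}), so the weighted limits $\{yC,\mathcal{E}\}$ and $\{S,\mathcal{E}\}$ computed in $\mathrm{LTop}$ agree with those in $\mathrm{Log}$, and the equivalence $s^{\ast}$ established above is therefore an equivalence of $\infty$-toposes as well.

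I expect the main obstacle to be the second step: pinning down precisely that the limit of $\rho$ over $(\mathcal{C}\downarrow S)$ is again a geometric embedding with the expected image, i.e.\ that ``componentwise relative $J$-sheaf'' is the correct description of $\{S,\mathcal{E}\}$ and that this is compatible with the $\mathcal{E}$-localization on $\hat{\mathcal{C}}_{/yC}$ under pullback along $s$. This is the point where descent of $\hat{\mathcal{C}}$ does the real work (replacing the monic-descent input of Proposition~\ref{prop1stidealsheaf}), and where one must be careful that the weighted limit is formed in the $\infty$-category of \emph{left exact cocontinuous} functors rather than merely in $\mathrm{CAT}_{\infty}$ — this is exactly what the closure of geometric embeddings (and of \'etale morphisms) under limits, invoked in the proof of Theorem~\ref{thmlocaltoglobal}, guarantees.
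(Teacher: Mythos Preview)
Your approach is the paper's: compute the weighted limit $\{S,\mathcal{E}\}$ as an ordinary limit over the $\infty$-category of elements of $S$ (the paper cites \cite[Theorem~2.33]{rovelliweights} here), use descent of $\hat{\mathcal{C}}$ to identify $\{S,\mathcal{O}_{\mathcal{C}}\}\simeq\hat{\mathcal{C}}_{/S}$ and hence $\{S,\mathcal{E}\}$ with the full subcategory $\mathcal{E}_{\mathrm{loc}}(S)\subseteq\hat{\mathcal{C}}_{/S}$ of maps locally in $\mathcal{E}$, and then show that the adjoint pair $(s_\sharp,s^\ast)\colon\mathcal{E}_{\mathrm{loc}}(S)\leftrightarrows\mathcal{E}_C$ is an equivalence when $s\in J(C)$.

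Two imprecisions are worth flagging. First, elements of a geometric kernel $J(C)$ are arbitrary maps $S\to yC$, not monomorphisms; this is one of the essential differences from the first-order situation, and while your argument never actually uses the mono hypothesis, writing $s\colon S\hookrightarrow yC$ misrepresents the setup. Second, your expression ``$\rho_C s^{\ast}\iota\simeq\mathrm{id}$'' does not typecheck ($s^{\ast}$ lands in $\hat{\mathcal{C}}_{/S}$, not in the domain of $\rho_C$), and the unit of $(s_\sharp,s^\ast)$ being an equivalence is the nontrivial half you have not quite argued: for $T\in\mathcal{R}(S)$ one must see that the gap map $T\to s^\ast\rho_C\Sigma_sT$ is an equivalence, which amounts to showing a certain square with $\mathcal{L}$-horizontals and $\mathcal{R}$-verticals is cartesian. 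This follows from 2-out-of-3 for $\mathcal{L}$ together with Lemma~\ref{lemmafibmodscons}.2, but the paper simply cites \cite[Proposition~3.3.5]{abjfsheavesI} for this step rather than spelling it out.
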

\begin{proof}
The proof is essentially the same as the proof of Proposition~\ref{proplocalglobalfirstorder}, only replacing monic 
descent by full descent.

Let $s\colon S\rightarrow yC$ be a map in $\hat{\mathcal{C}}$. Since the $\infty$-category
$(\mathrm{Log},\mathrm{Et})$ is complete (via \cite[Theorem 6.3.5.13]{luriehtt}), we first note that by
\cite[Theorem 2.33]{rovelliweights}, the weighted limit $\{S,\mathcal{E}\}$ may be computed as the ordinary 
limit
\begin{align*}
\mathrm{lim}(\mathrm{Un}(S)^{op}\overset{\mathrm{Un}(s)}{\twoheadrightarrow}\mathcal{C}^{op}\xrightarrow{\mathcal{E}}(\mathrm{Log},\mathrm{Et})),
\end{align*}
where $\mathrm{Un}(s)\colon\mathrm{Un}(S)\twoheadrightarrow\mathcal{C}$ is the Unstraightening of the presheaf $S$ and 
hence a right fibration. To compute this limit, it suffices to compute the limit of its postcomposition with the inclusion
$(\mathrm{Log},\mathrm{Et})\hookrightarrow\mathrm{Cat}_{\infty}$ also via
\cite[Proposition 6.3.2.3, Theorem 6.3.5.13]{luriehtt}. Therefore, we are to consider the (large) $\infty$-category
\begin{align}\label{equinftysemsheaf1}
\mathrm{lim}(\mathrm{Un}(S)^{op}\twoheadrightarrow\mathcal{C}^{op}\xrightarrow{\mathcal{E}}\mathrm{Cat}_{\infty}).
\end{align}
Since $\mathcal{E}\colon\mathcal{C}^{op}\rightarrow\mathrm{Cat}_{\infty}$ is a pointwise fully faithful subfunctor of 
the composition
$\hat{\mathcal{C}}_{/y(\cdot)}\colon\mathcal{C}^{op}\rightarrow\hat{\mathcal{C}}^{op}\rightarrow\mathrm{Cat}_{\infty}$, 
the limit (\ref{equinftysemsheaf1}) is a full $\infty$-subcategory of the limit
\begin{align}\label{equinftysemsheaf1.5}
\mathrm{lim}(\mathrm{Un}(S)^{op}\twoheadrightarrow\mathcal{C}^{op}\xrightarrow{\hat{\mathcal{C}}_{/y(\cdot)}}\mathrm{Cat}_{\infty}).
\end{align}
By \cite[Corollary 3.3.3.2]{luriehtt} this limit is given by the $\infty$-category $\mathrm{CSec}(\mathrm{gl})$ of 
cartesian sections of the Artin glueing ``$\mathrm{gl}$'' of the horizontal bottom composition in the following diagram.
\begin{align}\label{equinftysemsheaf2}
\begin{gathered}
\xymatrix{
\hat{{C}}\downarrow(y\circ \mathrm{Un}(s)) \ar[rr]\ar@{->>}[d]_{\mathrm{gl}}\ar@{}[dr]|(.3){\pbs} & & \mathrm{Fun}(\Delta^1,\hat{\mathcal{C}})\ar@{->>}[d]^t \\
\mathrm{Un}(S)\ar@{->>}[r]_{\mathrm{Un}(s)} & \mathcal{C}\ar[r]_{y} & \hat{\mathcal{C}}
}
\end{gathered}
\end{align}
Basically by construction, the $\infty$-category $\mathrm{CSec}(\mathrm{gl})$ in turn is equivalent to the $\infty$-
category $\mathrm{Cat}_{\infty}(\mathrm{Un}(S),\hat{\mathcal{C}})\downarrow^{\times}(y\circ \mathrm{Un}(s))$
of cartesian natural transformations over the bottom composition in (\ref{equinftysemsheaf2}). By descent of $\hat{\mathcal{C}}$, taking pointwise colimits yields an equivalence
\[\mathrm{colim}\colon\left(\mathrm{Cat}_{\infty}(\mathrm{Un}(S),\hat{\mathcal{C}})\downarrow^{\times}(y\circ \mathrm{Un}(s))\right)\xrightarrow{\simeq}\hat{\mathcal{C}}_{/\mathrm{colim}(y\circ\mathrm{Un}(s))}.\]
Because the source fibration $\hat{\mathcal{C}}_{/S}\twoheadrightarrow\hat{\mathcal{C}}$ is the Unstraightening of the representable $\hat{\mathcal{C}}(\blank,S)\colon\hat{\mathcal{C}}^{op}\rightarrow\mathcal{S}$, and because we have an equivalence $\hat{\mathcal{C}}(\blank,S)\circ y\simeq S$ due to the Yoneda Lemma, the square
\[\xymatrix{
\mathrm{Un}(S)\ar@{->>}[d]_{\mathrm{Un}(s)}\ar[r] & \hat{\mathcal{C}}_{/S}\ar@{->>}[d] \\
\mathcal{C}\ar[r]_y & \hat{\mathcal{C}}
}\]
is homotopy-cartesian. Thus, $\mathrm{Un}(S)\simeq(\mathcal{C}\downarrow S)$ is the $\infty$-category of elements of S, 
and
\[\mathrm{colim}(y\circ\mathrm{Un}(s))\simeq\mathrm{colim}(\mathrm{C}\downarrow S\hookrightarrow\hat{\mathcal{C}}_{/S}\twoheadrightarrow\hat{\mathcal{C}}).\]
This colimit is $S$ itself by \cite[Lemma 5.1.5.3]{luriehtt}, and so it follows that the limit in 
(\ref{equinftysemsheaf1.5}) is equivalent to the slice $\hat{\mathcal{C}}_{/S}$.

Accordingly, the limit (\ref{equinftysemsheaf1}) is equivalent to the full $\infty$-subcategory
$\mathcal{E}_{\mathrm{loc}}(S)\subseteq\hat{\mathcal{C}}_{/S}$, generated by the maps $T\rightarrow S$ in
$\hat{\mathcal{C}}$ which are locally contained in $\mathcal{E}$. In summary, we obtain an equivalence of the form
\[\{S,\mathcal{E}\}\simeq\mathcal{E}_{\mathrm{loc}}(S).\]
The map $s\colon S\rightarrow yC$ induces an adjoint pair
\begin{align}\label{equinftysemsheaf3}
(s_{\sharp},s^{\ast})\colon\mathcal{E}_{\mathrm{loc}}(S)\rightarrow\mathcal{E}_C
\end{align}
by Lemma~\ref{lemmarefllocbifib} (i.e.\ via the sharp-construction from \cite[Proposition 3.1.22]{abjfsheavesI}, see 
Remark~\ref{remcocartcharfacsys}) and pullback along $s$.
Now, if the map $s\colon S\rightarrow yC$ is in $J(C)$, the pair (\ref{equinftysemsheaf3}) is an adjoint equivalence by 
an application of \cite[Proposition 3.3.5]{abjfsheavesI}. This finishes the proof.
\end{proof}

\subsection{Classifications of sheaf theories over some simple examples}\label{subsecexples}

We end this section with a few simple examples of classifications of the left exact localizations of a given presheaf $\infty$-category.

\begin{example}\label{explelocsspaces}
The $\infty$-topos $\mathcal{S}$ has no non-trivial left exact localizations.
We show this in detail in two different ways to describe different proof techniques in the simplest environment. 

First, every left exact localization $\mathcal{S}\rightarrow\mathcal{E}$ can be factored through its topological 
and cotopological part $\mathcal{S}\rightarrow\mathcal{E}_{-1}\rightarrow\mathcal{E}$. Since $\mathcal{S}\simeq\hat{\ast}$, the topological 
localization $\mathcal{S}\rightarrow\mathcal{E}_{-1}$ is presented by a Grothendieck topology 
on the terminal $\infty$-category $\ast$. However the only Grothendieck topologies on $\ast$ are the two trivial ones, and so it follows that 
$\mathcal{E}_{-1}\in\{\ast,\mathcal{S}\}$. Either way, $\mathcal{E}_{-1}$ is hypercomplete, and hence has no non-trivial cotopological 
localizations.

Second, given a left exact localization $\mathcal{S}\rightarrow\mathcal{E}$, let $J$ be its associated (large) geometric kernel. Since 
$J$ is a left exact class of maps over the terminal space (and hence is but a class of spaces closed under finite limits),
and $\mathcal{E}$ is the localization of $\mathcal{S}$ at $J$, it follows that $\mathcal{E}$ is in fact the nullification of the class of 
spaces contained in $J$. Assume this nullification is not the identity. 
Then there is a non-contractible space $X$ which is contracted in $\mathcal{E}$. If $X$ is empty, then $\mathcal{E}\simeq\ast$ as (up to
homotopy) only the terminal space is $\emptyset$-local. If $X$ is non-empty, let $x\colon 1\rightarrow X$ be a point. Since $X$ is assumed to 
be non-trivial, there is some $n\geq 0$ such that $\pi_n(X,x)$ is 
non-trivial. It follows that the $n$-th loop space $\Omega_n(X,x)$ is not connected. We thus find points $p,q\in\Omega_n(X,x)$ in different 
path components, so the path space $\{p\}\times_{\Omega_n(X,x)}\{q\}$ is empty. However, since $\mathcal{S}\rightarrow\mathcal{E}$ is left 
exact, the space $\Omega_n(X,x)$ is mapped to a contractible object in $\mathcal{E}$, and hence so is the empty space. It follows again that
$\mathcal{E}\simeq \ast$. 
\end{example}

The first argument in Example~\ref{explelocsspaces} is an instance of a more general situation: whenever all (non-trivial) sieves on a small
$(\infty,1)$-category are representable, the $\infty$-topos $\hat{\mathcal{C}}$ has no non-topological left exact localizations.
This indeed applies to the following second simplest example.

\begin{example}\label{explesierpinski}
All left exact localizations of the Sierpi\'{n}ski $\infty$-topos $\mathcal{S}^{(\Delta^1)^{op}}$ are topological.
Therefore, we are to show that every topological localization of $\mathcal{S}^{(\Delta^1)^{op}}$ is hypercomplete. We note that 
every non-trivial subobject of a representable in $\mathcal{S}{(\Delta^1)^{op}}$ is again representable: the frame $\mathrm{Sub}(y(0))$ is 
trivial, while $\mathrm{Sub}(y(1))$ consists of the three subobjects given by $\emptyset$, $y(\mathrm{id}_1)$ and $y(0\rightarrow 1)$.
It follows from \cite[Remark 4.1]{rs_hgst} that every topological localization of $\mathcal{S}{(\Delta^1)^{op}}$ 
is hypercomplete. Thus, $\mathcal{S}^{(\Delta^1)^{op}}$ has exactly one non-trivial left exact localization, characterized by 
the atomic Grothendieck topology. The fact that the latter is hypercomplete also follows directly from the fact that the $\infty$-category of 
sheaves for this topology is equivalent to the $\infty$-category of spaces.
\end{example}

\begin{example}
More generally, the argument of Example~\ref{explesierpinski} for $\Delta^1$ applies to all complete linear orders $\mathbb{P}$. 
\end{example}

\begin{example}\label{explefreemonoid}
Let $F(\ast)$ be the free monoid generated by one element $g$ considered as a category with one object $\ast$. Then all left exact 
localizations of the $\infty$-topos $\widehat{F(\ast)}$ again are topological, as all non-trivial sieves on the object $\ast\in F(\ast)$ are 
generated by an arrow $g^n$ for some $n\geq 0$, and hence are representable.
\end{example}

The second argument in Example~\ref{explelocsspaces} can be generalized as follows.

\begin{example}\label{expllocsgrpds}
For any space $X$, all left exact localizations of $\hat{X}$ are topological. Indeed, given any (large) geometric kernel $J$ on $X$, and 
given any object $x\in X$, we have $\hat{X}_{/y(x)}\simeq\widehat{X_{/x}}\simeq\hat{\ast}\simeq\mathcal{S}$, and so each $J(x)\in\mathcal{S}$ 
is a left exact class of spaces. It again follows that $J(x)$ is either exactly the class of 
contractible spaces, or $J(x)$ contains the empty space. The latter case implies that the associated localization of $\mathcal{S}$ 
(equivalent to $\hat{X}_{/y(x)}[J_x^{-1}]$) is terminal. This in turn implies that
$J(x)=\mathcal{S}$, because $J(x)$ consists by assumption of all spaces contracted in the associated localization
$\mathcal{S}\rightarrow\ast$. In other words, each $J(x)\subseteq\hat{X}_{/y(x)}$ consists either exactly of the terminal objects in
$\hat{X}_{/y(x)}$, or it consists of all maps over $y(x)$. Thus, the associated sheaf theory is generated by the
Grothendieck topology $J_{-1}=J\cap\mathcal{M}$ on $X$, which is given (up to equivalence) by $J_{-1}(x)=\{1_x\}$ whenever $J(x)$ is the 
class of terminal objects, and by $J_{-1}(x)=\mathrm{Sub}(y(x))$ whenever $J(x)=\hat{X}_{/y(x)}$.
\end{example}

\begin{example}\label{expllocslocgrpds}
For any space $X$ consider the simplicial category $F(X)$ with two objects $0,1$ and hom-spaces $F(X)(0,0)=F(X)(1,1)=\Delta^0$ and
$F(X)(0,1)=X$. This includes for example the case of the free parallel pair of arrows 
$F(\partial\Delta^1)=\xymatrix{0\ar@<.5ex>[r]\ar@<-.5ex>[r] & 1}$, and the Sierpi\'{n}ski $\infty$-topos from 
Example~\ref{explesierpinski}. Then, $F(X)$ again has exactly one non-trivial Grothendieck topology $J$, given by $J(0)=\{1_0\}$
and $J(1)=\{1_1,\bigwedge_{x\in\pi_0(X)}y(x)\}$. A presheaf $G$ -- given by a pair $(G(0),G(1))$ of spaces together with a functor 
$G_1\colon X\rightarrow\mathrm{Fun}(G(1),G(0))$ -- is a sheaf for $J$ if and only if the natural map
\[(G_1(x)\mid x\in\pi_0(X))\colon G(1)\rightarrow\prod_{x\in\pi_0(X)}G(0)\]
is an equivalence. It follows that evaluation $\mathrm{ev}_0\colon\mathrm{Sh}_J(F(X))\rightarrow\mathcal{S}$ is an equivalence of
$\infty$-categories, with an inverse given by the apparent functor mapping a space $Z$ to the functor
\[(\pi_{(\cdot)})\colon X\rightarrow\pi_0(X)\rightarrow\mathrm{Fun}(\prod_{x\in\pi_0(X)}Z,Z).\]
Thus, in particular, it is again hypercomplete, and so all left exact localizations of $\widehat{F(X)}$ are topological.
However, the argument via representable modulators from the earlier examples does not apply. 
\end{example}

\begin{example}
Recall that a left exact localization $\rho\colon\mathcal{B}\rightarrow\mathcal{E}$ of an $\infty$-topos is locally connected whenever the 
associated cartesian reflective localization $\rho_{\bullet}\colon t_{\mathcal{B}}\rightarrow\mathcal{E}_{\bullet}$ has a (further) left 
adjoint in $\mathbf{Cart}(\mathcal{B})$. This implies that the functor $\rho\colon\mathcal{B}\rightarrow\mathcal{E}$ preserves 
dependent products. But every left exact localization
$\rho\colon\mathcal{B}\rightarrow\mathcal{E}$ of an $\infty$-topos $\mathcal{B}$ which 
preserves dependent products is topological -- in fact, even is the nullification at a set of subterminal objects: to every map $f\colon X\rightarrow Y$ in $\mathcal{B}$ we may assign the $(-1)$-truncated object 
$\mathrm{isequiv}(f)$ (\cite[Chapter 4]{hott}) such that $f$ is an equivalence in $\mathcal{B}$ if and only $\mathrm{isequiv}(f)$ is contractible. Whenever
$\rho\colon\mathcal{B}\rightarrow\mathcal{E}$ preserves dependent products, we have
$\rho(\mathrm{isequiv}(f))\simeq\mathrm{isequiv}(\rho(f))$ in $\mathcal{E}$. It follows that $\mathcal{E}$ is already the localization at the 
class of monomorphisms it inverts. It in particular is accessible.
\end{example}

Given the fact that all sheaf theories in the examples considered above are generated by a Grothendieck 
topology, one may wonder how complicated an $\infty$-category $\mathcal{C}$ has to be for $\hat{\mathcal{C}}$ to exhibit a non-topological 
(accessible) left exact localization. Famously, the locale $2^{S^{\infty}}$ is complicated enough already (\cite[Section 11.3]{rezkhtytps}), 
but whether it is in some sense minimal with this regard is unclear to the author, as the argument of the authors is very 
specifically designed for the locale at hand. 
 

\section{Morphisms of cartesian sites and the Comparison Lemma}\label{secsitemorphisms}

In ordinary topos theory there is a fair amount of theory regarding the category of Grothendieck sites, which reduces statements about
geometric morphisms between toposes to statements about morphisms of their according presentation as sites over a given generating category. 
In this section we consider morphisms of cartesian $(\infty,1)$-sites both in a general and in a ``topological'' context, the latter of which 
is rather poorly behaved. We exemplify this by proving a version of the Comparison Lemma for $(\infty,1)$-sites, but providing a
counter-example to a topological version (for Grothendieck topologies) of it. Therefore we 
recall that in 1-topos theory every suitably dense functor $F\colon\mathcal{C}\rightarrow\mathcal{D}$ from a small category
$\mathcal{C}$ to a category $\mathcal{D}$ equipped with a Grothendieck site $K$ induces a Grothendieck topology $J$ on $\mathcal{C}$ such 
that the restriction functor $F^{\ast}\colon\hat{\mathcal{D}}\rightarrow\hat{\mathcal{C}}$ restricts to an equivalence
$F^{\ast}\colon\mathrm{Sh}_K(\mathcal{D})\rightarrow\mathrm{Sh}_J(\mathcal{C})$ of sheaf theories (see e.g.\ \cite[Theorem 2.2.3]{elephant} 
where this is called the Comparison Lemma). We show that if we strengthen the notion of density, we obtain an according statement for left 
exact $\infty$-categories equipped with a sheaf theory (Theorem~\ref{thmcomplemma}). However, if we consider the special case of Grothendieck 
topologies and require the according ``topological'' density condition, we find counter-examples to a corresponding Comparison Lemma even in 
the best case scenario (Proposition~\ref{propcomplemmafail}). The proof implies that the canonical Grothendieck topologies relative to a left 
exact functor from \cite[Section 6.2.4]{luriehtt} generally do not induce equivalences of sheaf theories (even if the $\infty$-topos we 
start with is presented by a topological localization of a presheaf $\infty$-category to begin with).

\begin{notation}
For the following, a \emph{small $(\infty,1)$-site} is a tuple which consists of a small $\infty$-category $\mathcal{C}$ together with any of 
the equivalent structures:
\begin{itemize}
\item An accessible left exact reflective localization $\mathcal{E}$ of $\hat{\mathcal{C}}$.
\item A left exact modality $(\mathcal{L},\mathcal{R})$ of small generation on $\hat{\mathcal{C}}$.
\item A congruence $\mathcal{L}$ of small generation on $\hat{\mathcal{C}}$.
\item A higher Lawvere-Tierney operator $T$ on $\hat{\mathcal{C}}$.
\item A sheaf $\mathcal{E}$ of accessible $\mathcal{O}_{\mathcal{C}}$-ideals.
\item An (accessible) geometric kernel $J$ on $\mathcal{C}$.
\end{itemize} 
Any of these will be referred to as a sheaf theory on $\mathcal{C}$.
If not otherwise specified, a small $(\infty,1)$-site will be given by a small $\infty$-category $\mathcal{C}$ together with an accessible 
geometric kernel $J$ on $\mathcal{C}$.
A \emph{Grothendieck site} will denote a tuple $(\mathcal{C},J)$ where $\mathcal{C}$ is a small $\infty$-category and $J$ is a Grothendieck 
topology on $J$ (\cite[Section 6.2.2]{luriehtt}, \cite{abjfsheavesII}). As noted in Remark~\ref{remgrothtop}, a Grothendieck site is not
an $(\infty,1)$-site itself, but generates a unique such. The Grothendieck topologies thereby single out exactly the topological
$(\infty,1)$-sites.
\end{notation}

\begin{definition}
A \emph{(large) $(\infty,1)$-site} is a locally small $\infty$-category $\mathcal{C}$ together with either of the following equivalent data:
\begin{itemize}
\item A left exact reflective localization $\mathcal{E}$ of $\hat{\mathcal{C}}$.
\item A left exact modality $(\mathcal{L},\mathcal{R})$ on $\hat{\mathcal{C}}$.
\item A sheaf $\mathcal{E}$ of $\mathcal{O}_{\mathcal{C}}$-ideals.
\item A geometric kernel $J$ on $\mathcal{C}$ in the sense of Remark~\ref{remlargegeokerns}.
\end{itemize}
A tuple $(\mathcal{C},J)$ referred to as an $(\infty,1)$-site will be given by a locally small $\infty$-category $\mathcal{C}$ together with 
a geometric kernel $J$ on $\mathcal{C}$. Its associated left exact localization will be denoted by
$\hat{\mathcal{C}}\rightarrow\mathrm{Sh}_J(\mathcal{C})$.
\end{definition}

As every small $(\infty,1)$-site is a large $(\infty,1)$-site, in the rest of this section we will use the unspecified term
``$(\infty,1)$-site'' to refer to large $(\infty,1)$-sites.

\begin{definition}\label{defmorphismsites}
Given a small $(\infty,1)$-site $(\mathcal{C},J)$ and an $(\infty,1)$-site $(\mathcal{D},K)$, a functor
$F\colon\mathcal{C}\rightarrow\mathcal{D}$ is \emph{cover-preserving} if the associated left Kan extension
$F_!\colon\hat{\mathcal{C}}\rightarrow\hat{\mathcal{D}}$ takes all elements in $J(C)$ for any $C\in\mathcal{C}$ to elements in $K(F(C))$.
Whenever $\mathcal{C}$ and $\mathcal{D}$ are left exact $\infty$-categories, a \emph{morphism
$F\colon(\mathcal{C},J)\rightarrow (\mathcal{D},K)$ of cartesian sites} is a left exact cover-preserving functor. We denote the full subspace 
of morphisms of cartesian sites in $\mathrm{Fun}^{\mathrm{lex}}(\mathcal{C},\mathcal{D})$ by
$\mathrm{CSite}((\mathcal{C},J),(\mathcal{D},K))$. In particular, restricting this definition to such morphisms with small codomain
$\mathcal{D}$, this defines the $\infty$-category $\mathrm{CSite}$ of small cartesian $(\infty,1)$-sites. We say that
a functor of cartesian sites $F\colon(\mathcal{C},J)\rightarrow(\mathcal{D},K)$ is an \emph{equivalence of cartesian sites} if 
the associated geometric functor $F^{\ast}\colon\mathrm{Sh}_K(\mathcal{D})\rightarrow\mathrm{Sh}_J(\mathcal{C})$ is an equivalence of
$\infty$-categories.
\end{definition}

\begin{definition}
Say, a cartesian $(\infty,1)$-site $(\mathcal{D},K)$ \emph{has a presentation with small base} if there is an equivalence
$F\colon(\mathcal{C},J)\rightarrow(\mathcal{D},K)$ of cartesian sites such that $\mathcal{C}$ is a small $\infty$-category. The 
$(\infty,1)$-site $(\mathcal{D},K)$ is \emph{essentially small} if furthermore the geometric kernel $J$ is accessible.
\end{definition}

\begin{remark}
The distinction between presentations with small base and essential smallness arises, because it is open to date whether every left exact 
reflective localization of an $\infty$-topos is accessible (\cite[Remark 6.1.0.5]{luriehtt}). If it turns out that accessibility is indeed 
derivable, the distinction can be ignored. The same applies to the distinction between geometric kernels and accessible geometric kernels on 
a small $\infty$-category $\mathcal{C}$, etc.
\end{remark}

\begin{definition}\label{defdensemorphismsites}
Given an $(\infty,1)$-site $(\mathcal{D},K)$, a functor $F\colon\mathcal{C}\rightarrow\mathcal{D}$ out of a small $\infty$-category
$\mathcal{C}$ is \emph{$K$-dense} if for all 
objects $D\in\mathcal{D}$ the gap map out of the canonical colimit
\begin{align}\label{diagdefdense}
\mathrm{colim}\left(F\downarrow D\xrightarrow{\pi_D}\mathcal{D}_{/D}\xrightarrow{y_{/D}}\hat{\mathcal{D}}_{/yD}\right)\rightarrow yD
\end{align}
is contained in $K(D)$. Given a Grothendieck site $(\mathcal{D},K)$, a functor $F\colon\mathcal{C}\rightarrow\mathcal{D}$ is
\emph{topologically $K$-dense} if the $(-1)$-truncation of the colimit (\ref{diagdefdense}) is a $K$-cover.
\end{definition}

\begin{remark}
To relate these notions to the conventional synonyms for Grothendieck sites, we first note that every functor
$F\colon\mathcal{C}\rightarrow\mathcal{D}$ out of a small $\infty$-category induces an adjoint pair
$(F_!,F^{\ast})\colon\hat{\mathcal{C}}\rightarrow\hat{\mathcal{D}}$ on presheaf $\infty$-categories. As every adjoint pair between
$\infty$-categories induces adjoint pairs of according slices, and as the right adjoint $F^{\ast}$ preserves monomorphisms, for every object $C\in\mathcal{C}$ we obtain an adjoint pair
\[(F_!)_{-1}\colon\mathrm{Sub}(yC)\rightarrow\mathrm{Sub}(yFC)\colon F^{\ast},\]
where the left adjoint maps a subobject $A\hookrightarrow yC$ to its image under $F_!$ and then $(-1)$-truncates. The right adjoint maps a 
subobject $B\hookrightarrow yFC$ to the subobject $F^{\ast}B\hookrightarrow F^{\ast}y(FC)$ and then pulls it back along the unit at $yC$. 
One says that $(F_!)_{-1}(S\hookrightarrow yC)$ is the \emph{sieve generated by $S\hookrightarrow yC$ under $F$}, that is, the sieve 
generated by the set $\{F(s)|s\in S\}$ on $FC$ in $\mathcal{D}$ .

Now, whenever $(\mathcal{C},J)$ is a Grothendieck site and $(\mathcal{D},K)$ is an $(\infty,1)$-site, then a functor 
$F\colon\mathcal{C}\rightarrow \mathcal{D}$ is cover-preserving when considered as functor between associated $(\infty,1)$-sites if and only 
if every $J$-cover is mapped to an element in $K$ (as the left adjoint $F_!$ preserves colimits).
Whenever $(\mathcal{D},K)$ is a Grothendieck site itself and the functor $F$ is cover-preserving, then it is cover-preserving in the 
conventional sense: i.e.\ for every $J$-cover $S$ on an object $C$ the sieve generated by $S$ 
under $F$ is a $K$-cover on $FC$. Indeed, this follows from the fact that for any given sieve $S\hookrightarrow yC$ in $J(C)$ the map
$F_!(S)\rightarrow yFC$ is a $\mathrm{Sh}_K(\mathcal{D})$-local equivalence by assumption, and hence so is its $(-1)$-truncation. That means, $(F_!)_{-1}(S\hookrightarrow yC)$ is contained in $K(F(C))$.
 The other direction holds whenever $F$ is a 
morphism of cartesian sites, since in this case the left adjoint $F_!$ preserves monomorphisms as well and so the sieve generated by a
$J$-cover $S$ under $F$ is just $F_!(S)$. Generally however, a cover-preserving morphism of Grothendieck sites in the conventional sense only 
maps $J$-covers $S\hookrightarrow yC$ to effective epimorphisms in $\mathrm{Sh}_K(\mathcal{D})$ rather than to equivalences. An according 
remark applies to the two notions of density from Definition~\ref{defdensemorphismsites}.
\end{remark}

Clearly, the definition of a cover-preserving functor $F\colon(\mathcal{C},J)\rightarrow (\mathcal{D},K)$ between $(\infty,1)$-sites is 
defined in such a way that the induced precomposition $F^{\ast}\colon\hat{\mathcal{D}}\rightarrow\hat{\mathcal{C}}$ of presheaves restricts to a precomposition
\begin{align}\label{equcovpresfctr}
F^{\ast}\colon\mathrm{Sh}_K(\mathcal{D})\rightarrow\mathrm{Sh}_J(\mathcal{C})
\end{align}
of according sheaf theories. The same goes for the notion of cover-preserving functors of Grothendieck sites.

\begin{theorem}[Comparison Lemma]\label{thmcomplemma}
Let $(\mathcal{D},K)$ be a cartesian $(\infty,1)$-site, $\mathcal{C}$ be a small left exact $\infty$-category and
$F\colon\mathcal{C}\rightarrow\mathcal{D}$ be a cartesian functor.
\begin{enumerate}
\item Whenever the localization $\mathrm{Sh}_K(\mathcal{D})$ is locally presentable, the ordered class of accessible geometric kernels $J$ on
$\mathcal{C}$ such that the functor $F\colon(\mathcal{C},J)\rightarrow(\mathcal{D},K)$ is cover-preserving has a maximal element $J_K$.
\item Whenever $F$ is fully faithful and $K$-dense, there is a geometric kernel $J_K$ on $\mathcal{C}$ such that
$F\colon(\mathcal{C},J_K)\rightarrow(\mathcal{D},K)$ is an equivalence of cartesian $(\infty,1)$-sites.
\item Whenever $F$ is fully faithful and $K$-dense, and $\mathrm{Sh}_K(\mathcal{D})$ is locally presentable as well, the $(\infty,1)$-sites
$(\mathcal{C},J_K)$ from Parts 1 and 2 coincide.
\end{enumerate}
\end{theorem}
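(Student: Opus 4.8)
The three parts have quite different flavors, so I would handle them in the order 1, 2, 3, and extract the construction of $J_K$ differently in each of the first two parts before matching them in Part~3.

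For Part~1, the idea is to build the maximal cover-preserving geometric kernel by pulling back the congruence of $\mathrm{Sh}_K(\mathcal{D})$ along $F_!$. Concretely, the associated left exact localization $\hat{\mathcal{D}}\to\mathrm{Sh}_K(\mathcal{D})$ has a congruence $\mathcal{L}_K$ of maps it inverts; set $\mathcal{L}_J := F_!^{-1}[\mathcal{L}_K]\subseteq\hat{\mathcal{C}}^{\Delta^1}$. Since $F_!$ preserves colimits and finite limits (the latter because $F$ is cartesian, so $F_!$ is left exact on presheaf categories), $\mathcal{L}_J$ is pullback-stable and closed under colimits and finite limits, hence a congruence on $\hat{\mathcal{C}}$. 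Accessibility is the delicate point: I would invoke local presentability of $\mathrm{Sh}_K(\mathcal{D})$ to write $\mathcal{L}_K$ as generated by a set, then argue (using that $F_!$ is accessible, as a left adjoint between presentable categories, together with \cite[Corollary A.2.6.5]{luriehtt} as in the proof of Corollary~\ref{corlexfibmods}.3) that the preimage $\mathcal{L}_J$ is again of small generation. By Proposition~\ref{thmprelim} and Theorem~\ref{thmgeomkernel}, this congruence corresponds to an accessible geometric kernel $J_K$ on $\mathcal{C}$, and by construction $F_!$ sends $J$-covers to $K$-covers, i.e.\ $F$ is cover-preserving for $(\mathcal{C},J_K)$. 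Maximality is then immediate: any accessible geometric kernel $J$ with $F$ cover-preserving has its congruence $\mathcal{L}_{J}$ mapped by $F_!$ into $\mathcal{L}_K$, hence $\mathcal{L}_{J}\subseteq\mathcal{L}_J$, hence $J\subseteq J_K$ as ordered classes.

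For Part~2, I would run the classical Comparison-Lemma argument in the $\infty$-categorical setting. Since $F$ is fully faithful, the adjoint pair $(F_!,F^{\ast})$ on presheaves has $F^{\ast}F_!\simeq\mathrm{id}$, i.e.\ $F_!$ is fully faithful. The $K$-density hypothesis says precisely that for each $D$ the canonical map from $\mathrm{colim}(F\downarrow D\to\hat{\mathcal{D}}_{/yD})$ to $yD$ lies in $K(D)$; this exhibits every representable $yD$ as a $\mathrm{Sh}_K(\mathcal{D})$-local colimit of objects in the image of $F_!$, so that $F_!$ is "$K$-dense" in the sense that its image generates $\mathrm{Sh}_K(\mathcal{D})$ under $K$-local colimits. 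I would then define $J_K$ to be the geometric kernel associated (via Theorem~\ref{thmgeomkernel}, in the possibly-large form of Remark~\ref{remlargegeokerns}) to the left exact localization $\hat{\mathcal{C}}\to\mathcal{E}$ where $\mathcal{E}$ is the reflective subcategory of $\hat{\mathcal{C}}$ spanned by objects $X$ such that $F_!X$ is $K$-local — equivalently, localize $\hat{\mathcal{C}}$ at $F_!^{-1}[\mathcal{L}_K]$ exactly as in Part~1 but now without demanding accessibility. The key point to verify is that $F^{\ast}$ restricts to an equivalence $\mathrm{Sh}_K(\mathcal{D})\xrightarrow{\simeq}\mathrm{Sh}_{J_K}(\mathcal{C})$: fully faithfulness of the restriction follows because $F_!$ is fully faithful and the counit $F_!F^{\ast}\to\mathrm{id}$ becomes an equivalence after $K$-sheafification by $K$-density (this is where the colimit formula \eqref{diagdefdense} is used, essentially as in the proof of Proposition~\ref{propinftysemsheaf}); essential surjectivity follows because every $J_K$-sheaf $X$ on $\mathcal{C}$ is, by definition, one with $F_!X$ $K$-local, and one checks $F^{\ast}F_!X\simeq X$ for such $X$ using $F^{\ast}F_!\simeq\mathrm{id}$. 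Left exactness of $F^{\ast}$ is automatic (it is a right adjoint to the left exact $F_!$, and also preserves finite limits directly), so $F$ is an equivalence of cartesian sites.

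For Part~3, the task is just to see that the two constructions of $J_K$ agree when $\mathrm{Sh}_K(\mathcal{D})$ is locally presentable. But both are defined as the geometric kernel corresponding to the localization of $\hat{\mathcal{C}}$ at $\mathcal{L}_J:=F_!^{-1}[\mathcal{L}_K]$ — in Part~1 one additionally observes this localization is accessible (using local presentability), and in Part~2 one additionally observes $F^{\ast}$ is an equivalence (using $K$-density). Since a left exact reflective localization is determined by the class of maps it inverts (Proposition~\ref{thmprelim}) and geometric kernels are in bijection with such localizations (Theorem~\ref{thmgeomkernel}, Remark~\ref{remlargegeokerns}), the two $(\infty,1)$-sites $(\mathcal{C},J_K)$ literally coincide. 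The main obstacle I anticipate is the accessibility bookkeeping in Part~1 — making precise that $F_!^{-1}$ of a set-generated congruence is set-generated, which is not formal and will require care with the accessibility rank of $F_!$ and an application of the small object argument machinery of \cite{as_soa} along the lines of the proof of Corollary~\ref{corlexfibmods}.3; the equivalence claim in Part~2 is conceptually the heart but follows a well-worn template once $K$-density is correctly exploited.
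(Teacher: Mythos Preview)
Your plan is essentially the paper's approach, with the same key ingredients in each part: in Part~1 the congruence $F_!^{-1}[\mathcal{L}_K]$ pulled back along the left exact left adjoint $\rho\circ F_!$; in Part~2 the fully faithfulness of $F_!$ together with the observation that the counit $\varepsilon_S\colon F_!F^{\ast}S\to S$ lies in $\mathcal{L}_K$ (as a colimit of the $K$-covers $\varepsilon_{yD}$ coming from density); and in Part~3 matching the two.

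Two places deserve a sharper touch. In Part~1, rather than re-deriving small generation of $F_!^{-1}[\mathcal{L}_K]$ by hand via accessibility bookkeeping, the paper simply invokes \cite[Lemma~4.2.7]{abjfsheavesI}, which says exactly that the preimage of a left exact accessible localization along a left exact left adjoint is a congruence of small generation; this is the clean reference you are looking for. In Part~2, you assert that the localization of $\hat{\mathcal{C}}$ at the (large) class $F_!^{-1}[\mathcal{L}_K]$ exists, but this is not free without presentability. The paper handles this by observing that once you know $F^{\ast}[\mathcal{L}_K]=F_!^{-1}[\mathcal{L}_K]=:\mathcal{L}_F$ (via the counit argument and fully faithfulness of $F_!$), the pair $(\mathcal{L}_F,\ \text{retracts of }F^{\ast}[\mathcal{R}_K])$ is itself a left exact modality on $\hat{\mathcal{C}}$: every map factors by applying $F_!$, factoring in $\hat{\mathcal{D}}$, and applying $F^{\ast}$, using $F^{\ast}F_!\simeq\mathrm{id}$. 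Equivalently, the reflector is simply $F^{\ast}\rho_K F_!$; either way you should say why the reflection exists. Your claimed equivalence ``$J_K$-sheaf $\Leftrightarrow$ $F_!X$ is $K$-local'' is then a consequence, not the definition.

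Your Part~3 is actually tidier than the paper's: the paper argues one inclusion from maximality and the other by writing $X\simeq F^{\ast}(\rho F_!X)$, whereas you correctly note that both $J_K$'s are by construction the geometric kernel associated to the same congruence $F_!^{-1}[\mathcal{L}_K]$, so they coincide on the nose.
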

\begin{proof}
For Part 1,  given a left exact functor $F\colon\mathcal{C}\rightarrow\mathcal{D}$ and an object $C\in\mathcal{C}$, the 
left Kan extension $F_!\colon\hat{\mathcal{C}}\rightarrow\hat{\mathcal{D}}$ is left exact and hence induces a left exact left adjoint 
composition
\[\hat{\mathcal{C}}\xrightarrow{F_!}\hat{\mathcal{D}}\xrightarrow{\rho}\mathrm{Sh}_K(\mathcal{D})\]
of $\infty$-topoi. It follows from \cite[Lemma 4.2.7]{abjfsheavesI} then that the preimage of this precomposition is a congruence
$\mathcal{L}_K$ of small generation. If we denote the associated accessible geometric kernel on $\mathcal{C}$ by $J_K$,
then $F_!$ maps $J_K$-covers to $K$-covers by construction. Thus, the right adjoint
$F^{\ast}\colon\hat{\mathcal{D}}\rightarrow\hat{\mathcal{C}}$ restricts to a functor
\[F^{\ast}\colon\mathrm{Sh}_K(\mathcal{D})\rightarrow\mathrm{Sh}_{J_K}(\mathcal{C})\]
with left exact left adjoint $\rho F_!\iota\colon\mathrm{Sh}_{J_K}(\mathcal{C})\rightarrow\mathrm{Sh}_K(\mathcal{D})$.
The fact that $J_K$ is the maximal geometric kernel $J$ with this property is similarly straight-forward: any object $f\in J(C)$ is left 
orthogonal to $F^{\ast}(X)$ for all $K$-sheaves $X$ on
$\mathcal{D}$. That means that the map $F_!(f)\in\hat{\mathcal{D}}_{/y((C))}$ is left orthogonal to all $K$-sheaves on $\mathcal{D}$, which 
which in turn implies that $F_!(f)$ is contained in $K(F(C))$. Thus, by definition, $f\in J_K(C)$, and so $J\subseteq J_K$. This finishes 
Part 1.

Now, for Part 2, let $(\mathcal{L}_K,\mathcal{R}_K)$ be the left exact modality on $\hat{\mathcal{D}}$ associated to $K$. We first show that 
under the given assumptions the counit $\epsilon_S\colon F_! F^{\ast}S\rightarrow S$ is contained in $\mathcal{L}_K$ for every presheaf 
$S\in\hat{\mathcal{D}}$.
Therefore, we note that the counit $\varepsilon_S\colon F_!F^{\ast}S\rightarrow S$ is the gap map 
\[\mathrm{colim}(yF\downarrow S\xrightarrow{\simeq}\mathcal{C}\downarrow F^{\ast}S\rightarrow\mathcal{C}\xrightarrow{F}\mathcal{D}\xrightarrow{y}\hat{\mathcal{D}})\xrightarrow{\varepsilon_S} S\]
which represents the associated canonical cocone. By a simple rearrangement of indexing $\infty$-categories, one sees that the map
$\varepsilon_S$ itself is the colimit of the composition
\[\mathcal{D}\downarrow S\rightarrow\mathcal{D}\xrightarrow{\varepsilon y}\hat{\mathcal{D}}^{\Delta^1}.\]
Thus, the counit $\varepsilon_S$ is a colimit of the counits
\[\varepsilon_{yD}\colon F_!F^{\ast}yD\rightarrow yD\]
for objects $D\in\mathcal{D}$. But by virtue of $K$-density of the functor $F$ and fully faithfulness of the Yoneda embedding, each of these 
maps is contained in $K(D)$. Thus, the colimit $\varepsilon_S$ is contained in $\mathcal{L}_K$. 	

Thus, for every object $D\in\mathcal{S}$ and every map $S\rightarrow yD$, the horizontal maps of the counit
\[\xymatrix{
F_!F^{\ast}S\ar[d]_{F_!F^{\ast}s}\ar[r]^(.6){\varepsilon_S} & S\ar[d]^s\\
F_!F^{\ast}yD\ar[r]_(.6){\varepsilon_D} & yD 
}\]
are $\mathcal{L}_K$-maps. By 2-out-of-3, it follows that $S\rightarrow yD$ is contained in $K$ if and only if
$F_! F^{\ast}S\rightarrow F_! F^{\ast}yD$ is contained in $\mathcal{L}_K$. Since the left adjoint $F_!$ is fully faithful, it follows that the 
preimage $\mathcal{L}_{F}=F_!^{-1}[\mathcal{L}_{K}]$ and the image $F^{\ast}[\mathcal{L}_K]$ in $\hat{\mathcal{C}}$ coincide.
In particular, we obtain a left exact modality on $\hat{\mathcal{C}}$ given by the class $\mathcal{L}_F$ and the class of retracts of the 
image $F^{\ast}[\mathcal{R}_K]$. This pair induces a left exact reflective localization
$\hat{\mathcal{C}}\rightarrow\mathrm{Sh}_{\mathcal{L}_F}(\mathcal{C})$ together with a factorization
\begin{align}\label{diagresfac}
\begin{gathered}
\xymatrix{
\hat{\mathcal{C}} & \hat{\mathcal{D}}\ar[l]_{F^{\ast}} \\
\mathrm{Sh}_{\mathcal{L}_F}(\mathcal{C})\ar@{^(->}[u]_{\iota} & \mathrm{Sh}_K(\mathcal{D}).\ar[l]_{F^{\ast}}\ar@{^(->}[u]_{\iota}
}
\end{gathered}
\end{align}
To show that the bottom horizontal functor is part of an equivalence, one can basically copy the argument via the right Kan extension 
$F_{\ast}\colon\mathrm{Sh}_{\mathcal{L}_F}(\mathcal{C})\rightarrow\mathrm{Sh}_K(\mathcal{D})$ from the proof of
\cite[Theorem 2.2.3]{elephant}, or directly work with the pair $(F_!, F^{\ast})$ at hand instead. Indeed, the left adjoint
$F_!\colon\hat{\mathcal{C}}\rightarrow\hat{\mathcal{D}}$ induces a left adjoint $\rho F_!\colon\mathrm{Sh}_{\mathcal{L}_F}(\mathcal{C})\rightarrow\mathrm{Sh}_K(\mathcal{D})$ to $F^{\ast}$ via \cite[Proposition 5.5.4.20]{luriehtt}. For every presheaf $X\in\hat{\mathcal{C}}$, we obtain a triangle
\begin{align}\label{diagresfac2}
\begin{gathered}
\xymatrix{
X\ar[r]_{\eta_X}^{\simeq}\ar[dr] & F^{\ast}F_! X\ar[d]^{F^{\ast}(\eta_{F_!(X)})} \\
 & F^{\ast}\iota\rho F_! X,
}
\end{gathered}
\end{align}
where the top arrow is an equivalence by fully faithfulness of $F_!$, and the vertical map is contained in
$\mathcal{L}_F=F^{\ast}[\mathcal{L}_K]$. The bottom vertex is always an $\mathcal{L}_F$-sheaf by the factorization in (\ref{diagresfac}). 
Thus, whenever $X$ is an $\mathcal{L}_F$-sheaf itself, the composition in (\ref{diagresfac2}) is an equivalence, and thus
$F^{\ast}\circ(\rho F_!)\simeq 1$. Vice versa, we have already argued that the counit
\[\varepsilon_S\colon F_! F^{\ast}(S)\rightarrow S\]
is contained in $\mathcal{L}_K$ for all $S\in\hat{\mathcal{D}}$. Thus, it is mapped to an equivalence under the sheafification
$\rho\colon\hat{\mathcal{D}}\rightarrow\mathrm{Sh}_K(\mathcal{D})$. In particular, the map $\rho(\varepsilon_S)\colon\rho F_! F^{\ast}(S)\rightarrow S$ is an equivalence for all $S\in\mathrm{Sh}_K(\mathcal{D})$.

Lastly, for Part 3, let $(\mathcal{C},(J_K)_1)$ and $(\mathcal{C},(J_K)_2)$ be the cartesian sites obtained from Parts 1 and 
2, respectively. By virtue of the maximality property of $(J_K)_1$, we obtain an inclusion
$\mathrm{Sh}_{(J_K)_1}(\mathcal{C})\subseteq\mathrm{Sh}_{(J_K)_2}(\mathcal{C})$. But for every $X\in\mathrm{Sh}_{(J_K)_2}{\mathcal{C}}$, we 
have $F^{\ast}(\rho F_!)(X)\simeq X$ by Part 2. As $(\rho F_!)(X)\in\mathrm{Sh}_K(\mathcal{D})$, and
$F^{\ast}\colon\mathrm{Sh}_K(\mathcal{D})\rightarrow\hat{\mathcal{C}}$ factors through $\mathrm{Sh}_{(J_K)_1}{\mathcal{C}}$, we have
$X\in\mathrm{Sh}_{(J_K)_1}{\mathcal{C}}$. Thus, the two sheaf theories coincide.

\end{proof}

\begin{remark}
The assumption of local presentability of the localization $\mathrm{Sh}_K(\mathcal{D})$ in Theorem~\ref{thmcomplemma}.1 is a means to assure 
that the associated localization $\mathrm{Sh}_{J_K}(\mathcal{C})$ exists in the first place. An argument without any such assumption would be 
possible for instance when given a factorization of the geometric morphism
$(\rho F_!,\iota F^{\ast})\colon\hat{\mathcal{C}}\rightarrow\mathrm{Sh}_K(\mathcal{D})$ of $\infty$-logoi into a ``surjection'' followed by 
an embedding (in the sense of \cite[Chapter A.4.2]{elephant} for ordinary toposes). Given the Beck Monadicity Theorem~\ref{thmbeckmonadicity}, 
a construction of (surjection,embedding)-factorizations for $\infty$-logoi is almost a straight-forward matter, however there is a critical 
caveat: in order for the usual construction to succeed, the left adjoint of any geometric morphism $(f^{\ast},f_{\ast})$ between $\infty$-
logoi ought not only to be left exact but furthermore to preserve limits of $f^{\ast}$-split cosimplicial objects. Such however are limits 
of infinite index. The reason this problem does not arise in the context of ordinary topos theory is that here totalizations of cosimplicial 
objects are simply equalizers, and being finite limits such are automatically preserved by the left adjoints of geometric morphisms. The 
existence of (surjection,embedding)-factorizations for geometric morphisms between $\infty$-logoi (or $\infty$-toposes for that matter)
is hence unclear to the author.
\end{remark}

The assumption of local presentability of the localization $\mathrm{Sh}_K(\mathcal{D})$ in Theorem~\ref{thmcomplemma}.1 is not the only way 
to assure the existence of the associated localization $\mathrm{Sh}_{J_K}(\mathcal{C})$ however. Whenever $(\mathcal{D},K)$ is instead 
assumed to be a (large) Grothendieck site, a similar argument yields the existence of a (maximal) Grothendieck topology $J_K$ on
$\mathcal{C}$ such that the cartesian functor $F\colon(\mathcal{C},J_K)\rightarrow(\mathcal{D},K)$ is cover-preserving. In the special case 
where $(\mathcal{D},\mathrm{Geo}_{-1})$ is a semi-topos $\mathcal{D}$ equipped with its ordinary geometric topology $\mathrm{Geo}_{-1}$, this 
is the ``canonical Grothendieck topology`` $J$ on $\mathcal{C}$ with respect to $F$ constructed in \cite[Section 6.2.4]{luriehtt}. If one 
furthermore replaces $K$-density with topological $K$-density in Theorem~\ref{thmcomplemma}.2, one also obtains that
\[F^{\ast}\colon\mathrm{Sh}_K(\mathcal{D})\rightarrow\mathrm{Sh}_{J_K}(\mathcal{C})\]
is a geometric embedding, which in ordinary topos theory is part of \cite[Section C.2.2]{elephant}. In fact, for 1-toposes in this situation 
(even with less assumptions), the functor $F\colon(\mathcal{C},J_K)\rightarrow(\mathcal{D},K)$ is an equivalence of cartesian sites.
In \cite[Theorem C.2.2.3]{elephant}, this is called the Comparison Lemma. Further relaxations of conditions on the diagram 
$F\colon\mathcal{C}\rightarrow\mathcal{D}$ are considered for example in
\cite[Section 11]{shulmanexact}).
However, we will give a counter-example to the Comparison Lemma for Grothendieck topologies in 
the corresponding context of topological density in Proposition~\ref{propcomplemmafail}. 

Therefore, we recall that a small $\infty$-category $\mathcal{E}$ is called $\kappa$-geometric (for some 
cardinal $\kappa$) in \cite[Section 5]{rs_hgst} whenever it has finite limits and universal $\kappa$-small colimits. Every $\kappa$-geometric 
$\infty$-category $\mathcal{E}$ can be augmented to its canonical ``higher geometric'' small cartesian $(\infty,1)$-site
$(\mathcal{E},\mathrm{Geo}_{\kappa})$, whose associated sheaves are exactly those presheaves $\mathcal{E}^{op}\rightarrow\mathcal{S}$ which 
take colimits of $\kappa$-small ``higher covering diagrams'' in $\mathcal{E}$ to limits in $\mathcal{S}$. We say that a (potentially large 
but locally small) $\infty$-category $\mathcal{E}$ is geometric if it is $\kappa$-geometric for all cardinals $\kappa$. 
Accordingly, the $\infty$-category $\mathrm{Sh}_{\mathrm{Geo}}(\mathcal{E})\subseteq\hat{\mathcal{E}}$ of higher geometric sheaves consists 
exactly of those presheaves (of small spaces) which take colimits of small higher covering diagrams in $\mathcal{E}$ to limits in
$\mathcal{S}$. In the following we show that $(\mathcal{E},\mathrm{Geo})$ is a (large) cartesian site as well whenever $\mathcal{E}$ is an
$\infty$-topos. This higher geometric $(\infty,1)$-site is the canonical one associated to an $\infty$-topos $\mathcal{E}$ in the precise 
sense that the Yoneda embedding of $\mathcal{E}$ factors through an equivalence
$y\colon\mathcal{E}\xrightarrow{\simeq}\mathrm{Sh}_{\mathrm{Geo}}(\mathcal{E})$ (\cite[Theorem 5.19]{rs_hgst}). 

For the following, as is often practice, for a given $\infty$-topos $\mathcal{E}$ we will assume that there is a Grothendieck 
universe $\mathcal{V}$ (e.g.\ given by an inaccessible cardinal $\nu$) that $\mathcal{E}$ belongs to. We thus let $\mathcal{S}^+$ denote the 
$\infty$-category of all spaces, and identify $\mathcal{S}$ with the $\infty$-category $\mathcal{S}^+_{\nu}$ of $\nu$-small spaces.

\begin{corollary}\label{corgeoesssmall}
Let $\mathcal{E}$ be an $\infty$-topos. Then the inclusion $\mathrm{Sh}_{\mathrm{Geo}}(\mathcal{E})\hookrightarrow\hat{\mathcal{E}}$ has a 
left exact left adjoint. Furthermore, the associated cartesian site $(\mathcal{E},\mathrm{Geo})$ of higher geometric sheaves on $\mathcal{E}$ 
is essentially small.
\end{corollary}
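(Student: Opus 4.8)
The plan is to deduce Corollary~\ref{corgeoesssmall} from the Comparison Lemma (Theorem~\ref{thmcomplemma}) applied to the Yoneda embedding $y\colon\mathcal{E}\to\mathrm{Sh}_{\mathrm{Geo}}(\mathcal{E})$ of an $\infty$-topos $\mathcal{E}$. First I would recall from \cite[Theorem 5.19]{rs_hgst} that for an $\infty$-topos $\mathcal{E}$, the Yoneda embedding factors through an equivalence $\mathcal{E}\xrightarrow{\simeq}\mathrm{Sh}_{\mathrm{Geo}}(\mathcal{E})$; in particular $\mathrm{Sh}_{\mathrm{Geo}}(\mathcal{E})$ is an $\infty$-topos, hence locally presentable. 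This already makes Theorem~\ref{thmcomplemma}.1 applicable to \emph{any} small left exact $\mathcal{C}$ with a cartesian functor $F\colon\mathcal{C}\to\mathcal{E}$. The second thing to observe is that, with $\mathcal{D}=\mathcal{E}$ and $K=\mathrm{Geo}$, the Yoneda embedding $y\colon\mathcal{E}\to\mathcal{E}$ is itself trivially $K$-dense (the canonical colimit in~(\ref{diagdefdense}) is $yD$ on the nose, so the gap map is an equivalence and thus lies in every geometric kernel), so Parts 2 and 3 of the Comparison Lemma would apply to any suitably dense \emph{small} full subcategory of $\mathcal{E}$ as well.

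The core step is to produce a small left exact $\infty$-category $\mathcal{C}$ together with a fully faithful, $\mathrm{Geo}$-dense, cartesian functor $F\colon\mathcal{C}\to\mathcal{E}$. Here I would take $\mathcal{C}$ to be a small full subcategory of $\mathcal{E}$ closed under finite limits and large enough to generate $\mathcal{E}$ under colimits: since $\mathcal{E}$ is presentable, pick a regular cardinal $\kappa$ with $\mathcal{E}$ $\kappa$-presentable and let $\mathcal{C}=\mathcal{E}^{\kappa}$ be the full subcategory of $\kappa$-compact objects, which is small, essentially closed under finite limits (enlarging $\kappa$ if necessary so that finite limits of $\kappa$-compact objects are $\kappa$-compact), and whose inclusion $F\colon\mathcal{C}\hookrightarrow\mathcal{E}$ is fully faithful and cartesian. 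Density of $F$ with respect to $\mathrm{Geo}$ is the assertion that for every $D\in\mathcal{E}$ the canonical map $\mathrm{colim}\bigl(F\downarrow D\to\mathcal{E}_{/D}\xrightarrow{y_{/D}}\hat{\mathcal{E}}_{/yD}\bigr)\to yD$ is inverted by Geo-sheafification; under the equivalence $\mathcal{E}\simeq\mathrm{Sh}_{\mathrm{Geo}}(\mathcal{E})$ this is exactly the statement that $D$ is the colimit of its $\kappa$-compact subobjects $\mathcal{C}\downarrow D\to\mathcal{E}$, which holds because $\mathcal{E}^{\kappa}$ generates $\mathcal{E}$ under (even $\kappa$-filtered) colimits. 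So $F$ is fully faithful and $\mathrm{Geo}$-dense.

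With such an $F$ in hand, Theorem~\ref{thmcomplemma}.2 provides a geometric kernel $J_{\mathrm{Geo}}$ on the \emph{small} category $\mathcal{C}$ such that $F\colon(\mathcal{C},J_{\mathrm{Geo}})\to(\mathcal{E},\mathrm{Geo})$ is an equivalence of cartesian $(\infty,1)$-sites, i.e.\ $F^{\ast}\colon\mathrm{Sh}_{\mathrm{Geo}}(\mathcal{E})\xrightarrow{\simeq}\mathrm{Sh}_{J_{\mathrm{Geo}}}(\mathcal{C})$; composing with $\mathcal{E}\simeq\mathrm{Sh}_{\mathrm{Geo}}(\mathcal{E})$ exhibits $(\mathcal{E},\mathrm{Geo})$ as having a presentation with small base $\mathcal{C}$. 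Since $\mathrm{Sh}_{\mathrm{Geo}}(\mathcal{E})\simeq\mathcal{E}$ is locally presentable, Theorem~\ref{thmcomplemma}.3 identifies $J_{\mathrm{Geo}}$ with the maximal kernel $J_K$ from Part~1, which is accessible by construction (its associated congruence is of small generation via \cite[Lemma 4.2.7]{abjfsheavesI}); hence $(\mathcal{E},\mathrm{Geo})$ is essentially small. Finally, the first claim of the corollary — that $\mathrm{Sh}_{\mathrm{Geo}}(\mathcal{E})\hookrightarrow\hat{\mathcal{E}}$ has a left exact left adjoint — follows because this inclusion is, via the equivalence just established, the right Kan extension / restriction picture of an \emph{accessible} left exact localization, so it is a reflective subcategory with left exact reflector; alternatively one notes directly that $\mathrm{Sh}_{\mathrm{Geo}}(\mathcal{E})=\mathrm{Sh}_{J_{\mathrm{Geo}}}(\mathcal{C})$ transported along the equivalence $\hat{\mathcal{E}}\simeq\hat{\mathcal{C}}$-style comparison, and invoke Proposition~\ref{thmprelim} / Proposition~\ref{propetalemod}.

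The main obstacle I anticipate is verifying $\mathrm{Geo}$-density of $F$ cleanly: one must make sure that the higher geometric topology $\mathrm{Geo}$ on $\mathcal{E}$ really is the one whose sheaves recover $\mathcal{E}$ (this is \cite[Theorem 5.19]{rs_hgst}, so it can be cited), and then that the canonical colimit of representables over $F\downarrow D$ maps to $yD$ under $\mathrm{Geo}$-sheafification — which amounts to the colimit $\mathrm{colim}(\mathcal{C}\downarrow D)\simeq D$ in $\mathcal{E}$ being a \emph{geometric} colimit, i.e.\ the diagram $\mathcal{C}\downarrow D\to\mathcal{E}_{/D}$ being a covering diagram in the sense of \cite{rs_hgst}. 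One should choose $\kappa$ so that $\mathcal{C}=\mathcal{E}^{\kappa}$ is closed under finite limits and the colimit $\mathrm{colim}(\mathcal{C}\downarrow D)\simeq D$ is a $\kappa$-filtered (hence universal, hence ``higher covering'') colimit; this requires a short argument but no serious computation. Everything else is formal bookkeeping with the Comparison Lemma and the identification of accessible geometric kernels with accessible left exact localizations.
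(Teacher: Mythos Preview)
Your overall strategy---choose $\mathcal{C}=\mathcal{E}_{\kappa}$ for suitable $\kappa$, verify that the inclusion is fully faithful and $\mathrm{Geo}$-dense, then invoke the Comparison Lemma---matches the paper's. However, there is a genuine circularity in your argument that the paper has to work around, and which you do not address.

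Theorem~\ref{thmcomplemma} takes as hypothesis that $(\mathcal{D},K)$ is a cartesian $(\infty,1)$-site. By the definition in the paper, this means in particular that the left exact reflective localization $\hat{\mathcal{D}}\to\mathrm{Sh}_K(\mathcal{D})$ \emph{exists}; the proof of Part~2 begins ``let $(\mathcal{L}_K,\mathcal{R}_K)$ be the left exact modality on $\hat{\mathcal{D}}$ associated to $K$'', which is precisely this datum. So when you apply Theorem~\ref{thmcomplemma}.2 with $(\mathcal{D},K)=(\mathcal{E},\mathrm{Geo})$, you are already assuming that $\mathrm{Sh}_{\mathrm{Geo}}(\mathcal{E})\hookrightarrow\hat{\mathcal{E}}$ has a left exact left adjoint---the first claim of the corollary. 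Your final paragraph then derives this claim from the equivalence produced by the Comparison Lemma, closing the circle. The citation of \cite[Theorem~5.19]{rs_hgst} does not help here: it gives you $\mathcal{E}\simeq\mathrm{Sh}_{\mathrm{Geo}}(\mathcal{E})$ as $\infty$-categories, but not a reflector on $\hat{\mathcal{E}}$, and $\hat{\mathcal{E}}$ is not presentable, so you cannot appeal to the adjoint functor theorem either.

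The paper breaks the circularity by passing to a larger universe $\mathcal{S}^+$ in which $\mathcal{E}$ becomes small. There \cite[Theorem~5.11]{rs_hgst} supplies the left exact reflector $L^+\colon\mathrm{Fun}(\mathcal{E}^{op},\mathcal{S}^+)\to\mathrm{Sh}_{\mathrm{Geo}}(\mathcal{E})^+$ directly, so $(\mathcal{E},\mathrm{Geo})$ is a legitimate site in $\mathcal{S}^+$ and Theorem~\ref{thmcomplemma}.2 can be applied to $\iota\colon\mathcal{E}_{\kappa}\hookrightarrow\mathcal{E}$. One then has to descend: the paper shows that the pullback of $\mathrm{Sh}_{\mathrm{Geo}}(\mathcal{E})^+$ to small presheaves recovers $\mathrm{Sh}_{\mathrm{Geo}}(\mathcal{E})$, and constructs the small reflector $L_{\kappa}$ by hand as a left Kan extension $\mathrm{Lan}_y(y)\colon\hat{\mathcal{E}_{\kappa}}\to\mathrm{Sh}_{\mathrm{Geo}}(\mathcal{E}_{\kappa})$, checking left exactness by comparison with $L_{\kappa}^+$. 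This universe-hopping and the subsequent descent argument are the missing ingredients in your proposal.
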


\begin{proof}
Suppose $\kappa<\nu$ is a regular cardinal such that $\mathcal{E}$ is $\kappa$-presentable and such that the subcategory
$\mathcal{E}_{\kappa}\subset\mathcal{E}$ of $\kappa$-compact objects is closed under finite limits. If
$\iota\colon\mathcal{E}_{\kappa}\hookrightarrow\mathcal{E}$ denotes the canonical inclusion, the adjoint pair
$\iota^{\ast}\adj\iota_{\ast}$ given by restriction and right Kan extension along $\iota$, respectively, induces a diagram of
$\infty$-categories as follows, to be explained from front to back in the following step by step.
\begin{align}\label{diagcorgeoesssmall}
\begin{gathered}
\xymatrix{
\mathcal{E}_{\kappa}\ar@{^(->}[rr]^{\iota}\ar[dr]^y\ar@/_/[dddr]_y & & \mathcal{E}\ar[dr]^y\ar@/_/[dddr]_y & & \\
 & \hat{\mathcal{E}_{\kappa}}\ar@{^(->}[rr]^{\iota_{\ast}}\ar@{^(->}[dr] & & \hat{\mathcal{E}}\ar@{^(->}[dr] & \\
 & & \mathrm{Fun}(\mathcal{E}_{\kappa}^{op},\mathcal{S}^+)\ar@{^(->}[rr]^(.6){\iota_{\ast}}\ar@{-->}@/_1pc/[dd]_(.6){L_{\kappa}^+}\ar@{}[dd]|(.6){\adj} & & \mathrm{Fun}(\mathcal{E}^{op},\mathcal{S}^+)\ar@{-->}@/_1pc/[dd]_{L^+}\ar@{}[dd]|{\adj} \\
 & \mathrm{Sh}_{\mathrm{Geo}}(\mathcal{E}_{\kappa})\ar@{^(->}[uu]_(.4){j_{\kappa}}\ar@{^(->}[rr]^{\iota_{\ast}}\ar@{^(->}[dr] & & \mathrm{Sh}_{\mathrm{Geo}}(\mathcal{E})\ar@{^(->}[uu]| \hole_(.4){j}\ar@{^(->}[dr] & \\
 & & \mathrm{Sh}_{\mathrm{Geo}}(\mathcal{E}_{\kappa})^+\ar[rr]^{\iota_{\ast}}_{\simeq}\ar@{^(->}@/_1pc/[uu]_(.4){j_{\kappa}^+}  & & \mathrm{Sh}_{\mathrm{Geo}}(\mathcal{E})^+\ar@{^(->}@/_1pc/[uu]_{j^+} \\
 }
\end{gathered}
\end{align}
Here, the bottom right corner $\mathrm{Sh}_{\mathrm{Geo}}(\mathcal{E})^+:=\mathrm{Sh}_{\nu\text{-Geo}}(\mathcal{E})$ denotes the
$\infty$-category of higher $\nu$-geometric sheaves on 
the $\nu$-geometric $\infty$-category $\mathcal{E}$ (which itself is $\nu^+$-small and hence contained in $\mathcal{S}^+$). We obtain a left 
exact left adjoint
$L^+\colon\mathrm{Fun}(\mathcal{E}^{op},\mathcal{S}^+)\rightarrow\mathrm{Sh}_{\mathrm{Geo}}(\mathcal{E})^+$ by \cite[Theorem 5.11]{rs_hgst} 
(applied in $\mathcal{S}^+$). The inclusion $\iota\colon\mathcal{E}_{\kappa}\hookrightarrow\mathcal{E}$ is fully faithful 
and generates $\mathcal{E}$ under canonical colimits (\cite[Lemma 5.3.5.8, Proposition 5.4.7.4]{luriehtt}). That means
for every object $E\in\mathcal{E}$, the natural gap map
\[\mathrm{colim}(\mathcal{E}_{\kappa}\downarrow E\twoheadrightarrow\mathcal{C}\hookrightarrow\mathcal{E})\rightarrow E\]
is an equivalence in $\mathcal{E}$. As $\mathcal{E}_{\kappa}$ is left exact, the comma $\infty$-category $\mathcal{E}_{\kappa}\downarrow E$ 
has pullbacks which furthermore are preserved by the composition $\mathcal{E}_{\kappa}\downarrow E\rightarrow\mathcal{E}$.
It follows that $\mathcal{E}_{\kappa}\downarrow E\rightarrow \mathcal{E}$ is a higher covering diagram
with colimit $E$ (\cite[Lemma 5.17]{rs_hgst}). That means that the inclusion
$\iota\colon\mathcal{E}_{\kappa}\hookrightarrow\mathcal{E}$ is $\mathrm{Geo}$-dense. We hence may apply Theorem~\ref{thmcomplemma}.2 to the 
inclusion $\iota\colon\mathcal{E}_{\kappa}\hookrightarrow\mathcal{E}$ in $\mathcal{S}^+$. This yields the left exact reflective localization
\[L_{\kappa}^+\colon\mathrm{Fun}(\mathcal{E}_{\kappa}^{op},\mathcal{S}^+)\rightarrow\mathrm{Sh}_{\mathrm{Geo}}(\mathcal{E}_{\kappa})^+\]
together with the equivalence on the bottom of Diagram~(\ref{diagcorgeoesssmall}) which makes the front square (of solid arrows) commute.

The $\infty$-category $\mathrm{Sh}_{\mathrm{Geo}}(\mathcal{E}_{\kappa})$ shall be defined so to make the bottom square (or equivalently the 
right hand side square) of Diagram~(\ref{diagcorgeoesssmall}) cartesian by construction. It follows that all four side faces of the front 
cube in Diagram~(\ref{diagcorgeoesssmall}) are pullback squares (in fact all six faces are).
As replete inclusions between $\infty$-categories are isofibrations, it follows that the pullback
\[\iota_{\ast}\colon\mathrm{Sh}_{\mathrm{Geo}}(\mathcal{E}_{\kappa})\hookrightarrow\mathrm{Sh}_{\mathrm{Geo}}(\mathcal{E})\]
of the equivalence $\iota_{\ast}$ on the bottom is an equivalence as well. Now, the inclusion
\[\iota_{\ast}\colon\hat{\mathcal{E}}_{\kappa}\rightarrow\hat{\mathcal{E}}\]
on the top of the cube is right adjoint to the restriction functor $\iota^{\ast}$ and hence exhibits $\hat{\mathcal{E}}_{\kappa}$ as a left 
exact reflective localization of $\hat{\mathcal{E}}$. To finish the proof it thus suffices to show that the inclusion
$\mathrm{Sh}_{\mathrm{Geo}}(\mathcal{E}_{\kappa})\hookrightarrow\hat{\mathcal{E}_{\kappa}}$ has a left exact left adjoint. In that case, the 
composition $\hat{\mathcal{E}}\rightarrow\hat{\mathcal{E}_{\kappa}}\rightarrow\mathrm{Sh}_{\mathrm{Geo}}(\mathcal{E}_{\kappa})$ exhibits
$\mathrm{Sh}_{\mathrm{Geo}}(\mathcal{E})\simeq\mathrm{Sh}_{\mathrm{Geo}}(\mathcal{E}_{\kappa})$ as a left exact reflective localization of
$\hat{\mathcal{E}}$, with a presentation over the small base $\mathcal{E}_{\kappa}$ given by
$\hat{\mathcal{E}_{\kappa}}\rightarrow\mathrm{Sh}_{\mathrm{Geo}}(\mathcal{E}_{\kappa})$ itself. As we have equivalences
$\mathcal{E}\xrightarrow{\sim}\mathrm{Sh}_{\mathrm{Geo}}(\mathcal{E})\xleftarrow{\sim}\mathrm{Sh}_{\mathrm{Geo}}(\mathcal{E}_{\kappa})$, and
$\mathcal{E}$ is locally presentable by assumption, so is $\mathrm{Sh}_{\mathrm{Geo}}(\mathcal{E}_{\kappa})$. It follows that the localization
$\hat{\mathcal{E}_{\kappa}}\rightarrow\mathrm{Sh}_{\mathrm{Geo}}(\mathcal{E}_{\kappa})$ is accessible, which means that the site $(\mathcal{E}_{\kappa},\mathrm{Geo})$ is small.

To construct the left adjoint of $\mathrm{Sh}_{\mathrm{Geo}}(\mathcal{E}_{\kappa})\hookrightarrow\hat{\mathcal{E}_{\kappa}}$, we use that 
the Yoneda embedding $\mathcal{E}\rightarrow\hat{\mathcal{E}}$ factors through $\mathrm{Sh}_{\mathrm{Geo}}(\mathcal{E})$ (as representables 
take all colimits in $\mathcal{E}$ to limits of spaces). Indeed, by 
construction, this implies that the Yoneda embedding $\mathcal{E}_{\kappa}\rightarrow\hat{\mathcal{E}_{\kappa}}$ factors through
$\mathrm{Sh}_{\mathrm{Geo}}(\mathcal{E}_{\kappa})$ as well. As we just argued that $\mathrm{Sh}_{\mathrm{Geo}}(\mathcal{E}_{\kappa})$ is 
locally presentable, it is cocomplete; we hence can construct a left Kan extension
\[L_{\kappa}:=\mathrm{Lan}_{y}(y)\colon\hat{\mathcal{E}_{\kappa}}\rightarrow\mathrm{Sh}_{\mathrm{Geo}}(\mathcal{E}_{\kappa})\]
which is automatically left adjoint to the inclusion $j_{\kappa}$. We are left to show that $L_{\kappa}$ is left exact. Therefore simply note 
that the diagram
\[\xymatrix{
\mathcal{E}_{\kappa}\ar[r]^y\ar[dr]_y\ar@/^2pc/[rr]^y & \hat{\mathcal{E}_{\kappa}}\ar@{^(->}[r]\ar[d]^{L_{\kappa}} & \mathrm{Fun}(\mathcal{E}_{\kappa}^{op},\mathcal{S}^+)\ar[d]^{L_{\kappa}^+} \\
 & \mathrm{Sh}_{\mathrm{Geo}}(\mathcal{E}_{\kappa})\ar@{^(->}[r] & \mathrm{Sh}_{\mathrm{Geo}}(\mathcal{E}_{\kappa})^+
}\]
commutes, given that both $L_{\kappa}$ and $L_{\kappa}^+$ are the left Kan extension of (the respective factorization of) $y$ along $y$, and 
that both vertical inclusions preserve colimits (as noted in \cite[Remark 6.3.5.17]{luriehtt}). Both vertical 
inclusions furthermore preserve and reflect small limits, since limits are simply computed pointwise in all four cases. Thus, as the left 
adjoint $L_{\kappa}^+$ is left exact, it follows that so is $L_{\kappa}$. 
\end{proof}

%
%

We furthermore recall that for every uncountable cardinal $\kappa$ and every $\kappa$-geometric $\infty$-category $\mathcal{E}$ the localization
$\hat{\mathcal{E}}\rightarrow\mathrm{Sh}_{\mathrm{Geo}_{\kappa}}(\mathcal{E})$ 
admits a factorization
\[\hat{\mathcal{E}}\rightarrow\mathrm{Sh}_{(\mathrm{Geo}_{\kappa})_{-1}}(\mathcal{E})\rightarrow\mathrm{Sh}_{\mathrm{Geo}_{\kappa}}(\mathcal{E}),\]
where the first localization is topological and the second one is cotopological (\cite[Proposition 5.13]{rs_hgst}).
Here, the site $(\mathcal{E},\mathrm{Geo}_{-1})$ is the ``ordinary $\kappa$-geometric Grothendieck site'' associated to $\mathcal{E}$. 
Whenever $\mathcal{E}$ is an $\infty$-topos, we obtain an according factorization
\[\hat{\mathcal{E}}\rightarrow\mathrm{Sh}_{\mathrm{Geo}_{-1}}(\mathcal{E})\rightarrow\mathrm{Sh}_{\mathrm{Geo}}(\mathcal{E})\]
of the localization constructed in Corollary~\ref{corgeoesssmall}.
The two sites $(\mathcal{E},\mathrm{Geo}_{-1})$ and $(\mathcal{E},\mathrm{Geo})$ are generally not the same 
(\cite[Proposition 5.22]{rs_hgst}). Despite this distinction, we obtain the following two straight-forward translations of 
ordinary topos theoretic results.

\begin{proposition}\label{propmorphsitescan}
Let $(\mathcal{C},J)$ be a small cartesian $(\infty,1)$-site and $\mathcal{E}$ be an $\infty$-topos. Then the sheafified Yoneda embedding
$\rho y\colon\mathcal{C}\rightarrow\mathrm{Sh}_J(\mathcal{C})$ induces an equivalence
\[\mathrm{CSite}((\mathcal{C},J),(\mathcal{E},\mathrm{Geo}))\simeq\mathrm{RTop}(\mathcal{E},\mathrm{Sh}_J(\mathcal{C}))\]
\end{proposition}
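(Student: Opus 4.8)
The statement is the higher-categorical analogue of the classical fact that geometric morphisms into the topos of sheaves on a site $(\mathcal{C},J)$ correspond to flat, cover-preserving functors out of $\mathcal{C}$; here flatness becomes left-exactness (since $\mathcal{C}$ is a left exact $\infty$-category and we work with $\mathrm{lex}$-functors) and the target is the canonical higher geometric site on an $\infty$-topos $\mathcal{E}$. The plan is to build the equivalence as a composite of three equivalences, each of which is either a standard universal-property statement or has been set up earlier in the paper.

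First I would use the universal property of the presheaf $\infty$-topos together with the reflective localization $\rho\colon\hat{\mathcal{C}}\to\mathrm{Sh}_J(\mathcal{C})$: by \cite[Theorem 5.1.5.6]{luriehtt} (nerve/free-cocompletion) precomposition with $\rho y\colon\mathcal{C}\to\mathrm{Sh}_J(\mathcal{C})$ identifies $\mathrm{RTop}(\mathcal{E},\mathrm{Sh}_J(\mathcal{C}))$ — i.e.\ left exact cocontinuous functors $\mathrm{Sh}_J(\mathcal{C})\to\mathcal{E}$ — with the full subspace of $\mathrm{Fun}(\mathcal{C},\mathcal{E})$ spanned by those functors $G$ which (a) are left exact (equivalently: their left Kan extension $G_!=L\circ(\text{Yoneda-extension})$ along $y$ is left exact, which since $\mathcal{E}$ is an $\infty$-topos is the flatness condition), and (b) send every map in $J(C)$ to an equivalence in $\mathcal{E}$, i.e.\ the extension $\hat{\mathcal{C}}\to\mathcal{E}$ inverts the congruence $\mathcal{L}_J$ generated by $J$. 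This is just the statement that $\mathrm{Sh}_J(\mathcal{C})$ is the localization of $\hat{\mathcal{C}}$ at $\mathcal{L}_J$ and that left exact cocontinuous functors out of $\hat{\mathcal{C}}$ are free on $\mathcal{C}$ among left exact functors into an $\infty$-topos. Second, on the other side, I would invoke \cite[Theorem 5.19]{rs_hgst}: the sheafified Yoneda embedding $y\colon\mathcal{E}\xrightarrow{\simeq}\mathrm{Sh}_{\mathrm{Geo}}(\mathcal{E})$ is an equivalence, so a morphism of cartesian sites $F\colon(\mathcal{C},J)\to(\mathcal{E},\mathrm{Geo})$ is by \Cref{defmorphismsites} precisely a left exact functor $\mathcal{C}\to\mathcal{E}$ whose composite $\hat{\mathcal{C}}\to\mathrm{Sh}_{\mathrm{Geo}}(\mathcal{E})\simeq\mathcal{E}$ after sheafification inverts $J$; that is, exactly a left exact functor $G\colon\mathcal{C}\to\mathcal{E}$ such that $G_!$ sends $J$-covers to equivalences (using that $\rho y$ inverts the $\mathrm{Geo}$-local equivalences and that by \Cref{corgeoesssmall} the $\mathrm{Geo}$-sheafification of $\hat{\mathcal{E}}$ is left exact, so $F$ cover-preserving $\iff$ $\rho G_!$ inverts $J$). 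The third step is to check these two descriptions land on the literally same full subspace of $\mathrm{Fun}(\mathcal{C},\mathcal{E})$ — both are "left exact $G$ with $G_!$ inverting $J$" — and that the two identifications are compatible with $\rho y$, so that the composite equivalence is the one induced by $\rho y$ as claimed.

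The main obstacle, and the step I would spend the most care on, is reconciling the cover-preservation condition in \Cref{defmorphismsites} (stated as: $F_!$ sends elements of $J(C)$ to elements of $K(F(C))$, for $K=\mathrm{Geo}$) with the condition appearing on the $\mathrm{RTop}$-side (the extension inverts the congruence $\mathcal{L}_J$). One direction is immediate — inverting $\mathcal{L}_J$ forces $J$-covers to become equivalences, hence in particular $K$-local equivalences. The converse requires knowing that a left exact cocontinuous functor $\hat{\mathcal{C}}\to\mathcal{E}$ which inverts all of $J$ automatically inverts the whole congruence $\mathcal{L}_J$ it generates; this is exactly the content that $\mathcal{L}_J$ is the strong saturation of (a generating set of) $J$ together with the fact that a left exact cocontinuous functor between $\infty$-topoi has a right adjoint, so the class of maps it inverts is strongly saturated and pullback-stable — this is the input from \Cref{thmprelim} and \cite[Lemma 5.5.4.14]{luriehtt} combined with \cite[Lemma 4.2.7]{abjfsheavesI}. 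A secondary technical point is that $\mathrm{Geo}$ is a \emph{large} geometric kernel on $\mathcal{E}$, so one must be slightly careful that "$F_!$ sends $J$-covers into $\mathrm{Geo}(F(C))$" is genuinely equivalent to "$\rho_{\mathrm{Geo}}F_!$ inverts $J$-covers"; this follows from \Cref{remgeomkernelandlocequiv} (a geometric kernel consists precisely of the $\mathcal{E}$-local equivalences with representable codomain) applied on $\mathcal{E}$. Once these identifications are in place, naturality in $\mathcal{E}$ and the fact that the composite is induced by $\rho y$ are formal, since every equivalence used is itself a restriction-along-$(\rho y)$ statement.
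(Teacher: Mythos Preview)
Your proposal is correct and follows essentially the same approach as the paper: both arguments pass through the identification of each side with a full subspace of $\mathrm{Fun}^{\mathrm{lex}}(\mathcal{C},\mathcal{E})$, using left Kan extension together with \cite[Proposition~5.5.4.20]{luriehtt} on one side and the equivalence $y\colon\mathcal{E}\xrightarrow{\simeq}\mathrm{Sh}_{\mathrm{Geo}}(\mathcal{E})$ from \cite[Theorem~5.19]{rs_hgst} on the other. The paper's proof is considerably terser --- it simply exhibits the left Kan extension map and its inverse $(\rho y)^{\ast}$ without unpacking the cover-preservation reconciliation you carefully detail; your discussion of why inverting $J$ suffices to invert the congruence $\mathcal{L}_J$ (via strong saturation of the class inverted by a left adjoint) is a worthwhile elaboration that the paper leaves implicit in its citation of 5.5.4.20.
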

\begin{proof}
Left Kan extension induces a map
\[\mathrm{Fun}^{\mathrm{lex}}(\mathcal{C},\mathcal{E})\rightarrow\mathrm{LGeo}(\hat{\mathcal{C}},\hat{\mathcal{E}}),\]
where the right hand side denotes the space of left exact left adjoints from $\hat{\mathcal{C}}$ to $\hat{\mathcal{E}}$.
This Kan extension restricts to a map
\[\mathrm{CSite}((\mathcal{C},J),(\mathcal{E},\mathrm{Geo}))\rightarrow\mathrm{LTop}(\mathrm{Sh}_J(\mathcal{C}),\mathrm{Sh}_{\mathrm{Geo}}(\mathcal{E}))\]
via \cite[Proposition 5.5.4.20]{luriehtt} with the according right adjoints given by (\ref{equcovpresfctr}). Also note that, being equivalent to $\mathcal{E}$ itself, the $\infty$-category
$\mathrm{Sh}_{\mathrm{Geo}}(\mathcal{E})$ is presentable and hence an $\infty$-topos. This restriction has an inverse given by
\[(\rho y)^{\ast}\colon\mathrm{LTop}(\mathrm{Sh}_J(\mathcal{C}),\mathrm{Sh}_{\mathrm{Geo}}(\mathcal{E}))\rightarrow\mathrm{CSite}((\mathcal{C},J),(\mathcal{E},\mathrm{Geo}))\]
as $y\colon\mathcal{E}\rightarrow\mathrm{Sh}_{\mathrm{Geo}}(\mathcal{E})$ is an equivalence.
\end{proof}

\begin{remark}
Informally, Proposition~\ref{propmorphsitescan} states that the pair
\[\xymatrix{
\mathrm{LTop}\ar@/^1pc/@{-->}[r]^{((\cdot),\mathrm{Geo})}|{\times}\ar@{}[r]|{\top} & \mathrm{CSite}\ar@/^1pc/[l]^{\mathrm{Sh}}
}\]
would exhibit the $\infty$-category $\mathrm{LTop}$ of $\infty$-toposes as a reflective localization of the $\infty$-category
$\mathrm{CSite}$ of cartesian sites if the cartesian site $(\mathcal{E},\mathrm{Geo})$ was in fact small. This of course is not the case, 
however we will see in Corollary~\Ref{corgeoesssmall} that the site $(\mathcal{E},\mathrm{Geo})$ is \emph{essentially small} which still 
implies that the functor 
\[\mathrm{CSite}\xrightarrow{\mathrm{Sh}}\mathrm{LTop}\]
is essentially surjective.
\end{remark}

\begin{proposition}\label{propyonedacoverpres}
For every Grothendieck site $(\mathcal{C},J)$, the sheafified Yoneda embedding
$\rho y\colon (\mathcal{C},J)\rightarrow(\mathrm{Sh}_J(\mathcal{C}),\mathrm{Geo}_{-1})$ is cover-preserving. It hence is a morphism of cartesian sites if and only if the $\infty$-category $\mathcal{C}$ is cartesian. 
\end{proposition}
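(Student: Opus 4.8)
The plan is to treat the two assertions separately, the first by routing the problem through the canonical higher-geometric site of the $\infty$-topos $\mathcal{D}:=\mathrm{Sh}_J(\mathcal{C})$. Recall that $\mathrm{Sh}_{\mathrm{Geo}}(\mathcal{D})\simeq\mathcal{D}$ via the Yoneda embedding $y_{\mathcal{D}}$ (\cite{rs_hgst}), so that the associated sheafification $L_{\mathrm{Geo}}\colon\hat{\mathcal{D}}\to\mathrm{Sh}_{\mathrm{Geo}}(\mathcal{D})\simeq\mathcal{D}$ satisfies $L_{\mathrm{Geo}}\circ y_{\mathcal{D}}\simeq\mathrm{id}_{\mathcal{D}}$ and factors as $L_{\mathrm{cot}}\circ L_{\mathrm{Geo}_{-1}}$ with $L_{\mathrm{Geo}_{-1}}$ the topological sheafification for $\mathrm{Geo}_{-1}$ and $L_{\mathrm{cot}}$ cotopological (\cite{rs_hgst}). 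The first step is to observe that $L_{\mathrm{Geo}}\circ(\rho y)_!\simeq\rho$ as functors $\hat{\mathcal{C}}\to\mathcal{D}$: both are left adjoints (hence colimit-preserving), and on representables they agree, since $(\rho y)_!\circ y_{\mathcal{C}}\simeq y_{\mathcal{D}}\circ(\rho y)$ gives $L_{\mathrm{Geo}}\circ(\rho y)_!\circ y_{\mathcal{C}}\simeq L_{\mathrm{Geo}}\circ y_{\mathcal{D}}\circ(\rho y)\simeq\rho y\simeq\rho\circ y_{\mathcal{C}}$; two colimit-preserving functors out of $\hat{\mathcal{C}}$ agreeing on representables are equivalent, naturally.

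Next I would apply this to a $J$-covering sieve $s\colon S\hookrightarrow y_{\mathcal{C}}C$. By definition $s$ is inverted by $\rho$, hence $L_{\mathrm{Geo}}\big((\rho y)_!(s)\big)\simeq\rho(s)$ is an equivalence; therefore $L_{\mathrm{Geo}_{-1}}\big((\rho y)_!(s)\big)$ is inverted by the cotopological localization $L_{\mathrm{cot}}$, and so is $\infty$-connective in $\mathrm{Sh}_{\mathrm{Geo}_{-1}}(\mathcal{D})$, in particular an effective epimorphism. Consequently its $(-1)$-truncation is an equivalence; since $L_{\mathrm{Geo}_{-1}}$ is a left exact localization it preserves (effective epi, mono)-factorizations and thus commutes with $(-1)$-truncation of morphisms, so the inclusion $\tau_{\leq-1}\big((\rho y)_!(s)\big)\hookrightarrow y_{\mathcal{D}}(\rho y C)$ — which is precisely the sieve on $\rho y C$ generated by $S$ under $\rho y$ — is a $\mathrm{Geo}_{-1}$-local equivalence in $\hat{\mathcal{D}}$, i.e. a $\mathrm{Geo}_{-1}$-cover of $\rho y C$. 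By the characterization of cover-preservation for functors between Grothendieck sites recorded in the Remark following Definition~\ref{defdensemorphismsites}, this establishes that $\rho y\colon(\mathcal{C},J)\to(\mathrm{Sh}_J(\mathcal{C}),\mathrm{Geo}_{-1})$ is cover-preserving.

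For the second clause, $\rho y=\rho\circ y_{\mathcal{C}}$, and both $y_{\mathcal{C}}$ (the Yoneda embedding) and $\rho$ (a topological, hence left exact, localization) preserve whatever finite limits exist in their domains; hence $\rho y$ preserves all finite limits that exist in $\mathcal{C}$. Since $\mathrm{Sh}_J(\mathcal{C})$ is always an $\infty$-topos, hence a left exact $\infty$-category, and $\rho y$ is cover-preserving by the first part, it follows that $\rho y$ is a morphism of cartesian sites exactly when it is a left exact functor, which happens precisely when $\mathcal{C}$ is itself a left exact (i.e. cartesian) $\infty$-category. The point requiring care — the main obstacle — is the bookkeeping around the topological/cotopological decomposition of $\mathrm{Geo}$ on $\mathcal{D}$: one must keep track that the relevant condition is that the \emph{generated} sieve of a $J$-cover is a $\mathrm{Geo}_{-1}$-cover (as $(\rho y)_!$ need not preserve monomorphisms when $\mathcal{C}$ lacks finite limits), and that the two facts ``$L_{\mathrm{Geo}_{-1}}$ commutes with $(-1)$-truncation'' and ``$L_{\mathrm{cot}}$ inverts only $\infty$-connective maps, which are in particular effective epimorphisms'' are applied correctly; everything else is routine manipulation of adjoints between presheaf $\infty$-categories.
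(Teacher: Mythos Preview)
Your proof is correct, but it takes a more circuitous route than the paper's. The paper argues directly: given a $J$-cover $S\hookrightarrow yC$, the sieve generated by $(\rho y)_!(S\hookrightarrow yC)$ is the sieve generated by the family $\{\rho y(s)\mid s\in S\}$, and this family is jointly effective-epimorphic because the image of $\coprod_{s\in S}\rho y(s)\to\rho yC$ is $\rho(S)\hookrightarrow\rho yC$, which is an equivalence since $S\hookrightarrow yC$ is a $J$-cover and $\rho$ inverts it. That is the whole argument: one line, using only that $\rho$ preserves image factorizations and inverts $J$-covers.

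Your approach instead identifies $L_{\mathrm{Geo}}\circ(\rho y)_!\simeq\rho$ and then exploits the topological/cotopological decomposition $L_{\mathrm{Geo}}=L_{\mathrm{cot}}\circ L_{\mathrm{Geo}_{-1}}$ from \cite{rs_hgst} to deduce that $L_{\mathrm{Geo}_{-1}}((\rho y)_!(s))$ is $\infty$-connective and hence its $(-1)$-truncation is invertible. This is valid and has the conceptual merit of explaining the result as a shadow of the stronger fact that $\rho y$ is cover-preserving for the full higher-geometric kernel $\mathrm{Geo}$ (which is Proposition~\ref{propmorphsitescan} applied to the identity). The cost is that you invoke substantially more machinery---the existence of $L_{\mathrm{Geo}}$ on the large presheaf category $\hat{\mathcal{D}}$ (Corollary~\ref{corgeoesssmall}), its factorization, and the characterization of cotopological localizations---where the paper needs none of it. Your handling of the second clause is fine and matches what the paper leaves implicit.
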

\begin{proof}
Given a $J$-cover $S\hookrightarrow yC$ in $\mathcal{C}$, the sieve generated by $(\rho y)_!(S\hookrightarrow yC)$ in
$\mathrm{Sh}_J(\mathcal{C})$ is the sieve generated by 
the family $\{\rho y(s)\colon \rho y(\mathrm{dom}(s))\rightarrow \rho y(C)|s\in S\}$. This however is an ordinary geometric covering family, 
as the arrow
\[\coprod_{s\in S}\rho y(s)\colon\coprod_{s\in S}\rho y(\mathrm{dom}(s))\rightarrow \rho y(C)\]
is an effective epimorphism in $\mathrm{Sh}_J(\mathcal{C})$ (given that its image is the equivalence $\rho (S)\hookrightarrow \rho y(C)$).
\end{proof}

While Proposition~\ref{propyonedacoverpres} is fairly innocuous, it is noteworthy that the statement is stronger than the direct 
transcription of the according ordinary topos theoretic result which considers $\mathrm{Sh}_J(\mathcal{C})$ equipped with its 
\emph{canonical} (and hence in this case higher geometric) site; the canonical counterpart to Proposition~\ref{propyonedacoverpres} indeed 
follows directly from Proposition~\ref{propmorphsitescan} applied to the identity on $\mathcal{E}=\mathrm{Sh}_J(\mathcal{C})$. With this	 
discrepancy for Grothendieck sites in mind, we end this paper with a counter-example to the following ``topological'' Comparison 
Lemma.

\begin{proposition}[Failure of the topological Comparison Lemma]\label{propcomplemmafail}
There is a left exact and fully faithful functor $F\colon\mathcal{C}\hookrightarrow\mathcal{D}$ from a small left exact $\infty$-category
$\mathcal{C}$ into an $\infty$-topos $\mathcal{D}$ equipped with the ordinary geometric Grothendieck topology $\mathrm{Geo}_{-1}$ such that
\begin{enumerate}
\item $F$ is topologically $\mathrm{Geo}_{-1}$-dense, but
\item there is no sheaf theory $J$ on $\mathcal{C}$ such that $F\colon(\mathcal{C},J)\rightarrow(\mathcal{D},\mathrm{Geo}_{-1})$ is 
cover-preserving and such that the induced functor
\[F^{\ast}\colon\mathrm{Sh}_{\mathrm{Geo}_{-1}}(\mathcal{D})\rightarrow\mathrm{Sh}_J(\mathcal{C})\]
is an equivalence of sheaf theories. In particular, there is no such Grothendieck topology.
\end{enumerate}
\end{proposition}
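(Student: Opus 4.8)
The plan is to exhibit a concrete counterexample using the simplest ``non-topological'' geometry available, namely the canonical higher geometric site on a locale-shaped $\infty$-topos which is not hypercomplete. By Rezk's analysis (cf.\ \cite[Section 11.3]{rezkhtytps}), there is a locale --- e.g.\ $2^{S^\infty}$ --- whose $\infty$-topos $\mathrm{Sh}(2^{S^\infty})$ of sheaves of spaces carries a left exact localization of a presheaf $\infty$-topos which is \emph{not} topological, i.e.\ the cotopological part of the localization $\hat{\mathcal{E}}\to\mathrm{Sh}_{\mathrm{Geo}_{-1}}(\mathcal{E})\to\mathrm{Sh}_{\mathrm{Geo}}(\mathcal{E})$ is nontrivial. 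First I would fix such an $\infty$-topos $\mathcal{D}$ which is \emph{not} hypercomplete, set $\mathcal{C}:=\mathcal{D}_{\kappa}$ to be the full subcategory of $\kappa$-compact objects for a suitable regular cardinal $\kappa$ with $\mathcal{C}$ closed under finite limits (as in the proof of Corollary~\ref{corgeoesssmall}), and take $F\colon\mathcal{C}\hookrightarrow\mathcal{D}$ the inclusion. This $F$ is left exact and fully faithful by construction. For Part~1, the inclusion $F$ is $\mathrm{Geo}$-dense by the argument in Corollary~\ref{corgeoesssmall} (the comma categories $\mathcal{C}\downarrow D$ are higher covering diagrams with colimit $D$); taking $(-1)$-truncations of these covering colimits gives geometric covering families, so $F$ is in particular topologically $\mathrm{Geo}_{-1}$-dense.

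The heart of the argument is Part~2. Suppose toward a contradiction that there were a sheaf theory $J$ on $\mathcal{C}$ with $F\colon(\mathcal{C},J)\to(\mathcal{D},\mathrm{Geo}_{-1})$ cover-preserving and $F^{\ast}\colon\mathrm{Sh}_{\mathrm{Geo}_{-1}}(\mathcal{D})\to\mathrm{Sh}_J(\mathcal{C})$ an equivalence. The key observation is that $\mathrm{Sh}_{\mathrm{Geo}_{-1}}(\mathcal{D})$ is hypercomplete: it is a \emph{topological} localization of $\hat{\mathcal{D}}$, and topological localizations of presheaf $\infty$-toposes are hypercomplete (cf.\ \cite[Section 6.5.2, 6.5.4]{luriehtt}), so $\mathrm{Sh}_{\mathrm{Geo}_{-1}}(\mathcal{D})\ne\mathcal{D}$ since $\mathcal{D}$ itself is not hypercomplete. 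On the other hand, Corollary~\ref{corgeoesssmall} and the equivalence $\mathcal{D}\simeq\mathrm{Sh}_{\mathrm{Geo}}(\mathcal{D})\simeq\mathrm{Sh}_{\mathrm{Geo}}(\mathcal{C})$ show that $\mathcal{D}$ itself \emph{is} a sheaf $\infty$-topos over $\mathcal{C}$ --- it is the localization $\mathrm{Sh}_{J'}(\mathcal{C})$ for the geometric kernel $J'$ associated to the $\mathrm{Geo}$-site. Thus I would argue that \emph{if} the hypothetical $J$ existed, then $F$ would be $\mathrm{Geo}_{-1}$-dense in the strong sense needed to invoke the Comparison Lemma machinery, forcing $\mathrm{Sh}_J(\mathcal{C})\simeq\mathrm{Sh}_{\mathrm{Geo}_{-1}}(\mathcal{D})$ on the one hand; but the very same density data (which is topological) cannot distinguish $\mathrm{Geo}_{-1}$ from $\mathrm{Geo}$, so one is pushed to conclude $\mathrm{Sh}_{\mathrm{Geo}_{-1}}(\mathcal{D})\simeq\mathrm{Sh}_{\mathrm{Geo}}(\mathcal{D})\simeq\mathcal{D}$, contradicting non-hypercompleteness of $\mathcal{D}$.

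More carefully, the clean way to derive the contradiction is to compare the two left exact localizations of $\hat{\mathcal{C}}$ that arise. The equivalence $F^{\ast}$ would present $\mathrm{Sh}_{\mathrm{Geo}_{-1}}(\mathcal{D})$ as a left exact reflective localization of $\hat{\mathcal{C}}$, namely $\mathrm{Sh}_J(\mathcal{C})$. But Corollary~\ref{corgeoesssmall} already presents $\mathcal{D}\simeq\mathrm{Sh}_{\mathrm{Geo}}(\mathcal{C})$ as a left exact reflective localization of $\hat{\mathcal{C}}$ via the \emph{same} dense inclusion $F$; and the restriction $F^{\ast}\colon\mathrm{Sh}_{\mathrm{Geo}}(\mathcal{D})\to\hat{\mathcal{C}}$ is fully faithful with essential image $\mathrm{Sh}_{\mathrm{Geo}}(\mathcal{C})$ by Theorem~\ref{thmcomplemma}.2 applied in the enlarged universe (the inclusion $\mathcal{C}\hookrightarrow\mathcal{D}$ being $\mathrm{Geo}$-dense). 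The geometric kernel of $\mathrm{Sh}_J(\mathcal{C})$ is then squeezed between the kernels of the topological and the full geometric localizations: it must contain all $\mathrm{Geo}_{-1}$-covers pulled back along $F^{\ast}$ and be contained in the $\mathrm{Geo}$-kernel, yet the equivalence hypothesis forces equality of $\mathrm{Sh}_J(\mathcal{C})$ with the topological one. Unwinding via the factorization $\hat{\mathcal{C}}\to\mathrm{Sh}_{\mathrm{Geo}_{-1}}(\mathcal{C})\to\mathrm{Sh}_{\mathrm{Geo}}(\mathcal{C})$ and transporting along $F^{\ast}$ (which preserves the (topological, cotopological) factorization by Remark~\ref{remgrothtop}) yields $\mathrm{Sh}_{\mathrm{Geo}_{-1}}(\mathcal{D})=\mathrm{Sh}_{\mathrm{Geo}}(\mathcal{D})$, i.e.\ the cotopological part is trivial --- contradicting the choice of $\mathcal{D}$ as non-hypercomplete.

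\textbf{Main obstacle.} The delicate point I expect to grind on is verifying that the hypothetical equivalence $F^{\ast}$ genuinely forces the \emph{topological} localization $\mathrm{Sh}_{\mathrm{Geo}_{-1}}(\mathcal{D})$ to agree with the localization $\mathrm{Sh}_J(\mathcal{C})$ \emph{as localizations of $\hat{\mathcal{C}}$ over the \emph{same} functor $F$} --- i.e.\ that one cannot evade the contradiction by the equivalence $F^{\ast}$ failing to intertwine the sheafification adjunctions. This requires knowing that a cover-preserving $F\colon(\mathcal{C},J)\to(\mathcal{D},K)$ with $F^{\ast}$ an equivalence automatically makes $F$ behave like a dense site morphism, so that $F_!$ identifies the congruence on $\hat{\mathcal{C}}$ cut out by $J$ with $F_!^{-1}$ of the $K$-congruence; for this one leans on $F_!$ being fully faithful (since $F$ is) together with the maximality statement in Theorem~\ref{thmcomplemma}.1, comparing $J$ against the canonical $J_K$. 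Once that identification is in place, the contradiction is immediate from hypercompleteness of topological localizations versus non-hypercompleteness of $\mathcal{D}\simeq\mathrm{Sh}_{\mathrm{Geo}}(\mathcal{C})$, and the proof concludes.
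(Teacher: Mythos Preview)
Your overall strategy---derive a contradiction via the uniqueness of the (topological, cotopological) factorization---is the same as the paper's, but your execution has a genuine gap that the paper's choice of counterexample is specifically designed to avoid.

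The fatal error is your ``key observation'' that $\mathrm{Sh}_{\mathrm{Geo}_{-1}}(\mathcal{D})$ is hypercomplete because ``topological localizations of presheaf $\infty$-toposes are hypercomplete.'' This is false: the very $\infty$-topos $\mathrm{Sh}(2^{S^{\infty}})$ you invoke is itself a topological localization of a presheaf $\infty$-category and is \emph{not} hypercomplete. No such statement appears in \cite[Sections 6.5.2, 6.5.4]{luriehtt}. Your ``more carefully'' paragraph does not repair this: the claim that $F^{\ast}$ preserves the (topological, cotopological) factorization is not supported by Remark~\ref{remgrothtop} (which only records that a geometric kernel has an underlying Grothendieck topology), and there is no reason it should hold in general.

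The deeper structural problem is your choice $\mathcal{C}=\mathcal{D}_{\kappa}$. With this choice the localization $\hat{\mathcal{C}}\to\mathrm{Sh}_{\mathrm{Geo}}(\mathcal{C})\simeq\mathcal{D}$ need not be topological, so even if you could show the hypothetical $J$ gives a topological localization of $\hat{\mathcal{C}}$, you cannot conclude that the residual cotopological step is trivial. The paper instead takes $\mathcal{C}$ to be the locale $2^{S^{\infty}}$ itself, $J$ its canonical Grothendieck topology, $\mathcal{D}=\mathrm{Sh}_J(\mathcal{C})$, and $F=\rho y$ the sheafified Yoneda embedding. The point of this choice is that $\hat{\mathcal{C}}\to\mathcal{D}$ is topological \emph{by construction}. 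Then, assuming a geometric kernel $K$ with $\mathrm{Sh}_K(\mathcal{C})\simeq\mathrm{Sh}_{\mathrm{Geo}_{-1}}(\mathcal{D})$, one shows via Lurie's canonical-topology machinery \cite[Propositions 6.2.4.2, 6.2.4.6]{luriehtt} that $\mathrm{Sh}_K(\mathcal{C})=\mathrm{Sh}_{J_F}(\mathcal{C})$ for the canonical Grothendieck topology $J_F$ relative to $F$, hence is topological; transporting the cotopological step $\mathrm{Sh}_{\mathrm{Geo}_{-1}}(\mathcal{D})\to\mathrm{Sh}_{\mathrm{Geo}}(\mathcal{D})$ back along the equivalences gives a (topological, cotopological) factorization of the already-topological $\hat{\mathcal{C}}\to\mathrm{Sh}_J(\mathcal{C})$, forcing the cotopological part to be trivial and hence $\mathrm{Sh}_{\mathrm{Geo}_{-1}}(\mathcal{D})=\mathrm{Sh}_{\mathrm{Geo}}(\mathcal{D})$, which contradicts \cite[Proposition 5.22]{rs_hgst}. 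The two missing ingredients in your argument are thus (i) a base $\mathcal{C}$ over which $\mathcal{D}$ is a \emph{topological} localization, and (ii) the Lurie canonical-topology argument that the hypothetical $K$ must itself be topological.
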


\begin{proof}
Let $\mathcal{C}$ be the locale $2^{S^{\infty}}$ from \cite[Section 11.3]{rezkhtytps} and $J$ be its associated canonical geometric 
Grothendieck topology. 
The sheafified Yoneda embedding $F:=\rho y\colon\mathcal{C}\rightarrow\mathrm{Sh}_J(\mathcal{C})$ is left exact, fully faithful and generates 
the $\infty$-topos $\mathcal{D}:=\mathrm{Sh}_J(\mathcal{C})$ under canonical colimits (and hence is dense).
The $\infty$-topos $\mathcal{D}$ of $J$-sheaves itself (or the $\kappa$-logos of $\kappa$-compact objects in $\mathcal{D}$ for any regular 
cardinal $\kappa$ large enough) may be equipped with the Grothendieck topology $\mathrm{Geo}_{-1}$ on 
the one hand, and with the geometric kernel $\mathrm{Geo}$ generated by the ($\kappa$-small) higher covering diagrams in $\mathcal{D}$ on the 
other hand. Basically by definition, as $\mathcal{C}$ has pullbacks, the functor $F$ is $\mathrm{Geo}$-dense (\cite[Lemma 5.17]{rs_hgst}). It 
hence is in particular topologically $\mathrm{Geo}_{-1}$-dense.

We show that there can be no geometric kernel $K$ on $\mathcal{C}$ such that the functor $F\colon\mathcal{C}\rightarrow\mathcal{D}$ induces 
an equivalence
\[(\rho y)^{\ast}\colon\mathrm{Sh}_{\mathrm{Geo}_{-1}}({\mathcal{D}})\rightarrow\mathrm{Sh}_K(\mathcal{C})\]
of sheaf theories.
As $F$ is assumed to be fully faithful, so is the left Kan extension $F_!\colon\hat{\mathcal{C}}\rightarrow\hat{\mathcal{D}}$.
We hence obtain a diagram of embeddings as follows.
\begin{align}\label{diagcomplemma1}
\begin{gathered}
\xymatrix{
\mathcal{C}\ar[r]^{F}\ar[d]^y & \mathcal{D}\ar[d]^{y} \\
\hat{\mathcal{C}}\ar[r]_{F_!}\ar[d] & \hat{\mathcal{D}}\ar[d]^{\rho}\\
\mathrm{Sh}_J(\mathcal{C})\ar[r]_(.45){\rho F_!}^(.45){\simeq} & \mathrm{Sh}_{\mathrm{Geo}}(\mathcal{D})
}
\end{gathered}
\end{align}
Indeed, the functor
$F\colon(\mathcal{C},J)\rightarrow(\mathcal{D},\mathrm{Geo}_{-1})$ is cover-preserving by Proposition~\ref{propyonedacoverpres}, and in 
particular so is $F\colon(\mathcal{C},J)\rightarrow(\mathcal{D},\mathrm{Geo})$. It follows that the composition $\rho F_!$ restricts to a 
functor of sheaf theories as on the bottom of (\ref{diagcomplemma1}) by Proposition~\ref{propmorphsitescan}. Both horizontal functors $F_!$ 
and $\rho F_!$ are the left adjoint part of a geometric morphism $(F_!,F^{\ast})$. Furthermore, the geometric morphism on the bottom is an 
equivalence by a 2-out-of-3 argument: the composition
\[\mathcal{D}\xrightarrow{y}\mathrm{Sh}_{\mathrm{Geo}}(\mathcal{D})\xrightarrow{F^{\ast}}\mathrm{Sh}_J(\mathcal{C})\] 
takes an object $D\in\mathcal{D}$ to the $J$-sheaf $\mathcal{D}(F(\cdot),D)$ and is equivalent to the identity on $\mathcal{D}$. The 
(sheafified) Yoneda embedding $y\colon\mathcal{D}\rightarrow\mathrm{Sh}_{\mathrm{Geo}}(\mathcal{D})$ is an equivalence itself as well
(\cite[Theorem 5.19]{rs_hgst}), and hence so are $F^{\ast}\colon\mathrm{Sh}_{\mathrm{Geo}}(\mathcal{D})\rightarrow\mathrm{Sh}_J(\mathcal{C})$ 
and its left adjoint $\rho F_!$. 

Now, assume that there is a geometric kernel $K$ on $\mathcal{C}$ such that $F^{\ast}\colon\hat{\mathcal{D}}\rightarrow\hat{\mathcal{C}}$ 
restricts to an equivalence $F^{\ast}\colon\mathrm{Sh}_{\mathrm{Geo}_{-1}}(\mathcal{D})\rightarrow\mathrm{Sh}_K(\mathcal{C})$; then
$F\colon(\mathcal{C},K)\rightarrow(\mathcal{D},\mathrm{Geo}_{-1})$ is cover-preserving as well and we obtain a factorization of Diagram (\ref{diagcomplemma1}) as follows.
\begin{align}\label{diagcomplemma2}
\begin{gathered}
\xymatrix{
\mathcal{C}\ar[r]^{F}\ar[d]^y & \mathcal{D}\ar[d]^{y_{}} \\
\hat{\mathcal{C}}\ar[r]_{F_!}\ar[d] & \hat{\mathcal{D}}\ar[d]^{\rho_{-1}}\ar@{-->}@/_1pc/[l]_{F^{\ast}}\\
\mathrm{Sh}_K(\mathcal{C})\ar[r]_(.4){\rho_{-1} F_!}^(.4){\simeq}\ar[d]  & \mathrm{Sh}_{\mathrm{Geo}_{-1}}(\mathcal{D})\ar[d]^{\rho_{\infty}} \ar@{-->}@/_1pc/[l]_{F^{\ast}}\\
\mathrm{Sh}_J(\mathcal{C})\ar[r]_{\rho F_!}^{\simeq} & \mathrm{Sh}_{\mathrm{Geo}}(\mathcal{D})\ar@{-->}@/_1pc/[l]_{F^{\ast}}
}
\end{gathered}
\end{align}
Indeed, every element $f\in K$ is an equivalence in $\mathrm{Sh}_J(\mathcal{C})$, because $F_!(f)$ is mapped to an equivalence in
$\mathrm{Sh}_{\mathrm{Geo}_{-1}}(\mathcal{E})$, and hence so it is in $\mathrm{Sh}_{\mathrm{Geo}}(\mathcal{E})$. By commutativity of the 
diagram, it follows that $f$ is mapped to an equivalence in $\mathrm{Sh}_J(\mathcal{C})$. We now make the following observation: by 
\cite[Proposition 6.2.4.2]{luriehtt}, the canonical Grothendieck topology relative to $F$ on $\mathcal{C}$ exists, and will be denoted by
$J_F$. Since $F_!$ is left exact, the topology $J_F$ is defined in such a way that a sieve $S\hookrightarrow yC$ in $\mathcal{C}$ is
$J_F$-covering if and only if $F_!(S)\hookrightarrow yFC$ is an ordinary geometric cover in $\mathcal{E}$. But, whenever a presheaf 
$X\in\hat{\mathcal{C}}$ is a $K$-sheaf, then $\rho_{-1} F_!(X)\in\hat{\mathcal{E}}$ is an ordinary geometric sheaf and hence
$F^{\ast}\rho_{-1} F_!(X)\simeq X$ is a $J_F$-sheaf (\cite[Proposition 6.2.4.6]{luriehtt}). The same argument applies in the other direction. 
Thus, the sheaf theories
$\mathrm{Sh}_{J_F}(\mathcal{C})$ and $\mathrm{Sh}_K(\mathcal{C})$ coincide. In particular, the localization
\[\hat{\mathcal{C}}\rightarrow\mathrm{Sh}_K(\mathcal{C})\]
is topological. Furthermore, as the localization
$\rho_{\infty}\colon\mathrm{Sh}_{\mathrm{Geo}_{-1}}(\mathcal{E})\rightarrow\mathrm{Sh}_{\mathrm{Geo}}(\mathcal{E})$ is cotopological
(\cite[Proposition 5.13]{rs_hgst}), so is the equivalent localization
\[\mathrm{Sh}_K(\mathcal{C})\rightarrow\mathrm{Sh}_J(\mathcal{C}).\]
But the localization $\hat{\mathcal{C}}\rightarrow\mathrm{Sh}_J(\mathcal{C})$ is topological already, and so it follows that in fact
$\mathrm{Sh}_K(\mathcal{C})\rightarrow\mathrm{Sh}_J(\mathcal{C})$ is the identity by uniqueness of these kinds of factorizations 
(\cite[Remark 6.5.2.20]{luriehtt}). In turn it follows that the localization
\[\mathrm{Sh}_{\mathrm{Geo}_{-1}}(\mathcal{E})\rightarrow\mathrm{Sh}_{\mathrm{Geo}}(\mathcal{E})\]
is the identity. This however contradicts \cite[Proposition 5.22]{rs_hgst}, where it is shown that this localization is non-trivial.
\end{proof}

\begin{remark}
The specific counter-example in Proposition~\ref{propcomplemmafail} shows that the canonical Grothendieck topology construction 
from \cite[Section 6.2.4]{luriehtt} generally does not induce an equivalence of sheaf theories (but an embedding at best). Furthermore, the 
proposition also shows that Corollary~\ref{corgeoesssmall} (or at least its proof) does not apply to the ordinary geometric $(\infty,1)$-site 
on an $\infty$-topos $\mathcal{E}$, which makes it questionable whether $(\mathcal{E},\mathrm{Geo}_{-1})$ has a small presentation at all.
\end{remark}

\bibliographystyle{amsplain}
\bibliography{../BSBib}
\Address
\end{document}